\def\ebinom#1#2{\binom{#1}{{#2}/\mathcal E}}
\def\lexleq{\mathop{\leq_{\mathrm{lex}}}}
\def\fat#1{\mathbf{#1}}
\def\Emil{\mathcal E^{\mathrm{Mil}}}
\def\Ecs{\mathcal E^{\mathrm{CS}}}
\def\Eqn{\mathscr {E}}
\def\Egr{\mathcal E^{\mathrm{GR}}}
\def\Mmil{\mathcal M_{\mathcal E^{\mathrm{Mil}}}}
\def\Mcs{\mathcal M_{\mathcal E^{\mathrm{CS}}}}
\def\Mgr{\mathcal M_{\mathcal E^{\mathrm{GR}}}}
\def\fatnn#1#2{T_\fat{#1}({#2})}
\def\Str{\mathrm{Str}}
\def\fatn#1{T_\fat{#1}}
\def\comp{\mathbin{\text{\bf\smiley{}}}}
\def\fatl#1{\tilde{\fat{#1}}}
\def\fatc#1{\mathbf{\dddot{\fat{#1}}}}
\def\fatf#1{F_\fat{#1}}
\def\fatfn#1#2{F_\fat{#1}^{#2}}
\def\oleq{\leq^\circ}
\def\finleq{\leq_{\mathrm{fin}}}
\def\lift#1#2#3{\mathrm{Lift}_{\fat{#1}}(#2,\allowbreak #3)}
\def\liftp#1#2#3{\mathrm{Lift}_{\fat{#1}'}(#2,\allowbreak #3)}
\def\depth{\mathrm{depth}}
\def\conc{^\frown}
\DeclareMathSymbol{\ORDrestriction}{\mathord}{AMSa}{"16}
\renewcommand{\restriction}{\ORDrestriction\nolinebreak}
\def\meet{\wedge}
\def\typev{\mathrm{type}}
\def\S{\mathcal{S}}
\def\M{\mathcal{M}}
\def\AM{\mathcal{AM}}
\def\Dp{\mathrm{Dp}}
\def\Cl{\mathrm{Cl}}
\def\ind#1#2{\iota(#1,#2)}
\def\OAge{\mathrm{OAge}}
\def\OEmb{\mathrm{OEmb}}
\def\Emb{\mathrm{Emb}}
\def\type#1{{\str{#1}^+}}
\def\freef#1{\mathrm{fl}_\type{#1}}
\def\Dc{\mathrm{Dc}}
\def\Stree{(T,\allowbreak {\preceq},\allowbreak \Sigma,\allowbreak \S)}
\def\SNtree{(T,\allowbreak {\preceq},\allowbreak \Sigma,\allowbreak \S,\allowbreak \M)}
\def\str#1{\mathbf {#1}}
\def\pstr#1{{\mathbf {#1}^+}}
\def\Gp{{\mathbf G_\omega}}
\def\nbrel#1#2{R\ifstrempty{#1}{}{_{#1}}\ifstrempty{#2}{}{^{#2}}}
\def\rel#1#2{\nbrel{\ifstrempty{#1}{}{\str{#1}}}{#2}}
\def\KF{\mathcal K^\mathcal F}
\def\KFpt{\mathcal{K}^\mathcal{F}_{\mathrm{pt}}}
\def\KFt{\mathcal K^\mathcal F_\mathrm{t}}
\def\embGG{\Phi_{\str{G},\Gp}}
\def\embKG{\Phi_{\str{K},\str{G}}}
\def\embKH{\Phi_{\str{K},\str{H}}}
\def\embHG{\Phi_{\pstr{H},\pstr{G}}}
\def\embF{\Phi_{F}}
\def\embGpK{\Phi_{\Gp,\str{K}}}
\newif\ifsimpletheorem
\newif\ifcomplextheorem
\theoremstyle{plain}
\newtheorem{theorem}{Theorem}[section]
\newtheorem{corollary}[theorem]{Corollary}
\newtheorem{prop}[theorem]{Proposition}
\newtheorem{observation}[theorem]{Observation}
\newtheorem{lemma}[theorem]{Lemma}
\newtheorem{claim}[theorem]{Claim}
\theoremstyle{definition}
\newtheorem{definition}[theorem]{Definition}
\newtheorem{algorithm}[theorem]{Algorithm}
\newtheorem{example}[theorem]{Example}
\newtheorem*{remark*}{Remark}
\newtheorem*{claim*}{Claim}
\theoremstyle{remark}
\newtheorem{remark}[theorem]{Remark}
\begin{document}
\title[Trees with successor operation]{Ramsey theorem for trees with successor operation}
\author[M.B.]{Martin Balko}
\author[D.Ch.]{David Chodounsk\'y}
\address{Department of Applied Mathematics (KAM), Charles University, Ma\-lo\-stranské~nám\v estí 25, Praha 1, Czech Republic}
\email{\{balko,chodounsky,hubicka\}@kam.mff.cuni.cz}
\author[N.D.]{Natasha Dobrinen}
\address{Department of Mathematics, University of Notre Dame, 255 Hurley Bldg, Notre Dame, IN 46556 USA}
\email{ndobrine@nd.edu}
\author[J.H.]{Jan Hubi\v cka}
\author[M.K.]{Mat\v ej Kone\v cn\'y}
\address{Institute of Algebra\\TU Dresden\\ Dresden, Germany}
\email{matej.konecny@tu-dresden.de}
\author[J.N.]{Jaroslav Ne\v set\v ril}
\address{Computer Science Institute of Charles University (IUUK), Charles University, Ma\-lo\-stranské~nám\v estí 25, Praha 1, Czech Republic}
\email{nesetril@kam.mff.cuni.cz}
\author[A.Z.]{Andy Zucker}
\address{Department of Pure Mathematics, University of Waterloo, 200 University Ave W, Waterloo, ON N2L 3G1, Canada}
\email{a3zucker@uwaterloo.ca}
\begin{abstract}
	We prove a general Ramsey theorem for trees with a successor operation. This theorem is a common generalization of the Carlson--Simpson Theorem and the Milliken Tree Theorem for regularly branching trees.

	Our theorem has a number of applications both in finite and infinite combinatorics. For example, we give a short proof of the unrestricted Nešetřil--Rödl theorem, and we recover the Graham--Rothschild theorem. Our original motivation came from the study of big Ramsey degrees -- various trees used in the study can be viewed as trees with a successor operation. To illustrate this, we give a non-forcing proof of a theorem of Zucker on big Ramsey degrees.
\end{abstract}
\maketitle             
\tableofcontents
\section{Introduction}
Many well known Ramsey-type results speak about trees. Obviously, the Hal\-pern--L\"auchli theorem~\cite{Halpern1966} and the Milliken tree theorems~\cite{Milliken1979} are examples of this, as well as Ramsey's theorem itself is~\cite{Ramsey1930} (a chain is just a particularly simple tree). Various parameter space theorems (for example the Hales--Jewett theorem~\cite{Hales1963}, the Graham--Rothschild theorem~\cite{Graham1971}, or the Carlson--Simpson Theorem~\cite{carlson1984}) also speak about trees: Given a finite alphabet $\Sigma$, the set $\Sigma^{<\omega}$ of all finite words over $\Sigma$ forms a tree with the the order given by end-extension,  parameter words then describe special regular subtrees. Ellentuck's theorem~\cite{Ellentuck1974}, or Hindman's theorem~\cite{hindman1974}, Gowers' $\mathrm{FIN}_k$ theorem~\cite{gowers1992lipschitz}, and other block-sequence theorems are also examples of this phenomenon.

In this paper we prove a new abstract Ramsey theorem for finitely branching trees equipped with a successor operation, which, for regularly branching trees, is a common generalization of the Carlson--Simpson Theorem~\cite{carlson1984} and
Milliken's tree theorem~\cite{Milliken1979}, as well as a whole spectrum of theorems ``in-between.'' For trees with unbounded branching it provides a new form of Ramsey theorem for colouring isomorphic copies of a tree in itself. Even better, these trees form a topological Ramsey space, which we show by proving that certain closely related objects satisfy Todorcevic's Ramsey space axioms~\cite{todorcevic2010introduction}.

The original motivation for our work was the recent progress in the area of big Ramsey degrees, as each big Ramsey result turns out to give a Ramsey theorem for special finite branching trees. In the late 1960's, Laver produced upper bounds on big Ramsey degrees of the rationals which now can be seen as using the Halpern--L\"auchli
\cite{Halpern1966} theorem and Milliken's tree theorem~\cite{Milliken1979} (see e.g.~\cite{devlin1979}). Laver's technique has been
adapted to other cases and, until recently, it was the main tool used in the area (see
\cite{devlin1979,Laflamme2006,todorcevic2010introduction,Hubicka2020uniform}).  To prove bounds on
big Ramsey degrees of the homogeneous universal triangle-free
graph,~Dobrinen introduced a concept of coding trees~\cite{dobrinen2017universal}. The proof of the Ramsey theorem for coding
trees develops set-theoretic forcing on such trees,
building on Harrington's forcing proof of the Halpern--L\"auchli theorem.
Coding trees have been generalized to free
amalgamation classes in binary languages in a series of
papers~\cite{Balko2021exact,dobrinen2019ramsey,dobrinen2023infinite,zucker2020}.
Recently, Hubi\v cka~\cite{Hubicka2020CS} introduced a new connection between the Carlson--Simpson theorem~\cite{carlson1984} and big
Ramsey degrees, which has significantly simplified some of earlier results and made it possible to bound
big Ramsey degrees of partial orders and metric spaces~\cite{balko2021big,Balko2023}.

Our theorem aims to give an abstract and practical framework for phrasing and proving Ramsey tree theorems in this area, thereby avoiding the need for complex ad hoc arguments and proofs. We intend for it to help separate the structural part of the arguments from the Ramsey-theoretic part. To support our claims, we give a new proof of a theorem of Zucker on big Ramsey degrees of free amalgamation classes in finite binary languages~\cite{zucker2020} which is entirely elementary and does not require the arguments from set-theoretic forcing. A vast generalisation of this is in preparation.

We stress that our theorem is of independent interest and is not limited to the particular setup of big Ramsey degrees. To illustrate this, we give a simple proof of the unrestricted version of the Ne\v set\v ril--R\"odl~\cite{Nevsetvril1977} and Abramson--Harrington theorem~\cite{Abramson1978}, and prove a spectrum of Ramsey theorems for regularly branching trees whose extreme points are the Carlson--Simpson theorem~\cite{carlson1984} and Milliken's tree theorem~\cite{Milliken1979} for regularly branching trees. We also obtain the Dual Ramsey theorem, and the Graham--Rothschild theorem~\cite{Graham1971} as well as its $*$-version.

\subsection{Trees with a successor operation}

A \emph{tree} is a (possibly empty) partially ordered set $(T, \preceq)$ such
that, for every $a \in T$, the set $\{b \in T : b \prec a\}$ is finite and linearly ordered by $\preceq$.
Given a node $a\in T$, we denote by $\ell(a)=|\{b\in T:b\prec a\}|$ the \emph{level} of $a$. We put $T(n)=\{a\in T: \ell(a)=n\}$, $T({\leq}n)=\{a\in T: \ell(a)\leq n\}$ and analogously for $T({<}n)$, $T({\geq}n)$ and $T({>}n)$.
We call $b\in T$ an \emph{immediate successor} of $a\in T$ if $a\prec b$ and there is no $c\in T$ satisfying $a\prec c\prec b$.  A tree $T$ is \emph{finitely branching} if for every $a\in T$ the set of all immediate successors of $a$ is finite.

We equip trees with an additional operation that can be used to build all immediate successors of a given node of the tree.
\begin{definition}[$\S$-tree]
	\label{def:sucessor}
	An \emph{$\S$-tree} is a quadruple $(T,{\preceq},\Sigma,\S)$ where $(T,{\preceq})$ is a countable finitely branching tree with finitely many nodes of level 0, $\Sigma$ is a set called the \emph{alphabet} and $\S$ is a partial function $\S\colon T\times T^{<\omega}\times \Sigma\to T$ called the \emph{successor operation} satisfying the following three axioms:
	\begin{enumerate}[label=(S\arabic*)]
		\item\label{S1} If $\S(a,\bar p,c)$ is defined for some \emph{base} $a\in T$, \emph{parameter} $\bar p\in T^{<\omega}$ and \emph{character} $c\in \Sigma$, then $\S(a,\bar p,c)$ is an immediate successor of $a$ and all nodes in $\bar p$ have levels  at most $\ell(a)-1$. 
		\item\label{S2} \emph{Injectivity:} If $\S(a,\bar p,c)=\S(b,\bar q,d)$, then $a=b$, $\bar p=\bar q$ and $c=d$.
		\item\label{S3} \emph{Constructivity:} For every node $a\in T$ and its immediate successor $b$, there exist $\bar p\in T^{<\omega}$ and $c\in \Sigma$ such that $b=\S(a,\bar p,c)$.
	\end{enumerate}
\end{definition}
Note that $(T, \preceq)$ is uniquely determined from the operation $\S$ and the set of nodes of level 0.
If $\bar p$ is the empty sequence, then we will also write $\S(a,c)$ for brevity. 
\begin{example}
	\label{ex:1}
	Consider tree $(2^{{<}\omega},\sqsubseteq)$ of all finite $\{0,1\}$-sequences (or words) where  $a\sqsubseteq b$ if and only if $a$ is an initial segment of $b$. One can naturally view this as an $\S$-tree $(2^{{<}\omega},\sqsubseteq,\{0,1\},\S)$  where $\S$
    is defined only for empty parameters $\bar{p}$.
	For every $a\in B$ and  $c\in \{0,1\}$, we let $\S(a,c)=a\conc c$ where $a\conc c$ denotes the sequence $a$ extended by $c$.

	Observe that every finite sequence can be constructed from the empty sequence $()$
	using the successor operation.  For example, the sequence $a=01011$ can be
	constructed as
	\[\S(\S(\S(\S(\S((),0),1),0),1),1).\]
\end{example}
Given $S\subseteq T$, a function $f\colon S\to T$ is \emph{level-preserving} if for every $a,b\in S$ such that $\ell(a)=\ell(b)$ we also have $\ell(f(a))=\ell(f(b))$.
For a level-preserving function $f\colon S\to T$, we denote by $\tilde{f}$ the function $\tilde{f}\colon \ell(S)\to \omega$ defined
by $\tilde{f}(n)=\ell(f(a))$ for some $a\in S$ with $\ell(a)=n$.

We consider the following embeddings of trees.
\begin{definition}[Shape-preserving functions]
	\label{def:shape-pres}
	Let $(T,{\preceq},\Sigma,\S)$ be an $\S$-tree.
	We call an injection $F\colon T\to T$ a \emph{shape-preserving function} if 
	\begin{enumerate}[label=(\roman*)]
		\item\label{def:shape-pres:item1} $F$ is level preserving, and
		\item\label{def:shape-pres:item2} $F$ is \emph{weakly $\S$-preserving}: 
			For every $a\in T$, $\bar{p}\in T^{< \omega}$ and $c\in \Sigma$ such that $\S(a,\bar{p},c)$ is defined, we have
			\[\S(F(a),F(\bar{p}),c)\preceq F(\S(a,\bar{p},c))\]
			(in particular, $\S(F(a),F(\bar{p}),c)$ is defined). Here, $F(\bar{p})$ is the tuple consisting of images of entries of $\bar{p}$ under $F$.
		\item\label{def:shape-pres:item3} For every $a\in T(0)$ and $b$ such that $a\preceq b$ it also holds that $a\preceq F(b)$.
	\end{enumerate}
	Given $S\subseteq T$, we also call a function $f\colon S\to T$ \emph{shape-preserving} if it extends to a shape-preserving function $F\colon T\to T$.
\end{definition}
It is not hard to verify (see Proposition~\ref{prop:shape-pres}) that shape-preserving functions form a closed monoid, and every shape-preserving function $F$ is an embedding $F\colon (T,\preceq)\to(T,\preceq)$ where $(F[T],\preceq)$ is a meet- and level-preserving subtree of $(T,\preceq)$. 
\subsection{Ramsey theorem for shape-preserving functions}

\begin{definition}
	Let $\Stree$ be an $\S$-tree, $F\colon T\to T$ be a shape-preserving function and $m\geq 0$ a level.  We say that $F$ is \emph{skipping level $m$} if $m\notin \tilde{F}[\omega]$ and
	that $F$ is \emph{skipping only level $m$} if $\tilde{F}[\omega]=\omega\setminus \{m\}$.
\end{definition}

It follows from the definition of shape-preserving function that
$F$ skipping only level $m$ is uniquely defined by set $F[T(m)]$, as $F$ is the identity on $T({<}m)$, and $F(\S(a,\bar{p},c)) = \S(F(a),F(\bar{p}),c)$ whenever $\ell(\S(a,\bar{p},c)) > m$.

\begin{definition}[$(\S,\M)$-tree]
	\label{def:sntree}
	Given an $\S$-tree $\Stree$ and a monoid $\M$ of some shape-preserving functions $T\to T$, we call $\SNtree$ an \emph{$(\S,\M)$-tree} if the following three conditions are satisfied:
	\begin{enumerate}[label=(M\arabic*)]
		\item\label{item:Nmonoid}
		\emph{The set $\M$ forms a closed monoid}: $\M$ contains the identity function $\mathrm{Id}\colon T\to T$,
		for every $F,F'\in \M$ it holds that $F\circ F'\in \M$, and every sequence $(F_i)_{i\in \omega}$ of functions from $\M$
		satisfying
		$(\forall_{i\in \omega})F_i\restriction_{T({\leq}i)}=F_{i+1}\restriction_{T({\leq}i)}$ has a limit in $\M$, that is, a function $F_\infty \in \M$ such that $(\forall_{i\in \omega})F_\infty\restriction_{T({\leq}i)} = F_i\restriction_{T({\leq}i)}$. Equivalently, one can define the \emph{pointwise convergence topology} on the set of shape-preserving functions in the typical way, and then we demand that $\M$ is closed in this topology.
		\item\label{item:Ndecomposition} \emph{The set $\M$ admits decompositions}:
		For every $n\in \omega$ and $F\in \M$ skipping level $\tilde{F}(n)-1$ such that $\tilde{F}(n)>0$,
		there exist $F_1, F_2\in \M$ such that $F_2$ skips only level $\tilde{F}(n)-1$ and $$F_2\circ F_1\restriction_{T({\leq} n)}=F\restriction_{T({\leq} n)}.$$
		\item\label{item:Nduplication} \emph{The set $\M$ is closed for duplication}: For all $n$ and $m$ with $n<m\in \omega$, there exists a function $F_m^n\in \M$
		skipping only level $m$ such that
		for every $a\in T(n)$, $b\in T(m)$, $\bar{p}\in T^{<\omega}$ and $c\in \Sigma$, where $\S(a,\bar{p},c)$ is defined and $\S(a,\bar{p},c)\preceq b$,
		we have $F^n_m(b)=\S(b,\bar{p},c)$.
	\end{enumerate}
\end{definition}
\begin{figure}
	\centering
	\includegraphics[scale=0.833]{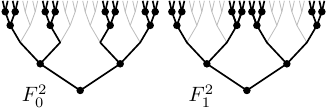}
	\caption{Duplication functions requried by \ref{item:Nduplication} on a binary tree.}
	\label{fig:duplication}
\end{figure}
\begin{figure}
	\centering
	\includegraphics[scale=0.833]{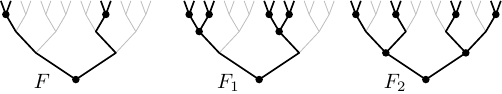}
	\caption{Decomposition of function $F$ given by \ref{item:Ndecomposition}.}
	\label{fig:decomposition}
\end{figure}
See Figures \ref{fig:duplication} and \ref{fig:decomposition} for examples of shape-preserving functions in \ref{item:Ndecomposition} and \ref{item:Nduplication}.

Assuming that~\ref{item:Nmonoid} holds, \ref{item:Ndecomposition} is equivalent with the property that every
function in $\M$ is the limit of functions constructed from the identity by composing functions from $\M$ skipping only one level.
Duplication is often an essential step in
proving the pigeonhole version of various dual Ramsey theorems.
Put:

\[\begin{aligned}
		\M^n             & =\{F\in \M:F\restriction_{T({<} n)}\hbox{ is identity}\}, \\
		\AM              & =\{F\restriction_{T({<}k)}:F\in \M, k> 0\},               \\
		\AM^n_k          & =\{F\restriction_{T({<}n+k)}:F\in \M^n\},                 \\
		\AM^n_k({\leq}m) & =\{f:f\in \AM^n_k\hbox{ and }\tilde{f}(n+k-1)\leq m\},    \\
		\AM^n_k(m)       & =\{f:f\in \AM^n_k\hbox{ and }\tilde{f}(n+k-1)=m\}.
	\end{aligned}
\]
$\AM^n_k$ thus denotes finite approximations to members of $\M$ where $n$ levels are fixed and $k$ levels move.
$\AM^n_k({\leq}m)$ denotes such extensions ending on level $m$ or earlier
and $\AM^n_k(m)$ denotes such extensions ending precisely on level $m$.
We now state the finite-dimensional Ramsey theorem for $(\S,\M)$-trees.
\begin{theorem}
	\label{thm:shaperamseyN}
	Let $\SNtree$ be an $(\S,\M)$-tree.
	Then, for every pair $n,k \in \omega$ and every finite colouring
	$\chi$ of $\AM^n_k$, there exists $F\in \M^n$ such that $\chi$ is constant when restricted to
	$\{F\circ g\colon g\in \AM^n_k\}$.
\end{theorem}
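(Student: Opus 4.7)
I proceed by induction on $k$. The base case $k = 0$ is immediate: $\AM^n_0 = \{\mathrm{Id}\restriction_{T({<}n)}\}$ is a singleton, so $F = \mathrm{Id}\in \M^n$ suffices. The main work is the pigeonhole case $k = 1$, which combined with the inductive hypothesis handles the general inductive step.

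For the pigeonhole, the difficulty is that $\AM^n_1$ is infinite: its elements can send $T(n)$ to arbitrarily high levels, so the finite pigeonhole does not apply directly. I construct $F \in \M^n$ by a fusion argument, producing a sequence $F_0, F_1, \ldots$ in $\M^n$ each refining the previous and whose pointwise limit $F$ exists by closure~\ref{item:Nmonoid}. At stage $i$, the duplication axiom~\ref{item:Nduplication} supplies a one-level-skipping function $F^n_{m_i}$ used to extend $F_i$ to $F_{i+1}$, and decomposition~\ref{item:Ndecomposition} guarantees each $F_i$ lies in $\M^n$. The finite set $\AM^n_1(m_i)$ carries the finite coloring inherited from $\chi$, and the finite pigeonhole selects a color class to which I restrict attention. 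A standard diagonalization across stages ensures a single color survives in the limit.

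For the inductive step from $k$ to $k+1$, given $\chi$ coloring $\AM^n_{k+1}$, observe that each $g \in \AM^n_{k+1}$ is determined by its restriction $g' = g\restriction_{T({<}n+k)} \in \AM^n_k$ together with a single-level extension on $T(n+k)$. For each fixed $g'$, the possible extensions form, after composition with an appropriate shape-preserving function, a copy of $\AM^{n+k}_1$; the pigeonhole applied to these assigns $g'$ a stabilized color $c_{g'}$. The induced map $g' \mapsto c_{g'}$ is a finite coloring of $\AM^n_k$, to which the inductive hypothesis applies, giving $F' \in \M^n$. A further fusion, using axioms~\ref{item:Nmonoid}--\ref{item:Nduplication}, merges $F'$ with the pigeonhole-producing functions into the desired $F \in \M^n$.

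The main obstacle is the bookkeeping in the fusion, in particular ensuring that pigeonhole selections at successive stages are mutually compatible and that all partial functions built remain in $\M^n$ so a limit exists. Axioms~\ref{item:Nmonoid}--\ref{item:Nduplication} are precisely calibrated for this: duplication produces primitive one-level moves, decomposition assembles them into arbitrary elements of $\M^n$, and closure furnishes the limit.
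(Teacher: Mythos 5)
Your proposal has a genuine gap at its core: the ``pigeonhole case $k=1$'' cannot be established by a fusion argument using only finite pigeonhole and diagonalization. At stage $i$ you restrict to a colour class on $\AM^n_1(m_i)$, but at stage $i+1$ the colour class you are forced to choose on $\AM^n_1(m_{i+1})$ may be \emph{different} and incompatible with the earlier choice. Since $\AM^n_1$ is infinite and the finite pigeonhole makes no attempt to align choices across levels, the diagonalization can easily kill all colours in the limit; nothing in axioms~\ref{item:Nmonoid}--\ref{item:Nduplication} alone forces compatibility. The missing ingredient is a genuine Ramsey-theoretic input: the paper's 1-dimensional pigeonhole (Lemma~\ref{lem:pigeonhole1}) is proved via the $*$-version of the Hales--Jewett theorem, encoding one-level extensions as words over a finite alphabet and using a monochromatic combinatorial line. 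Even that lemma only yields a two-level approximation $h\in\AM^n_2$; getting a full $F\in\M^n$ already requires the combinatorial-forcing machinery (Observation~\ref{obs:large}, Lemmas~\ref{lem:largepigeon} and~\ref{lem:fixlevel}, Proposition~\ref{prop:A4}), which the paper runs at the level of fat subtrees and then pulls back to $\M$ via Theorem~\ref{thm:shaperamseyspace}.

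The inductive step from $k$ to $k+1$ is also underspecified in a way that hides the same difficulty: the stabilized colour $c_{g'}$ comes from applying the pigeonhole separately for each of the infinitely many $g'\in\AM^n_k$, and you then need a single $F$ that witnesses all of these simultaneously \emph{and} is compatible with the $F'$ from the inductive hypothesis. ``A further fusion'' does not resolve this, for the same reason as above: you need a uniform Ramsey statement, not a sequence of independent finite pigeonhole applications. This is precisely why the paper proves the infinite-dimensional Theorem~\ref{thm:fatramseyspace} first (via Todorcevic's axioms \ref{item:A1}--\ref{item:A4} and Theorem~\ref{thm:stevo}) and only then extracts Theorem~\ref{thm:shaperamseyN} as an easy corollary: a colouring of $\AM^n_k$ is turned into a colouring of $\M^n$ by looking at initial segments, and the Ellentuck theorem does the heavy lifting. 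Your high-level intuition---that duplication supplies primitive moves, decomposition assembles them, and closure provides limits---is correct as far as it goes, but it needs Hales--Jewett (or an equivalent pigeonhole) and a forcing-style stabilization to become a proof.
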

By compactness we also get the following two corollaries:
\begin{corollary}
	\label{cor:shaperamseyN1}
	Let $\SNtree$ be an $(\S,\M)$-tree.
	Then, for every quadruple $n,k,m,r \in \omega$ there exists $N\in \omega$ such that for every $r$-colouring
	$\chi\colon \AM^n_k({\leq}N)\to r$, there exists $f\in \AM^n_m({\leq}N)$ such that $\chi$ is constant when restricted to
	$\{F\circ g\colon g\in \AM^n_k(\leq m)\}$.
\end{corollary}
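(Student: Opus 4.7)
My plan is to derive the corollary from Theorem \ref{thm:shaperamseyN} by the standard compactness (König's lemma) argument: assume the corollary fails and patch together local counterexamples into a single bad colouring of all of $\AM^n_k$, contradicting the theorem. Two finiteness facts set up the compactness. First, since $T$ is finitely branching with finitely many nodes of level~$0$, each level $T(j)$ is finite, hence each $\AM^n_k(\leq N)$ is a finite set of partial functions and the set $r^{\AM^n_k(\leq N)}$ of $r$-colourings is finite as well. Second, $\AM^n_k = \bigcup_N \AM^n_k(\leq N)$, since $\tilde{f}(n+k-1)$ is a finite natural number for every $f \in \AM^n_k$.

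Next I would suppose toward a contradiction that for every $N$ there exists a \emph{bad} colouring $\chi_N \colon \AM^n_k(\leq N) \to r$, meaning that no $f \in \AM^n_m(\leq N)$ has $\chi_N$ constant on $\{f \circ g : g \in \AM^n_k(\leq m)\}$. The one routine check is monotonicity: if $\chi \colon \AM^n_k(\leq N) \to r$ is bad and $N' \leq N$, then $\chi \restriction_{\AM^n_k(\leq N')}$ is bad, since $\AM^n_m(\leq N') \subseteq \AM^n_m(\leq N)$ and for any $f \in \AM^n_m(\leq N')$ the set $\{f \circ g : g \in \AM^n_k(\leq m)\}$ sits inside $\AM^n_k(\leq N')$, so a witness at level $N'$ would also witness at level $N$. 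Ordering bad colourings by restriction thus yields a finitely branching rooted tree, nonempty at every level; König's lemma then produces an infinite branch, which amalgamates into a single colouring $\chi \colon \AM^n_k \to r$ whose every restriction $\chi \restriction_{\AM^n_k(\leq N)}$ is bad.

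Finally, applying Theorem \ref{thm:shaperamseyN} to $\chi$ yields some $F \in \M^n$ for which $\chi$ is constant on $\{F \circ g : g \in \AM^n_k\}$. Setting $f = F \restriction_{T(<n+m)}$ and choosing any $N \geq \tilde{F}(n+m-1)$, one has $f \in \AM^n_m(\leq N)$ and $\chi \restriction_{\AM^n_k(\leq N)}$ is constant on $\{f \circ g : g \in \AM^n_k(\leq m)\}$, contradicting badness of this restriction. The only real content beyond Theorem \ref{thm:shaperamseyN} is the monotonicity observation for bad colourings, which is elementary, so I do not anticipate a substantive obstacle.
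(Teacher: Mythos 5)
Your proof is correct and spells out exactly what the paper leaves to the reader: the paper derives Corollary~\ref{cor:shaperamseyN1} from Theorem~\ref{thm:shaperamseyN} with the single remark that it ``follows by compactness,'' and your König's lemma argument — finiteness of each $\AM^n_k(\leq N)$, monotonicity of badness under restriction, gluing a branch into a global bad colouring of $\AM^n_k$, and deriving a contradiction from the $F\in\M^n$ supplied by Theorem~\ref{thm:shaperamseyN} by restricting to $f=F\restriction_{T({<}n+m)}$ — is the standard instantiation of that remark.
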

\begin{corollary}
	\label{cor:shaperamseyN2}
	Let $\SNtree$ be an $(\S,\M)$-tree.
	Then, for every quadruple $n,k,m,r \in \omega$ there exists $N\in \omega$ such that for every $r$-colouring
	$\chi\colon \AM^n_k(N)\to r$, there exists $F\in \AM^n_m(N)$ such that $\chi$ is constant when restricted to
	$\{F\circ g\colon g\in \AM^n_k(m)\}$.
\end{corollary}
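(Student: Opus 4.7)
The plan is a standard compactness argument reducing Corollary~\ref{cor:shaperamseyN2} to Theorem~\ref{thm:shaperamseyN}. Suppose toward a contradiction that the statement fails for some fixed quadruple $n, k, m, r \in \omega$. Then for each $N \in \omega$ we may select a \emph{bad} colouring $\chi_N \colon \AM^n_k(N) \to r$ with the property that no $F \in \AM^n_m(N)$ makes $\{F \circ g : g \in \AM^n_k(m)\}$ monochromatic under $\chi_N$.

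Because $\AM^n_k = \bigsqcup_{N \in \omega} \AM^n_k(N)$, the family $(\chi_N)_{N \in \omega}$ assembles into a single colouring $\chi \colon \AM^n_k \to r$ via $\chi(f) = \chi_{N_f}(f)$, where $N_f = \tilde f(n+k-1)$ is the top level of $f$'s image. Applying Theorem~\ref{thm:shaperamseyN} to $\chi$ yields $F^\infty \in \M^n$ and a colour $c \in r$ such that $\chi$ is constantly $c$ on $\{F^\infty \circ g : g \in \AM^n_k\}$; in particular on the subset indexed by $g \in \AM^n_k(m)$.

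To finish, let $F = F^\infty \restriction_{T({<}n+m)}$ and let $N^*$ be the level such that $F \in \AM^n_m(N^*)$. For every $g \in \AM^n_k(m)$ the image of $g$ lies in $T({\leq}m) \subseteq \dom(F)$, so $F \circ g = F^\infty \circ g$; consequently $\chi_{N^*}(F \circ g) = c$ for every such $g$. Hence $\chi_{N^*}$ is monochromatic on $\{F \circ g : g \in \AM^n_k(m)\}$, contradicting the choice of $\chi_{N^*}$ as a bad colouring.

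The main technical point to handle carefully is the bookkeeping of target levels: one must verify that $F \circ g$ genuinely lies in $\AM^n_k(N^*)$, so that the colour assigned by the global $\chi$ coincides with the colour assigned by the specific bad colouring $\chi_{N^*}$. This uses the strict monotonicity of $\tilde F^\infty$ guaranteed by the shape-preservation properties recorded in Proposition~\ref{prop:shape-pres}. Corollary~\ref{cor:shaperamseyN1} is proved by an entirely parallel compactness argument, in which the ``at most'' variant of the indexing makes the level matching automatic.
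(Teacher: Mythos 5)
Your route---compactness directly from Theorem~\ref{thm:shaperamseyN}---is different from the paper's, which first obtains Corollary~\ref{cor:shaperamseyN1} by compactness and then derives Corollary~\ref{cor:shaperamseyN2} from it by padding with iterated duplication functions from~\ref{item:Nduplication}. A compactness argument is a legitimate alternative in principle, but the step you describe as ``the main technical point to handle carefully'' is exactly where the proof breaks, and you never actually carry it out.

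Concretely, for $g\in\AM^n_k(m)$ one has $\tilde g(n+k-1)=m$, hence
\[
\widetilde{F^\infty\circ g}\,(n+k-1)\;=\;\widetilde{F^\infty}\bigl(\tilde g(n+k-1)\bigr)\;=\;\widetilde{F^\infty}(m),
\]
while $N^*=\tilde F(n+m-1)=\widetilde{F^\infty}(n+m-1)$. Strict monotonicity of $\widetilde{F^\infty}$ (Proposition~\ref{prop:shape-pres}~\ref{item:orderpres}) gives $\widetilde{F^\infty}(m)<N^*$ for every $n\geq 2$, so $F\circ g$ lies in $\AM^n_k\bigl(\widetilde{F^\infty}(m)\bigr)$ and \emph{not} in $\AM^n_k(N^*)$; the colour $\chi_{N^*}(F\circ g)$ is therefore not even defined, and the intended contradiction against $\chi_{N^*}$ evaporates. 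The monochromaticity you have extracted concerns the bad colouring at level $\widetilde{F^\infty}(m)$, but your $F$ lives in $\AM^n_m(N^*)$, not $\AM^n_m\bigl(\widetilde{F^\infty}(m)\bigr)$, and no restriction of $F^\infty$ can live there either: any $F'\in\AM^n_m$ with $F'\circ g=F^\infty\circ g$ for all such $g$ must agree with $F^\infty$ on $T(\leq m)$, forcing $\widetilde{F'}(n+m-1)>\widetilde{F'}(m)=\widetilde{F^\infty}(m)$. The mismatch is structural: the domain $T(\leq n+m-1)$ of an element of $\AM^n_m$ strictly contains the maximal range $T(\leq m)$ of any $g\in\AM^n_k(m)$, so the top $n-1$ moving levels of $F$ are never probed. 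If one instead quantifies over $g\in\AM^n_k(n+m-1)$ (so that the last moving level of $g$ lands on the last moving level of $F$), the level matching becomes automatic and your compactness argument does close; as written, the key verification fails for every $n\geq 2$.
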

To state an infinite-dimensional strengthening of Theorem~\ref{thm:shaperamseyN} we first review the basic notions of Ramsey spaces~(see Chapters~4 and~5 of~\cite{todorcevic2010introduction} for details).
Recall that a subset $\mathcal{X}$ of a topological space is
\begin{enumerate}
	\item \emph{nowhere dense} if every non-empty open set contains a non-empty open subset that avoids $\mathcal{X}$.
	\item \emph{meager} if is the union of countably many nowhere dense sets,
	\item has the \emph{Baire property} if it can be written as the symmetric difference of an open set
	      and a meager set.
\end{enumerate}
\begin{definition}[Ellentuck topological space $\M$]
	\label{defn:EllentuckN}
	Given an $(\S,\M)$-tree $\SNtree$ we
	equip $\M$ with the \emph{Ellentuck topology} given by the following basic open sets:
	$$[f,F]=\{F\circ F':F'\in \M\hbox{ and $F\circ F'$ extends $f$}\}$$
	for every $f\in \AM$ and $F\in \M$.
\end{definition}
Equivalently, the Ellentuck topology on $\M$ is the coarsest left-invariant topology on the monoid $\M$ which refines the pointwise convergence topology.

Given a shape-preserving function $F\in \M$ and $f\colon T({\leq}n)\to T$ such that $f\in \AM$ we define
$\depth_F(f)=\tilde{g}(n)$ for $g\in \AM$ satisfying $F\circ g=f$.  We set
$\depth_F(f)=\omega$ if there is no such $g$.
Given $n\in \omega$ we also write $[n,F]$ for $[F\restriction_{T({\leq}n)},F]$.
\begin{definition}
	\label{def:ramsey}
	Let $\mathcal{X}$ be a subset of $\M$.
	\begin{enumerate}
		\item We call $\mathcal{X}$ \emph{Ramsey} if for every non-empty basic
		      set $[f,F]$ there is $F'\in [\depth_F(f), F]$ such that either $[f,F']\subseteq \mathcal {X}$ or $[f,F']\cap \mathcal {X}=\emptyset$. (Such a set is sometimes called \emph{completely Ramsey} in the literature.)

		\item We say that $\mathcal{X}$ is \emph{Ramsey null} if for every $[f,F]\neq \emptyset$ we can find $F'\in [\depth_F(f), F]$ such that  $[f,F'] \cap \mathcal {X}=\emptyset$.
	\end{enumerate}
\end{definition}
Referring to the Ellentuck topology, it is not hard to see that every Ramsey set has the Baire property  and every Ramsey null set is meager~\cite[Lemma 4.24]{todorcevic2010introduction}.
The following result establishes the equivalence.
\begin{theorem}[Ellentuck theorem for shape-preserving functions]
	\label{thm:shaperamseyspace}
	Let $\SNtree$ be an $(\S,\M)$-tree
	and consider $\M$ with the Ellentuck topology given by Definition~\ref{defn:EllentuckN}.
	Then every property of Baire subset of $\M$ is Ramsey and every meager subset is Ramsey null.
\end{theorem}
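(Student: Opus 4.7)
The plan is to deduce Theorem~\ref{thm:shaperamseyspace} from Todorcevic's Abstract Ellentuck Theorem~\cite[Theorem~5.4]{todorcevic2010introduction}, which asserts that any topological Ramsey space $(\mathcal{R},\leq,r)$ satisfying the axioms A.1--A.4 has the property that every subset with the Baire property is Ramsey and every meager subset is Ramsey null. I take $\mathcal{R}=\M$, the restriction map $r(F,n)=F\restriction_{T({<}n)}\in\AM$, and the quasi-order $F\leq G$ defined by the existence of $H\in\M$ with $F=G\circ H$; the basic open sets $[f,F]$ from Definition~\ref{defn:EllentuckN} then coincide with those of Todorcevic's framework.

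The sequencing and finitization axioms A.1 and A.2 are essentially formal. Axiom A.1 follows from the injectivity and level-preservation of shape-preserving functions together with the coherence of restrictions. For A.2, one sets $f\finleq g$ exactly when $f$ is a restriction of some $G\circ H$ that also extends $g$, with $G,H\in\M$, and verifies the required consistency using the closed-monoid property~\ref{item:Nmonoid}. For the amalgamation axiom A.3, given $F\in[\depth_G(a),G]$ with $[a,F]\neq\emptyset$, one constructs $F'\in[\depth_G(a),G]$ with $\emptyset\neq[a,F']\subseteq[a,F]$ by gluing the prefix of $G$ below level $\depth_G(a)$ to a tail of $F$, using the decomposition axiom~\ref{item:Ndecomposition} to factor each step and passing to the limit guaranteed by~\ref{item:Nmonoid}.

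The main obstacle is the pigeonhole axiom A.4: given $a\in\AM$ with $|a|=n$, a function $F\in\M$ with $[a,F]\neq\emptyset$, and a finite coloring of the one-step extensions $\{r(F\circ H,n+1):H\in\M\}$, produce $F'\in[\depth_F(a),F]$ on which the coloring is constant. The plan is to transfer this to the ambient tree via the substitution $\chi'(g)=\chi(F\circ g)$ for $g\in\AM^n_1$: then Theorem~\ref{thm:shaperamseyN} in the case $k=1$ produces $F''\in\M^n$ that monochromatises $\chi'$, so that $F'=F\circ F''$ is the required function, lying in $[\depth_F(a),F]$ because $F''$ is the identity below level $n$ and $\M$ is closed under composition by~\ref{item:Nmonoid}. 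The delicate step is checking that this substitution genuinely captures every one-step extension that arises below $F$: extensions realised via parameters and characters need not correspond to immediate successors in the ambient tree and must be reconstructed via the duplication functions $F_m^n$ provided by~\ref{item:Nduplication}, so that $F\circ g$ indeed ranges over all relevant continuations. Once A.1--A.4 are verified, Todorcevic's theorem delivers Theorem~\ref{thm:shaperamseyspace} directly.
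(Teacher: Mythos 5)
Your plan is a genuinely different route from the paper's. The paper never attempts to verify Todorcevic's axioms for $\M$ directly; it verifies them for the finer space $\mathcal R$ of fat subtrees (Theorem~\ref{thm:fatramseyspace}), and then deduces Theorem~\ref{thm:shaperamseyspace} by showing that the projection $\pi\colon\mathcal R\to\M$, $\pi(\fat U)=\fatf{U}$, is continuous and open (Lemma~\ref{lem:open}), so that preimages of meager and Baire-property sets are again meager and Baire-property, and the Ramsey / Ramsey-null conclusions transfer back along $\pi$ by a short computation with $\fatf{U}$, $\fatf{V}$, and $\fatf{x}$. The paper is explicit that the fat-subtree finitisation is what makes the Todorcevic axiomatisation applicable.

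The decisive gap in your proposal is circularity in A.4. You monochromatise the one-step extensions by invoking Theorem~\ref{thm:shaperamseyN} with $k=1$, but in the paper Theorem~\ref{thm:shaperamseyN} is proved \emph{as a consequence of} Theorem~\ref{thm:shaperamseyspace} (at the end of Section~\ref{sec:shapeprof}): you would be assuming the theorem you are trying to prove. The only pigeonhole actually available at that stage is Lemma~\ref{lem:pigeonhole1}, and it is strictly weaker than what A.4 demands: it produces a finite $h\in\AM^n_2$ making $\chi$ constant only on the line $\{h\circ g:g\in\mathcal L_n\}$, not an infinite $F''\in\M^n$ making $\chi$ constant on all of $\{F''\circ g:g\in\AM^n_1\}$. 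Bridging this gap is exactly what the combinatorial forcing machinery in the paper (Observation~\ref{obs:large}, Lemmas~\ref{lem:largepigeon} and \ref{lem:fixlevel}, and Proposition~\ref{prop:A4}) accomplishes, and it is carried out in the fat-subtree space where one-step extensions are controlled by the cut function $\fatc{U}$ and the restriction function $r$. You acknowledge the substitution $\chi'(g)=\chi(F\circ g)$ is ``delicate'' but neither carry out the check nor supply any forcing argument on $\M$ that could replace Proposition~\ref{prop:A4}; as written, A.4 is unproved and the overall argument does not close.
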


We end this subsection with a key application of~\ref{item:Ndecomposition}, which will be important in the next subsection.
\begin{prop}
	\label{prop:canonical}
	Let $\SNtree$ be a $(\S,\M)$-tree, $n\in \omega$ and $f\in \AM$ be a shape-preserving function with domain $T({\leq}n)$.
	Then there exists a unique $f^+\in \M$ such that $f^+\restriction_{T({\leq} n)}=f$ and
	for every $\ell\geq \tilde{f}(n)$ it holds that $\ell\in \tilde{f}^+[\omega]$. We call this $f^+$ the \emph{canonical extension} of $f$.
\end{prop}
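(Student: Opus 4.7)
My approach is to first observe that any candidate for $f^+$ is uniquely determined on each level by $f$ via the successor operation; then to \emph{define} $f^+$ directly by this rule, verify that it is shape-preserving and skips no level at height $\geq \tilde{f}(n)$, and finally prove $f^+ \in \M$ by constructing a coherent sequence of functions in $\M$ whose limit (guaranteed by \ref{item:Nmonoid}) is $f^+$.

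\emph{Uniqueness.} If $f^+, g^+$ both satisfy the stated properties, they are level-preserving and skip no level $\geq \tilde{f}(n)$, so $\tilde{f^+}(\ell) = \tilde{g^+}(\ell) = \tilde{f}(n) + (\ell - n)$ for every $\ell \geq n$. Arguing by induction on $\ell$, assume the two functions agree on $T({\leq}\ell)$. For $a \in T(\ell+1)$, axioms \ref{S2} and \ref{S3} provide a unique decomposition $a = \S(a', \bar p, c)$ with $a' \in T(\ell)$ and all entries of $\bar p$ at levels $\leq \ell - 1$. Weak $\S$-preservation gives $\S(f^+(a'), f^+(\bar p), c) \preceq f^+(a)$, and both nodes lie at level $\tilde{f^+}(\ell)+1$, forcing equality; the same identity for $g^+$ together with the induction hypothesis closes the step.

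\emph{Definition of $f^+$ and shape-preservation.} I define $f^+$ recursively: $f^+ = f$ on $T({\leq}n)$, and for $\ell \geq n$ and $a \in T(\ell+1)$ with unique decomposition $a = \S(a', \bar p, c)$, set $f^+(a) = \S(f^+(a'), f^+(\bar p), c)$. I will confirm below that each such application of $\S$ is defined by exhibiting $F \in \M$ agreeing with $f^+$ on larger and larger initial segments. Granting this, \ref{S1} makes $f^+$ level-preserving with $\tilde{f^+}(\ell) = \tilde{f}(n) + (\ell-n)$ for $\ell \geq n$, weak $\S$-preservation holds by construction, and condition \ref{def:shape-pres:item3} follows by an easy induction using that $f$ extends to some shape-preserving function.

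\emph{Approximating by functions in $\M$.} I build $F_\ell \in \M$ satisfying $F_\ell\restriction_{T({\leq}\ell)} = f^+\restriction_{T({\leq}\ell)}$ by induction on $\ell \geq n$. The base case $F_n$ exists because $f \in \AM$. Given $F_\ell$: if $\tilde{F_\ell}(\ell+1) = \tilde{f^+}(\ell) + 1$, the uniqueness computation already forces $F_\ell\restriction_{T({\leq}\ell+1)} = f^+\restriction_{T({\leq}\ell+1)}$ and I take $F_{\ell+1} = F_\ell$. Otherwise $F_\ell$ skips the level $m := \tilde{F_\ell}(\ell+1) - 1 > 0$, so axiom \ref{item:Ndecomposition} applied at level $\ell+1$ produces $F_1, F_2 \in \M$ with $F_2$ skipping only level $m$ and $F_2 \circ F_1\restriction_{T({\leq}\ell+1)} = F_\ell\restriction_{T({\leq}\ell+1)}$. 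A preliminary observation---that any shape-preserving function skipping only level $m$ acts as the identity on $T({<}m)$, which follows from \ref{def:shape-pres:item3} together with a short induction using weak $\S$-preservation---combined with the fact that $F_1$ maps $T({\leq}\ell)$ into $T({<}m)$ (since $\tilde{F_1}(\ell) \leq \tilde{F_1}(\ell+1)-1 = m-1$) yields $F_1\restriction_{T({\leq}\ell)} = f^+\restriction_{T({\leq}\ell)}$ and $\tilde{F_1}(\ell+1) = m$. After finitely many such decrements I obtain the required $F_{\ell+1}$. The resulting sequence is automatically coherent (both $F_\ell$ and $F_{\ell+1}$ agree with $f^+$ on $T({\leq}\ell)$), so \ref{item:Nmonoid} produces a limit in $\M$, which must equal $f^+$.

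\emph{Main obstacle.} The delicate point is the inductive step for $F_{\ell+1}$: I must check that the left factor produced by \ref{item:Ndecomposition} preserves agreement with $f^+$ on the already-fixed initial segment. This reduces to the auxiliary claim that a shape-preserving function skipping only one level is the identity strictly below that level, which is short but uses all three clauses of Definition~\ref{def:shape-pres}.
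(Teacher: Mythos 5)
Your proof is correct and follows essentially the same approach as the paper: iteratively apply~\ref{item:Ndecomposition} to shrink the skipped gap just above the already-fixed segment, then use~\ref{item:Nmonoid} to pass to the limit in $\M$. You supply more detail than the paper does on two points it treats as immediate — the uniqueness argument (via \ref{S2}, \ref{S3} and weak $\S$-preservation) and the observation that a shape-preserving function skipping only one level is the identity strictly below that level (stated without proof in the paper right after the definition of ``skipping only level $m$'') — but the underlying mechanism is the same.
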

\begin{proof}
	Fix $f\in \AM$ and $n\in \omega$ such that domain of $f$ is $T({\leq}n)$. Let $F\in \M$ to be a function extending $f$.
	We will prove that for every $m \geq n$ such that $\tilde{F}(m)+1 < \tilde{F}(m+1)$ there exists $F'\in \M$ such that $F'\restriction_{T({\leq}m)} = F\restriction_{T({\leq}m)}$ and $\tilde{F'}(m+1) < \tilde{F}(m+1)$:
	By~\ref{item:Ndecomposition} there are $F_1, F_2\in \M$ such that $F_2\circ F_1\restriction_{T({\leq} m)}=F\restriction_{T({\leq} m)}$ and $F_2$ only skips level $\tilde{F}(m+1)-1$. Notice that $F_1\restriction_{T({\leq} m)} = F\restriction_{T({\leq} m)}$ and $\tilde{F}_1(m+1)<\tilde{F}(m+1)$, hence we can put $F' = F_1$. By doing this repeatedly, we can even ensure that $\tilde{F'}(m+1) = \tilde{F}(m)+1$.

	Now we can use this to construct a sequence $F = F_0, F_1, \ldots\in \M$ such that $F_{i}\restriction_{T({\leq}n+i)} = F_{i+1}\restriction_{T({\leq}n+i)}$ and $\tilde{F}_{i+1}(n+i+1) = \tilde{F}_{i+1}(n+i)+1$. Put $f^+$ to be the limit of this sequence. By~\ref{item:Nmonoid} we have that $f^+\in \M$. Clearly, it is the unique canonical extension of $f$.
\end{proof}

\subsection{Fat subtrees}
We can further strengthen Theorem~\ref{thm:shaperamseyspace} to finer objects defined on an $(\S,\M)$-tree.
This strengthening is essential for applying the Todorcevic axiomatization of Ramsey spaces~\cite{todorcevic2010introduction}.
\begin{figure}
	\centering
	\includegraphics[scale=0.833]{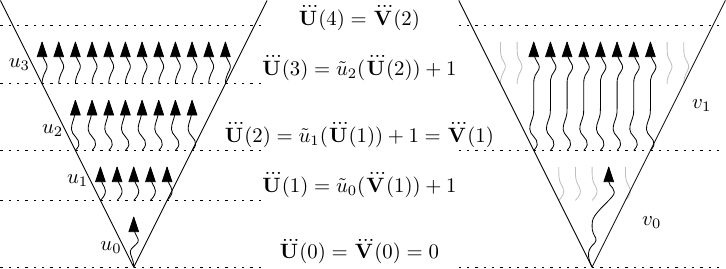}
	\caption{Sketch of fat subtrees $\fat{U}$ and $\fat{V}$ such that $\fat{V}<\fat{U}$.}
	\label{fig:levels}
\end{figure}

\begin{definition}[Fat subtree]
	Let $\SNtree$ be an $(\S,\M)$-tree and $n\in \omega+1$.
	A \emph{fat subtree} of height $n$ is a sequence $\fat{U}=(u_i)_{i\in n}$ of functions in $\AM$ for which there exists a \emph{cut function} $\fatc{U}\colon\min(n+1,\omega)\to\omega$ with $\fatc{U}(0)=0$ satisfying
	$u_\ell\in  \AM^{\fatc{U}(\ell)}_1(\fatc{U}(\ell+1)-1)$
	for every $\ell< n$:
\end{definition}
Notice that a fat subtree uniquely determines the function $\fatc{U}$.

\begin{definition}[Lift of a set]
	\label{def:lift}
	Given an $(\S,\M)$-tree $\SNtree$, a fat subtree $\fat{U}=(u_i)_{i\in n}$, levels $0\leq \ell \leq k\leq n$, and a non-empty set $X\subseteq T(\fatc{U}(\ell))$, we denote by $\lift{U}{X}{k}$ the set defined recursively as follows:
	\begin{enumerate}
		\item If $\ell = k$, we set $\lift{U}{X}{k}=X$,
		\item If $\ell< k$, we set $\lift{U}{X}{k}=\lift{U}{u_\ell^+[N]}{k}$, where $N$ is the set of all immediate successors of nodes in $X$; note that $u_\ell^+[N]\subseteq T(\fatc{U}(\ell+1))$.
	\end{enumerate}
	We will also write $\lift{U}{\ell}{k}$ for $\lift{U}{T(\ell)}{k}$.
\end{definition}
\begin{definition}[Fat subtree of a fat subtree]
	\label{def:subfatsubtrees}
	Let $\SNtree$ be an $(\S,\M)$-tree, $n\geq 0$ an integer, and $\fat{V}=(v_i)_{i\in m}$ and $\fat{U}=(u_i)_{i\in m'}$ fat subtrees.
	We say that \emph{$\fat{V}$ is a subtree of $\fat{U}$} and write $\fat{V}\leq \fat{U}$ if there exists a function $\varphi\colon \min(m+1,\omega)\to\min(m'+1,\omega)$ such that for every $\ell< m$
	it holds that
	\begin{enumerate}
		\item $\fatc{V}(\ell)=\fatc{U}(\varphi(\ell))$,
		\item $\lift{V}{\ell}{\ell+1}\subseteq \lift{U}{\varphi(\ell)}{\varphi(\ell+1)}.$
	\end{enumerate}
\end{definition}
\subsection{Ramsey theorem for fat subtrees}

Analogously as for $\M$, we can define the Ellentuck topology on the space of fat subtrees:
\begin{definition}[Ellentuck topological space of fat subtrees]
	\label{defn:R}
	Given an $(\S,\M)$-tree $(T,\allowbreak {\preceq},\allowbreak \Sigma,\S,\M)$ we denote
	by $\mathcal{R}$ the set all infinite fat subtrees of $(T,\allowbreak {\preceq},\allowbreak \Sigma,\S,\M)$
	and by $\mathcal{AR}$ the set of all finite fat subtrees of $(T,\allowbreak {\preceq},\allowbreak \Sigma,\S,\M)$. We will use upper-case bold letters to denote elements of $\mathcal R$ and lower-case bold letters to denote elements of $\mathcal{AR}$.
	We endow $\mathcal R$ with the \emph{Ellentuck topology} given by the following basic open sets:
	$$[\fat{x},\fat{U}]=\{\fat{V}\in \mathcal R:\fat{V}\leq \fat{U}\hbox{ and the sequence }\fat{x} \hbox { is an initial segment of }\fat{V}\}$$
	for every $\fat{x}\in \mathcal{AR}$ and $\fat{U}\in \mathcal {R}$.
\end{definition}
Given $n\in \omega$ and $\fat{U}\in \mathcal R$, we write $[n,\fat{U}]$ for $[\fat{U}\restriction_n,\fat{U}]$.
Given $\fat{x}\in\mathcal{AR}$ satisfying
$\fat{x}\leq \fat{U}$, we define $\depth_\fat{U}(\fat{x})$ to be the
unique integer $n$ satisfying $\fatc{U}(n)=\fatc{x}(|\fat{x}|)$.
Otherwise we put $\depth_\fat{U}(\fat{x})=\omega$.
With the notion of depth defined, Ramsey and Ramsey null sets on $\mathcal R$ are defined in a complete analogy to Definition~\ref{def:ramsey}.
Again we can establish an Ellentuck-type theorem.
\begin{theorem}[Ellentuck theorem for fat subtrees]
	\label{thm:fatramseyspace}
	Let $(T,\allowbreak {\preceq},\allowbreak \Sigma,\S,\M)$ be an $(\S,\M)$-tree
	and $\mathcal R$ a topological space given by Definition~\ref{defn:R}.
	Then every property of Baire subset of $\mathcal{R}$ is Ramsey and every meager subset is Ramsey null.
\end{theorem}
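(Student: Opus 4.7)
The plan is to derive Theorem~\ref{thm:fatramseyspace} from Todorcevic's abstract Ellentuck theorem~\cite[Theorem 5.4]{todorcevic2010introduction} by verifying that $(\mathcal{R},\leq,r)$ satisfies his four Ramsey space axioms A.1--A.4, where $r_n(\fat{U})=(u_0,\ldots,u_{n-1})$ is the length-$n$ restriction. We will check axiom A.1 (sequencing) directly from Definition~\ref{def:subfatsubtrees}: $r_0(\fat{U})$ is empty for every $\fat{U}$, two distinct infinite fat subtrees disagree on some $r_n$, and $r_n$ surjects onto the length-$n$ elements of $\mathcal{AR}$. For A.2 (finitization) we will take $\finleq$ on $\mathcal{AR}$ to be the obvious finite analog of the subtree relation from Definition~\ref{def:subfatsubtrees}; below any finite $\fat{y}\in\mathcal{AR}$ there are only finitely many $\finleq$-predecessors (each lift $\lift{y}{\ell}{\ell+1}$ is finite), and the relation $\fat{V}\leq\fat{U}$ unpacks levelwise into $\finleq$-compatibility.

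Axiom A.3 (amalgamation) will rest on Proposition~\ref{prop:canonical}: given $\fat{x}\leq\fat{U}$ with $\depth_{\fat{U}}(\fat{x})<\omega$, we will canonically extend each function of $\fat{x}$ using the lifts supplied by $\fat{U}$, producing a canonical member of $[\fat{x},\fat{U}]$ and thereby showing non-emptiness. The second half of A.3 will be handled by the same construction applied to a given $\fat{V}\leq\fat{U}$ with $[\fat{x},\fat{V}]\neq\emptyset$: replace the initial segment of $\fat{V}$ by $\fat{x}$ and canonically extend inside $\fat{V}$. Closedness of $\M$ under limits (axiom~\ref{item:Nmonoid}) guarantees the result stays in $\M$ and hence defines a genuine fat subtree.

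The main obstacle will be axiom A.4 (pigeonhole): given $\fat{x}\leq\fat{U}$ and a finite coloring $\chi$ of $r_{|\fat{x}|+1}[\fat{x},\fat{U}]$, we must produce $\fat{V}\in[\depth_{\fat{U}}(\fat{x}),\fat{U}]$ with $\chi$ constant on $r_{|\fat{x}|+1}[\fat{x},\fat{V}]$. Each element of this set appends exactly one new function to $\fat{x}$, and the constraint of lying inside $\fat{U}$ forces this function to take values in the \emph{relativized} tree $T_{\fat{U}}$ obtained from $T$ by restricting to lifts through $\fat{U}$ above level $\fatc{x}(|\fat{x}|)$. The plan is to check that $T_{\fat{U}}$, equipped with the induced successor operation and the monoid $\M_{\fat{U}}$ consisting of members of $\M$ that factor through $\fat{U}$, is itself an $(\S,\M)$-tree after a natural reindexing of levels, and then to apply Theorem~\ref{thm:shaperamseyN} to the translated coloring on $\mathcal{AM}^{|\fat{x}|}_1$ to extract a shape-preserving function that monochromizes $\chi$. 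Combining this function with a canonical extension of $\fat{x}$ via Proposition~\ref{prop:canonical} will give the required $\fat{V}$. I expect the hardest step to be verifying that the duplication and decomposition axioms~\ref{item:Nduplication} and~\ref{item:Ndecomposition} survive the restriction to $T_{\fat{U}}$; this should follow from the fact that the witnessing functions of $\M$ can be chosen to factor compatibly through arbitrary members of $\mathcal{R}$, but the bookkeeping of level renumbering and of matching $\M_{\fat{U}}$-witnesses to those of $\M$ is where the technical weight of the proof will sit.
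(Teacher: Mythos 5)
Your handling of axioms A.1--A.3 is essentially the same as the paper's and is fine. The problem is the plan for A.4, where there is a genuine circularity: you propose to apply Theorem~\ref{thm:shaperamseyN} to the relativized tree $T_{\fat{U}}$. But in the paper the logical order is the opposite: Theorem~\ref{thm:shaperamseyN} is obtained from Theorem~\ref{thm:shaperamseyspace}, which in turn is derived (via the open continuous map $\pi\colon\mathcal R\to\M$, Lemma~\ref{lem:open}) from Theorem~\ref{thm:fatramseyspace}, i.e.\ from the very statement you are trying to prove. Even if you check that $T_{\fat{U}}$ with $\M_{\fat{U}}$ is an $(\S,\M)$-tree (a reasonable intermediate claim), invoking Theorem~\ref{thm:shaperamseyN} for it requires Theorem~\ref{thm:fatramseyspace} for $T_{\fat{U}}$, hence A.4 for $T_{\fat{U}}$, which is not any easier than A.4 for $T$ and which you have no well-founded descent to justify. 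The only genuinely primitive input the paper uses is the $*$-version of the Hales--Jewett theorem, packaged as Lemma~\ref{lem:pigeonhole1}; that lemma, not Theorem~\ref{thm:shaperamseyN}, is what you are allowed to use at this stage.

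There is also a second, independent underestimate in the last sentence of your plan. A single application of a one-level pigeonhole plus a canonical extension does not yield a $\fat{V}$ witnessing A.4: the set $r_{|\fat{x}|+1}[\fat{x},\fat{V}]$ contains one-level extensions of $\fat{x}$ whose cut can land at \emph{any} level of $\fat{V}$, so monochromaticity must be secured simultaneously at infinitely many levels. This is exactly why the paper runs a combinatorial-forcing argument: it introduces $(\fat{U},\fat{x})$-large sets (Observation~\ref{obs:large}), proves a pigeonhole for large sets (Lemma~\ref{lem:largepigeon}), fuses one level at a time in a diagonalization (Lemma~\ref{lem:fixlevel}), and only then obtains Proposition~\ref{prop:A4} by iterating and passing to the limit. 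None of that machinery is present in your sketch. To repair the proposal you would need to replace the appeal to Theorem~\ref{thm:shaperamseyN} by a Hales--Jewett-based one-level pigeonhole at the level of fat subtrees (the analogue of Lemma~\ref{lem:fatpigeonhole1}), and then carry out the large-set fusion argument to pass from a single-level conclusion to the full A.4.
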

In other words, $(\mathcal R,\leq,\restriction)$ forms a topological Ramsey space
(see~\cite[Section 5]{todorcevic2010introduction} for details).

\medskip

The paper is organized as follows.  Section~\ref{sec:basic} introduces some notation
and basic properties of $(\S,\M)$-trees which are useful both for proof of the main results and for applications.
Section~\ref{sec:mainproof} proves Theorem~\ref{thm:fatramseyspace}, Section~\ref{sec:shapeprof} proves Theorem~\ref{thm:shaperamseyspace}. Finally Section~\ref{sec:applications} gives applications
of the main results.  Section~\ref{sec:applications} does not require understanding of Sections~\ref{sec:mainproof} and \ref{sec:shapeprof}.

\section{Shape-preserving functions and fat subtrees}\label{sec:basic}
In this section we give some basic observations about shape-preserving functions and fat subtrees which will be useful in the main proofs as well as for applications. We start by introducing some additional notation:

Let $(T,\preceq)$ be tree.
Given a node $a\in T$ and $n\leq \ell(a)$, we denote by $a|_n$ the (unique) predecessor of $a$ at level $n$.
Given $a,b\in T$ such that $a|_0=b|_0$, we denote by $a\meet b$ the \emph{meet} of $a$ and $b$ which is the $\preceq$-maximal member of $T$ with $a\wedge b\preceq a$ and $a\wedge b\preceq b$.
For $S\subseteq T$, we put $\ell(S)=\{\ell(a):a\in S\}$ to be the \emph{level set} of $S$.

Given an  $\S$-tree $(T,{\preceq},\Sigma,\S)$
and $a=\S(b,\bar{p},c)$ we put $\Dp(a)=\bar{p}$ and call it the \emph{parameters} of $a$ and  $\Dc(a)=c$ and call it the \emph{character} of $a$.
Since $\S$ is injective, $\Dp(a)$ and $\Dc(a)$ are well-defined.

\subsection{Properties of shape-preserving functions}
\label{sec:shapepres}
The following result summarizes basic properties of shape-preserving functions.
\begin{prop}[Basic properties of a shape-preserving function]
	\label{prop:shape-pres}
	Let $(T,{\preceq},\Sigma,\S)$ be an $\S$-tree, $S\subseteq T$ and $f\colon S\to T$ be a shape-preserving function.  Then
	\begin{enumerate}[label=(\roman*)]
		\item\label{item:sucessor} \emph{$f$ preserves successors}: For every $a,b\in S$ with $a\prec b$, we have $f(a)\prec f(b)$,
		\item\label{item:passingnumber} \emph{$f$ preserves decompositions}:
		$\S(f(a|_{n}),\allowbreak f(\Dp(a|_{n+1})),\allowbreak \Dc(a|_{n+1}))$ is defined whenever $a\in S$ and $n<\ell(a)$. Moreover, \[\S(f(a|_{n}),\allowbreak f(\Dp(a|_{n+1})),\allowbreak \Dc(a|_{n+1}))\preceq f(a).\]
		\item\label{item:orderpres} \emph{$f$ preserves the relative order of levels}: For every $a,b\in S$ with $\ell(a) < \ell(b)$, we have $\ell(f(a)) < \ell(f(b))$.
		\item\label{item:meets} \emph{$f$ preserves meets}: For every $a,b\in S$, we have $f(a\meet b)=f(a)\meet f(b)$,
		\item\label{item:closed-monoid} \emph{Shape-preserving functions form a closed monoid}:
		\begin{enumerate}
			\item The identity mapping $\mathrm{Id}\colon S\to S$ is shape-preserving.
			\item If $S'\subseteq S$ and $g\colon S'\to S$ is shape-preserving then $f\circ g$ is also shape-preserving.
			\item For every sequence $(F_i)_{i\in \omega}$ of shape-preserving functions $T\to T$ such that $(\forall_{i\in \omega})F_i\restriction_{T({\leq}i})=F_{i+1}\restriction_{T({\leq}i)}$ there exists a (unique) shape-pre\-serving function $F_\infty$ satisfying $(\forall_{i\in \omega})F_\infty\restriction_{T({\leq}i}) = F_i\restriction_{T({\leq}i)}$.
		\end{enumerate}
	\end{enumerate}
\end{prop}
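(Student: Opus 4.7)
Throughout, let $F \colon T \to T$ be a shape-preserving extension of $f$, which exists by definition. Since each statement only constrains values of $f$ on $S$, it suffices to verify the corresponding statement for $F$ on all of $T$. The plan is to prove (i)--(iv) in order, each leaning on its predecessors, and then to assemble (v) from the axioms together with the earlier items.

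For (i), fix a chain of immediate successors $a = a_0 \prec \cdots \prec a_k = b$; by \ref{S3} each $a_{j+1} = \S(a_j, \bar p_j, c_j)$, and then weak $\S$-preservation combined with \ref{S1} gives $F(a_j) \prec F(a_{j+1})$, so chaining yields $F(a) \prec F(b)$. Item (ii) is a direct application of weak $\S$-preservation to the equation $a|_{n+1} = \S(a|_n, \Dp(a|_{n+1}), \Dc(a|_{n+1}))$, followed by (i) to pass from $F(a|_{n+1})$ up to $F(a)$. For (iii), note that $b|_{\ell(a)}$ is a strict predecessor of $b$ with $\ell(b|_{\ell(a)}) = \ell(a)$; (i) gives $F(b|_{\ell(a)}) \prec F(b)$, and level-preservation identifies $\ell(F(b|_{\ell(a)})) = \ell(F(a))$, yielding $\ell(F(a)) < \ell(F(b))$.

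The main obstacle is (iv). Set $m = a \wedge b$ (assuming $a|_0 = b|_0$, else there is nothing to show); the direction $F(m) \preceq F(a) \wedge F(b)$ is immediate from (i). For the reverse, assume $m$ is a strict predecessor of both (the degenerate case is trivial) and apply \ref{S3} to write $a|_{\ell(m)+1} = \S(m, \bar p_a, c_a)$ and $b|_{\ell(m)+1} = \S(m, \bar p_b, c_b)$; these are distinct nodes, so \ref{S2} forces $(\bar p_a, c_a) \neq (\bar p_b, c_b)$, and entrywise injectivity of $F$ preserves this distinction. Applying \ref{S2} once more, the nodes $u_a := \S(F(m), F(\bar p_a), c_a)$ and $u_b := \S(F(m), F(\bar p_b), c_b)$ are distinct immediate successors of $F(m)$, hence incomparable, while weak $\S$-preservation places $u_a \preceq F(a)$ and $u_b \preceq F(b)$. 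Any common $\preceq$-ancestor of $F(a)$ and $F(b)$ must therefore be a $\preceq$-predecessor of both $u_a$ and $u_b$, hence $\preceq F(m)$, closing the argument.

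For (v), the identity is trivially shape-preserving. For composition $f \circ g$, level-preservation, injectivity, and the root-extension clause of Definition~\ref{def:shape-pres}\ref{def:shape-pres:item3} compose directly; weak $\S$-preservation follows by first applying it to $g$, then using (i) on $f$ to transport the resulting $\preceq$-inequality through $f$, and finally applying weak $\S$-preservation to $f$. For the limit clause, define $F_\infty(a) := F_{\ell(a)}(a)$; each constraint in Definition~\ref{def:shape-pres} at a node of level $k$ involves only values of $F_\infty$ at levels $\leq k+1$, and on such a bounded initial segment $F_\infty$ agrees with some $F_n$ by the stabilization hypothesis, so the constraint is inherited. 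Injectivity of $F_\infty$ between distinct levels follows from (iii) applied to a single $F_n$ that dominates both levels, while injectivity within a level is inherited from the same $F_n$.
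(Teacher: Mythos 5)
Your proof is correct and follows essentially the same route as the paper's: items (i)--(iv) use weak $\S$-preservation together with \ref{S1}--\ref{S3} in the same way, and the meet-preservation argument hinges on the same point, namely that the two lifted immediate successors of $F(a\meet b)$ must be distinct. You fill in more detail than the paper on item (v) (the paper dispatches it in one sentence, while you spell out the composition and limit arguments), but the content matches what the paper intends.
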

\begin{proof}
	We may assume without loss of generality that $S=T$. Let $f\colon T\to T$ be shape-preserving.

	For Statement \ref{item:sucessor}, it is enough to consider the case $b = \S(a, \Dp(b), \Dc(b))$, and $f(a)\preceq \S(f(a), f\circ \Dp(b), \Dc(b))\preceq f(b)$, the second $\preceq$ following from Definition~\ref{def:shape-pres}~\ref{def:shape-pres:item2}.

	For Statement~\ref{item:passingnumber} we have $a|_{n+1} = \S(a|_n, \Dp(a|_{n+1}), \Dc(a|_{n+1})$, hence
	\[\S(f(a|_n), f\circ \Dp(a|_{n+1}), \Dc(a|_{n+1}))\preceq f(a|_{n+1})\preceq f(a).\]
	The first $\preceq$ follows from Definition~\ref{def:shape-pres}~\ref{def:shape-pres:item2}, and the second follows from Statement~\ref{item:sucessor}.

	Statement \ref{item:orderpres} is a direct consequence of the fact that $f$ is level-preserving (Definition~\ref{def:shape-pres} \ref{def:shape-pres:item1}) and
	Statement~\ref{item:sucessor}.

	To see Statement~\ref{item:meets}, consider $a,b\in S$ and let $c=a\meet b$ and $n=\ell(c)$. If $c=a$ or $c=b$, the conclusion follows from Statement~\ref{item:sucessor}.
	If $c\neq a,b$, it follows from Statement~\ref{item:passingnumber} that \[\S(f(c),f(\Dp(a|_{n+1})),\Dc(a|_{n+1}))\preceq f(a)\] and \[\S(f(c),f(\Dp(b|_{n+1})),\Dc(b|_{n+1}))\preceq f(b).\]
	Since $a|_{n+1}\neq b|_{n+1}$, it follows from two applications of \ref{S2} and the injectivity of $f$ that \[\S(f(c),f(\Dp(a|_{n+1})),\Dc(a|_{n+1}))\neq \S(f(c),f(\Dp(b|_{n+1})),\Dc(b|_{n+1})).\]

	Finally, Statement~\ref{item:closed-monoid} follows directly from Definition~\ref{def:shape-pres} and the fact that composition of level-preserving functions is still level-preserving.
\end{proof}

We also state the following generalized form of \ref{item:Ndecomposition}.
\begin{prop}
	\label{prop:shape-split}
	Let $\SNtree$ be an $(\S,\M)$-tree, $n\in \omega$ be an integer, $f\in \AM_n^0$ be a shape-preserving function and $m\in \omega$ be an integer satisfying one of the following two conditions
	\begin{enumerate}
		\item $n=0$ and $0\leq m\leq \tilde{f}(n)$, or
		\item $n>0$ and $\tilde{f}(n-1)<m\leq \tilde{f}(n)$.
	\end{enumerate}
	Denote by $f|_m$ the function $f|_m\colon T({\leq}n)\to T$ satisfying:
	$$
		f|_m(a)=
		\begin{cases}
			a|_m & \hbox{if $a\in T(n)$,} \\
			a    & \hbox{otherwise.}
		\end{cases}
	$$
	Then $f|_m\in \AM_n^0$ and there exists $g\in \AM^m_1$ such that  $g\circ f|_m=f$.
\end{prop}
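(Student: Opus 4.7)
The plan is to imitate the iterative argument from Proposition~\ref{prop:canonical} in reverse: instead of extending $f$ canonically upward, I peel off one level at a time from the image of $T(n)$, using axiom~\ref{item:Ndecomposition} repeatedly until that image drops from $\tilde f(n)$ down to $m$. Fix any extension $F_0\in\M$ of $f$, and build sequences $F_0,F_1,\dots,F_k\in\M$ and $G_1,\dots,G_k\in\M$ with $k=\tilde f(n)-m$ so that $\tilde F_i(n)=\tilde f(n)-i$, each $G_i$ skips only level $\tilde F_{i-1}(n)-1$, and $G_i\circ F_i\restriction_{T({\leq}n)}=F_{i-1}\restriction_{T({\leq}n)}$. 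The hypothesis of~\ref{item:Ndecomposition} holds at every step because $\tilde F_{i-1}(n)-1\geq m>\tilde f(n-1)=\tilde F_{i-1}(n-1)$ when $n>0$, and similarly $\tilde F_{i-1}(0)>0$ when $n=0$.

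Set $F'=F_k$ and $g=G_1\circ\cdots\circ G_k$. I then verify $F'\restriction_{T({\leq}n)}=f|_m$ and $g\in\M^m$, which gives the desired $g\restriction_{T({\leq}m)}\in\AM^m_1$. Both verifications rest on an auxiliary fact worth isolating: any shape-preserving function $H$ with $\tilde H(\ell)=\ell$ for all $\ell<s$ is in fact the identity on $T({<}s)$. The proof is a short induction on $\ell$: the base case $\ell=0$ is axiom~\ref{def:shape-pres:item3} combined with level-preservation, and the inductive step writes each $a\in T(\ell)$ as $\S(a|_{\ell-1},\Dp(a),\Dc(a))$, uses weak $\S$-preservation~\ref{def:shape-pres:item2} to push everything through $H$, and invokes injectivity~\ref{S2} to conclude $H(a)=a$. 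Applied to each $G_i$ (which skips only level $\tilde F_{i-1}(n)-1\geq m$), the fact yields $G_i\restriction_{T({<}m)}=\mathrm{Id}$, so $g\in\M^m$; applied to $F'$ (which is level-identity on $T({<}n)$) it yields $F'\restriction_{T({<}n)}=\mathrm{Id}$.

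It remains to identify $F'$ on level $n$. For $a\in T(n)$ the telescoping decomposition gives $g(F'(a))=F_0(a)=f(a)$, and $F'(a)\in T(m)$ by level-preservation. A small variant of the same axiom chase shows that any shape-preserving function identity on $T({<}m)$ satisfies $b\preceq g(b)$ for every $b\in T(m)$; applied to $b=F'(a)$ this yields $F'(a)\preceq f(a)$, and comparing levels pins $F'(a)$ down as the unique ancestor of $f(a)$ at level $m$, which is precisely $f|_m(a)$. The relation $g\circ f|_m=f$ then follows from $g\circ F'\restriction_{T({\leq}n)}=F_0\restriction_{T({\leq}n)}$. The main obstacle is the bookkeeping of the iteration, namely verifying that the level-skipping hypothesis of~\ref{item:Ndecomposition} persists step after step and that the resulting composition $g$ genuinely lies in $\M^m$ rather than just $\M$; once the auxiliary "level-identity implies identity" lemma is isolated, both points reduce to clean inductions.
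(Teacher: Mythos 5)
Your proof is correct and takes essentially the same route as the paper's: both peel off one level at a time by iterating axiom~\ref{item:Ndecomposition} exactly $\tilde f(n)-m$ times and then compose the resulting one-level-skipping maps to form $g$, the only cosmetic difference being that you phrase it as a built-out iteration while the paper runs an induction on $\tilde f(n)-m$. Your auxiliary observations (a shape-preserving function that is level-identity below $s$ is the identity there, and a shape-preserving function that is the identity on $T({<}m)$ satisfies $b\preceq H(b)$ for $b\in T(m)$) are exactly the facts the paper uses implicitly to assert "Note that $g_1=f|_{\tilde f(n)-1}$" without comment, so you are just supplying details the paper elides.
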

\begin{proof}
	We proceed by induction on $\tilde{f}(n)-m$.  If $\tilde{f}(n)=m$, then the function $g$ is the identity and $f|_m = f\in \AM^0_n$.

	Now, assume that $\tilde{f}(n)-m>0$.
	We use \ref{item:Ndecomposition} to obtain $g_1\in \AM^0_n$ and $g_2\in \AM^{\tilde{f}(n)-1}_1$ such that
	$\tilde g_1(n)=\tilde{f}(n)-1$
	and $g_2\circ g_1=f$.
	Note that $g_1=f|_{\tilde{f}(n)-1}$.
	We apply the induction hypothesis for $g_1$ and $m$ to obtain $g'$ such that $g'\circ g_1|_m=g_1$.
	Since $g_1=f|_{\tilde{f}(n)-1}$ and $\tilde{f}(n)-m>0$, we get that $g_1|_m=(f|_{\tilde{f}(n)-1})|_m=f|_m \in \AM^n_0$
	and thus $f=g_2\circ g_1=g_2\circ g'\circ g_1|_m=g_2\circ g'\circ f|_m$.
	It remains to set $g= g_2\circ g'$.
\end{proof}
\subsection{Correspondence between fat subtrees and shape-preserving functions}
\label{sec:fatshape}
We now develop a correspondence between shape-preserving functions and fat subtrees.
\begin{definition}[level of a fat subtree]
	Given an $(\S,\M)$-tree $\SNtree$, a fat subtree $\fat{U}=(u_i)_{i\in n}$, and a level $\ell \in n$
	we put $$\fatnn{U}{\ell}=u_\ell(\lift{U}{0}{\ell}),$$
	and
	$$\fatn{U}=\bigcup_{\ell\in n} \fatnn{U}{\ell}.$$
\end{definition}
Notice that $(\fatn{U},\preceq)$ is a meet preserving subtree of $(T,\preceq)$ such that level $\ell$ of $\fatn{U}$ is a subset of level $\fatc{U}(\ell+1)-1$ of $T$.
In analogy to shape-preserving functions,  we define the \emph{level function} $\fatl{U}(\ell)\colon n\to\omega$ via $\fatl{U}(\ell)=\fatc{U}(\ell+1)-1$ for every $\ell\in n$.
\begin{definition}[Shape-preserving subtree of a fat subtree]
	Let $(T,\allowbreak {\preceq},\allowbreak \Sigma,\S,\M)$ be an $(\S,\M)$-tree, $n\geq 0$ an integer, $\fat{U}$ a fat subtree, and $F\in \M\cup \AM$ a shape-preserving function.
	We say that $F$ is a \emph{shape-preserving subtree of $\fat{U}$}, and write $F\oleq \fat{U}$, if   the range of $F$ is a subset of $\fatn{U}$.

\end{definition}
\begin{observation}
	Let $(T,\allowbreak {\preceq},\allowbreak \Sigma,\S,\M)$ be an $(\S,\M)$-tree, $n\geq 0$ an integer, and $\fat{U}\leq \fat{V}$ fat subtrees.
	Then $\fatn{U}\subseteq \fatn{V}$, and consequently, for every $F\in \M\cup\AM$ such that $F\oleq \fat{U}$, we also have $F\oleq \fat{V}$.
\end{observation}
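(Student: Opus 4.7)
The consequence for $F\in\M\cup\AM$ is immediate from the definition of $\oleq$, so the work is to prove $\fatn{U}\subseteq\fatn{V}$. Let $\varphi$ be the witness to $\fat{U}\leq\fat{V}$ from Definition~\ref{def:subfatsubtrees}. My plan is first to show by induction on $k$ that $\lift{U}{0}{k}\subseteq\lift{V}{0}{\varphi(k)}$, and then to transfer this containment from the lifts to the level sets $\fatnn{U}{\ell}$.

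The base case $k=0$ is trivial, as both sides equal $T(0)$. For the induction step, fix $a\in\lift{U}{0}{k}$ and an immediate successor $n$ of $a$ in $T$; it suffices to show $u_k^+(n)\in\lift{V}{0}{\varphi(k+1)}$. Condition~(2) from Definition~\ref{def:subfatsubtrees} gives $u_k^+(n)\in\lift{U}{k}{k+1}\subseteq\lift{V}{\varphi(k)}{\varphi(k+1)}$. The decisive \emph{locality observation} is that $u_k(a)\succeq a$ for every $a\in T(\fatc{U}(k))$: using constructivity~\ref{S3} write $a=\S(a',\bar p,c)$ where $a'$ and the entries of $\bar p$ all lie in $T({<}\fatc{U}(k))$; since $u_k$ is the identity below level $\fatc{U}(k)$, weak $\S$-preservation gives $u_k(a)\succeq\S(a',\bar p,c)=a$. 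Hence $u_k^+(n)\succeq u_k(a)\succeq a$, so the unique predecessor of $u_k^+(n)$ at level $\fatc{U}(k)=\fatc{V}(\varphi(k))$ is $a$. The set $\lift{V}{\varphi(k)}{\varphi(k+1)}$ decomposes as $\bigcup_{b\in T(\fatc{U}(k))}\lift{V}{\{b\}}{\varphi(k+1)}$, and the same locality argument applied inside $\fat{V}$ shows each piece consists of descendants of $b$; this pins down $u_k^+(n)\in\lift{V}{\{a\}}{\varphi(k+1)}$. The inductive hypothesis places $a\in\lift{V}{0}{\varphi(k)}$, and monotonicity of the lift combined with the identity $\lift{V}{0}{\varphi(k+1)}=\lift{V}{\lift{V}{0}{\varphi(k)}}{\varphi(k+1)}$ (obtained by unfolding Definition~\ref{def:lift}) yields $u_k^+(n)\in\lift{V}{0}{\varphi(k+1)}$, closing the induction.

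To finish, I would show $\fatnn{U}{\ell}\subseteq\fatnn{V}{\varphi(\ell+1)-1}$ for every $\ell$. Take $y=u_\ell(a)\in\fatnn{U}{\ell}$ with $a\in\lift{U}{0}{\ell}$ and any immediate successor $n$ of $a$; by locality $u_\ell^+(n)\succeq y$, and by the previous step $u_\ell^+(n)\in\lift{U}{0}{\ell+1}\subseteq\lift{V}{0}{\varphi(\ell+1)}$. Set $j=\varphi(\ell+1)-1$; unfolding the lift one level below the top shows that every element of $\lift{V}{0}{j+1}$ has the form $v_j^+(m)$ for some immediate successor $m$ in $T$ of some $b\in\lift{V}{0}{j}$. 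Applying locality inside $\fat{V}$ to $u_\ell^+(n)=v_j^+(m)$ gives $v_j^+(m)\succeq v_j(b)\in\fatnn{V}{j}$, and since $v_j(b)$ lies at the same level $\fatc{U}(\ell+1)-1$ as $y$ and both lie below $u_\ell^+(n)$, we conclude $v_j(b)=y$. Hence $y\in\fatnn{V}{\varphi(\ell+1)-1}\subseteq\fatn{V}$.

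The only genuinely non-trivial ingredient is the locality observation $u_k(a)\succeq a$ for $a\in T(\fatc{U}(k))$; once this (and its $\fat{V}$-analogue) is in hand, the rest is bookkeeping with the recursive definition of $\lift{U}{\cdot}{\cdot}$ and the monotonicity of the lift in its first argument.
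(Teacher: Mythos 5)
The paper states this as an observation without a proof, so there is no in-text argument to compare against; it evidently treats the claim as immediate from the definitions. Your argument is a correct, complete unwinding of exactly those definitions: the two-stage plan (first $\lift{U}{0}{k}\subseteq\lift{V}{0}{\varphi(k)}$ by induction, then transfer to $\fatnn{U}{\ell}$) is the natural one, and the pivotal ``locality'' fact $u_k(a)\succeq a$ is correctly extracted from constructivity~\ref{S3} plus weak $\S$-preservation (for $k>0$), with the compositionality and monotonicity of $\lift{U}{\cdot}{\cdot}$ filling in the bookkeeping. One tiny point worth tidying: you derive locality via~\ref{S3}, which only applies at levels $>0$; for $k=0$ one should instead invoke item~\ref{def:shape-pres:item3} of Definition~\ref{def:shape-pres} (though, as you implicitly use, the $k=0$ case of the induction is also handled directly since $\varphi(0)=0$), and in a couple of places where you write ``by locality'' you are actually using preservation of immediate successors (Proposition~\ref{prop:shape-pres}~\ref{item:sucessor}). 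Neither affects correctness.
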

Every fat subtree contains a (canonical) shape-preserving function:

\begin{definition}
	\label{defn:fatf}
	Let $\SNtree$ be an $(\S,\M)$-tree, $n\geq 0$ an integer, and $\fat{U}=(u_i)_{i\in \omega}$ a fat subtree.
	We put: $$\fatfn{U}{n}=\lim_{m\to\infty}u_{n+m}^+\circ u^+_{n+m-1}\circ \cdots \circ u^+_{n}.$$
	We also put $\fatf{U}=\fatfn{U}{0}$.
\end{definition}
\begin{observation}
	Let $\SNtree$ be an $(\S,\M)$-tree, $\fat{U}$ its fat subtree of height $\omega$, and $n$ an integer. Then
	\begin{enumerate}
		\item $\fatfn{U}{n}$ is well-defined and $\fatfn{U}{n}\in \M^{\fatc{U}(n)}$.
		\item $\fatf{U}\oleq \fat{U}$.
	\end{enumerate}
\end{observation}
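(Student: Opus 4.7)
The plan is to analyze both assertions through the partial compositions $G_m = u_{n+m}^+ \circ \cdots \circ u_n^+$, exploiting the explicit description of the canonical extensions from Proposition~\ref{prop:canonical}. Each $u_{n+j}^+$ lies in $\M^{\fatc{U}(n+j)}$: it belongs to $\M$ by Proposition~\ref{prop:canonical}, and since $u_{n+j}$ is the identity on $T({<}\fatc{U}(n+j))$, so is $u_{n+j}^+$. Moreover, $u_{n+j}$ sends $T(\fatc{U}(n+j))$ to $T(\fatc{U}(n+j+1)-1)$, and by strict monotonicity of the level function of a shape-preserving function (Proposition~\ref{prop:shape-pres}~\ref{item:orderpres}) together with the canonical condition of Proposition~\ref{prop:canonical}, the level function of $u_{n+j}^+$ is forced to be the identity on $[0,\fatc{U}(n+j))$ and to satisfy
\[
\widetilde{u_{n+j}^+}(\ell)=\fatc{U}(n+j+1)-1+\bigl(\ell-\fatc{U}(n+j)\bigr)
\]
for every $\ell\geq \fatc{U}(n+j)$. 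In particular every $u_{n+j}^+\in \M^{\fatc{U}(n)}$, whence $G_m\in \M^{\fatc{U}(n)}$ for each $m$ by axiom~\ref{item:Nmonoid}.

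For statement~(1), it then suffices to prove pointwise stabilization of $(G_m(a))_{m\in\omega}$ for every $a\in T$, after which closure of $\M$ under limits in axiom~\ref{item:Nmonoid} delivers $\fatfn{U}{n}\in \M^{\fatc{U}(n)}$. Put $b_0=a$, $b_{j+1}=u_{n+j}^+(b_j)$, and track the \emph{gap} $d_j := \ell(b_j)-\fatc{U}(n+j)$. If $d_j<0$, the step acts as the identity on $b_j$ and, since $\fatc{U}$ is strictly increasing, $d_{j+1}<d_j<0$; if $d_j\geq 0$, the displayed formula gives $d_{j+1}=d_j-1$. Hence the gap becomes negative after finitely many steps and stays negative, so $(b_j)$ stabilizes.

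For statement~(2), I would prove by induction on $L\geq 0$ that $\fatf{U}(T(L))\subseteq \fatnn{U}{L}$, which immediately yields $\fatf{U}\oleq \fat{U}$. The base case $L=0$ uses that every shape-preserving function fixes $T(0)$ pointwise by Definition~\ref{def:shape-pres}~\ref{def:shape-pres:item3} (apply with $b=a$ and use level-preservation), which forces $\fatc{U}(1)=1$ and makes $u_0$ the identity on $T(0)=\fatnn{U}{0}$. For the inductive step, the gap analysis applied to $a|_L\in T(L)$ gives $\fatf{U}(a|_L)=u_L(G_{L-1}(a|_L))$, while applied to $a\in T(L+1)$ it gives $\fatf{U}(a)=u_{L+1}(u_L^+(G_{L-1}(a)))$ with $\ell(G_{L-1}(a))=\fatc{U}(L)+1$. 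Shape-preservation of $G_{L-1}$ (Proposition~\ref{prop:shape-pres}) yields $G_{L-1}(a)|_{\fatc{U}(L)}=G_{L-1}(a|_L)$; the inductive hypothesis combined with injectivity of $u_L$ places $G_{L-1}(a|_L)$ in $\lift{U}{0}{L}$, so $G_{L-1}(a)$ is an immediate successor of an element of $\lift{U}{0}{L}$. Definition~\ref{def:lift} then gives $u_L^+(G_{L-1}(a))\in \lift{U}{0}{L+1}$ and hence $\fatf{U}(a)\in u_{L+1}[\lift{U}{0}{L+1}]=\fatnn{U}{L+1}$. The main obstacle throughout is the careful bookkeeping between $T$-levels, gap indices, and the recursive unwinding of $\lift{U}{\cdot}{\cdot}$; once the level function of the canonical extension is made explicit, the rest is this tidy induction.
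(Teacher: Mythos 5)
Your argument is substantially correct and far more detailed than the paper's, which only observes that each $u_i$ is the identity on $T({<}\fatc{U}(i))$ before declaring the limit to exist and part~(2) to be immediate. Your gap analysis $d_j=\ell(b_j)-\fatc{U}(n+j)$ is a clean way to make the stabilization explicit, the explicit level function of the canonical extension is right, and the induction you run for~(2) is sound.

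There is, however, a flaw in the base case of~(2). You assert that every shape-preserving function fixes $T(0)$ pointwise, citing Definition~\ref{def:shape-pres}~\ref{def:shape-pres:item3} together with level-preservation. But the paper's notion of level-preserving only requires $\ell(f(a))=\ell(f(b))$ whenever $\ell(a)=\ell(b)$, not $\ell(f(a))=\ell(a)$; and condition~\ref{def:shape-pres:item3} with $b=a$ only yields $a\preceq F(a)$, which is compatible with $\ell(F(a))>0$. Thus $\tilde F(0)$ need not be $0$, $\fatc{U}(1)$ need not be $1$, and $u_0$ need not be the identity on $T(0)$, so the identification $T(0)=\fatnn{U}{0}$ is not available. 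Fortunately the base-case conclusion still holds and can be obtained from your own machinery: for $a\in T(0)$ the gap computation (with $L=0$, so $G_{-1}$ is the empty composition, i.e.\ the identity) gives $\fatf{U}(a)=u_0^+(a)=u_0(a)$, while $\lift{U}{0}{0}=T(0)$ by the $\ell=k$ clause of Definition~\ref{def:lift}, so $\fatnn{U}{0}=u_0[T(0)]$. Hence $\fatf{U}[T(0)]=\fatnn{U}{0}$ directly, which is exactly the formula $\fatf{U}(a|_L)=u_L(G_{L-1}(a|_L))$ from your inductive step specialized to $L=0$. With that repair the induction goes through unchanged, and the proof is correct.
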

\begin{proof}
	Let $\fat{U}=(u_i)_{i\in \omega}$. From the definition of a fat subtree it follows that $u_i$ is the identity on $T({<} \fatc{U}(i))$ for every $i\in \omega$. It follows that the limit defining $\fatfn{U}{n}$ exists, and by~\ref{item:Nmonoid}, this limit is in $\M$. The second point is immediate.
\end{proof}
In the opposite direction, every shape-preserving function can be widened to a fat subtree by a repeated application of Proposition~\ref{prop:shape-split} (note that this widening is not necessarily unique):
\begin{observation}
	\label{obs:shapetofat}
	For every $F\in \M$ there exists $\fat{U}\in \mathcal{R}$ such that $\fatf{U}=F$.
\end{observation}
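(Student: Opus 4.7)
The plan is to construct the fat subtree $\fat{U}=(u_\ell)_{\ell\in\omega}$ by induction on $\ell$, using Proposition~\ref{prop:shape-split} at each stage to peel off the ``next level'' of $F$. Set $\fatc{U}(0)=0$ and $\fatc{U}(\ell+1)=\tilde F(\ell)+1$, so each $u_\ell$ is expected to realize the jump of $F$ from level $\ell-1$ to level $\ell$. Maintain the inductive invariant that the finite composition $G_\ell:=u_\ell^+\circ u_{\ell-1}^+\circ\cdots\circ u_0^+$ agrees with $F$ on $T({\leq}\ell)$; the base case sets $u_0:=F\restriction_{T({\leq}0)}\in \AM^0_1(\tilde F(0))$.

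For the inductive step, apply Proposition~\ref{prop:shape-split} to $f:=F\restriction_{T({\leq}\ell)}$ with $m:=\fatc{U}(\ell)=\tilde F(\ell-1)+1$, which lies in the admissible range $(\tilde f(\ell-1),\tilde f(\ell)]$ since $\tilde F$ is strictly increasing. This yields $g\in \AM^m_1$ with $g\circ f|_m=f$; from $\tilde g(m)=\tilde F(\ell)=\fatc{U}(\ell+1)-1$ it follows that $g\in \AM^{\fatc{U}(\ell)}_1(\fatc{U}(\ell+1)-1)$, so we may set $u_\ell:=g$. The main obstacle (and crux of the induction) is verifying that $G_{\ell-1}\restriction_{T({\leq}\ell)}$ coincides with $f|_m$, so that the factorization $g\circ f|_m=f$ translates into $u_\ell^+\circ G_{\ell-1}\restriction_{T({\leq}\ell)}=F\restriction_{T({\leq}\ell)}$. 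This follows from two observations: by the invariant $G_{\ell-1}$ already matches $F$ on $T({\leq}\ell-1)$; and since $G_{\ell-1}$ sends level $\ell-1$ to level $\tilde F(\ell-1)$ and level $\ell$ to the immediately succeeding level $\tilde F(\ell-1)+1$, the weak $\S$-preservation of Definition~\ref{def:shape-pres}~\ref{def:shape-pres:item2} collapses to equality on that step (there is no room for the image of level $\ell$ to strictly extend the natural successor), forcing $G_{\ell-1}(b)=\S(F(b|_{\ell-1}),F(\Dp(b)),\Dc(b))=F(b)|_m$ for every $b\in T(\ell)$.

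To finish, observe that $(G_\ell)_{\ell\in\omega}$ stabilizes on each $T({\leq}\ell)$: $u_{\ell+1}^+$ is the identity below level $\fatc{U}(\ell+1)$ and the invariant places $G_\ell[T({\leq}\ell)]$ strictly below that level. Axiom~\ref{item:Nmonoid} then produces the limit $\fatf{U}=\lim_\ell G_\ell\in\M$, and the invariant yields $\fatf{U}=F$ level-by-level, as required.
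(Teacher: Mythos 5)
Your proof is correct and takes essentially the same route as the paper: both set $u_0=F\restriction_{T(\leq 0)}$ and for $i>0$ obtain $u_i$ by applying Proposition~\ref{prop:shape-split} to $F\restriction_{T(\leq i)}$ at level $\tilde F(i-1)+1$. The paper stops at writing down the decomposition $f_i=u_i\circ f_i|_{\tilde F(i-1)+1}$ and leaves the check that the resulting $\fat{U}$ is a fat subtree and that $\fatf{U}=F$ implicit, whereas you make the inductive invariant $G_\ell\restriction_{T(\leq\ell)}=F\restriction_{T(\leq\ell)}$ explicit and justify the key step (that weak $\S$-preservation forces equality on the ``one-level'' jump from $\tilde F(\ell-1)$ to $\tilde F(\ell-1)+1$) — a genuine gap-fill rather than a different argument.
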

\begin{proof}
	Fix $F\in \M$. We construct $\fat{U}=(u_i)_{i\in \omega}$ as follows.
	Put $u_0=F\restriction_{T(0)}$.
	For a given $i>0$ put $f_i=F\restriction_{T({\leq}i)}$  and use Proposition~\ref{prop:shape-split} to decompose $f_i$ on level $\tilde{F}(i-1)+1$, choosing $u_i$ to satisfy $f_i=u_i\circ f_i|_{\tilde{F}(i-1)+1}.$
\end{proof}

\section{Proof of the Ramsey theorem for fat subtrees}
\label{sec:mainproof}
\subsection{Pigeonhole for shape-preserving functions}
\label{sec:shapepigeon}
We start with a finitary form of a Ramsey statement for colouring shape-preserving functions with one moving level.
We fix an $(\S,\M)$-tree $\SNtree$
and put
\[
	\mathcal L_n=\{f\in \AM^n_1: \tilde{f}(n)\leq n+1\}
\]
($\mathcal L$ stands for \emph{line} since it generalizes a combinatorial line of the Hales--Jewett theorem.)
The main result of this subsection is the following lemma.
\begin{lemma}[1-dimensional pigeonhole]
	\label{lem:pigeonhole1}
	For every $n$ and every finite colouring $\chi$ of~$\AM^n_1$, there exists $h\in \mathcal \AM^n_2$ such that $\chi\restriction_{\{h\circ g:g\in \mathcal L_n\}}$ is constant.
\end{lemma}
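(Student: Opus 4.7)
Plan: I would prove Lemma~\ref{lem:pigeonhole1} by reducing to a Hales--Jewett-style finite pigeonhole and realizing the resulting combinatorial line inside $\AM^n_2$ via the duplication axiom~\ref{item:Nduplication} and decomposition axiom~\ref{item:Ndecomposition}. The goal is to find $h\in\AM^n_2$ such that all compositions $\{h\circ g:g\in\mathcal L_n\}$ receive the same $\chi$-color.

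First, I would analyze the structure of $\mathcal L_n$: it contains the identity $g_0=\mathrm{Id}\restriction_{T({\leq}n)}$ with $\tilde{g_0}(n)=n$, together with a finite family of non-identity functions $g_\sigma$ (each with $\tilde{g_\sigma}(n)=n+1$), one for every choice function $\sigma$ assigning an immediate successor to each $a\in T(n)$. For any $h\in\AM^n_2$ with $\tilde h(n)=m_0$ and $\tilde h(n+1)=m_1$, the composition $h\circ g_0$ lives in $\AM^n_1(m_0)$ while each $h\circ g_\sigma$ lives in $\AM^n_1(m_1)$, so the lemma asks us to align colors across these two strata simultaneously, coupled by the fact that $h$ is a single shape-preserving extension.

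Next, I would use the duplication function $F^n_m\in\M$ provided by axiom~\ref{item:Nduplication} for each sufficiently large $m$. Since $F^n_m$ reproduces the branching choice at level $n\to n+1$ anew at level $m\to m+1$, composing the candidate $h$ with iterated duplications $F^n_{m_1}\circ F^n_{m_2}\circ\cdots$ generates arbitrarily many independent ``copies'' of the successor-choice structure at higher levels. Pulling the coloring $\chi$ back along these compositions converts the problem into a finite coloring of the combinatorial cube indexed by successor choices of $T(n)$, and a Hales--Jewett-type pigeonhole then furnishes a monochromatic combinatorial line whose ``variable'' coordinate corresponds precisely to the identity-vs-successor dichotomy defining $\mathcal L_n$.

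Finally, I would assemble the monochromatic line into the desired $h\in\AM^n_2$ by applying Proposition~\ref{prop:shape-split} to decompose a suitable iterated composition and invoking the monoid axiom~\ref{item:Nmonoid} to ensure the resulting function lies in $\M$. The main obstacle will be the Hales--Jewett-type core of the argument: because $\mathcal L_n$ couples the ``stay/jump'' choice across \emph{all} nodes of $T(n)$ simultaneously, the monochromatic line must respect this coupling, which requires a variant of the Hales--Jewett theorem adapted to the (possibly non-regular) finite branching encoded by $\S$. Care must also be taken that each step of the iterative duplication-and-pigeonhole construction preserves the shape-preserving conditions and the identity-on-$T({<}n)$ requirement defining $\AM^n_2$.
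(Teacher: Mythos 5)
Your plan has the right skeleton---reduce to a $*$-version of the Hales--Jewett theorem and realize the resulting monochromatic line via duplication---and that is essentially what the paper does. However, the ``main obstacle'' you identify is a false lead. You worry that the coupling across all nodes of $T(n)$ forces a Hales--Jewett variant ``adapted to the (possibly non-regular) finite branching encoded by $\S$.'' No such variant is needed, and trying to produce one is a wrong turn. The paper's key move is to take the alphabet for Hales--Jewett to be $\Sigma = \{f\in\AM^n_1:\tilde f(n)=n+1\}$: a single letter is one \emph{simultaneous} choice of an allowed immediate successor for every node of $T(n)$ at once, and a word in $\Sigma^{<\omega}$ corresponds to iterating such one-level extensions (the map $w\mapsto g_w$, defined recursively by $g_w(a)=g_u^+(e_w(a))$). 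The irregular branching is entirely absorbed into the finiteness of $\Sigma$, and the coordinates of the Hales--Jewett word index successive \emph{levels}, not nodes; the standard $*$-version (Theorem~\ref{thm:HJ}) then applies with no modification. The identity element of $\mathcal L_n$ corresponds exactly to the $*$ truncation, which is why the $*$-version is the right tool.

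One further device your plan omits that is essential to make the realization land inside $\AM^n_2$: the paper applies Hales--Jewett not to arbitrary words but to words of the form $s\conc w$, where $s$ is a fixed prefix containing every letter of $\Sigma$. This guarantees that when you later build $h=h_{|L|}$ inductively, each constant coordinate $L_i\in\Sigma$ of the monochromatic line $L=s\conc L'$ has an earlier occurrence $L_j=s_j=L_i$ in the fixed prefix, so the step $h_i\to h_{i+1}$ can be performed as $h_{i+1}=F^{n+j+1}_{n+i+1}\circ h_i$ using only the duplication functions of~\ref{item:Nduplication}; similarly $\lambda$-coordinates are handled with $F^{n+m+1}_{n+i+1}$. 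Without the prefix trick, there is no $j$ to point to and the construction does not close under $\M$. Your plan also invokes Proposition~\ref{prop:shape-split}, which is not needed here: the assembly of $h$ is done purely with duplication and the monoid closure~\ref{item:Nmonoid}, together with the canonical extension $g_u^+$ from Proposition~\ref{prop:canonical} (which does lean on~\ref{item:Ndecomposition}, but only indirectly).
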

This lemma is proved by an application of the $*$-version of the Hales--Jewett theorem which we now briefly recall.
Given a finite alphabet $\Sigma$ not containing symbols $*$ and $\lambda$, a \emph{combinatorial line} is a finite word (sequence) $L\in (\Sigma \cup \{\lambda\})^{<\omega}$ such that $\lambda$ occurs at least once ($\lambda$ is a \emph{parameter}).
For $c\in \Sigma$, we use $L(c)$ to denote the words created from $L$ by replacing all occurrences of $\lambda$ by $c$.
We also use $L(*)$ to denote the sequence created by truncating $L$ before the first occurrence of $\lambda$.
For words $u$ and $w$, we denote their \emph{concatenation} by $u\conc w$.
Given word $w$ and $i<|w|$, we denote by $w_i$ the character on index $i$ (i.e., $w(i)$ when $w$ is seen as a sequence). Indexes start from 0.

\begin{theorem}[The 1-dimensional $*$-version of the Hales--Jewett theorem~\cite{voigt1980partition,Hales1963}]
	\label{thm:HJ}
	For every finite alphabet $\Sigma$ and a positive integer $r$, there exists a positive integer $N=\mathrm{HJ}^*(|\Sigma|,r)$ such that for every $r$-colouring
	$\chi$ of $\Sigma^{\leq N}$ there exists a monochromatic line $L$ of length at most $N$. That is, $\chi$ is constant when restricted to $\{L(c):c\in \Sigma \cup \{*\}\}$.
\end{theorem}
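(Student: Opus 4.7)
My plan is to derive Theorem~\ref{thm:HJ} from the multi-parameter Hales--Jewett theorem combined with a pigeonhole argument on prefix colours. I apply the $(r+1)$-dimensional Hales--Jewett theorem to a product refinement of $\chi\colon\Sigma^{\leq N}\to r$ that tracks the colours of several prefixes of a word. For $N$ sufficiently large, this yields an $(r+1)$-parameter combinatorial subspace $S\subseteq\Sigma^N$ with parameters $\lambda_0,\ldots,\lambda_r$ whose first-occurrence positions are $p_0<p_1<\cdots<p_r$, such that $\alpha_i:=\chi(S(\vec{c})[0:p_i])$ is constant in the substitution vector $\vec{c}\in\Sigma^{r+1}$ for each $i\in\{0,1,\ldots,r\}$.

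Pigeonhole on the $r+1$ values $\alpha_0,\ldots,\alpha_r$ in $r$ colours yields $0\leq i<j\leq r$ with $\alpha_i=\alpha_j$. I then construct the sought $*$-line $L$ of length at most $p_j\leq N$ by fixing all parameters except $\lambda_i$ to an arbitrary $c_0\in\Sigma$, promoting $\lambda_i$ to the single parameter $\lambda$ of $L$, and restricting $S$ to positions in $[0,p_j)$. A direct calculation shows that $\chi(L(c))=\alpha_j$ for every $c\in\Sigma$ while $\chi(L(*))=\alpha_i$, so the choice of $i,j$ forces these colours to coincide, establishing the required $*$-monochromaticity of $L$ under $\chi$. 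The length bound $p_j\leq p_r\leq N$ ensures $L$ fits within $\Sigma^{\leq N}$.

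\textbf{Main obstacle.} The principal technical difficulty is constructing the product refinement of $\chi$ that controls its values on prefixes at the (a priori unknown) first-occurrence positions $p_i$ of the Hales--Jewett parameters. A naive tracking of $\chi$ on every prefix length inflates the colour count to $r^{N+1}$ and yields a self-referential bound $N\geq\mathrm{HJ}_{r+1}(|\Sigma|,r^{N+1})$ with no finite solution. I would instead build the required subspace by an iterated Hales--Jewett construction: apply HJ once with only $r$ colours to obtain a $1$-parameter subspace $S_1$ with its first-occurrence position available for tracking, then within $S_1$ apply HJ again with $r$ colours to produce $S_2\leq S_1$ with two parameters and with the two relevant prefix colours controlled, and continue inductively. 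After $r+1$ nested applications of ordinary $r$-colour Hales--Jewett, one obtains the desired $(r+1)$-parameter subspace with all prefix colours $\alpha_0,\ldots,\alpha_r$ controlled; the final bound $\mathrm{HJ}^*(|\Sigma|,r)$ is primitive recursive in $(|\Sigma|,r)$, on the order of Shelah's bounds for the ordinary Hales--Jewett theorem.
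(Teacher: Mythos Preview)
The paper does not prove Theorem~\ref{thm:HJ}; it is quoted as a known result from the literature (Voigt~1980) and used as a black box in the proof of Lemma~3.1. So there is no ``paper's proof'' to compare against, and I evaluate your argument on its own.

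Your high-level strategy is correct and standard: obtain an $(r{+}1)$-parameter word $S$ whose prefix colours $\alpha_i=\chi(S(\vec c)[0{:}p_i])$ are constant in $\vec c$, pigeonhole to get $\alpha_i=\alpha_j$, and read off the $*$-line by freezing all parameters except $\lambda_i$ and truncating at $p_j$. That part of the write-up is fine.

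The gap is in your iterated construction. Two issues. First, you write ``within $S_1$ apply HJ again \ldots\ to produce $S_2\le S_1$ with two parameters'', but $S_1$ is a $1$-parameter word, and no sub-parameter-word of a $1$-parameter word has two parameters; if you mean instead that $S_2$ \emph{extends} $S_1$ into a further block, say so. Second, and more seriously, the claim that each step uses ``only $r$ colours'' cannot be right. When you extend an $i$-parameter word $S_i$ by a line $L$ in a new block, the colour $\chi(S_i(\vec c)\cdot L(c'))$ depends on all of $\vec c\in\Sigma^i$; to get a single $L$ that is monochromatic simultaneously for every $\vec c$ you must apply HJ to the product colouring $v\mapsto(\chi(S_i(\vec c)\cdot v))_{\vec c\in\Sigma^i}$, which has $r^{|\Sigma|^i}$ colours, not $r$. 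Worse, even this does not control the new prefix colour $\chi(S_i(\vec c)\cdot L(*))$, which still varies with $\vec c$; nothing in an $r$-colour HJ step forces that to be constant.

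A correct realisation of your plan runs the iteration from right to left. Fix block sizes $b_r, b_{r-1},\ldots,b_0$ with $b_k=\mathrm{HJ}\bigl(|\Sigma|,\ r^{P_k}\bigr)$ where $P_k$ counts the prefixes of length at most $b_0+\cdots+b_{k-1}$; this is well-founded since each $b_k$ depends only on earlier $b_j$'s. Process block $B_r$ first with the product colouring indexed by all possible prefixes to its left, obtaining a line $L_r$; then process $B_{r-1}$ with a (smaller) product colouring that also records the induced colour through $L_r$; and so on. After $r{+}1$ steps the parameter word $S=L_0\cdot L_1\cdots L_r$ has each $\alpha_i$ determined before any substitution is made, and your pigeonhole finish applies verbatim. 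Alternatively, you may simply quote the $d$-parameter Hales--Jewett/Graham--Rothschild theorem with $d$ large, then pass to a subspace in which the relevant prefix colours stabilise; but either way the iteration necessarily involves colour counts that grow with the step, not a flat $r$.
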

\begin{proof}[Proof of Lemma~\ref{lem:pigeonhole1}]
	We consider the alphabet $\Sigma=\{f\in \AM^n_1:\tilde{f}(n)=n+1\}$ and establish a correspondence between the words in the alphabet $\Sigma$ and $\AM^n_1$ such that every combinatorial line will give a function in $\AM^n_2$.

	Given a finite word $w$ in the alphabet $\Sigma$, we construct $g_w\in \mathcal \AM^n_1$ by induction on $|w|$.
	If $|w|=0$, we let $g_w$ be the identity.
	Fix some $|w|>0$, and denote by $u$ the word created from $w$ by removing the last character and we let $e_w\in \Sigma$ be this last character.
	We construct $g_w$ by setting
	\[g_w(a)=g_u^+(e_w(a))\]
	for every $a \in T({\leq} n)$.
	Equivalently,
	\[g_w(a)=
		\begin{cases}
			a                                  & \mbox{for every $a\in T({<}n)$,} \\
			\S(g_u(a),\Dp(e_w(a)),\Dc(e_w(a))) & \mbox{for every $a\in T(n)$.}    \\
		\end{cases}
	\]
	Since $g_u\in \AM^n_1$ (and thus by Proposition~\ref{prop:canonical} also $g_u^+\in \M^n$) and $e_w\in \AM^n_1$, by~\ref{item:Nmonoid} it holds that $g_w\in \AM^n_1$.

	Let $s$ be an arbitrary finite word in the alphabet $\Sigma$ containing all characters from $\Sigma$ and let $\chi'$ be a colouring of finite words in $\Sigma$ defined by $\chi'(w)=\chi(g_{s\conc w})$. We apply Theorem~\ref{thm:HJ} to obtain a monochromatic combinatorial line $L=s\conc L'$.
	Recall that $L(*)$ denotes the maximal initial segment of $L$ which does not contain the parameter $\lambda$.
	Put $m=|L(*)|$ to be the length of the initial segment of $L$.

	We define functions $h_i\colon T({\leq}(n+1))\to T$ for every $i$ with $m\leq i\leq |L|$ by first setting
	\[
		h_0(a)=\begin{cases}
			a           & \mbox{for every $a\in T({<}n)$,} \\
			g_{L(*)}(a) & \mbox{for every $a\in T(n)$,}    \\
			a|_n        & \mbox{for every $a\in T(n+1)$.}  \\
		\end{cases}
	\]
	Now, assume that $h_i$ is defined for some $i$ with $0\leq i<|L|$ and set
	\[
		h_{i+1}(a)=\begin{cases}
			a                                          & \mbox{for every $a\in T({<}n)$,} \\
			g_{L(*)}(a)                                & \mbox{for every $a\in T(n)$,}    \\
			\S(h_{i}(a),\Dp(L_i(a|_n)),\Dc(L_i(a|_n))) & \mbox{for every $a\in T(n+1)$}   \\&\hbox{if $L_i\in \Sigma$,}  \\
			\S(h_{i}(a),\Dp(a),\Dc(a))                 & \mbox{for every $a\in T(n+1)$}   \\&\hbox{if $L_i=\lambda$.} \\
		\end{cases}
	\]
	We will verify that all the successor operations used are actually defined.

	Observe that, for every $i$ with $0\leq i\leq |L|$, we have $h_i\restriction_{T({\leq}n)}=g_{L(*)}$ and consequently $h_i\restriction_{T({\leq}n)}\in \AM_1^{n}$.
	Let $m$ be the index of the first occurrence of the parameter $\lambda$ in $L$ (so $m$ is the length of $L(*)$).
	Note that the functions $h_0,h_1,\ldots, h_m$ are not shape-preserving since $\tilde{h}_i(n+1)\leq \tilde{h}_i(n)$ for every $0\leq i\leq m$.
	Also observe, by comparing the construction of $h_{i+1}$ and the alternative definition of $g_w$, that $h_m(a)=h_m(a|_n)$ for every $a\in T(n+1)$.

	We prove by induction that the function $h_i$, for every $i$ with $m+1\leq i\leq |L|$, is well-defined (that is, all the successor operations used in its construction are defined), shape-preserving and in $\AM^n_2$.

	The function $h_{m+1}$ lies in $\AM^n_2$ since $h_{m+1}(a)=\S(h_{m+1}(a|_n),\Dp(a),\Dc(a))$ for every $a\in T(n+1)$
	and thus $h_{m+1}=g^+_{L(*)}\restriction_{T({\leq}(n+1))}$.
	Consequently, we have $h_{m+1}\in \AM^n_2$.

	Now, assume that the function $h_i$ lies in $\AM^n_2$ for some $i$ with $m+1\leq i<|L|$.
	We have already verified that $h_{i+1}\restriction_{T({\leq}n)}$ is shape-preserving.
	We consider the following two cases:
	\begin{enumerate}
		\item If $L_{i}=\lambda$, then $h_{i+1} = F_{n+i+1}^{n+m+1}\circ h_i$
		      (recall that~\ref{item:Nduplication} ensures the existence of $F_{n+i+1}^{n+m+1}\in \M$).
		\item If $L_{i}\in \Sigma$, then there exists $j<i$ satisfying $L_j=s_j=L_{i}$.
		      (Recall that $L=s\conc L'$ and the choice of $s$ ensure the existence of such $j$.)
		      Then $h_{i+1} = F_{n+i+1}^{n+j+1}\circ h_i$.
	\end{enumerate}
	In both cases,~\ref{item:Nduplication} gives us
	$h_{i+1}\in \AM^n_2$.

	Finally, we set $h=h_{|L|}$. Observe that
	\[\{h\circ g:g\in \mathcal L_n\}=\{g_{L(c)}:c\in \Sigma\cup\{*\}\}\]
	by the construction of $h$.
	Consequently, $\chi\restriction_{\{h\circ g:g\in \mathcal L^n\}}$ is constant.
\end{proof}
\subsection{Fat subtrees and Todorcevic's axiomatization of Ramsey spaces}
\label{sec:ramseyspace}
We fix an $(\S,\M)$-tree $\SNtree$,
consider the topological space $\mathcal R$ and set $\mathcal{AR}$ introduced in Definition~\ref{defn:R} and the triple $(\mathcal R,{\leq},\allowbreak r)$
where $\leq$ is the order of fat subtrees introduced in
Definition~\ref{def:subfatsubtrees} restricted to $\mathcal R$, and $r$
is a function $r\colon \mathcal R\times \omega\to \mathcal {AR}$ defined
by $$r(\fat{U},n)=\fat{U}\restriction_n$$ for every $\fat{U}\in
	\mathcal R$ and $n\in \omega$.
As is usual in the area, we will write $r_n(\fat{U})$ for $r(\fat{U},n)$.
Given $n\in \omega$ we also put $\mathcal{AR}_n=\{\fat{x}\in \mathcal {AR}:|\fat{x}|=n\}$.

The following is an axiomatization of topological Ramsey spaces given by Todorcevic~\cite[Section 5]{todorcevic2010introduction}.
\begin{enumerate}[label=(A\arabic*)]
	\item \label{item:A1} Sequencing:
	      \begin{enumerate}[label=(\arabic*)]
		      \item $r_0(\fat{U})=\emptyset$ for all $\fat{U}\in \mathcal R$.
		      \item $\fat{U}\neq\fat{V}$ implies $r_n(\fat{U})\neq r_n(\fat{V})$ for some $n$.
		      \item $r_n(\fat{U})=r_m(\fat{V})$ implies that $n=m$ and $r_k(\fat{U})=r_k(\fat{V})$ for all $k<n$.
	      \end{enumerate}
	\item \label{item:A2} There is a quasi-ordering	$\finleq$ on $\mathcal{AR}$ such that:
	      \begin{enumerate}[label=(\arabic*)]
		      \item\label{A2.1} $\{\fat{x}\in \mathcal{AR}:\fat{x}\finleq\fat{y}\}$ is finite for all $\fat{y}\in\mathcal{AR}$.
		      \item\label{A2.2} $\fat{V}\leq \fat{U}$ iff $\forall_n\exists_m: r_n(\fat{V})\finleq r_m(\fat{U})$.
		      \item\label{A2.3} $\forall_{\fat{x},\fat{y}, \fat{z}\in\mathcal{AR}}[\fat{x}\sqsubseteq\fat{y}\hbox{ and }\fat{y}\finleq \fat{z}\hbox{ implies }\exists_{\fat{z}'\sqsubseteq \fat{z}}:\fat{x}\finleq \fat{z}']$. Here, $\fat{x}\sqsubseteq\fat{y}$ means that $\fat{x}$ is an initial segment of $\fat{y}$.
	      \end{enumerate}
	\item \label{item:A3} Amalgamation:
	      \begin{enumerate}[label=(\arabic*)]
		      \item\label{A3.1} If $\depth_{\fat{U}}(\fat{x})$ is finite then $[\fat{x},\fat{V}]\neq \emptyset$ for all $\fat{V}\in [\depth_\fat{U}(\fat{x}),\allowbreak \fat{U}]$.
		      \item\label{A3.2} $\fat{V}\leq \fat{U}$ and $[\fat{x},\fat{V}]\neq\emptyset$ imply that there is $\fat{U}'\in [\depth_\fat{U}(\fat{x}),\allowbreak \fat{U}]$ such that $\emptyset\neq [\fat{x},\fat{U}']\subseteq [\fat{x},\fat{V}]$.
	      \end{enumerate}
	\item \label{item:A4} If $\depth_{\fat{U}}(\fat{x})$ is finite and $\mathcal O\subseteq \mathcal{AR}_{|\fat{x}|+1}$, then there is $\fat{V}\in[\depth_\fat{U}(\fat{x}),\fat{U}]$ such that $r_{|\fat{x}|+1}[\fat{x},\fat{V}]$ is either a subset of $\mathcal O$ or a subset of the complement of $\mathcal O$.
\end{enumerate}
To prove Theorem~\ref{thm:fatramseyspace} we will apply the following theorem.
\begin{theorem}[Todorcevic's Abstract Ellentuck theorem~\cite{todorcevic2010introduction}]\label{thm:stevo}
	If $(\mathcal R,\leq,r)$ is closed and if it satisfies axioms \ref{item:A1}, \ref{item:A2}, \ref{item:A3} and \ref{item:A4}, then every property of Baire
	subset of $\mathcal R$ is Ramsey and every meager subset is Ramsey null, or in other words, the triple $(\mathcal R,\leq, r)$ forms a topological Ramsey space.
\end{theorem}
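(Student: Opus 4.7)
The plan is to follow the standard blueprint for abstract Ellentuck-type theorems: the four axioms conspire to run a fusion (diagonal amalgamation) argument which, for every definable subset $\mathcal{X}\subseteq\mathcal R$ and every basic open set $[\fat{x},\fat{U}]$, produces a refinement $\fat{V}\in[\depth_\fat{U}(\fat{x}),\fat{U}]$ that decides $\mathcal{X}$ on $[\fat{x},\fat{V}]$. Axiom \ref{item:A1} provides the sequencing (and hence a Polish topology on $\mathcal R$ refined by the Ellentuck topology), \ref{item:A2} gives the comparability and locality of the finite approximations, \ref{item:A3} is amalgamation (used to splice a new refinement onto a prescribed initial segment), and \ref{item:A4} is the one-step pigeonhole needed to decide a single additional approximation at a time.

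The first technical step is the \emph{fusion lemma}: given a family $\{\mathcal O_k\subseteq\mathcal{AR}_k:k\in\omega\}$ and $\fat{U}\in\mathcal R$, there exists $\fat{V}\leq\fat{U}$ such that for every $k$ and every initial segment $\fat{y}=r_{k-1}(\fat{V})$ of $\fat{V}$, either $r_k[\fat{y},\fat{V}]\subseteq\mathcal O_k$ or $r_k[\fat{y},\fat{V}]\cap\mathcal O_k=\emptyset$. This is built by recursion: at stage $k$, apply \ref{item:A4} to each $\fat{x}\in\AM^n_k$ that is $\finleq$-below the current approximation (finitely many by~\ref{A2.1}) to obtain a local refinement, amalgamate these refinements with the already-fixed initial segment via~\ref{A3.2}, and extend; convergence of the resulting $\leq$-decreasing sequence of refinements to an element of $\mathcal R$ uses closedness of the triple in the Polish topology from~\ref{item:A1}. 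Property~\ref{A2.3} guarantees that the initial segments fixed at stage $k$ remain $\finleq$-compatible with the further refinements.

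Step two is to show that \emph{open sets are Ramsey}. For $\mathcal{X}$ open and a basic open set $[\fat{x},\fat{U}]$, apply the fusion lemma with $\mathcal O_k=\{\fat{y}\in\mathcal{AR}_k:[\fat{y},\fat{U}]\subseteq\mathcal{X}\}$; the resulting $\fat{V}$ either has some initial segment forcing $[\fat{y},\fat{V}]\subseteq\mathcal{X}$, in which case one replaces $\fat{V}$ by an appropriate sub-refinement via~\ref{A3.1}, or avoids $\mathcal{X}$ on all finite approximations, which implies $[\fat{x},\fat{V}]\cap\mathcal{X}=\emptyset$. Step three handles \emph{meager sets}: nowhere dense sets are Ramsey null by applying Step two to the complement of their closure (a closed set, hence Ramsey, and containing no basic open by the nowhere dense hypothesis), and countable unions of nowhere dense sets are reduced to a single Ramsey null set by one more fusion that simultaneously avoids each $\mathcal N_k$ on the $k$-th tail. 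Step four combines these: if $\mathcal{X}$ has the Baire property, write $\mathcal{X}=\mathcal O\triangle\mathcal M$ with $\mathcal O$ open and $\mathcal M$ meager; first refine to $\fat{V}_1$ deciding $\mathcal O$, then refine to $\fat{V}_2\leq\fat{V}_1$ with $[\fat{x},\fat{V}_2]\cap\mathcal M=\emptyset$, and note that $\mathcal{X}$ agrees with $\mathcal O$ on $[\fat{x},\fat{V}_2]$.

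The main obstacle is the fusion lemma, and specifically the book-keeping required to ensure that the limit of the $\leq$-decreasing sequence of refinements lies in $\mathcal R$ (needing closedness) and that finitely many decisions made at each stage remain valid after all subsequent refinements (needing~\ref{A2.3} together with \ref{A3.2}). Once this is in place the rest is a formal unwinding: the Ramsey property transfers from basic clopen sets to open sets, then to closed sets by complementation, then to $F_\sigma$/$G_\delta$ and on up to the Baire-property class by diagonalizing over countably many fusion arguments.
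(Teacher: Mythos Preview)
The paper does not prove this theorem at all: it is quoted as a black box from Todorcevic's book~\cite{todorcevic2010introduction} and immediately applied to deduce Theorem~\ref{thm:fatramseyspace} after verifying axioms \ref{item:A1}--\ref{item:A4}. So there is no ``paper's own proof'' to compare against.

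That said, your outline is the standard argument from Chapter~5 of~\cite{todorcevic2010introduction}, and the overall architecture (fusion/combinatorial forcing $\Rightarrow$ open sets are Ramsey $\Rightarrow$ nowhere dense sets are Ramsey null $\Rightarrow$ meager $\Rightarrow$ Baire property) is correct. Two points of imprecision are worth flagging. First, your stated fusion lemma only asks that $r_k[\fat{y},\fat{V}]$ be decided for the single $\fat{y}=r_{k-1}(\fat{V})$; this is too weak to push through Step~2. What you need (and what your proof sketch actually builds) is a $\fat{V}$ deciding $\mathcal O_k$ for \emph{every} $\fat{y}\in\mathcal{AR}_{k-1}$ with $\depth_\fat{V}(\fat{y})<\omega$, i.e., for every $\fat{y}$ that could arise as an approximation of something in $[0,\fat{V}]$. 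Second, you write ``each $\fat{x}\in\AM^n_k$'' inside the fusion step; this is notation specific to the shape-preserving-function space $\M$, not the abstract space $\mathcal{R}$. You mean each $\fat{x}\in\mathcal{AR}_k$ with $\fat{x}\finleq r_m(\fat{U}_k)$ for the current $\fat{U}_k$ and appropriate $m$. With these corrections the sketch is sound.
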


The majority of this section will be devoted to proving~\ref{item:A4} as the following proposition:

\begin{prop}[\ref{item:A4} pigeonhole]
	\label{prop:A4}
	For every $\fat{x}\in \mathcal {AR}$ and $\fat{U}\in \mathcal {R}$ such that $\depth_\fat{U}(\fat{x})< \omega$ and every $\mathcal O\subseteq \mathcal{AR}_{|\fat x|+1}$
	there is $\fat{V}\in [\depth_\fat{U}(\fat{x}),\fat{U}]$ such that $r_{|\fat{x}|+1}[[\fat{x},\fat{V}]]$ is either a subset of $\mathcal O$ or a subset of the complement of $\mathcal O$.
\end{prop}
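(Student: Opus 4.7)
The plan is to reduce Proposition~\ref{prop:A4} to the $1$-dimensional pigeonhole Lemma~\ref{lem:pigeonhole1} via the correspondence between fat subtrees and shape-preserving functions from Section~\ref{sec:fatshape}, and then build the desired infinite fat subtree by fusion. Let $d=\depth_\fat{U}(\fat{x})$ and $n=\fatc{x}(|\fat{x}|)=\fatc{U}(d)$. We may assume $[\fat{x},\fat{U}]\neq\emptyset$ (otherwise the statement is trivial), so $\fat{x}\leq\fat{U}$. For any $\fat{V}\in[d,\fat{U}]$ with $\fat{x}\sqsubseteq\fat{V}$, each element of $r_{|\fat{x}|+1}[[\fat{x},\fat{V}]]$ has the form $\fat{x}\conc(w)$, where $w\in\AM^n_1$ is a single extending function whose image at level $n$ lies inside some $\fatnn{V}{k}$ with $k\geq|\fat{x}|$, subject to the lift condition of Definition~\ref{def:subfatsubtrees}. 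Using Observation~\ref{obs:shapetofat} and Proposition~\ref{prop:canonical}, any such $w$ factors as $\fatfn{V}{|\fat{x}|}\circ g$ for some $g\in\AM^n_1$, and the colouring $\mathcal O$ pulls back along this factorization to a $2$-colouring $\chi$ on an appropriate subfamily of $\AM^n_1$.

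I would then build $\fat{V}=(v_i)_{i\in\omega}$ as a fusion limit, setting $v_i:=u_i$ for $i<|\fat{x}|$ (so that $\fat{x}\sqsubseteq\fat{V}$) and constructing the remaining $v_j$'s inductively. At stage $j\geq|\fat{x}|$, with $v_{|\fat{x}|},\ldots,v_{j-1}$ already chosen and an infinite tail $\fat{U}^j\leq\fat{U}$ compatible with them, I would apply Lemma~\ref{lem:pigeonhole1} to the colouring of lines induced by $\chi$ at the current top level of $\fat{U}^j$. This yields a function $h_j$ along which $\mathcal{L}$ is monochromatically coloured. The duplication axiom~\ref{item:Nduplication} then lets me realize $h_j$ as the head of a sub-fat-subtree of $\fat{U}^j$, producing both the next function $v_j$ and a refined tail $\fat{U}^{j+1}$. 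The closed-monoid axiom~\ref{item:Nmonoid} guarantees convergence and ensures that the resulting $\fat{V}$ lies below $\fat{U}$.

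The main obstacle is in the inductive step. Lemma~\ref{lem:pigeonhole1} produces monochromaticity only on combinatorial lines $\mathcal{L}_n$ — one-step extensions landing exactly one level up — while $\mathcal{AR}_{|\fat{x}|+1}$ contains extensions whose top can live arbitrarily deep inside $\fat{V}$. The factorization $w=\fatfn{V}{|\fat{x}|}\circ g$ lets us trade a deep extension for a shallow one, but in order for the fusion to succeed, each newly chosen $v_j$ must preserve the colour agreement achieved at earlier stages. Here Proposition~\ref{prop:shape-split} and axiom~\ref{item:Ndecomposition} are decisive: together they guarantee that any relevant $g\in\AM^n_1$ whose image reaches into the tail can be split through the current finite approximation, so that its colour is already determined by a line colour computed at an earlier stage of the fusion. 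Carrying out this bookkeeping — ensuring that no extension is overlooked and that all colour coincidences propagate across stages — is the heart of the proof.
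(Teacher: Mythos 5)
Your proposal correctly identifies the reduction to Lemma~\ref{lem:pigeonhole1} and the general fusion shape, but it contains a genuine gap that cannot be patched by the ingredients you list. The gap is exactly the ``main obstacle'' you name and then wave at: if at each fusion stage $j$ you apply Lemma~\ref{lem:pigeonhole1} to a fresh colouring, you obtain a monochromatic line at that stage, but nothing forces its colour $c_j$ to agree with the colour $c_{j'}$ obtained at another stage $j'$. You suggest that Proposition~\ref{prop:shape-split} and axiom~\ref{item:Ndecomposition} resolve this because a deep one-step extension $w$ ``can be split through the current finite approximation, so that its colour is already determined by a line colour computed at an earlier stage.'' That is not true: decomposition factors a function as a composition, but the colour of $\fat{x}\conc w$ is an arbitrary property of $w$ (an arbitrary subset $\mathcal O\subseteq\mathcal{AR}_{|\fat{x}|+1}$), and there is no reason it should be a function of any one factor. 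Decomposition propagates no colour information at all.

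The paper's proof resolves this with a combinatorial-forcing argument that your proposal is missing. One first defines a set $\mathcal O$ to be $(\fat{U},\fat{x})$-\emph{large} if no $\fat{V}\in[\depth_\fat{U}(\fat{x}),\fat{U}]$ avoids it, and observes that either $\mathcal O$ or its complement is large; from then on the invariant carried through the fusion is \emph{largeness}, not merely monochromaticity of a single line. Two further ingredients are essential and absent from your sketch. First, Lemma~\ref{lem:largepigeon} does not apply Lemma~\ref{lem:pigeonhole1} to the $2$-colouring induced by $\mathcal O$; it applies Lemma~\ref{lem:fatpigeonhole1} to a \emph{rich} colouring whose colours are functions $\mathcal C\to\{0,1\}$ recording the $\mathcal O$-membership of all compatible shorter extensions $h'\in\mathcal C$, so that monochromaticity of this rich colouring forces a single line to lie entirely inside $\mathcal O$. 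Second, Lemma~\ref{lem:fixlevel} enumerates all possible heads $g$ and runs a diagonal argument showing that for some head the set of good tails is itself large; this is what allows the process to be iterated, with the large set shrinking but staying large. These three steps together (dichotomy of largeness, the rich-colouring trick, and the diagonalisation over heads) are precisely what makes the fusion coherent, and none of them is reconstructible from decomposition and duplication alone. Your sketch as written would fall apart at the first place where two stages disagree on colour.
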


Assuming it is true, proving Theorem~\ref{thm:fatramseyspace} is easy.

\begin{proof}[Proof of Theorem~\ref{thm:fatramseyspace}]
	In order to apply Theorem~\ref{thm:stevo} we only need to verify axioms \ref{item:A1}, \ref{item:A2}, \ref{item:A3} and \ref{item:A4}.

	Since fat subtrees trees are sequences, axiom \ref{item:A1} is obvious.
	To verify \ref{item:A2} we define order $\finleq$ on $\mathcal {AR}$ by putting $\fat{x}\finleq \fat{y}$ if and only if $\fat{x}\leq \fat{y}$ and $\fatc{x}(|\fat{x}|)=\fatc{y}(|\fat{y}|)$.
	Item \ref{A2.1} follows from the fact that $(T,\preceq)$ is finitely branching and thus there are only finitely many subtrees to consider ending at a given level.
	Items \ref{A2.2} and \ref{A2.3} follow directly from the definition.

	To see \ref{item:A3} \ref{A3.1}, consider $\fat{x}\in\mathcal{AR}$ and $\fat{V}\in \mathcal R$ such that $n=\depth_\fat{V}(\fat{x})$ is finite. Choose $\fat{U}=(u_i)_{i\in \omega}\in [n,\fat{V}]$. Consider $\fat{U}'$ constructed as a concatenation of $\fat{x}$ and the sequence $(u_{i+n})_{i\in \omega}$. It follows that $\fat{U}'\in[\fat{x},\fat{U}]$.

	To see \ref{item:A3} \ref{A3.2}, choose $\fat{U}=(u_i)_{i\in\omega}\leq \fat{V}\in \mathcal{R}$ and $\fat{x}\in \mathcal{AR}$ such that $[\fat{x},\fat{U}]\neq \emptyset$. Let $n=\depth_\fat{U}(\fat{x})$ and $m=\depth_\fat{V}(\fat{x})$.
	Construct $\fat{U}'$ as a concatenation of $\fat{V}\restriction_m$ and $(u_{i+n})_{i\in\omega}$.
	Then the set $[\fat{x},\fat{U}']$ consists of all fat subtrees $\fat{U}''$ with initial segment $\fat{x}$ which continue as subtrees of $\fat{U}$. As the same is true for $[\fat{x},\fat{U}]$, we thus have $[\fat{x}, \fat{U}'] = [\fat{x}, \fat{U}]$.

	Finally, axiom~\ref{item:A4} is the content of Proposition~\ref{prop:A4}. This finishes the proof.
\end{proof}

We devote the rest of this section to a proof of Proposition~\ref{prop:A4}.
The proof builds on Lemma~\ref{lem:pigeonhole1} and
applies combinatorial forcing. This is a quite general technique that is an adaptation of the combinatorial forcing proof from~\cite{carlson1984,Karagiannis2013}.

We first need to re-state Lemma~\ref{lem:pigeonhole1} in the context of fat subtrees.

\begin{figure}
	\centering
	\includegraphics[scale=0.833]{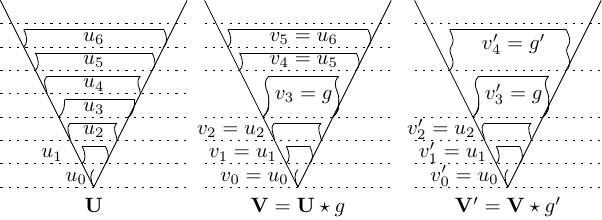}
	\caption{Operation $\fat{U}=\fat{V}\star g$ and line $(g,g')$.}
	\label{fig:line}
\end{figure}
\begin{definition}
	\label{def:star}
	Let $\fat{U}=(u_i)_{i\in \omega}$ be a fat subtree, $n\leq m\in \omega$ be integers and $g\in \AM^{\fatc{U}(n)}_1$ be a shape-preserving function such that $$g[T(\fatc{U}(n))]\subseteq u_m[\lift{U}{n}{m}].$$  By $\fat{U}\star g$ we denote a \emph{fat subtree of $\fat{U}$ produced by $g$} which is a fat subtree $\fat{V}=(v_i)_{i\in \omega}$ defined as follows:
	$$
		v_i=
		\begin{cases}
			u_i       & \hbox{if }i< n, \\
			g         & \hbox{if }i= n, \\
			u_{m+i-n} & \hbox{if }i>n.
		\end{cases}
	$$
	For functions $g \in \AM^{\fatc{U}(n)}_1$ such that $g(T(\fatc{U}(n)))\not \subseteq u_m[\lift{U}{n}{m}]$ we say that $\fat{U}\star g$ is undefined.
\end{definition}
See Figure~\ref{fig:line}.
\begin{definition}[Concatenation]
	Given $n< \omega$, a fat subtree $\fat{x}=(x_i)_{i\in n}\in \mathcal{AR}$,  and $g\in \AM^{\fatc{x}(n)}_1$, we denote by $\fat{x}\conc g$ the fat subtree $\fat{y}=(y_i)_{i\in n+1}$ where $y_i=x_i$ for every $i\in n$ and $y_n=g$.
\end{definition}

Given $\fat{x}\in \mathcal{AR}$ we put:
$$
	\mathcal {G}^\fat{x}=\{\fat{x}\conc g:g\in \AM^{\fatc{x}(|\fat{x}|)}_1\}.
$$
See Figure~\ref{fig:line}.
Notice that this is precisely the set of all fat subtrees trees of height $|\fat{x}|+1$ which extend $\fat{x}$ (\emph{one level extensions} of $\fat{x}$).
\begin{definition}[Line]
	For every $\fat{U}\in \mathcal R$ and $n< \omega$,
	we call the pair $(g,g')$ a \emph{line} if
	\begin{enumerate}
		\item $g\in \AM^{\fatc{U}(n)}_1$ and $\fat{V} =\fat{U}\star g$ is defined, and
		\item $g'\in \AM^{\fatc{V}(n+1)}_1$ and $\fat{V}'=\fat{V}\star g'$ is defined.
	\end{enumerate}
	Given $\fat{x}\in \mathcal {AR}$ with $n=\depth_\fat{U}(\fat{x})$ finite, we also put
	$$\mathcal{L}^{\fat{x},(g,g')}_\fat{U} = \{\fat{y}\in \mathcal{G}^\fat{x}:\fat{y}\leq \fat{V}'\hbox{ and }\fatl{y}(|\fat{y}-1|)\leq \fatl{V}'(n+1)\}$$
\end{definition}
Notice that $\fat{V}'$ restricts two levels just after the end of $\fat{U}$.
The name ``line'' is motivated by the following connection of $\mathcal{L}^{\fat{x},(g,g')}_\fat{U}$ to lines $\mathcal{L}^n$ introduced in previous section:
\begin{lemma}
	\label{lem:4andy}
	For every $\fat{U}\in \mathcal R$, $\fat{x}\in \mathcal {AR}$ such that $n=\depth_\fat{U}(\fat{x})$ is finite
	and every line $(g,g')$ it holds that:
	$$\mathcal{L}^{\fat{x},(g,g')}_\fat{U}  = \{\fat{x}\conc (g'\circ g^+\circ g''):g''\in \mathcal L^{\fatc{U}(n)}\}.$$
\end{lemma}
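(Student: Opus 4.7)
Plan: Write $\ell_0 := \fatc{U}(n) = \fatc{x}(k)$ with $k := |\fat{x}|$, $\ell_1 := \fatc{V}(n+1) = \tilde{g}(\ell_0)+1$, and $\ell_2 := \fatc{V'}(n+2) = \tilde{g'}(\ell_1)+1$. Because $\fat{V'}$ coincides with $\fat{U}$ on its first $n$ entries, any $\varphi$ witnessing $\fat{x}\leq \fat{U}$ with image inside $\{0,\ldots,n\}$ simultaneously witnesses $\fat{x}\leq \fat{V'}$. I plan to verify each inclusion by extending such a $\varphi$ by one step and chasing the canonical-extension identity $F^+(\S(a,\bar{p},c))=\S(F^+(a),F^+(\bar{p}),c)$, which is valid whenever $F^+$ skips no level past $\tilde{F}(\ell(a))$; this is a consequence of Proposition~\ref{prop:canonical} together with Definition~\ref{def:shape-pres}~\ref{def:shape-pres:item2}.

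For the inclusion $\supseteq$, fix $g''\in \mathcal{L}^{\ell_0}$ and put $h := g'\circ g^+\circ g''$. Proposition~\ref{prop:shape-pres}~\ref{item:closed-monoid} together with axiom~\ref{item:Nmonoid} give $h\restriction_{T({\leq}\ell_0)}\in \AM^{\ell_0}_1$, and a short calculation yields $\tilde{h}(\ell_0)=\ell_1-1$ when $g''$ is the identity and $\tilde{h}(\ell_0)=\ell_2-1$ when $\tilde{g''}(\ell_0)=\ell_0+1$. Extending $\varphi$ by $\varphi(k+1)\in \{n+1,n+2\}$ accordingly, and writing $\fat{y}=\fat{x}\conc h$, the required inclusion $\lift{y}{k}{k+1}\subseteq \lift{V'}{n}{\varphi(k+1)}$ reduces to the identity $h^+(b)=g'^+(\S(g^+(g''(b|_{\ell_0})),\Dp(b),\Dc(b)))$ at every immediate successor $b$ of a node in $\lift{x}{0}{k}$, which follows by three iterated applications of the canonical-extension identity.

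For the inclusion $\subseteq$, take $\fat{y}=\fat{x}\conc h\in \mathcal{L}^{\fat{x},(g,g')}_\fat{U}$ and split on $\tilde{h}(\ell_0)\in \{\ell_1-1,\ell_2-1\}$. In the short case $\tilde{h}(\ell_0)=\ell_1-1$, the lift inclusion $h^+[\mathrm{IS}(\lift{x}{0}{k})]\subseteq g^+[T(\ell_0+1)]$, together with successor-injectivity~\ref{S2} and successor-preservation (Proposition~\ref{prop:shape-pres}~\ref{item:sucessor}), forces $h(a)=g(a)$ for every relevant $a\in T(\ell_0)$, so $g'':=\mathrm{Id}$ works. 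In the long case $\tilde{h}(\ell_0)=\ell_2-1$, the analogous chain $h^+[\mathrm{IS}(\lift{x}{0}{k})]\subseteq g'^+[\mathrm{IS}(g^+[T(\ell_0+1)])]$ combined with two applications of~\ref{S2} uniquely identifies, for each relevant $a\in T(\ell_0)$, an immediate successor $a''\in T(\ell_0+1)$ with $h(a)=g'(g^+(a''))$; setting $g''(a):=a''$ and $g''$ identity on $T({<}\ell_0)$ yields the candidate factor.

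The main technical obstacle is verifying in this long case that the pointwise-defined $g''$ is itself weakly $\S$-preserving. The key is that shape-preserving functions are $\preceq$-embeddings (an easy consequence of the meet-preservation of Proposition~\ref{prop:shape-pres}~\ref{item:meets} plus injectivity), so $g'\circ g^+$ is a $\preceq$-embedding; pulling back the weak-$\S$-preservation inequality for $h$ through $g'\circ g^+$, and matching parameters and characters via~\ref{S2}, yields the corresponding inequality for $g''$. Once $g''\in \mathcal{L}^{\ell_0}$ is established, the equality $h=g'\circ g^+\circ g''$ holds by construction, completing the proof.
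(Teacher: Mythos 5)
Your overall strategy—rewriting the condition $\fat{y}\leq\fat{V}'$ as a lift inclusion at the single new level, extending $\varphi$ by one step, and chasing the canonical-extension identity—is the same one the paper uses, and the paper's displayed computation only justifies the inclusion $\supseteq$. Your $\supseteq$ argument is essentially correct, modulo a notational slip: by Definitions~\ref{def:lift} and~\ref{def:subfatsubtrees} the lift $\lift{y}{k}{k+1}$ that must sit inside $\liftp{V}{n}{\varphi(k+1)}$ is $h^+$ applied to the \emph{entire} level $T(\fatc{U}(n)+1)$, not just to the immediate successors of $\lift{x}{0}{k}$ (compare the paper's own display, which uses $T(\fatc{V}'(n)+1)$). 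Your canonical-extension computation does hold for every $b\in T(\fatc{U}(n)+1)$, so this is trivially repaired; but the same slip makes ``$h(a)=g(a)$ for every relevant $a$'' in your short case look weaker than what is available. With the full-level lift inclusion you really do get $h=g$ on all of $T(\fatc{U}(n))$, which is what the conclusion $g''=\mathrm{Id}$ requires.

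The genuine gap is in the long case of $\subseteq$, at the step ``Once $g''\in \mathcal{L}^{\fatc{U}(n)}$ is established.'' You pointwise define $g''(a)$ as the unique $(g'\circ g^+)$-preimage of $h(a)$ and then argue weak $\S$-preservation of $g''$ by pulling back the inequality for $h$ through the $\preceq$-embedding $g'\circ g^+$. That establishes that $g''$ is a level-preserving, weakly $\S$-preserving injection fixing $T({<}\fatc{U}(n))$ with $a\preceq g''(a)$. But $\mathcal{L}^{\fatc{U}(n)}\subseteq \AM^{\fatc{U}(n)}_1$, and membership in $\AM$ means $g''$ must be the restriction of some $F\in\M^{\fatc{U}(n)}$. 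Since $\M$ is an \emph{arbitrary} closed monoid satisfying \ref{item:Nmonoid}--\ref{item:Nduplication}, the pointwise conditions are necessary but not sufficient: in the Carlson--Simpson and Graham--Rothschild instances, for example, $\AM^{\ell}_1(\ell+1)$ (given by $\Ecs_\ell$ or $\Egr_\ell$) is a strict subset of the maps $a\mapsto a^\frown e(a)$ that satisfy your local conditions. Nothing in your argument produces the required extension $F\in\M$, so $g''\in\mathcal{L}^{\fatc{U}(n)}$ is not established. This is precisely the step the paper's chain of equalities also leaves implicit, and it needs an actual structural argument that pins $g''$ down as a bona fide member of $\AM$ (for instance, a careful decomposition of $h$ using Proposition~\ref{prop:shape-split} and axiom~\ref{item:Ndecomposition} that identifies the correct one-step factor), not just a verification of weak $\S$-preservation.
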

\begin{proof}
	$$
		\begin{aligned}
			\mathcal{L}^{\fat{x},(g,g')}_\fat{U} & = \{\fat{y}\in \mathcal{G}^\fat{x}:\fat{y}\leq \fat{V}'\hbox{ and }\fatl{y}(|\fat{y}-1|)\leq \fatl{V}'(n+1)\}                      \\
			                                     & = \{\fat{x}\conc g\}\cup \{\fat{y}\in \mathcal{G}^\fat{x}:\fat{y}\leq \fat{V}'\hbox{ and }\fatl{y}(|\fat{y}-1|) = \fatl{V}'(n+1)\} \\
			                                     & = \{\fat{x}\conc g\}\cup \{\fat{x}\conc (g'\circ g^+\circ g''):g''\in \AM^{\fatc{U}(n)}_1(\fatc{U}(n)+1)\}                         \\
			                                     & = \{\fat{x}\conc (g'\circ g^+\circ g''):g''\in \mathcal L^{\fatc{U}(n)}\}.
		\end{aligned}
	$$
	The third equality may deserve a justification.
	To see that $\fat{y}=\fat{x}\conc (g'\circ g^+\circ g'')<\fat{V}'$ for every $g''\in \AM^{\fatc{U}(n)}_1(\fatc{U}(n)+1)$ we want to check
	that $$\lift{y}{n}{n+1}\subseteq \liftp{V}{n}{n+2}.$$
	(All other inclusions required by Definition~\ref{def:subfatsubtrees} are trivial, since we already have $\fat{x}<\fat{U}$ and thus also $\fat{x}<\fat{V}'$ since $\fat{U}\restriction_n=\fat{V}'\restriction_n$.)
	Let $N$ be the set of all immediate successors of $\liftp{V}{n}{n+1}=g^+[T(\fatc{V}(n)+1)].$
	Notice that $$(g^+\circ g'')[T(\fatc{V}'(n)+1]\subseteq N\subseteq  T(\fatc{V}'(n+1)+1)$$
	By Definition~\ref{def:lift} have:
	$$\begin{aligned}\lift{y}{n}{n+1} & =(g'\circ g^+\circ g'')^+[T(\fatc{V}'(n)+1]             \\&=(g')^+[(g^+\circ g'')^+[T(\fatc{V}'(n)+1]]\\
                                & \subseteq(g')^+[N]=\liftp{V}{N}{n+2}=\liftp{V}{n}{n+2}.\end{aligned}$$
\end{proof}
The set $\mathcal{L}^{\fat{x},(g,g')}_\fat{U}$ thus contains all one-level extensions of $\fat{x}$ within two levels affected by $\fat{V}'$.
The following is the minimal finite pigeonhole statement for colouring fat subtrees.

\begin{lemma}[1-dimensional pigeonhole for fat subtrees]
	\label{lem:fatpigeonhole1}
	For every $\fat{U}\in \mathcal R$, $\fat{x}\in \mathcal {AR}$ such that $n=\depth_\fat{U}(\fat{x})$ is finite,
	and for every finite colouring $\chi$ of $\mathcal {G}^\fat{x}$ there exists a line $(g,g')$
	with the property that $\chi\restriction_{\mathcal{L}^{\fat{x},(g,g')}_\fat{U}}$ is constant.
\end{lemma}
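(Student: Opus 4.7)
The plan is to relativize the one-dimensional pigeonhole for shape-preserving functions (Lemma~\ref{lem:pigeonhole1}) to the fat subtree $\fat{U}$. Writing $n' = \fatc{U}(n)$, I first pull back $\chi$ to a coloring $\chi'$ of $\AM^{n'}_1$ by setting
\[\chi'(f) = \chi(\fat{x}\conc(\fatfn{U}{n}\circ f)),\]
which is well-defined since $\fatfn{U}{n}\in\M^{n'}$ yields $\fatfn{U}{n}\circ f\in\AM^{n'}_1$ via~\ref{item:Nmonoid}. Applying Lemma~\ref{lem:pigeonhole1} to $\chi'$ produces $h\in\AM^{n'}_2$ such that $\chi'$ is constant on $\{h\circ f : f\in \mathcal L^{n'}\}$. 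Setting $H := \fatfn{U}{n}\circ h\in\AM^{n'}_2$, this says that $\chi$ is constant on $\{\fat{x}\conc(H\circ f) : f\in \mathcal L^{n'}\}$.

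To match this with a set $\mathcal L^{\fat{x},(g,g')}_\fat{U}$ via Lemma~\ref{lem:4andy}, I construct a line $(g,g')$ for $\fat{U}$ at level $n$ satisfying $g'\circ g^+ = H$ on $T({\leq}n'+1)$. Take $g := H\restriction_{T({\leq}n')}\in\AM^{n'}_1$ and $m := n + (\tilde h(n') - n')$. Unpacking the defining composition $\fatfn{U}{n} = \lim_k u_{n+k}^+\circ\cdots\circ u_n^+$, one verifies that $\fatfn{U}{n}[T(\tilde h(n'))]\subseteq u_m[\lift{U}{n}{m}] = \fatnn{U}{m}$, so $g[T(n')]\subseteq\fatnn{U}{m}$ and $\fat{V} := \fat{U}\star g$ is well-defined.

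The main technical obstacle is producing $g'\in\AM^{\fatc{V}(n+1)}_1$ with $g'\circ g^+ = H$ on $T({\leq}n'+1)$ and image lying in $\fatnn{V}{m'}$ for some $m'\geq n+1$. The key identity $\fatfn{V}{n+1} = \fatfn{U}{m+1}$, immediate from Definition~\ref{def:star}, supplies the factorization $H = \fatfn{V}{n+1}\circ(u_m^+\circ\cdots\circ u_n^+)\circ h$. I will then apply Proposition~\ref{prop:shape-split} to split $H$ at the intermediate level $\fatc{V}(n+1)$, using the shape-preserving property of $H$ to identify $H\restriction_{\fatc{V}(n+1)}$ with $g^+\restriction_{T({\leq}n'+1)}$ (both are determined by $\S(g(a|_{n'}),\Dp(a),\Dc(a))$ on $T(n'+1)$); this yields the required $g'$. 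Verifying that $g'$ can be chosen with image in $\fatn{V}$ amounts to observing that on $g^+[T(n'+1)]$ the values of $g'$ agree with $H$ and hence lie in $\fatn{V}$ by the factorization above, and then extending $g'$ consistently across the remainder of $T(\fatc{V}(n+1))$ using $\fatfn{V}{n+1}$'s structure. Lemma~\ref{lem:4andy} then identifies $\{\fat{x}\conc(H\circ f) : f\in\mathcal L^{n'}\}$ with $\mathcal L^{\fat{x},(g,g')}_\fat{U}$, on which $\chi$ is monochromatic.
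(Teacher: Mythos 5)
Your argument is essentially the paper's proof: pull the colouring back through $\fatfn{U}{n}$, apply Lemma~\ref{lem:pigeonhole1} to obtain $h$, set $g=(\fatfn{U}{n}\circ h)\restriction_{T({\leq}\fatc{U}(n))}$ and $\fat{V}=\fat{U}\star g$, then split $\fatfn{U}{n}\circ h$ at level $\fatc{V}(n+1)$ via Proposition~\ref{prop:shape-split} to get $g'$, and finish with Lemma~\ref{lem:4andy}. One minor slip: $u_m[\lift{U}{n}{m}]$ contains but need not equal $\fatnn{U}{m}$ (the latter lifts from $T(0)$, the former from all of $T(\fatc{U}(n))$), though the containment you actually use, $g[T(\fatc{U}(n))]\subseteq u_m[\lift{U}{n}{m}]$, is unaffected.
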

\begin{proof}
	Put  $m=\fatc{U}(n)$  and $F=\fatfn{U}{n}$ (Definition~\ref{defn:fatf}).
	Define colouring $\chi'$ of $\AM^m_1$ by putting
	$$\chi'(g)=\chi(\fat{x}\conc(F\circ g)).$$
	Apply Lemma~\ref{lem:pigeonhole1} on $m$ and $\chi'$ to obtain $h\in\AM^m_2$.
	Put $$g=(F\circ h)\restriction_{T({\leq}m)}$$ and $$\fat{V}=\fat{U}\star g.$$
	Note that $\fat{U}\star g$ is defined. Now apply Proposition~\ref{prop:shape-split} to decompose $F\circ h$ as
	$$F\circ h= g'\circ (F\circ h)|_{\fatc{V}(n+1)}$$ where $g'\in \AM^{\fatc{V}(n+1)}_1$.
	It follows that $(g,g')$ is a a line. By Lemma~\ref{lem:4andy} we have $$\mathcal{L}^{\fat{x},(g,g')}_\fat{U}=\{\fat{x}\conc (F\circ h\circ g''):g''\in \mathcal L_m\},$$
	and thus $g$ and $g'$ have the desired properties.
\end{proof}

\begin{definition}[Large sets]
	For $\fat{U}\in \mathcal R$, $\fat{x}\in \mathcal {AR}$ such that $n=\depth_\fat{U}(\fat{x})$ is finite and $\mathcal O\subseteq \mathcal{G}^\fat{x}$ we say that $\mathcal O$ is
	$(\fat{U},\fat{x})$-large if for every $\fat{V} \in [n, \fat{U}]$ there exists $\fat{y}\in \mathcal O$ such that $\fat{y}\leq \fat{V}$.
\end{definition}
In other words, the set $\mathcal O$ is $(\fat{U},\fat{x})$-large if there is no  $\fat{V} \in [n,\fat{U}]$ ``avoiding'' $\mathcal O$.

\begin{observation}\label{obs:large}
	For every $\fat{U}\in \mathcal R$, $\fat{x}\in \mathcal {AR}$ such that $n=\depth_\fat{U}(\fat{x})$ is finite, and $(\fat{U},\fat{x})$-large set $\mathcal O$ there exist a fat subtree $\fat{U}'\in [n,\fat{U}]$ and $n'\geq n$ such that for every $\fat{V}\in [n',\fat{U}']$
	there exists some $\fat{y}\in \mathcal O$ satisfying $\fat{y}\leq \fat{V}$ and $\depth_{\fat{V}}(\fat{y})=n'+1$.
\end{observation}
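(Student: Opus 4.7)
The plan is to argue by contradiction, inductively constructing a fat subtree $\fat{V}^*\in[n,\fat{U}]$ which no $\fat{y}\in\mathcal{O}$ refines, contradicting $(\fat{U},\fat{x})$-largeness. The negation of the conclusion reads: for every $\fat{U}'\in[n,\fat{U}]$ and every $n'\geq n$, there exists some $\fat{V}\in[n',\fat{U}']$ such that no $\fat{y}\in\mathcal{O}$ with $\fat{y}\leq\fat{V}$ satisfies $\depth_{\fat{V}}(\fat{y})=n'+1$. Setting $\fat{V}_{-1}:=\fat{U}$, I apply this negation at stage $k\geq 0$ with $\fat{U}'=\fat{V}_{k-1}$ and $n'=n+k$ (noting by induction that $\fat{V}_{k-1}\in[n,\fat{U}]$) to obtain $\fat{V}_k\in[n+k,\fat{V}_{k-1}]$ with the property that no $\fat{y}\in\mathcal{O}$, $\fat{y}\leq\fat{V}_k$, has $\depth_{\fat{V}_k}(\fat{y})=n+k+1$.

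Because $\fat{V}_k\in[n+k,\fat{V}_{k-1}]$, the initial segments stabilize: $r_{n+k}(\fat{V}_j)=r_{n+k}(\fat{V}_k)$ for every $j\geq k$. I then define $\fat{V}^*$ as the unique fat subtree satisfying $r_{n+k}(\fat{V}^*)=r_{n+k}(\fat{V}_k)$ for all $k$. Axiom \ref{item:A2} and the fact that $\fat{V}_k\leq\fat{U}$ for all $k$ yield $\fat{V}^*\leq\fat{U}$, and $r_n(\fat{V}^*)=r_n(\fat{V}_0)=r_n(\fat{U})$, so $\fat{V}^*\in[n,\fat{U}]$.

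By $(\fat{U},\fat{x})$-largeness applied to $\fat{V}^*$, there exists $\fat{y}\in\mathcal{O}$ with $\fat{y}\leq\fat{V}^*$. Let $k^*:=\depth_{\fat{V}^*}(\fat{y})$. Since $\fat{y}\in\mathcal{G}^{\fat{x}}$ extends $\fat{x}$ by exactly one level and $\depth_{\fat{V}^*}(\fat{x})=n$, we have $k^*\geq n+1$; set $j:=k^*-n-1$. The witnessing map for $\fat{y}\leq\fat{V}^*$ depends only on the first $k^*$ cut values of $\fat{V}^*$, and $r_{k^*}(\fat{V}^*)=r_{k^*}(\fat{V}_{j+1})=r_{k^*}(\fat{V}_j)$ (the second equality because $\fat{V}_{j+1}\in[n+j+1,\fat{V}_j]=[k^*,\fat{V}_j]$). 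Hence $\fat{y}\leq\fat{V}_j$ with $\depth_{\fat{V}_j}(\fat{y})=k^*=n+j+1$, contradicting the construction of $\fat{V}_j$. The main technical point is the localization in this last step, namely that $\fat{y}\leq\fat{V}^*$ together with $\depth_{\fat{V}^*}(\fat{y})=k^*$ is a condition read off from $r_{k^*}(\fat{V}^*)$; this is immediate from Definition~\ref{def:subfatsubtrees} once one notes that the witnessing function $\varphi$ factors through finitely many cut values of $\fat{V}^*$.
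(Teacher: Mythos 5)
Your proof is correct and follows essentially the same diagonalization as the paper's: assume the negation, inductively produce a decreasing sequence $\fat{V}_k$ with stabilizing initial segments avoiding witnesses at depth $n+k+1$, pass to the limit, and contradict largeness. The only cosmetic differences are an index shift ($\fat{V}_k$ here is the paper's $\fat{U}_{k+1}$) and that you explicitly spell out the localization step (that $\fat{y}\leq\fat{V}^*$ with $\depth_{\fat{V}^*}(\fat{y})=k^*$ is determined by $r_{k^*}(\fat{V}^*)$), which the paper leaves as "easy to see."
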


\begin{proof}
	Suppose for a contradiction that for every $\fat{U}' \in [n,\fat{U}]$ and every $n'\geq n$ there is $\fat{V}\in [n',\fat{U}']$ such that
	there is no $\fat{y}\in \mathcal O$ satisfying $\fat{y}\leq \fat{V}$ and $\depth_{\fat{V}}(\fat{y})=n'+1$.
	For each choice of $\fat{U}'$ and $n'$, denote by $\fat{V}^{\fat{U}'}_{n'}$ some fat subtree with this property.
	Construct an infinite sequence $\fat{U} = \fat{U}_{0} \geq \fat{U}_1 \geq \ldots$ by putting $\fat{U}_{i+1}=\fat{V}^{\fat{U}_{i}}_{i+n}$ for every $i\in \omega$. Notice that for every $i\in \omega$ we have:
	\begin{enumerate}
		\item $\fat{U}_{i+1} \in [n,\fat{U}]$,
		\item There is no $\fat{y}\in \mathcal O$ with $\fat{y}\leq \fat{U}_{i+1}$ and $\depth_{\fat{U}_{i+1}}(\fat{y})\leq n+i+1$.
	\end{enumerate}
	Finally, put $\fat{V}$ to be the limit of $(\fat{U}_i)_{i\in \omega}$. It is easy to see that $\fat{V} \in [n,\fat{U}]$ and there is no $\fat{y}\in \mathcal O$ with $\fat{y}\leq \fat{V}$ which contradicts the fact that $\mathcal O$ is $(\fat{U},\fat{x})$-large.
\end{proof}

\begin{lemma}[1-dimensional pigeonhole for large sets]
	\label{lem:largepigeon}
	For every $\fat{U}\in \mathcal R$, $\fat{x}\in \mathcal {AR}$ such that $n=\depth_\fat{U}(\fat{x})$ is finite,
	and every $(\fat{U},\fat{x})$-large set $\mathcal O$, there exists a line $(g,g')$
	with the property that $\mathcal{L}^{\fat{x},(g,g')}_\fat{U}\subseteq \mathcal O$.
\end{lemma}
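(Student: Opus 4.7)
The plan is to combine Observation~\ref{obs:large} with the one-dimensional pigeonhole of Lemma~\ref{lem:fatpigeonhole1} through a translation argument. Observation~\ref{obs:large} will let us shrink $\fat U$ so that every further extension sees an element of $\mathcal O$ at the \emph{immediately next} available level, while Lemma~\ref{lem:fatpigeonhole1} produces monochromatic lines. Converting the coloring of $\mathcal G^\fat x$ into a coloring of one-level extensions of a deeper initial segment of $\fat U$, the pigeonhole together with the localization provided by Observation~\ref{obs:large} will force the monochromatic value to be ``in $\mathcal O$''.

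Concretely, I first apply Observation~\ref{obs:large} to $(\fat U,\fat x,\mathcal O)$ to obtain $\fat U_0\in[n,\fat U]$ and an integer $m\geq n$ such that every $\fat V\in[m,\fat U_0]$ contains some $\fat y\in\mathcal O$ with $\fat y\leq \fat V$ and $\depth_{\fat V}(\fat y)=m+1$. Setting $\fat{x}'=\fat U_0\restriction_m$, so $\depth_{\fat U_0}(\fat{x}')=m$, I define a natural projection $\pi\colon\mathcal G^{\fat{x}'}\to\mathcal G^\fat x$ in such a way that every $\fat y'=\fat{x}'\conc h\in\mathcal G^{\fat{x}'}$ determines, by composing $h$ with the canonical extensions (Proposition~\ref{prop:canonical}) of the $\fat U_0$-functions between levels $n$ and $m$, a one-level extension $\pi(\fat y')=\fat x\conc z\in\mathcal G^\fat x$, where $z$ is a shape-preserving function at level $n$. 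I then introduce the coloring $\chi\colon\mathcal G^{\fat{x}'}\to\{0,1\}$ defined by $\chi(\fat y')=1$ iff $\pi(\fat y')\in\mathcal O$.

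Next, I apply Lemma~\ref{lem:fatpigeonhole1} to $(\fat U_0,\fat{x}',\chi)$ to obtain a line $(g,g')$ at level $m$ of $\fat U_0$ on which $\chi$ is constant on $\mathcal L^{\fat{x}',(g,g')}_{\fat U_0}$. To see the constant value must equal $1$, let $\fat V'=\fat U_0\star g\star g'\in[m,\fat U_0]$; by Observation~\ref{obs:large} there exists $\fat y^*\in\mathcal O$ with $\fat y^*\leq \fat V'$ and $\depth_{\fat V'}(\fat y^*)=m+1$. Iteratively applying Proposition~\ref{prop:shape-split} to peel off intermediate levels, the last function of $\fat y^*$ factors through the $\fat U_0$-structure followed by $g$, so $\fat y^*=\pi(\fat w)$ for some $\fat w\in\mathcal L^{\fat{x}',(g,g')}_{\fat U_0}$. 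Therefore $\chi(\fat w)=1$, and consequently $\pi[\mathcal L^{\fat{x}',(g,g')}_{\fat U_0}]\subseteq\mathcal O$.

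Finally, I convert $(g,g')$ back to a line $(g_*,g'_*)$ at level $n$ of $\fat U$ by pre-composing $g$ and $g'$ with the first $m-n$ functions of $\fat U_0$ (using the canonical extensions from Proposition~\ref{prop:canonical}); since $\fat U_0\leq \fat U$ implies $\fatn{U_0}\subseteq \fatn U$, the images land in $\fatn U$ and $(g_*,g'_*)$ is a valid line of $\fat U$ at level $n$. A direct comparison via Lemma~\ref{lem:4andy} and the definition of $\pi$ identifies $\mathcal L^{\fat x,(g_*,g'_*)}_{\fat U}$ with $\pi[\mathcal L^{\fat{x}',(g,g')}_{\fat U_0}]\subseteq\mathcal O$, as required. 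The main obstacle is the careful bookkeeping defining $\pi$ and showing that every such $\fat y^*$ lies in $\pi[\mathcal L^{\fat{x}',(g,g')}_{\fat U_0}]$; this factorization via Proposition~\ref{prop:shape-split} through the nested $\S$-tree structure is technical but of the same type as in the proof of Lemma~\ref{lem:pigeonhole1}.
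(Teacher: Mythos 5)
Your proof follows the same overall scaffolding as the paper's (localize $\mathcal O$ via Observation~\ref{obs:large}, re-cast the problem as a colouring of one-level extensions of a deeper initial segment $\fat{x}'$, apply Lemma~\ref{lem:fatpigeonhole1}, then translate back), but the colouring you feed into the pigeonhole is too coarse, and this creates a genuine gap at the crucial step where you argue the monochromatic colour must be~$1$.

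Concretely, your map $\pi$ sends $\fat{x}'\conc h$ to $\fat{x}\conc(h\circ F)$ for one \emph{fixed} function $F$ built from the canonical extensions of $u_n,\dots,u_{m-1}$. After Lemma~\ref{lem:fatpigeonhole1} gives a monochromatic line $(g,g')$ and Observation~\ref{obs:large} hands you some $\fat{y}^*\in\mathcal O$ with $\fat{y}^*\leq\fat{V}'$ and $\depth_{\fat{V}'}(\fat{y}^*)=m+1$, you assert that $\fat{y}^*=\pi(\fat{w})$ for some $\fat{w}$ on the line. This is exactly where the argument breaks. Write $\fat{y}^*=\fat{x}\conc z^*$. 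Proposition~\ref{prop:shape-split} does let you peel $z^*$ as $z^*=h\circ\bigl(z^*|_{\fatc{U}_0(m)}\bigr)$, but the inner factor $z^*|_{\fatc{U}_0(m)}$ is determined by $z^*$, not by $\fat{U}_0$, and it has no reason to coincide with your fixed~$F$. Indeed the constraint $\fat{y}^*\leq\fat{V}'$ only forces the image of $(z^*)^+$ to lie inside the set $\lift{V'}{n}{m+1}$; for each $a\in T(\fatc{U}_0(n))$ there are in general many $\fat{U}_0$-visible nodes at level $\fatc{U}_0(m)$ above $a$ that $z^*(a)$ could pass through, so $z^*|_{\fatc{U}_0(m)}$ ranges over many possible ``intermediate selectors'' $h'$, not just $F$. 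Consequently $\fat{y}^*$ need not lie in the range of $\pi$, and $\chi(\fat{w})=1$ does not follow; symmetrically, a contradiction argument ``if the constant is~$0$ then $\fat{V}'$ avoids $\mathcal O$'' also fails, because $\chi\equiv 0$ on the line only says the $\pi$-images avoid $\mathcal O$, not that every $\fat{y}\leq\fat{V}'$ of the right depth does.

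The paper resolves this exact issue by colouring each $\fat{y}'=\fat{x}'\conc h$ not by a single bit but by the whole \emph{function} $h'\mapsto\bigl[\,\fat{x}\conc(h\circ h')\in\mathcal O\,\bigr]$ over the finite set $\mathcal C$ of all admissible intermediate selectors $h'\in\AM^{\fatc{U}(n)}_1(\fatc{U}'(n'))$. This tracks every possible factorization simultaneously, so that when Observation~\ref{obs:large} produces $\fat{y}^*$, it is of the form $\fat{x}\conc(h\circ h')$ for \emph{some} $h'\in\mathcal C$, and the monochromatic colour $c\colon\mathcal C\to\{0,1\}$ cannot be identically~$0$; one then sets $g=h\circ h'$ for that witnessing $h'$. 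To repair your proof you would need to replace your scalar colouring by this $\mathcal C$-indexed one (and then your ``convert back'' step collapses into choosing $g=h\circ h'$ rather than pre-composing with the fixed canonical $F$).
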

\begin{proof}
	Fix $\fat{U}$, $\fat{x}$, $n$, and $\mathcal O$ as in the statement. Use Observation~\ref{obs:large} to get $\fat{U}'=(u'_i)_{i \in \omega} \in [n,\fat{U}]$ and $n'\geq n$.
	Put $$
		\begin{aligned}
			\mathcal C & =\{h'\in \AM^{\fatc{U}(n)}_1(\fatc{U}'(n')): \fat{U}'\star (u'_{n'}\circ h')\hbox{ is defined}\}, \\
			\fat{x'}   & =\fat{x}\conc (u'_{n})\conc (u'_{n+1})\conc \cdots \conc (u'_{n'-1}).
		\end{aligned}
	$$
	Next we will apply Lemma~\ref{lem:fatpigeonhole1} on $\fat{U}'$, $\fat{x}'$, and a finite colouring $\chi$ of $\mathcal{G}^{\fat{x}'}$ which we now define.
	For a given $\fat{y}'={\fat{x}'}\conc h\in \mathcal{G}^{\fat{x}'}$ such that $\fat{y}'\leq \fat{U}'$, its colour $\chi(\fat{y}')$ is a function $\chi(\fat{y}')\colon \mathcal C\to \{0,1\}$ defined by putting $\chi(\fat{y}')(h')=1$ if and only if $\fat{x}\conc (h\circ h')\in \mathcal O$.

	Lemma~\ref{lem:fatpigeonhole1} gives us a line $(h,g')$ and a colour $c\colon \mathcal C\to \{0,1\}$ such that $\chi\restriction_{\mathcal{L}^{\fat{x}',(h,g')}_{\fat{U}'}} = c$. See Figure~\ref{fig:line2}.
	\begin{figure}
		\centering
		\includegraphics[scale=0.833]{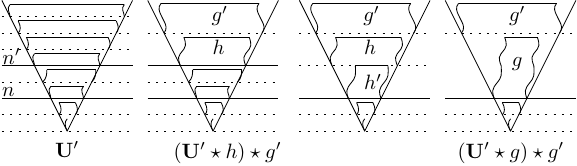}
		\caption{Construction of a line.}
		\label{fig:line2}
	\end{figure}
	We claim that there is $h'\in \mathcal C$ such that $c(h') = 1$. Once we see this, then setting $g = h\circ h'$ and applying Lemma~\ref{lem:4andy} we have $\mathcal{L}^{\fat{x},(g,g')}_\fat{U}\subseteq \mathcal O$ as desired:
	$$\begin{aligned}\mathcal{L}^{\fat{x},(g,g')}_\fat{U} & = \{\fat{x}\conc (g'\circ g^+\circ g''):g''\in \mathcal L^{\fatc{U}(n)}\}                                                  \\
                                                    & = \{\fat{x}\conc (g'\circ (h\circ h')^+\circ g''):g''\in \mathcal L^{\fatc{U}(n)}\}                                        \\
                                                    & \subseteq \{\fat{x}\conc (({h'})^+\circ (g'\circ h^+\circ g''')):g'''\in \mathcal L^{\fatc{U}'(n')}\}\subseteq \mathcal O.
		\end{aligned}
	$$

	If $c(h') = 0$ for every $h'\in \mathcal C$, put $\fat{V}=\fat{U}'\star h$ (this is defined because $h\in \AM^{\fatc{U}'(n')}_1$ and $\fat x'{}\conc h \leq \fat{U}'$). By Proposition~\ref{prop:shape-split}, every $\fat{y}\in \mathcal{G}^\fat{x}$ such that $\fat{y}\leq \fat{V}$ and
	$\depth_{\fat{V}}(\fat{y})=n'$
	has the property that $\fat{y}=\fat{x}\conc (h\circ h')$ for some $h'\in \mathcal C$.  Consequently, $\fat{y}\notin \mathcal O$, which means that we found $\fat{V}\in [n',\fat{U}']$ with no $\fat{y}\in \mathcal O$ such that $\fat y\leq \fat{V}$ and $\depth_{\fat{V}}(\fat{y})=n'+1$, a contradiction.

\end{proof}
\begin{lemma}
	\label{lem:fixlevel}
	For every $\fat{U}\in \mathcal R$, $\fat{x}\in \mathcal {AR}$ such that $n=\depth_\fat{U}(\fat{x})$ is finite,
	and $(\fat{U},\fat{x})$-large set $\mathcal O$ there exist $\fat{V}\in [n,\fat{U}]$ and $g\in \AM^{\fatc{U}(n)}_1$ such that the set
	$N$ of all words $g'$ such that $\mathcal{L}^{\fat{x},(g,g')}_\fat{U}\subseteq \mathcal O$ is $(\fat{V}\star g,\fat{x}\conc g)$-large.
\end{lemma}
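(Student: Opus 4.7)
The plan is to argue by contradiction via a fusion, with Lemma~\ref{lem:largepigeon} providing the final contradiction. Suppose the conclusion fails. Then for every $\fat{V}\in[n,\fat{U}]$ and every $g\in\AM^{\fatc{U}(n)}_1$ with $\fat{V}\star g$ defined, the set $N(g)=\{g':\mathcal{L}^{\fat{x},(g,g')}_\fat{U}\subseteq\mathcal O\}$ fails to be $(\fat{V}\star g,\fat{x}\conc g)$-large, so one can fix a witness $\fat{W}(\fat{V},g)\in[n+1,\fat{V}\star g]$ such that no $g'$ with $\fat{x}\conc g\conc g'\leq\fat{W}(\fat{V},g)$ lies in $N(g)$. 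The aim is to amalgamate all these witnesses into a single $\fat{V}^{*}\in[n,\fat{U}]$ within which every line $(g,g')$ fails the condition $\mathcal{L}^{\fat{x},(g,g')}_{\fat{U}}\subseteq\mathcal O$.

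For the fusion, enumerate $\AM^{\fatc{U}(n)}_1$ as $(g_k)_{k\in\omega}$ and build $\fat{U}=\fat{V}_0\geq\fat{V}_1\geq\cdots$ in $[n,\fat{U}]$ with the coherence $r_{n+k}(\fat{V}_k)=r_{n+k}(\fat{V}_{k+1})$, so that the frozen initial segments strictly grow and converge. At stage $k$, provided $g_k$ is already compatible with the frozen segment $r_{n+k}(\fat{V}_k)$, feed $(\fat{V}_k,g_k)$ into the contradiction hypothesis to produce $\fat{W}_k=\fat{W}(\fat{V}_k,g_k)$, and take $\fat{V}_{k+1}\leq\fat{V}_k$ to lie inside $\fat{W}_k$ past the part where $g_k$ is situated, while preserving one more level of the frozen segment; if $g_k$ is not yet compatible, simply extend the frozen segment by one level. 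Let $\fat{V}^{*}$ be the resulting fusion limit; the fat subtree analogue of~\ref{item:Nmonoid}, secured through the correspondence of Section~\ref{sec:fatshape}, guarantees $\fat{V}^{*}\in[n,\fat{U}]$.

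Since any $\fat{W}'\in[n,\fat{V}^{*}]$ also belongs to $[n,\fat{U}]$, the set $\mathcal O$ remains $(\fat{V}^{*},\fat{x})$-large. Lemma~\ref{lem:largepigeon} applied to $(\fat{V}^{*},\fat{x},\mathcal O)$ produces a line $(g,g')$ inside $\fat{V}^{*}$ with $\mathcal{L}^{\fat{x},(g,g')}_{\fat{V}^{*}}\subseteq\mathcal O$. By the explicit description in Lemma~\ref{lem:4andy}, the set $\mathcal{L}^{\fat{x},(g,g')}$ depends only on $g$ and $g'$ and not on the ambient fat subtree, so in fact $\mathcal{L}^{\fat{x},(g,g')}_{\fat{U}}\subseteq\mathcal O$, i.e.\ $g'\in N(g)$. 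This $g$ is some $g_k$ from the enumeration, and the construction of $\fat{V}^{*}$ places $\fat{V}^{*}\star g_k$ inside $\fat{W}_k$, so no such good $g'$ can lie inside $\fat{V}^{*}\star g_k$, a contradiction.

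The main obstacle I anticipate is the fusion bookkeeping. One must arrange the enumeration so that every $g$ ultimately compatible with $\fat{V}^{*}$ is processed at a stage where $g$'s image already lies in the frozen initial segment, and $\fat{V}_{k+1}$ must be chosen to simultaneously extend the frozen segment by one level and fit inside $\fat{W}_k$ beyond $g_k$'s reach. This coordination is standard but delicate, closely paralleling the fusion arguments in Carlson--Simpson type proofs; all the required closure of $\mathcal R$ under fusion limits is provided by~\ref{item:Nmonoid} lifted via the correspondence of Section~\ref{sec:fatshape}.
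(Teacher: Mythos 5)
Your proposal follows essentially the same route as the paper: enumerate $\AM^{\fatc{U}(n)}_1$, perform a fusion in which each stage either witnesses the desired large set (so we stop) or diagonalizes against $N(g_k)$, pass to the limit $\fat V^*$, and then contradict via Lemma~\ref{lem:largepigeon} combined with the observation from Lemma~\ref{lem:4andy} that $\mathcal L^{\fat x,(g,g')}$ does not depend on the ambient fat subtree. The only place you explicitly punt --- the ``delicate bookkeeping'' guaranteeing that the $g$ produced at the limit was actually processed at some finite stage --- is exactly where the paper's two concrete choices do the work: the enumeration is ordered so that $\tilde g_i(\fatc U(n))$ is non-decreasing, and the coherence condition at stage $i$ fixes the initial segment of $\fat U_{i-1}$ up to $\depth_{\fat U_{i-1}}(\fat x\conc g_i)$. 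Together these ensure that if $\fat x\conc g_i\leq \fat V^*$, then already $\fat x\conc g_i\leq \fat U_{i-1}$ (by transitivity of $\leq$, since $\fat V^*\leq\fat U_{i-1}$), so $g_i$ was genuinely diagonalized against at stage $i$, and the contradiction goes through. So you have the right plan and correctly identify the one step needing care; filling it in with the ordering trick would make the proof match the paper's almost verbatim (the paper also handles the ``$g_i$ not compatible'' case by simply setting $\fat U_i=\fat U_{i-1}$ rather than moving the frozen segment, which is a harmless cosmetic difference).
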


\begin{proof}
	Enumerate $\AM^{\fatc{U}(n)}_1$ as $g_1,g_2,\ldots$ so that $\tilde{g}_i(\fatc{U}(n))\leq \tilde{g}_j(\fatc{U}(n))$ for all $i$ and $j$ with $0<i<j$.
	We will now try (and eventually fail) to inductively construct an infinite sequence $(\fat{U}_i)_{i \in \omega}$ of fat subtrees such that $\fat{U}=\fat{U}_0 \geq \fat{U}_1 \geq \fat{U}_2 \geq \cdots$ and the following two conditions are satisfied for every $i\geq 1$:
	\begin{enumerate}
		\item If $\fat{x}\conc g_{i}\not\leq \fat{U}_{i-1}$ then $\fat{U}_{i}=\fat{U}_{i-1}$.
		\item If $\fat{x}\conc g_{i}\leq \fat{U}_{i-1}$ then:
		      \begin{enumerate}[label=(\roman*)]
			      \item\label{item:TN1} There is no $g'$ such that $\mathcal L^{\fat{x},(g_i,g')}_{\fat{U}_{i-1}}\subseteq \mathcal O$.
			      \item\label{item:TN2} $\fat{U}_i\restriction_{m}=\fat{U}_{i-1}\restriction_m$ where $m=\depth_{\fat{U}_{i-1}}(\fat{x}\conc g_i)$.
		      \end{enumerate}
	\end{enumerate}
	We start by setting $\fat{U}_0=\fat{U}$.  Note that Conditions~\ref{item:TN1} and \ref{item:TN2} do not apply to $\fat{U}_0$.

	Now, assume that $\fat{U}_{i-1}$ has been constructed for some $i > 0$. If $\fat{x}\conc g_i\not\leq \fat{U}_{i-1}$ we can put $\fat{U}_{i}=\fat{U}_{i-1}$.
	If $\fat{x}\conc g_i\leq \fat{U}_{i-1}$ then put $N_i$ to be set of all shape-preserving functions $g'$ such that $(g_i,g')$ satisfies the conclusion
	of Lemma~\ref{lem:largepigeon} for $\fat{U}_{i-1}$ and $\mathcal O$. We consider two cases:
	\begin{enumerate}
		\item If $N_i$ is $(\fat{U}_{i-1}\star g_i,\fat{x}\conc g_i)$-large, then we can stop the induction and put $\fat{V} = \fat{U}_{i-1}$ and $g = g_i$.
		\item If $N_i$ is not $(\fat{U}_{i-1}\star g_i,\fat{x}\conc g_i)$-large let $\fat{V}_i\in [\fat{x}\conc g_i,\fat{U}_i \star{g}_i]$ be a fat subtree avoiding $N_i$.
			Construct $\fat{U}_i$ as the concatenation of $\fat{U}_{i-1}\restriction_{\depth_{\fat{U}_{i-1}}(\fat{x}}\conc g_i)$ and the sequence created form $\fat{V}_i$ by removing the first $n+1$ levels.
		      Clearly $\fat{U}_{i}$ satisfies \ref{item:TN2}.
		      Notice that $\fat{U}_i\star{g}_i=\fat{V}_i$, and since it avoids $N_i$, we have \ref{item:TN1}.
	\end{enumerate}

	Suppose that we finished constructing $\fat{U}_i$ for every $i\in \omega$
	without ever obtaining a large set.
	By~\ref{item:TN2}, the sequence $(\fat{U}_i)_{i \in \omega}$ has a limit $\fat{V}\leq \fat{U}$.
	Since $\mathcal O$ is $(\fat{U},\fat{x})$-large, it is also $(\fat{V},\fat{x})$-large, and thus Lemma~\ref{lem:largepigeon} gives a line $(g,g')$
	which contradicts condition~\ref{item:TN1} for $i$ such that $g = g_i$.
\end{proof}
\begin{proof}[Proof of Proposition~\ref{prop:A4}]
	Let $\fat{U}$, $\fat{x}$ and $\mathcal O$ be as in the statement of the proposition.
	Notice that for every subset $\mathcal O'$ of $\mathcal{AR}_{|\fat x|+1}$ it holds that either $\mathcal O'$ or its complement is $(\fat{U},\fat{x})$-large.
	Without loss of generality assume that $\mathcal O$ is $(\fat{U},\fat{x})$-large. Put $\mathcal O_0=\mathcal O$,
	and repeatedly apply Lemma~\ref{lem:fixlevel} to obtain sequences $\fat{U}=\fat{U}_0\geq \fat{U}_1\geq \ldots$ and $\fat{x} = \fat{x}_0 \sqsubseteq \fat{x}_1 \sqsubseteq \cdots$ and $\mathcal O_0, \mathcal O_1, \cdots$ such that $\mathcal O_i$ is $(\fat{U}_i,\fat{x}_i)$-large as follows:

	For every $i\in \omega$, apply Lemma~\ref{lem:fixlevel} for $\fat{U}_i$, $\fat{x}_i$ and $\mathcal O_i$ to obtain $\fat{V}$, $g$ and $N$. Put $\fat{U}_{i+1} = \fat{V}\star g$, $\fat{x}_{i+1} = \fat{x}_i\conc g$ and $\mathcal O_{i+1} = N$.

	Finally, putting $\fat{V}$ to be the limit of $(\fat{U}_i)_{i\in \omega}$ finishes the proof.
\end{proof}

\section{Proof of a Ramsey theorem for shape-preserving functions}

\label{sec:shapeprof}
Throughout this section, $\mathcal{R}$ and $\M$ are equipped with their respective Ellentuck topologies unless specified otherwise. Let us write $\pi\colon \mathcal{R}\to \M$ for the map given by $\pi(\fat{U}) = \fatf{U}$ (Definition~\ref{defn:fatf}).
\begin{lemma}
	\label{lem:open}
	The map $\pi$ is continuous and open.
\end{lemma}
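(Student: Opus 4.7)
The plan is to reduce both continuity and openness to a single factorization principle together with a short direct computation about initial segments.

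The key preliminary step I would establish is: if $\fat{V'} \leq \fat{V}$ in $\mathcal R$, then there exists $H \in \M$ with $\fatf{V'} = \fatf{V} \circ H$. The range of $\fatf{V'}$ lies in $\fatn{V}$, since $\fatf{V'} \oleq \fat{V'}$ combined with the observation following Definition~\ref{def:subfatsubtrees} gives $\fatf{V'} \oleq \fat{V}$. Using that $\fatf{V}$ is an injective shape-preserving function onto $\fatn{V}$ in our setting, the formula $H := \fatf{V}^{-1} \circ \fatf{V'}$ is well-defined; verifying $H$ is shape-preserving uses Proposition~\ref{prop:shape-pres} applied to both $\fatf{V'}$ and $\fatf{V}$, with weak $\S$-preservation being the main subtle point, and membership $H\in\M$ follows from~\ref{item:Nmonoid}. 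I also need a short companion observation: inspection of Definition~\ref{defn:fatf} shows that $\fatf{V'}\restriction_{T({\leq}k)}$ depends only on $v'_0,\dots,v'_k$ (because $v'_i{}^+$ is the identity on the relevant initial levels once $i>k$); consequently, whenever $\fat{x}$ is an initial segment of $\fat{V'}$ of length at least $k+1$, this restriction depends only on $\fat{x}$. Write $f_{\fat{x}}$ for the common restriction to $T({\leq}|\fat{x}|-1)$.

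Continuity is then immediate: given a basic open $[f,F] \subseteq \M$ and $\fat{V} \in \pi^{-1}([f,F])$ with $\fatf{V} = F\circ G$, choose $n$ so that $\mathrm{dom}(f) \subseteq T({\leq}n-1)$ and set $\fat{x} = \fat{V}\restriction_n$. For any $\fat{V'} \in [\fat{x},\fat{V}]$, the companion observation gives $\fatf{V'}\restriction_{T({\leq}n-1)} = f_{\fat{x}} = \fatf{V}\restriction_{T({\leq}n-1)}$, so $\fatf{V'}$ extends $f$; and the factorization principle yields $\fatf{V'} = F\circ(G\circ H) \in [f,F]$. Hence $[\fat{x},\fat{V}] \subseteq \pi^{-1}([f,F])$.

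For openness I would prove the stronger identity $\pi([\fat{x},\fat{U}]) = [f_{\fat{x}}, \fatf{U}]$, which makes openness transparent. The inclusion $\subseteq$ follows immediately from the factorization principle and the companion observation. The reverse inclusion is the main technical obstacle: given $G = \fatf{U}\circ G' \in [f_{\fat{x}}, \fatf{U}]$, I must construct $\fat{V'} \in [\fat{x},\fat{U}]$ with $\fatf{V'} = G$. I would set $v'_i := x_i$ for $i < |\fat{x}|$ (this is consistent with $G$ extending $f_{\fat{x}}$ by injectivity of $\fatf{U}$) and then iteratively widen $G$ from level $\fatc{x}(|\fat{x}|)$ onward by repeated application of Proposition~\ref{prop:shape-split}, at each step choosing cut levels that lie among those of $\fat{U}$. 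The delicate checks are that these choices are globally consistent, that the resulting shape-preserving function coincides with $G$, and that $\fat{V'} \leq \fat{U}$; the last of these traces back to $G(T) = \fatf{U}(G'(T)) \subseteq \fatn{U}$, which threads through each decomposition step.
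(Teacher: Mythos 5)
Your global strategy matches the paper's: continuity via basic-open preimages, openness via the identity $\pi\bigl[[\fat{x},\fat{U}]\bigr]=[\fatf{x},\fatf{U}]$, with the reverse inclusion handled by iteratively widening $G$ by Proposition~\ref{prop:shape-split} while keeping cut levels among those of $\fat{U}$ and then passing to the limit. Your companion observation about $\fatf{V'}\restriction_{T({\leq}k)}$ depending only on the first $k+1$ entries is correct and is implicitly used in the paper as well. Making the factorization principle an explicit preliminary is a reasonable instinct — the paper is quite terse about exactly this point.

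The gap is in your argument for that preliminary. You set $H=\fatf{V}^{-1}\circ\fatf{V'}$ and claim $H\in\M$ ``follows from~\ref{item:Nmonoid}.'' But~\ref{item:Nmonoid} only asserts that $\M$ is a \emph{closed monoid} — closed under composition and limits — and gives no left cancellation; from $\fatf{V}\in\M$ and $\fatf{V}\circ H\in\M$ you cannot conclude $H\in\M$. Even the weaker claim that $H$ is shape-preserving is not a direct application of Proposition~\ref{prop:shape-pres}: weak $\S$-preservation for $H$ would require pulling the inequality $\S(\fatf{V}(H(a)),\fatf{V}(H(\bar{p})),c)\preceq \fatf{V}(H(\S(a,\bar{p},c)))$ back through $\fatf{V}$, but the node on the left generically sits at a level that $\fatn{V}$ skips, so $\fatf{V}^{-1}$ has nothing to say about it. The honest route to the factorization is to build $H$ level by level from restrictions of compositions of the $v_i^{+}$ — using that each one-level piece of $\fat{V'}$ lands inside a lift of $\fat{V}$, so it can be split by Proposition~\ref{prop:shape-split} — and then take the limit via~\ref{item:Nmonoid}. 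That is essentially the same machinery you already invoke for the reverse inclusion, so the gap is repairable without circularity; but as written, the formal-inverse definition of $H$ does not establish either shape-preservation or membership in $\M$, and both your continuity argument and the forward inclusion of openness currently rest on it.
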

\begin{proof}
	To see continuity, consider a basic open set $\mathcal{X}=[f,F]\subseteq \M$. Then $\pi^{-1}(X)$ is the union of all basic open sets $[\fat{x},\fat{U}]$ for all choices of $\fat{x}$ and $\fat{U}$ satisfying $\fatf{x}=f$  and $\fatf{U}=F$, hence it is open in $\mathcal R$.

	To see that $\pi$ is open, fix a basic clopen set $\mathcal{Y} = [\fat{x}, \fat{U}]\subseteq \mathcal{R}$. We claim that $\pi[[\fat{x}, \fat{U}]] = [\fatf{x}, \fatf{U}]$. The forward inclusion is clear. For the reverse, fix $F\in [\fatf{x}, \fatf{U}]$, and write $F = \fatf{U}\circ G$ for some $G\in \M$. We claim that there is $\fat{V}\in [\fat{x}, \fat{U}]$ with $\fatf{V} = F = \fatf{U}\circ G$, which will prove the reverse inclusion.

	To see this, we define a sequence $(\fat{U}_i)_{i\in m}$ of fat subtrees as follows:
	$$
		\fat{U}_0=\fat{U}\star (\fatf{U} \circ (G\restriction_{T({\leq}0)}))
	$$
	and
	$$
		\fat{U}_i=\fat{U}_{i-1}\star f_i
	$$
	for every $0<i<m$ where $f_i\in \AM^{\fatc{U}_{i-1}(i-1)}_1$ is obtained using Proposition~\ref{prop:shape-split} decomposing $\fatf{U}\circ G\restriction_{T({\leq}i)}$ on level $\fatc{U}_{i-1}(i-1)$.

	Notice that $\fat{U}_{i+1}\restriction_i = \fat{U}_i\restriction_i$ and $\fat{U}_{i+1}\leq \fat{U}_{i}$ for every $i$, and we put $\fat{V}'$ to be the limit of the sequence $(\fat{U}_i)_{i\in m}$. Since $\fat{U}_0\leq \fat{U}$, we get that $\fat{V}' \leq \fat{U}$. By simply replacing a suitable initial segment of $\fat{V}'$ with $\fat{x}$, we obtain $\fat{V}\leq \fat{U}$ with $\fat{x}\sqsubseteq \fat{V}$.

	From the construction it follows that $F_{\fat{U}_i} \restriction_{T({\leq} i)} = \fatf{U}\circ G\restriction_{T({\leq} i)}$ for every $i$. Hence, indeed, $\fatf{V} = \fatf{U}\circ G$.
\end{proof}

\begin{proof}[Proof of Theorem~\ref{thm:shaperamseyspace}]
	Let $\mathcal X$ be be a meager subset of $\M$. By Lemma~\ref{lem:open}, $\pi^{-1}(\mathcal X)$ is also meager.
	By Theorem~\ref{thm:fatramseyspace}, $\pi^{-1}(\mathcal X)$ is Ramsey null.  We need to show that $\mathcal X$ is also Ramsey null.
	Choose $[f,F]\neq \emptyset$. Let $f'\in \AM$ be such that $f=F\circ f'$ and
	$n=\depth_F(f)$.
	Let $\fat{U}$ be a fat subtree such that $\fatf{U}=F$ given by Observation~\ref{obs:shapetofat}.

	Arguing as in the proof of Lemma~\ref{lem:open}, there is $\fat{x}$ such that $\fatf{x} = \fatf{U} \circ f' = f$. Notice that $\depth_\fat{U}(\fat{x})=n$. Since $\pi^{-1}(\mathcal X)$ is Ramsey null, we know that there exists $\fat{V}\in [n,\fat{U}]$ such that $[\fat{x},\fat{V}]\cap \pi^{-1}(\mathcal {X})=\emptyset$.
	It follows that $[f,\fatf{V}]\subseteq [f,F]$ is non-empty and $[n,\fatf{V}]\cap \mathcal X=\emptyset$.

	Now let $\mathcal X$ be a set with the property of Baire. By Lemma~\ref{lem:open}, $\pi^{-1}(\mathcal{X})$ also has the property of Baire. By Theorem~\ref{thm:fatramseyspace}, we know that $\pi^{-1}(\mathcal X)$ is Ramsey.
	To see that $\mathcal X$ is also Ramsey, choose $[f,F]$. Just like in the previous case, let $f'\in \AM$ be such that $f=F\circ f'$ and $n=\depth_F(f)$.
	Obtain $\fat{U}$ with $\fatf{U}=F$ and $\fat{x}$ with $\fatf{x}=f$.
	Because $\pi^{-1}(\mathcal X)$ is Ramsey, we know that there exists $\fat{V}\in [n,\fat{U}]$ such that either $[\fat{x},\fat{V}]\subseteq \pi^{-1}(\mathcal X)$
	or $[\fat{x},\fat{V}]\cap \mathcal \pi^{-1}(\mathcal X) =\emptyset$. Put $F'=\fatf{V}$. It follows that $F'\in [n, F]$ and either $[f,F']\subseteq \mathcal {X}$ or $[f,F']\cap \mathcal {X}=\emptyset$.
\end{proof}
Theorem~\ref{thm:shaperamseyN} follows directly from Theorem~\ref{thm:shaperamseyspace} since every colouring of $\AM^n_k$ can be turned into a colouring of $\M^n$ by assigning the colour to the elements
of $\M^n$ according to their initial segment in $\AM^n_k$.  Corollary~\ref{cor:shaperamseyN1} follows by compactness.
Corollary~\ref{cor:shaperamseyN2} may deserve a short proof:
\begin{proof}[Proof of Corollary~\ref{cor:shaperamseyN2}]
	Fix $n,k,m,r$ as in the statement. Let $N$ be given by Corollary~\ref{cor:shaperamseyN1} and let $\chi$ be a colouring of $\AM^n_k(N)$.
	Given $f\in \AM^n_k$ put $$f'=\underbrace{F_0^{\tilde{f}(n+k)}\circ F_0^{\tilde{f}(n+k)}\circ \dots \circ F_0^{\tilde{f}(n+k)}}_{N-\tilde{f}(n+k)\hbox{ times}}\circ f,$$
	where $F_0^{\tilde{f}(n+k)}$ is the duplication function given by \ref{item:Nduplication} of Definition~\ref{def:sntree}.
	Define a colouring $\chi'$ of $\AM^n_k$ by putting $\chi'(f)=\chi(f')$.

	Let $F$ be the function given by Corollary~\ref{cor:shaperamseyN1}. Let
	$$F'=\underbrace{F_0^{\tilde{f}(n+m)}\circ F_0^{\tilde{f}(n+m)}\circ \dots \circ F_0^{\tilde{f}(n+m)}}_{N-\tilde{F}(n+m)\hbox{ times}}\circ F.$$
	$F'$ satisfies the conclusion of Corollary~\ref{cor:shaperamseyN2}.
\end{proof}
\section{Envelopes and embedding types}
\label{sec:envelope}
The following concept often plays a crucial role in applications of Ramsey-type theorems on trees~(see e.g. \cite[Section 6]{todorcevic2010introduction} or \cite{dodos2016}).
\begin{definition}[Envelope]
	Let $\SNtree$ be an $(\S,\M)$-tree and $X\subseteq T$ be a set.
	A function $f\in \M\cup \AM$ is called an \emph{envelope of $X$} if there exists set $Y\subseteq T$ such that $f[Y]=X$.
	In this setting, $Y$ is called the \emph{embedding type of $X$ in $f$}.
	For finite sets $X$, the \emph{height} of envelope $f$ is $\max\{\ell(a):a\in Y\}+1$.
\end{definition}
It is clear that the identity is an envelope of every set. However, it is often desirable to find an envelope minimizing
the height, a \emph{minimal envelope}. In this section we give a general algorithm and prove that under a mild additional assumption (which is always satisfied in the applications) it produces a minimal envelope of every set.

We start by determining some nodes that must be in the image of every envelope of set $X$.
\begin{definition}[Meet-closed set]
	Let $\SNtree$ be an $(\S,\M)$-tree and $X\subseteq T$.  We say that $X$ is  \emph{meet-closed}
	if for every $a,b\in X$ also $a\meet b\in X$.
\end{definition}
\begin{definition}[Parameter-closed set]
	Let $\SNtree$ be an $(\S,\M)$-tree, let $X\subseteq T$ be a set, and let $I\subseteq \omega$ be set of levels.  We say that $X$ is \emph{parameter-closed over $I$}
	if for every $i\in I$, $a\in X\cap T({>}i)$ and $b\in \Dp(a|_{i+1})$ it holds that $b\in X$.

	We say that $X$ is \emph{parameter-closed} if it is parameter-closed over $L(X)=\{\ell(a):a\in X\}$.
\end{definition}
Given $X\subseteq T$ and $I\subseteq \omega$, we denote by $\Cl_I(X)$ the inclusion minimal subset of $T$
containing $X$ that is both meet-closed and parameter-closed over $I$. It is easy to see that $\Cl_I(X)$ is uniquely defined.
\begin{algorithm}[Construction of an minimal envelope]
	\label{envelope}
	Let $\SNtree$ be an $(\S,\M)$-tree and let $X\subset T$ be a finite set. Put $\ell=\max \{\ell(a):a\in X\}$. Construct subsets $\{\ell\}=I_\ell\subseteq I_{\ell-1}\subseteq\cdots \subseteq I_{0}\subseteq \omega$
	of levels and functions $F_\ell,F_{\ell-1},\ldots, F_0\in \M$ by decreasing induction.

	\begin{enumerate}
		\item Put $I_\ell=\{\ell\}$ and $F_\ell$ to be the identity $\mathrm{Id}:T\to T$.

		\item Now assume that $I_{i+1}$ and $F_{i+1}$ are constructed for some $i< \ell$.
		      Level $i$ is \emph{interesting} if at least one of the following is satisfied:
		      \begin{enumerate}[label=(I\arabic*)]
			      \item\label{item:I1} \emph{Level $i$ contains a node}: There exists $a \in \Cl_{I_{i+1}}(X)$ such that $\ell(a)=i$,
			      \item\label{item:I2} \emph{Level $i$ contains a meet}: There exist $a,b \in \Cl_{I_{i+1}}(X)$ such that $\ell(a\meet b)=i$.
			      \item\label{item:I3} \emph{Level $i$ can not be skipped}: Assume that \ref{item:I1} and \ref{item:I2} are not satisfied. Put $S_i=\{a|_i:a\in \Cl_{I_{i+1}}(X)\cap T({>}i)\}$ and construct function $f_i\colon S_i\to T(i+1)$ sending $a|_i\mapsto a|_{i+1}$
			      (by our assumption such $f_i$ exists and is well-defined).
			      We say that level $i$ \emph{can not be skipped} if $f_i$ does not extend to a function in $\M$ skipping only level $i$.
		      \end{enumerate}
		      If $i$ is not interesting, let $H_i\in \M$ be some function extending $f_i$ constructed in the discussion of \ref{item:I3}. Put
		      $$
			      \begin{aligned}
				      F_i & =\begin{cases}
					             F_{i+1}          & \hbox{ if $i$ is interesting,}     \\
					             F_{i+1}\circ H_i & \hbox{ if $i$ is not interesting.}
				             \end{cases}  \\
				      I_i & =\begin{cases}
					             I_{i+1}\cup \{i\} & \hbox{ if $i$ is interesting,}     \\
					             I_{i+1}           & \hbox{ if $i$ is not interesting.}
				             \end{cases}
			      \end{aligned}
		      $$
	\end{enumerate}

	The output of this algorithm is the pair $(F=F_0, I=I_0)$. The set $I$ is called the set of  \emph{interesting levels} of $X$. Below we will prove that, under a minor assumption, $F$ is a minimal envelope of $X$ of height $|I|$.
\end{algorithm}

Notice that $F$ is not uniquely determined because the choice of $H_i$ in the algorithm is not uniquely specified.
The main result of this section is the following proposition.
\begin{prop}
	\label{prop:envelopes}
	Let $\SNtree$ be an $(\S,\M)$-tree. Assume that the following property is satisfied:
	\begin{enumerate}[label=(E\arabic*)]
		\item \label{item:E1}For every function $F\in \M$ skipping only one level and for every $a\in T$, $\bar{p}\in T^{\leq \omega}$ and $c\in \Sigma$
		      such that $\S(F(a),F(\bar{p}),c)$ is defined, $\S(a,\bar{p},c)$ is also defined.
	\end{enumerate}
	Given a finite set $X\subset T$, Algorithm~\ref{envelope} will produce a minimal envelope $F$ of $X$ with a set $I$ of interesting levels of $X$.
	Moreover:
	\begin{enumerate}
		\item The height of envelope $F$ is $|I|$ and $I=\tilde{F}[\omega]\cap \max \{\ell(a)+1:a\in X\}$.
		\item The set $I$ and the embedding type of $X$ in $F$ are uniquely determined, independent of the particular choice of functions $H_i$.
	\end{enumerate}
\end{prop}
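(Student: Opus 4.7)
The proof rests on the following key lemma: for any envelope $F'\in\M$ of $X$, the image $F'[T]$ contains $\Cl_{\tilde{F'}[\omega]}(X)$. Meet-closure of $F'[T]$ is Proposition~\ref{prop:shape-pres}~\ref{item:meets}. For parameter-closure at a level $j\in\tilde{F'}[\omega]$, write $a=F'(a')\in F'[T]$ with $\ell(a)>j$, set $j'=\tilde{F'}^{-1}(j)$, and apply weak $\S$-preservation to $a'|_{j'+1}=\S(a'|_{j'},\Dp(a'|_{j'+1}),\Dc(a'|_{j'+1}))$; a level count forces equality at level $j+1$, whence~\ref{S2} gives $\Dp(a|_{j+1})=F'(\Dp(a'|_{j'+1}))\subseteq F'[T]$. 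Since $F'[T]$ contains $X$ and is meet- and parameter-closed over $\tilde{F'}[\omega]$, minimality of the closure yields the inclusion.

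\textbf{$F$ is an envelope of height $|I|$.} By~\ref{item:Nmonoid}, $F=F_{0}\in\M$ is a composition of finitely many elements of $\M$. For each $a\in X$ I trace $a$ upward through the construction: at every interesting level no action occurs, while at each non-interesting level $i<\ell(a)$, the function $H_{i}$ extending $f_{i}$ sends $a|_{i}$ to $a|_{i+1}$ because $a\in\Cl_{I_{i+1}}(X)\cap T({>}i)$. This produces a unique $b_{a}\in T$ at the level equal to the position of $\ell(a)$ in $I$ with $F(b_{a})=a$; setting $Y=\{b_{a}:a\in X\}$ gives $F[Y]=X$, so the height is $|I|$. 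The equality $\tilde{F}[\omega]\cap\max\{\ell(a)+1:a\in X\}=I$ then follows immediately from the construction, as $F$ is the identity at interesting levels and each $H_{i}$ skips only its single non-interesting level $i$.

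\textbf{Minimality.} For an arbitrary envelope $F'$ of $X$, I prove $I\cap[i,\ell]\subseteq\tilde{F'}[\omega]$ by downward induction on $i$; the base case $i=\ell$ is immediate, since $X$ contains a node at level $\ell$. For the step, assume $I_{i+1}\subseteq\tilde{F'}[\omega]$ and that level $i$ is interesting. In cases~\ref{item:I1} and~\ref{item:I2} the key lemma together with monotonicity of $\Cl$ gives $\Cl_{I_{i+1}}(X)\subseteq F'[T]$, and the witnessing node or meet at level $i$ forces $i\in\tilde{F'}[\omega]$. In case~\ref{item:I3}, suppose for contradiction $i\notin\tilde{F'}[\omega]$; let $j^{*}$ be the smallest level in $\tilde{F'}[\omega]$ exceeding $i$, and iterate~\ref{item:Ndecomposition} at $n=\tilde{F'}^{-1}(j^{*})$ to decompose $F'|_{T({\leq}n)}=G_{j^{*}-1}\circ\cdots\circ G_{i}\circ F_{1}^{*}|_{T({\leq}n)}$ with each $G_{k}\in\M$ skipping only level $k$. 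For each $a=F'(a')\in\Cl_{I_{i+1}}(X)\cap T({>}i)$, a level-by-level trace identifies $F_{1}^{*}(a'|_{n})$ with $a|_{i}$ and hence $G_{i}(a|_{i})=a|_{i+1}$, so $G_{i}$ extends $f_{i}$, contradicting~\ref{item:I3}.

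\textbf{Uniqueness and the main obstacle.} Uniqueness of $I$ follows because the defining conditions~\ref{item:I1},~\ref{item:I2},~\ref{item:I3} at each level depend only on $\Cl_{I_{i+1}}(X)$ and on whether $f_{i}$ extends within $\M$, which do not reference the choices of $H_{j}$. For uniqueness of the embedding type, the preimage $b_{a}$ sits at the level determined by the position of $\ell(a)$ in $I$, $b_{a}|_{0}=a|_{0}$ by Definition~\ref{def:shape-pres}~\ref{def:shape-pres:item3}, and its remaining $\S$-decomposition is forced by $a$'s successor decomposition at interesting levels; hypothesis~\ref{item:E1} ensures that the pulled-back $\S$-operations are genuinely defined in $T$, so that $Y$ is a well-defined subset of $T$ independent of the $H_{i}$-choices. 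The main obstacle is the~\ref{item:I3} case of the minimality argument, where the iterative decomposition into single-skip factors and the level-by-level identification of $F_{1}^{*}(a'|_{n})$ with $a|_{i}$ must be carried out uniformly across all $a\in\Cl_{I_{i+1}}(X)\cap T({>}i)$.
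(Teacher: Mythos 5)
Your overall architecture matches the paper's: the key lemma that any envelope $F'$ of $X$ covers $\Cl_{\tilde{F'}[\omega]}(X)$ is the paper's Observation~\ref{obs:closure} together with Proposition~\ref{prop:shape-pres}, the envelope property is established by a downward pass over the algorithm, and minimality is obtained by peeling a one-skip factor off a putatively smaller envelope. However, there is a genuine gap in the step ``$F$ is an envelope.'' You justify the claim $a\in F[T]$ only by observing that $H_i$ sends $a|_i$ to $a|_{i+1}$, but that says nothing about the levels of $a$ above $i+1$. To show $\Cl_{I_i}(X)\subseteq F_i[T]$ (given $\Cl_{I_{i+1}}(X)\subseteq F_{i+1}[T]$) one needs an inner induction on $j$ from $i$ up to $\ell(a)$: at each step one pulls back $a|_j$ and $\Dp(a|_{j+1})$ under $H_i$ (using the parameter-closure and the inner inductive hypothesis) and must then know that $\S\bigl(H_i^{-1}(a|_j),\,H_i^{-1}(\Dp(a|_{j+1})),\,\Dc(a|_{j+1})\bigr)$ is actually defined --- which is exactly what hypothesis~\ref{item:E1} provides. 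This is the content of the paper's Lemma~\ref{lem:invariant} and Claim~\ref{cl:envelopes}, and it is the only place where~\ref{item:E1} is genuinely used; the example immediately following the proposition shows that without~\ref{item:E1} the algorithm's $I$ can be strictly too small and the output $F$ fails to be an envelope. You cite~\ref{item:E1} only under ``uniqueness,'' framed as securing well-definedness of the pullback, but it is required for the envelope to exist at all, and the inner induction itself is missing from your write-up.

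Your minimality argument is sound in outline but takes a slightly different, heavier route: you iterate~\ref{item:Ndecomposition} directly to produce a chain $G_{j^*-1},\dots,G_i$ of one-skip factors, which requires checking that $F_1$ agrees with $F'$ below level $n$ at each step so that $i$ stays skipped. The paper instead invokes Proposition~\ref{prop:shape-split} twice (once at $i+1$, once at $i$), which packages exactly the peeling you need and yields the extension $g^+$ of $f_i$ in one stroke. Both routes work; the paper's is cleaner since the iterative bookkeeping is already proved once in Proposition~\ref{prop:shape-split}. The treatment of the~\ref{item:I1}/\ref{item:I2} cases and the uniqueness of $I$ agree with the paper.
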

\begin{example}
	To see that the additional assumption~\ref{item:E1} is necessary, consider the subset $T$ of $\omega^{<\omega}$ of all finite sequences
	$w=(w_i)_{i\in n}$ satisfying $w_i\leq i$ for every $i\in n$.  The tree $(T,\sqsubseteq)$ can be equipped with
	a natural successor operation of concatenation. Put $\Sigma=\omega$
	and define $\S(w,\bar{p},c)=w\conc c$ if and only if $w\conc c\in T$  and $\bar{p}=\emptyset$.

	Consider set $X_w=\{w,w\conc 7\}$ for some $w\in T$ of length at least $7$. And observe that Algorithm~\ref{envelope} will produce $I = \{\lvert w\rvert, \lvert w\rvert + 1\}$, while every minimal envelope of $X_w$ has height $9$
	and the additional levels are not uniquely determined.
\end{example}
We devote the rest of this section to a proof of Proposition~\ref{prop:envelopes}.
The following result is a consequence of
Proposition~\ref{prop:shape-pres} \ref{item:passingnumber} and \ref{item:meets}.
\begin{observation}
	\label{obs:closure}
	If $F\in N$ is an envelope of $X$, then it is also an envelope of $\Cl_I(X)$ for every $I\subseteq \tilde{F}[\omega]$.
\end{observation}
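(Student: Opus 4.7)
The plan is to prove the stronger set-theoretic statement $\Cl_I(X) \subseteq F[T]$; the observation then follows immediately by setting $Y' = F^{-1}[\Cl_I(X)]$, which satisfies $F[Y'] = \Cl_I(X) \cap F[T] = \Cl_I(X)$ and witnesses that $F$ is an envelope of $\Cl_I(X)$.

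To establish $\Cl_I(X) \subseteq F[T]$, I would verify that $F[T]$ contains $X$ (which it does, since $F$ is an envelope of $X$) and is stable under the two operations whose iterated closure defines $\Cl_I(X)$. Stability under meets is immediate from Proposition~\ref{prop:shape-pres}~\ref{item:meets}: given $a = F(a')$ and $b = F(b')$ in $F[T]$, one has $a \meet b = F(a') \meet F(b') = F(a' \meet b') \in F[T]$. Stability under parameters over $I$ is where the hypothesis $I \subseteq \tilde{F}[\omega]$ enters. Fix $i \in I$ and let $n$ be the unique integer with $\tilde{F}(n) = i$. Given $a = F(a') \in F[T]$ with $\ell(a) > i$, level-preservation forces $\ell(a') > n$, so $a'|_{n+1}$ is well-defined. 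Proposition~\ref{prop:shape-pres}~\ref{item:passingnumber} then asserts that $\S(F(a'|_n),\allowbreak F(\Dp(a'|_{n+1})),\allowbreak \Dc(a'|_{n+1}))$ is defined and lies $\preceq F(a'|_{n+1}) \preceq a$. By axiom~\ref{S1} this node sits at level $i+1$ and is an immediate successor of $a|_i = F(a'|_n)$, so it must coincide with the unique ancestor $a|_{i+1}$ of $a$ at level $i+1$. Injectivity (axiom~\ref{S2}) then delivers $\Dp(a|_{i+1}) = F(\Dp(a'|_{n+1})) \subseteq F[T]$, exactly as required.

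The one subtlety worth flagging concerns the case $i+1 \notin \tilde{F}[\omega]$, in which the node $a|_{i+1}$ itself does \emph{not} belong to $F[T]$. Nevertheless, the argument above still produces an $F$-preimage for every element of $\Dp(a|_{i+1})$ (all of which sit at level at most $i$ by~\ref{S1}), and this is precisely what parameter-closure over $I$ demands; $a|_{i+1}$ itself is not added to $\Cl_I(X)$ by the closure, so no hypothesis on $i+1$ is needed. Hence the inclusion $\Cl_I(X) \subseteq F[T]$ holds, completing the proof.
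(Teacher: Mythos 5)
Your proof is correct and takes essentially the same approach the paper sketches: the paper's one-line proof invokes Proposition~\ref{prop:shape-pres}~\ref{item:passingnumber} and \ref{item:meets}, which are exactly the two facts you use to establish that $F[T]$ is parameter-closed over $I$ and meet-closed. You have simply unpacked the argument, and your reduction to $\Cl_I(X)\subseteq F[T]$ together with the handling of the case $i+1\notin\tilde{F}[\omega]$ is the right way to do it.
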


Correctness of Algorithm~\ref{envelope} follows from the following invariant:
\begin{lemma}
	\label{lem:invariant}
	In Algorithm~\ref{envelope},
	for every $i\leq \ell=\max \{\ell(a):a\in X\}$, it holds that $F_i$ is an envelope of $\Cl_{I_i}(X)$ and it is skipping exactly levels $\{i,i+1,\ldots,\ell\} \setminus I_i$.
	Consequently, $I_i=\{j\in \tilde{F}_i[\omega]:i\leq j\leq \ell\}$.
\end{lemma}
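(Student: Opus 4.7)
The plan is to prove the invariant by downward induction on $i$, from $i=\ell$ to $i=0$. The base case is immediate: $F_\ell=\mathrm{Id}$ is an envelope of every set (take the embedding type to be the set itself), it skips no levels, and $\{\ell,\dots,\ell\}\setminus I_\ell=\emptyset$. The ``consequently'' clause $I_i=\{j\in\tilde{F}_i[\omega]:i\leq j\leq\ell\}$ is a direct reformulation of the skipping claim, so it will not need separate proof. Alongside the main invariant, I would maintain the side invariant that $F_i$ is the identity on $T({<}i)$; this is preserved in both subcases because $F_{i+1}$ is the identity on $T({<}i+1)$ and $H_i$ (when used) is the identity on $T({<}i)$ by construction.

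If $i$ is interesting, then $F_i=F_{i+1}$ and $I_i=I_{i+1}\cup\{i\}$. The skipped-levels claim is immediate from the inductive hypothesis since $F_{i+1}$ skips no level $\leq i$. For the envelope claim, the ``consequently'' part of the inductive hypothesis gives $I_{i+1}\subseteq\tilde{F}_{i+1}[\omega]$, and $i\in\tilde{F}_{i+1}[\omega]$ since $i$ is not among the skipped levels; thus $I_i\subseteq\tilde{F}_{i+1}[\omega]$. Applying Observation~\ref{obs:closure} yields that $F_{i+1}$ is an envelope of $\Cl_{I_i}(\Cl_{I_{i+1}}(X))$, which equals $\Cl_{I_i}(X)$ by monotonicity and idempotency of the closure operator.

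If $i$ is not interesting, then $F_i=F_{i+1}\circ H_i$ and $I_i=I_{i+1}$. For the skipped-levels claim, $\tilde{F}_i=\tilde{F}_{i+1}\circ\tilde{H}_i$; since $\tilde{H}_i[\omega]=\omega\setminus\{i\}$ and $\tilde{F}_{i+1}$ fixes $\{0,\dots,i\}$ (by the side invariant), one computes $\tilde{F}_i[\omega]=\tilde{F}_{i+1}[\omega]\setminus\{i\}$, whose intersection with $\{i,\dots,\ell\}$ is $I_i$. For the envelope claim, let $Y_{i+1}:=F_{i+1}^{-1}[\Cl_{I_{i+1}}(X)]$; since $F_{i+1}$ is injective, it suffices to show $Y_{i+1}\subseteq H_i[T]$, for then $Y_i:=H_i^{-1}[Y_{i+1}]$ satisfies $F_i[Y_i]=\Cl_{I_i}(X)$.

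The main obstacle is this last containment. Elements of $Y_{i+1}\cap T({<}i)$ are in $H_i[T]$ because $H_i$ is the identity there, and $Y_{i+1}\cap T(i)=\emptyset$ because $\Cl_{I_{i+1}}(X)\cap T(i)=\emptyset$ (failure of~\ref{item:I1}). For $b\in Y_{i+1}\cap T({>}i)$ with image $c=F_{i+1}(b)\in\Cl_{I_{i+1}}(X)$, I would apply Proposition~\ref{prop:shape-pres}~\ref{item:passingnumber} to $F_{i+1}$ to see that $b|_i=c|_i\in S_i$, so $H_i(c|_i)=f_i(c|_i)=c|_{i+1}$; then by induction on $\ell(c)-i$, using the formula $H_i(\S(a,\bar p,d))=\S(H_i(a),H_i(\bar p),d)$, valid whenever the successor is at level $>i$ because $H_i$ skips only level $i$, together with Proposition~\ref{prop:shape-pres}~\ref{item:passingnumber} for $F_{i+1}$, one lifts the equality level by level through $c$'s predecessors down to $b$ itself and so produces the required preimage. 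This is precisely the step where the choice of $f_i$ and its extendibility (failure of~\ref{item:I3}) do the real work.
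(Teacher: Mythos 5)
Your overall plan coincides with the paper's: downward induction on $i$, the interesting case handled via Observation~\ref{obs:closure}, and in the non-interesting case a reduction to showing $Y_{i+1}:=F_{i+1}^{-1}[\Cl_{I_{i+1}}(X)]\subseteq H_i[T]$ followed by an inner induction on levels. The base case, the interesting case, the skipped-levels computation, the side invariant that $F_i$ is the identity on $T({<}i)$, and the observations that $Y_{i+1}\cap T(i)=\emptyset$ and that $f_i(c|_i)=c|_{i+1}$ are all correct and essentially as in the paper.

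However, there is a genuine gap in the inner induction ``lifting'' step. At the step from $c|_j$ to $c|_{j+1}$ you want to exhibit a preimage of $c|_{j+1}$ under $H_i$ by forming $\S(H_i^{-1}(c|_j),\allowbreak H_i^{-1}(\Dp(c|_{j+1})),\allowbreak\Dc(c|_{j+1}))$ and applying the identity $H_i(\S(a,\bar p,d))=\S(H_i(a),H_i(\bar p),d)$. Two things are missing. First, that identity is only usable once you know the successor $\S(H_i^{-1}(c|_j),H_i^{-1}(\Dp(c|_{j+1})),\Dc(c|_{j+1}))$ is \emph{defined}; the fact that $\S(c|_j,\Dp(c|_{j+1}),\Dc(c|_{j+1}))$ is defined does not by itself give this, and the paper invokes assumption~\ref{item:E1} precisely at this point. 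Without~\ref{item:E1} the lemma as written is actually false, as the $\omega^{<\omega}$ example following Proposition~\ref{prop:envelopes} shows, so this is not a cosmetic omission. Second, you need $\Dp(c|_{j+1})\subseteq H_i[T]$. These parameters need not be predecessors of $c$, so appealing to Proposition~\ref{prop:shape-pres}~\ref{item:passingnumber} for $F_{i+1}$ and ``lifting through $c$'s predecessors'' does not cover them. The paper resolves this by showing that $Y_{i+1}$ is parameter-closed over all levels $\geq i+1$ (using that $\Cl_{I_{i+1}}(X)=\Cl_{I_{i+1}\cup(\omega\setminus(\ell+1))}(X)$ and that $F_{i+1}$ skips exactly $\{i+1,\dots,\ell\}\setminus I_{i+1}$), which puts every parameter of $c|_{j+1}$ back in $Y_{i+1}$ so the inner induction hypothesis applies to it. Your sketch attributes the whole difficulty to the extendibility of $f_i$ (failure of~\ref{item:I3}), but that only supplies the base step $c|_{i+1}\in H_i[T]$; the lifting beyond level $i+1$ needs~\ref{item:E1} and the parameter-closure argument, neither of which appears in the proposal.
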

\begin{proof}
	We proceed by decreasing induction on $i$.  For $i=\ell$ the statement follows trivially, since $F_\ell$ is the identity which is an envelope of every set.

	Now assume that the induction hypothesis is satisfied for $i+1$. If level $i$ is interesting then both statements follow trivially. We now consider the case that level $i$ is not interesting. Let $H_i$ be the function used by the algorithm, that is, $F_i=F_{i+1}\circ H_i$.

	Recall that $H_i$ is a function skipping only level $i$. Using the induction hypothesis we get that, indeed, $F_i$ is skipping exactly levels $\{i,i+1,\ldots,\ell\} \setminus I_i$, and consequently $I_i=\{j\in \tilde{F}_i[\omega]:i\leq j\leq \ell\}$.

	It remains to prove that $F_i$ is an envelope of $\Cl_{I_i}(X)$.	Put $X_{i+1}=F_{i+1}^{-1}[\Cl_{I_{i+1}}(X)]$.

	\begin{claim}\label{cl:envelopes}
		For every $a\in X_{i+1}$ and $i<j\leq \ell(a)$ it holds $a\in H_i[T]$ and $a|_j\in H_i[T]$.
	\end{claim}
	Assuming that Claim~\ref{cl:envelopes} holds, the proof of Lemma~\ref{lem:invariant} is immediate: We have that $I_{i+1} = I_i$, hence $\Cl_{I_{i+1}}(X) = \Cl_{I_{i}}(X)$. By the claim we have, in particular, $a\in H_i[T]$ for every $a\in X_{i+1}$, and so indeed $\Cl_{I_i}(X) \subseteq F_i[T]$.

	It remains to verify Claim~\ref{cl:envelopes}. Note that $H_i$ skips only level $i$. Hence, if $\ell(a) < i$ we have that $a\in H_i[T]$. Note also that there is no $a\in X_{i+1}$ with $\ell(a) = i$, as $F_{i+1}$ is the identity on $T({\leq}i)$, hence we would have $a\in X$ which would mean that level $i$ is interesting by~\ref{item:I1}. So we can assume that $\ell(a) \geq i+1$ and we will proceed by induction on $j$ (note that $a = a|_{\ell(a)}$).

	Let $j=i+1$ and pick an arbitrary $a\in X_{i+1}$ with $\ell(a) \geq j$. Since $F_{i+1}$ is shape-preserving and identity on $T({\leq}i)$, it follows that $a|_{j} \preceq F(a)$, and so $a|_j = F(a)|_j$. The claim then follows from the choice of functions $f_i$ and $H_i$: We have $a|_j = F(a)|_j=H_i(F(a)|_{i})$.

	Now assume that the claim is satisfied for some $j\geq i$. For a given $a\in X_{i+1}\cap T({>}j)$
	we want to verify that $a|_{j+1}\in H_i[T]$.
	Since $\ell = \max(\ell(a) : a\in X)$, it follows that $\Cl_{I_{i+1}}(X) = \Cl_{I_{i+1} \cup (\omega \setminus (\ell+1))}(X)$, and so
	$X_{i+1}=F_{i+1}^{-1}[\Cl_{I_{i+1} \cup (\omega \setminus (\ell+1))}(X)]$.
	By the (outer) induction hypothesis we know that $F_{i+1}$ is skipping exactly levels $\{i+1,\ldots,\ell\}\setminus I_{i+1}$, hence $X_{i+1}=\Cl_{\omega\setminus (i+1)}(X_{i+1})$. This implies that $\Dp(a|_{j+1})$ consists of members of $X_{i+1}$

	By the (inner) induction hypothesis we thus know that $\bar{p}=H_i^{-1}(\Dp(a|_{j+1}))$ is defined, and we have some $b=H_i^{-1}(a|_{j})$.
	By~\ref{item:E1} there is $a'=\S(b,\bar{p},\Dc(a|_{j+1}))$.
	Since $H_i$ is shape-preserving, we have $$H_i(a')\succeq \S(H_i(b),H_i(\bar{p}), \Dc(a|_{j+1}))=\S(a|_j,\Dp(a|_{j+1}),\Dc(a|_{j+1}))=a|_{j+1}.$$
	As $H_i$ skips only level $i$, there is in fact equality, and thus indeed $a|_{j+1}\in H_i[T]$.
\end{proof}
\begin{proof}[Proof of Proposition~\ref{prop:envelopes}]
	Let $X$, $F$ and $I$ be as in the statement.  By Lemma~\ref{lem:invariant}, we know that $F$ is an envelope of height
	$|I|$ and $I=\tilde{F}[\omega]\cap \ell$.
	It is also easy to verify from the algorithm itself that the decision whether a level is interesting does not does not depend on the functions $F_\ell,F_{\ell-1},\ldots, F_0$ constructed.

	\medskip

	It remains to discuss minimality and uniqueness of $I$.  Suppose for a contradiction that there is a set $X$ for which the algorithm produces an
	envelope $F$ of height $n$ while there exists an envelope $F'$ of smaller height $m<n$.

	Let $i$ be the maximal level in $\tilde{F}[n]\setminus \tilde{F'}[m]$. The algorithm considered it interesting for some reason.
	Notice that the set $I_{i+1}$ has the property that $I_{i+1}\subseteq \tilde{F}[n]$ and $I_{i+1}\subseteq \tilde{F'}[m]$.
	If $i$ was deemed interesting by \ref{item:I1} or \ref{item:I2} we know that $\Cl_{I_{i+1}}(X)$
	contains a node of level $i$ and, by Observation~\ref{obs:closure}, we have $i\in \tilde{F}'[m]$ because $F'$
	is also an envelope.

	We thus conclude that level $i$ can not be skipped.
	Let $f_i$ be the function constructed in \ref{item:I3}.
	Let $m'\in \omega$ be the minimal level such that $\tilde{F}'(m')>i$ and put $f=F'\restriction_{T({{\leq} m})}$.
	By an application of Proposition~\ref{prop:shape-split} on $f$ and $i+1$, we have $f'=f|_{i+1}\in \AM$.
	By another application of Proposition~\ref{prop:shape-split}, this time on $f'$ and $i$, we obtain a decomposition $f'=g\circ f'|_i$.
	Notice that $g^+\in \M$ skips only level $i$ and extends $f_i$. A contradiction.
\end{proof}
As a consequence of Algorithm~\ref{envelope} we thus obtain a method useful for characterising minimal envelopes
in a given $(\S,\M)$-tree. Its key ingredient is an understanding of which
functions $f$ constructed in \ref{item:I3} can be extended to functions
in $\M$ skipping only one level.

\section{Applications}
\label{sec:applications}
\subsection{Regularly branching trees and combinatorial cubes}
Most combinatorial applications of Ramsey-type theorems on trees use regularly branching trees $(\Sigma^{\leq n},\sqsubseteq)$ or combinatorial cubes $\Sigma^n$ (for some $n\in \omega$ and finite alphabet $\Sigma$), see e.g. \cite{devlin1979,Nevsetvril1989,Nevsetvril1987,Laflamme2006,PromelBook,Hubicka2016,bhat2016ramsey,Hubicka2020CS,braunfeld2023big}.
The general strategy applied by such proofs is to invent a representation of objects of interest using words in finite alphabet $\Sigma$
in a way so that the subtrees (or embeddings) used by the Ramsey-type theorem preserve key features of this representation.
This transfers colouring of subobjects of interest to colouring of subtrees.

An advantage of our new Ramsey-type theorem  is the flexibility of choice of the monoid $\M$
which can be tailored for a given application.
In this section we formulate a version of our main results which makes such applications quite straightforward.
We discuss how this relates to classical theorems (Milliken tree theorem~\cite{Milliken1979}, Carlson--Simpson theorem~\cite{carlson1984} and Graham--Rothschild theorem~\cite{Graham1971})
and show how a specific choice of monoid $\M$ simplifies the proof of the Abramson--Harrington (or unrestricted Ne\v set\v ril--R\"odl)
theorem~\cite{Abramson1978,Nevsetvril1977}.

\begin{definition}[Boring extensions]
	\label{def:boringext}
	Given finite alphabet $\Sigma$, a \emph{family of boring extensions} is a countable sequence $\mathcal E=(\mathcal E_n)_{n\in \omega}$ of sets $$\mathcal E_n\subseteq \{e \hbox{ is a function }e\colon \Sigma^n\to \Sigma\}$$ satisfying the following two conditions:
	\begin{enumerate}[label=(B\arabic*)]
		\item \emph{Duplication}:
		      \label{item:boring:duplication}
		      For every $m<n$ the set $\mathcal E_n$ contains a function $e^n_m\colon \Sigma^n\to \Sigma$ defined by:
		      $$e^n_m(a)=a_m \hbox{ for every $a\in \Sigma^n$}.$$
		\item \emph{Insertion:}
		      \label{item:boring:insertion}
		      For every $m\leq n$, $e_1\in \mathcal E_m$, $e_2\in \mathcal E_n$ there exists $e_3\in \mathcal E_{n+1}$ such that
		      for every $a\in \Sigma^m$ and $b\in \Sigma^{n-m}$ the following is satisfied:
		      $$e_3(a\conc e_1(a)\conc b)= e_2(a\conc b).$$
	\end{enumerate}
\end{definition}
Every extension $e\in \mathcal E_n$ can be seen as an embedding of $\Sigma^n$ to $\Sigma^{n+1}$ by mapping every $w\in \Sigma^n$ to an element of $\Sigma^{n+1}$ via concatenation: $$w\mapsto w\conc e(w).$$
Each family $\mathcal E$ thus gives us notions of interesting levels and embedding types as follows:
\begin{definition}[Interesting levels]
	Given a finite alphabet $\Sigma$, a family of boring extensions $\mathcal E$ and a set $X\subseteq \Sigma^{{<}\omega}$ we denote by $I_{\mathcal E}(X)$ the set of \emph{interesting levels of $X$}. This is a set of all $\ell\in \omega$ such that there is no $e_\ell\in \mathcal E_\ell$ satisfying for every $a\in X$, $|a| \geq \ell$
	$$(a|_\ell)\conc e_\ell(a|_\ell)\sqsubseteq a.$$
\end{definition}
Notice that it always holds that $\{|w|:w\in X\}\subseteq I_\mathcal E(X)$ and $\{|w\meet w'|:w,w'\in X\}\subseteq  I_\mathcal E(X)$.
\begin{definition}[Embedding type]
	Given a finite alphabet $\Sigma$, a family of boring extensions $\mathcal E$ and a set $X\subseteq \Sigma^{{<}\omega}$ we define the \emph{embedding type of $X$}, denoted $\tau_{\mathcal E}(X)$, to be the set of all words created from words in $X$ by removing all letters with indices not in $I_\mathcal{E}(X)$.
\end{definition}

Given a finite alphabet $\Sigma$, a family of boring extensions $\mathcal E$ and sets $X,Y\subseteq \Sigma^{{<}\omega}$ we put $$\ebinom{Y}{X}=\left\{X'\subseteq Y:\tau_{\mathcal E}\left(X'\right)=\tau_{\mathcal E}\left(X\right)\right\}.$$
(This is the set of all subsets of $Y$ of a given embedding type.)
The Ramsey theorem for colouring sets of a given embedding type can be formulated either for trees or combinatorial cubes:
\begin{theorem}[Colouring sets of given embedding type in a finite tree]
	\label{thm:boring1}
	For every finite alphabet $\Sigma$, family of boring extensions $\mathcal E$, positive integer $r$ and finite sets $X,Y\subseteq \Sigma^{{<}\omega}$
	there exists $N\in \omega$ such that for every $r$-colouring $\chi\colon \ebinom{\Sigma^{{<}N}}{X}\to r$
	there exists $Y'\in \ebinom{\Sigma^{{<}N}}Y$ such that $\chi$ is constant when restricted to $\ebinom{Y'}{X}$.
\end{theorem}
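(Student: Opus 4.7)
The plan is to instantiate the $(\S,\M)$-tree framework on $(\Sigma^{<\omega}, \sqsubseteq)$ so that the shape-preserving monoid $\M$ encodes the family $\mathcal E$ of boring extensions, and then to deduce the theorem from Corollary~\ref{cor:shaperamseyN1}.

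First I would construct the $(\S,\M)$-tree. Take $T = \Sigma^{<\omega}$ with successor operation $\S(w, \emptyset, c) = w\conc c$ defined only for empty parameter sequences, extending Example~\ref{ex:1} to arbitrary $\Sigma$. For every $m \in \omega$ and every $e \in \mathcal E_m$, let $H_e^m$ be the unique shape-preserving function skipping only level $m$ that sends $w \in \Sigma^m$ to $w\conc e(w)$. Let $\M$ be the smallest closed monoid of shape-preserving functions containing the identity and all such $H_e^m$. Then \ref{item:Nmonoid} holds by construction; \ref{item:Nduplication} follows from \ref{item:boring:duplication}, since the required duplication function $F^n_m$ is precisely $H^n_{e^n_m}$; and \ref{item:Ndecomposition} follows from \ref{item:boring:insertion}, which states exactly that inserting a boring extension beneath another boring extension yields a boring extension. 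Assumption \ref{item:E1} of Section~\ref{sec:envelope} is trivially satisfied because $\S$ accepts only empty parameters.

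Next I would verify that the two notions of embedding type coincide: for every finite $X \subseteq \Sigma^{<\omega}$, the set of interesting levels output by Algorithm~\ref{envelope} equals $I_\mathcal E(X)$, and the embedding type of $X$ in its minimal envelope (via Proposition~\ref{prop:envelopes}) coincides with $\tau_\mathcal E(X)$. Conditions \ref{item:I1}--\ref{item:I2} match the meet- and node-detecting parts of the definition of $I_\mathcal E$ directly. For \ref{item:I3}, the one-level-skipping members of $\M$ are by construction exactly the functions $H_e^m$, so the unskippability of a level $i$ is equivalent to the non-existence of $e \in \mathcal E_i$ witnessing boringness. Proposition~\ref{prop:envelopes} then gives minimal envelopes for $X$ and $Y$ of heights $k := |I_\mathcal E(X)|$ and $m := |I_\mathcal E(Y)|$ respectively.

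Then I would apply Corollary~\ref{cor:shaperamseyN1} with parameters $n = 0$, $k$, $m$ and the given $r$ to produce a threshold $N$. Given an $r$-colouring $\chi$ of $\ebinom{\Sigma^{<N}}{X}$, lift it to a colouring $\chi^* \colon \AM^0_k({\leq}N) \to r$ by $\chi^*(g) := \chi(g[\tau_\mathcal E(X)])$, extended arbitrarily where $g[\tau_\mathcal E(X)]$ does not have embedding type $\tau_\mathcal E(X)$. The corollary yields $f \in \AM^0_m({\leq}N)$ making $\chi^*$ constant on $\{f \circ g : g \in \AM^0_k({\leq}m)\}$, the composition understood via the canonical extension of $f$ given by Proposition~\ref{prop:canonical}. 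Setting $Y' := f[\tau_\mathcal E(Y)] \in \ebinom{\Sigma^{<N}}{Y}$ is the natural candidate.

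The main obstacle I anticipate is verifying that every $X' \in \ebinom{Y'}{X}$ admits a factorization $X' = (f \circ g)[\tau_\mathcal E(X)]$ for some $g \in \AM^0_k({\leq}m)$, which is exactly what transfers monochromaticity from $\chi^*$ to $\chi$. The strategy is to run Algorithm~\ref{envelope} on $X'$ inside $Y'$: by the correspondence above, the interesting levels of $X'$ lie inside $\tilde f[\omega]$, and a minimal envelope of $X'$ pulls back through $f$ to the required $g$ with $(f \circ g)[\tau_\mathcal E(X)] = X'$. Once this factorization is in place, constancy of $\chi^*$ on $\{f \circ g : g \in \AM^0_k({\leq}m)\}$ translates directly into constancy of $\chi$ on $\ebinom{Y'}{X}$, completing the proof.
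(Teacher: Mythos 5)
Your proposal follows essentially the same route as the paper's, differing only in how the monoid $\M$ is obtained. The paper defines $\M_{\mathcal E}$ extensionally as the set of all shape-preserving functions $F$ with $I_{\mathcal E}(F[\Sigma^{<\omega}])=\tilde F[\omega]$, and then proves the $(\S,\M)$-tree axioms via the equivalent characterization in Observation~\ref{obs:shapeboring}; you instead take $\M$ to be the smallest closed monoid generated by the one-level-skipping functions $H^m_e$ for $e\in\mathcal E_m$. These coincide (the paper's monoid is closed and contains the generators, while \ref{item:Ndecomposition} makes every member of $\M_{\mathcal E}$ a limit of compositions of its one-level skippers, which by Observation~\ref{obs:shapeboring} are exactly the $H^m_e$), but your verification of \ref{item:Ndecomposition} directly from \ref{item:boring:insertion} is only sketched and in fact requires the same commuting argument as the paper's Proposition~\ref{prop:boringmonoid}. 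The remaining steps — matching Algorithm~\ref{envelope} with $\tau_{\mathcal E}$, applying Corollary~\ref{cor:shaperamseyN1} with $n=0$, pulling the colouring back through $g\mapsto g[\tau_{\mathcal E}(X)]$, and setting $Y'=f[\tau_{\mathcal E}(Y)]$ — are the same, and you are more explicit than the paper (which only asserts that ``this correspondence is one to one'') about the factorization step that transfers monochromaticity. One small caution: the minimal envelope of $X'$ produced by Algorithm~\ref{envelope} depends on the arbitrary choice of the skipping functions $H_i$, so it need not literally factor through $f$; the argument works by pulling $X'$ back to $Z=(f^+)^{-1}[X']$ inside the domain of $f$ and observing that $\tau_{\mathcal E}(Z)=\tau_{\mathcal E}(X')=X$ (because $f\in\M_{\mathcal E}$ preserves embedding types, using closure of $\M_{\mathcal E}$ under composition with the $H^\ell_e$). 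Also note your $k,m$ are one larger than the paper's ``maximal word length,'' which is actually the correct reading, since $\AM^0_k$ has domain $T({<}k)$.
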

\begin{theorem}[Colouring sets of given embedding type in a combinatorial cube]
	\label{thm:boring2}
	For every finite alphabet $\Sigma$, family of boring extensions $\mathcal E$, positive integers $m,n,r$ and (finite) sets $X\subseteq \Sigma^m$, $Y\subseteq \Sigma^n$
	there exists $N\in \omega$ such that for every $r$-colouring $\chi\colon\ebinom{\Sigma^N}{X} \to r$
	there exists $Y'\in \ebinom{\Sigma^N}Y$ such that $\chi$ is constant when restricted to $\ebinom{Y'}{X}$.
\end{theorem}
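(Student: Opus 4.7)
The plan is to deduce Theorem~\ref{thm:boring2} from Corollary~\ref{cor:shaperamseyN2} applied to an $(\S,\M)$-tree built from $(\Sigma,\mathcal E)$. First I would take the $\S$-tree $(\Sigma^{<\omega},\sqsubseteq,\Sigma,\S)$ from Example~\ref{ex:1}, and let $\M$ be the closure (in the pointwise convergence topology) of the monoid generated by the single-level skip functions $F^e_\ell$ (for $\ell<\omega$ and $e\in\mathcal E_\ell$) which are the identity on $T({<}\ell)$, skip only level $\ell$, and send each $w\in T(\ell)$ to $w\conc e(w)$. Axiom~\ref{item:Nmonoid} is by construction; axiom~\ref{item:Nduplication} follows directly from the duplication clause~\ref{item:boring:duplication}, because the required function $F^n_m$ equals $F^{e^m_n}_m$; and axiom~\ref{item:Ndecomposition} is verified by taking $F_2$ to be the single-level skip at level $\tilde F(n)-1$ which matches what $F$ does at that level, with $F_1$ obtained by stripping that layer from a factorization of $F\restriction_{T({\leq}n)}$ into generators --- the insertion clause~\ref{item:boring:insertion} is used to commute generators past each other so that this isolation is legal inside $\M$.

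The envelope condition~\ref{item:E1} is vacuous here, since every successor is defined. The next step is to check that for any finite $X\subseteq\Sigma^{<\omega}$ the set of interesting levels produced by Algorithm~\ref{envelope} coincides with $I_\mathcal E(X)$. Two observations suffice: since $\S$ has empty parameters, the closure $\Cl_I(X)$ reduces to the meet-closure of $X$, and $I_\mathcal E(X)$ is unchanged under meet-closure; and by the construction of $\M$, extending the partial map $a|_i\mapsto a|_{i+1}$ on $X\cap T({>}i)$ to a function in $\M$ skipping only level $i$ amounts literally to exhibiting some $e\in\mathcal E_i$ with $e(a|_i)=a_i$ for every such $a$. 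Proposition~\ref{prop:envelopes} then identifies the shape-preserving images $F[X]$ for $F\in\M$ with exactly those $X'\subseteq\Sigma^{<\omega}$ satisfying $\tau_\mathcal E(X')=\tau_\mathcal E(X)$.

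With these preparations, I would apply Corollary~\ref{cor:shaperamseyN2} with its parameters $(n,k,m,r)$ set to $(0,m,n,r)$ to obtain some $N\in\omega$. Given $\chi\colon\ebinom{\Sigma^N}{X}\to r$, define $\chi'\colon\AM^0_m(N)\to r$ by $\chi'(f)=\chi(f[X])$; this is well-defined because $f[X]\in\ebinom{\Sigma^N}{X}$ by the envelope characterization. The corollary supplies $F^*\in\AM^0_n(N)$ on which $\chi'$ is constant along $\{F^*\circ g:g\in\AM^0_m(n)\}$. Putting $Y'=F^*[Y]$ gives $Y'\in\ebinom{\Sigma^N}{Y}$, and for any $X'\in\ebinom{Y'}{X}$ the preimage $(F^*)^{-1}[X']\subseteq Y$ has embedding type $\tau_\mathcal E(X)$, so by the envelope characterization equals $g[X]$ for some $g\in\AM^0_m(n)$; hence $X'=(F^*\circ g)[X]$ and $\chi(X')=\chi'(F^*\circ g)$ is the monochromatic value.

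The main obstacle will be the identification of $\M$-envelopes with $\mathcal E$-embedding types, particularly the verification of axiom~\ref{item:Ndecomposition} and the recovery of a function $g\in\AM^0_m(n)$ from a generic copy $X'\in\ebinom{Y'}{X}$. The decomposition step asks us to realize each element of the generated monoid as a composition of single-level generators taken in the correct top-down order, which is exactly what the insertion clause~\ref{item:boring:insertion} is designed to support; and the lifting step relies on Proposition~\ref{prop:envelopes} together with the observation, which must be checked for our specific $\M$, that every minimal envelope produced by Algorithm~\ref{envelope} actually lies in $\M$ and can be extended from $X$ to all of $T({\leq}m)$ using boring fillers at the remaining levels.
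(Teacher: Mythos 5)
Your proposal is correct and follows essentially the same strategy as the paper: build the $(\S,\M)$-tree on $(\Sigma^{<\omega},\sqsubseteq,\Sigma,\conc)$, identify minimal envelopes via Algorithm~\ref{envelope} with the $\mathcal E$-embedding types $\tau_{\mathcal E}$, and apply Corollary~\ref{cor:shaperamseyN2} with $n=0$, $k$ the level of $X$, and $m$ the level of $Y$. The one genuine variation is in how $\M$ is specified. You take the closure of the monoid generated by the single-level skips $F^e_\ell$ and then have to labour to prove axiom~\ref{item:Ndecomposition}, invoking the insertion clause~\ref{item:boring:insertion} to ``commute generators'' and isolate a top-level skip; this is the right idea but is stated rather loosely and would need to be made precise (one has to first show that every finite composition of generators lies in the set characterized by Observation~\ref{obs:shapeboring}, which already requires insertion). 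The paper instead defines $\M_{\mathcal E}$ directly by the characterization of Observation~\ref{obs:shapeboring} — a shape-preserving $F$ belongs to $\M_{\mathcal E}$ iff each skipped level admits a witnessing extension $e^F_\ell\in\mathcal E_\ell$ — which makes decomposition~\ref{item:Ndecomposition} immediate (pull out $e^F_{\tilde F(n)-1}$ as $F_2$, keep the rest as $F_1$), at the cost of having to prove closure under composition~\ref{item:Nmonoid}, which is where the paper spends the insertion clause. The two definitions give the same monoid, as follows from the remark after Definition~\ref{def:sntree}; so the approaches are interchangeable, merely relocating where~\ref{item:boring:insertion} does its work. Your reduction of Algorithm~\ref{envelope}'s interesting-level test to $I_{\mathcal E}(X)$ (empty parameters make $\Cl_I$ just meet-closure, which does not change $I_{\mathcal E}$) and your use of~\ref{item:E1} being vacuous are both exactly right, and your final lifting argument (recovering $g\in\AM^0_m(n)$ from a copy $X'\subseteq Y'$ of the same embedding type) matches the paper's appeal to the bijection between $\ebinom{\Sigma^N}{X}$ and $\AM^0_k(N)$.
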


To relate Theorems~\ref{thm:boring1} and~\ref{thm:boring2} to Corollaries \ref{cor:shaperamseyN1} and \ref{cor:shaperamseyN2}
we only need to build the corresponding $(\S,\M)$-trees.
Given an alphabet $\Sigma$ we consider tree $(\Sigma^{{<}\omega},\sqsubseteq)$
and the corresponding
$\S$-tree $(\Sigma^{{<}\omega},\allowbreak {\sqsubseteq},\allowbreak \Sigma,\allowbreak \S)$ with $\S$ being the concatenation: Given $a\in \Sigma^{{<}\omega}$, $c\in \Sigma$ and $\vec{p}$ we define
$\S(a,\vec{p},c)$ if and only if $\vec{p}$ is empty sequence and put $$\S(a,\vec{p},c)=a\conc c.$$
In this context Definition~\ref{def:shape-pres} can be simplified as follows.
\begin{definition}[Shape-preserving functions on regularly branching trees]
	Function $F\colon \Sigma^{{<}\omega}\to \Sigma^{{<}\omega}$ is \emph{shape-preserving} if
	\begin{enumerate}[label=(\roman*)]
		\item\label{def:shape-pres:sitem1} $F$ is level preserving, and
		\item\label{def:shape-pres:sitem2} $F$ preserves passing numbers:
		for every $a\in \Sigma^{{<}\omega}$ and $\ell<\ell(a)$ it holds
		that $F(a)_{\tilde{F}(\ell)}=a_\ell$.
	\end{enumerate}
\end{definition}
Every set of boring extensions $\mathcal E$ can be associated with a corresponding monoid of shape-preserving functions:
\begin{definition}
	Given a finite alphabet $\Sigma$ and a family of boring extensions $\mathcal E$ we denote by $\mathcal M_\mathcal E$ the set of all shape-preserving functions $F\colon\Sigma^{{<}\omega}\to \Sigma^{{<}\omega}$
	such that $$I_\mathcal E(F[\Sigma^{{<}\omega}])=\tilde F[\omega].$$
\end{definition}
More technically this condition can be stated as follows:
\begin{observation}
	\label{obs:shapeboring}
	Given a finite alphabet $\Sigma$, family of boring extensions $\mathcal E$
	and a shape-preserving function $F\colon \Sigma^{{<}\omega}\to\Sigma^{{<}\omega}$ it holds that $F\in \M_\mathcal E$ if and only if
	for every level $\ell$ skipped by $F$ (i.e., $\ell\in \omega\setminus \tilde{F}(\omega)$) there exists a (not necessarily unique) extension $e^F_\ell\in \mathcal E_\ell$ such that for every $a\in \Sigma^{{<}\omega}$ with $\ell(F(a))>\ell$ it holds that
	$$(F(a)|_\ell)\conc e^F_\ell(F(a)|_\ell) \sqsubseteq F(a).$$
\end{observation}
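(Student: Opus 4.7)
The plan is to unfold the definitions of $\M_\mathcal E$ and $I_\mathcal E$ and split the equivalence into two implications, each of which turns out to be a short definition-chase once one preliminary (automatic) inclusion is noted.

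For the forward direction I would start from $F\in \M_\mathcal E$, which by definition of $\M_\mathcal E$ means $I_\mathcal E(F[\Sigma^{<\omega}])=\tilde F[\omega]$. Fixing a level $\ell$ skipped by $F$, we have $\ell\notin\tilde F[\omega]=I_\mathcal E(F[\Sigma^{<\omega}])$, and the definition of the interesting-levels set produces some $e^F_\ell\in\mathcal E_\ell$ satisfying $(b|_\ell)\conc e^F_\ell(b|_\ell)\sqsubseteq b$ for every $b\in F[\Sigma^{<\omega}]$ with $|b|>\ell$. Specialising to $b=F(a)$ (which is automatically in $F[\Sigma^{<\omega}]$) yields exactly the condition in the statement.

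For the reverse direction, assume the family $\{e^F_\ell\}_{\ell\notin\tilde F[\omega]}$ exists. Since every node of $F[\Sigma^{<\omega}]$ has its level in $\tilde F[\omega]$, the assumed property of $e^F_\ell$ directly certifies $\ell\notin I_\mathcal E(F[\Sigma^{<\omega}])$ for every skipped $\ell$, giving the inclusion $I_\mathcal E(F[\Sigma^{<\omega}])\subseteq\tilde F[\omega]$. The remaining inclusion is the only step that is not pure definition chasing: I would show that every $\ell=\tilde F(k)\in\tilde F[\omega]$ is automatically interesting. Choose any two words $a,a'\in\Sigma^{k+1}$ agreeing on the first $k$ coordinates but differing in the last one (possible because $|\Sigma|\ge 2$, which is implicit when $\mathcal E$ is nontrivial). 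Shape-preservation and passing-number preservation give $F(a)|_\ell = F(a|_k) = F(a'|_k) = F(a')|_\ell$, while $F(a)_\ell = a_k \neq a'_k = F(a')_\ell$. Hence no function $\Sigma^\ell\to\Sigma$, in $\mathcal E_\ell$ or otherwise, can consistently produce the $\ell$-th character of elements of $F[\Sigma^{<\omega}]$ from their length-$\ell$ prefix, so $\ell\in I_\mathcal E(F[\Sigma^{<\omega}])$.

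The main (and really quite minor) obstacle is just keeping straight the two complementary sets---levels skipped by $F$ versus interesting levels of $F[\Sigma^{<\omega}]$---and recognising that the passing-number identity $F(a)_{\tilde F(k)}=a_k$ forces the automatic inclusion $\tilde F[\omega]\subseteq I_\mathcal E(F[\Sigma^{<\omega}])$. Everything else is immediate from the definitions of $I_\mathcal E$, $\M_\mathcal E$, and shape-preservation on $\Sigma^{<\omega}$.
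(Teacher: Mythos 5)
Your proof is correct, and the overall structure (split the equivalence $I_\mathcal E(F[\Sigma^{<\omega}])=\tilde F[\omega]$ into the two inclusions, observe that the forward direction and the inclusion $I_\mathcal E\subseteq\tilde F[\omega]$ are definition-unwinding, and isolate $\tilde F[\omega]\subseteq I_\mathcal E(F[\Sigma^{<\omega}])$ as the one thing to check) is the right one. The one place you diverge from the paper's intent is the last inclusion. The paper notes immediately after Definition 6.4 that $\{|w|:w\in X\}\subseteq I_\mathcal E(X)$ for \emph{any} $X$: whenever $X$ contains a word $w$ of length exactly $\ell$, the defining condition for a boring extension is required to hold for $a=w$ as well (the quantifier ranges over $|a|\geq\ell$, not $|a|>\ell$), and $(w|_\ell)\conc e_\ell(w|_\ell)$ has length $\ell+1$ and hence cannot be a prefix of $w$; so $\ell$ is interesting for the trivial ``end of word'' reason. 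Since $F$ is level-preserving, $\tilde F[\omega]=\{|w|:w\in F[\Sigma^{<\omega}]\}$, and the inclusion is immediate, with no appeal to passing numbers and no hypothesis on $|\Sigma|$. Your passing-number argument is valid and is in fact a genuine contradiction at those levels (it would still work if the quantifier in the definition of $I_\mathcal E$ were $|a|>\ell$), but it costs you the extra assumption $|\Sigma|\geq 2$ and an invocation of the passing-number identity, both of which the intended one-line observation avoids. Either way the statement is proved; yours is a slightly heavier road to the same place.
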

Observation~\ref{obs:shapeboring} is useful both for verifying that a given function $F$ is in $\M_\mathcal E$ and for constructing a function in $F'\in \M_\mathcal E$ by giving extensions for every level skipped by $F'$.
We are now ready to show:
\begin{prop}
	\label{prop:boringmonoid}
	For every finite alphabet $\Sigma$ and family of boring extensions $\mathcal E$ it holds that $(\Sigma^{{<}\omega},\allowbreak \sqsubseteq,\allowbreak \Sigma,\M_\mathcal E)$ is an
	$(\mathcal S,\mathcal M)$-tree.
\end{prop}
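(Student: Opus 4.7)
The plan is to verify the three axioms \ref{item:Nmonoid}, \ref{item:Ndecomposition}, \ref{item:Nduplication} of Definition~\ref{def:sntree} in turn, using Observation~\ref{obs:shapeboring} to check $\M_\mathcal{E}$-membership through the extensions associated with skipped levels.

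Duplication~\ref{item:Nduplication} follows immediately from~\ref{item:boring:duplication}: for $n<m$, I take $F_m^n$ to be the unique shape-preserving function skipping only level $m$ whose extension at that level is $e_n^m\in \mathcal{E}_m$. Then $F_m^n\in \M_\mathcal{E}$ by construction and, since $e_n^m(b)=b_n$ for $b\in T(m)$, we have $F_m^n(b)=b\conc b_n=\S(b,\emptyset,b_n)$, matching the required identity (the only valid parameter is $\bar p=\emptyset$ in this $\S$-tree).

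For decomposition~\ref{item:Ndecomposition}, given $F\in \M_\mathcal{E}$ skipping level $\tilde{F}(n)-1$ with extension $e=e^F_{\tilde{F}(n)-1}\in \mathcal{E}_{\tilde{F}(n)-1}$, I let $F_2\in \M_\mathcal{E}$ be the function skipping only level $\tilde{F}(n)-1$ with extension $e$. I then construct $F_1$ explicitly by $F_1(a)=F(a)$ for $\ell(a)<n$; for $\ell(a)=n$, $F_1(a)$ is $F(a)$ with the character at position $\tilde{F}(n)-1$ deleted; and for $\ell(a)>n$, I extend canonically by $F_1(a)=F_1(a|_n)\conc a_n a_{n+1}\cdots a_{\ell(a)-1}$, so no further levels are skipped. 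A direct check shows $F_1$ is shape-preserving with skipped levels exactly those of $F$ in $[0,\tilde{F}(n)-2]$ (so with extensions inherited from $F$ and lying in $\mathcal{E}$), hence $F_1\in \M_\mathcal{E}$; and $F_2\circ F_1$ agrees with $F$ on $T(\leq n)$ because $F_2$ reinserts the deleted character using $e$.

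The closed monoid axiom~\ref{item:Nmonoid} is the main obstacle. The identity lies trivially in $\M_\mathcal{E}$, and closure under pointwise limits is straightforward since each skipped level $\ell$ of a limit $F_\infty$ is already skipped by all sufficiently large $F_i$ with the same extension in $\mathcal{E}_\ell$. The heart of the argument is closure under composition. For $F,G\in \M_\mathcal{E}$, the composition $F\circ G$ is shape-preserving by Proposition~\ref{prop:shape-pres}, and its skipped levels are either (a) $\ell\notin \tilde{F}[\omega]$, handled directly by $e^F_\ell\in \mathcal{E}_\ell$, or (b) $\ell=\tilde{F}(k)$ with $k\notin \tilde{G}[\omega]$. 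In case~(b), the character at position $\ell$ in $F(G(a))$ equals $e^G_k(G(a)|_k)$, and since $F(G(a))|_\ell=F(G(a)|_k)$, the task reduces to constructing an extension $e^{F\circ G}_\ell\in \mathcal{E}_\ell$ satisfying $e^{F\circ G}_\ell(F(b))=e^G_k(b)$ for every $b\in \Sigma^k$. I do this by enumerating the skipped levels of $F$ in $[0,\ell)$ as $m_1<\cdots<m_s$ with $s=\ell-k$, starting from $e^{(0)}=e^G_k\in \mathcal{E}_k$, and iteratively applying~\ref{item:boring:insertion} at each step $j$ with $e_1=e^F_{m_j}$ and $e_2=e^{(j-1)}$ to obtain $e^{(j)}\in \mathcal{E}_{k+j}$. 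A pigeonhole count on $[0,k+j-1]$ (at most $k$ non-skipped positions, hence at least $j$ skipped) shows $m_j\leq k+j-1$, so the insertion axiom always applies; after $s$ steps, $e^{(s)}\in \mathcal{E}_\ell$ is the desired extension.
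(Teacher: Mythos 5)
Your proposal is correct and follows essentially the same route as the paper: verify the three axioms of Definition~\ref{def:sntree} via Observation~\ref{obs:shapeboring}, handle duplication directly from~\ref{item:boring:duplication}, construct $F_1$ and $F_2$ for decomposition by prescribing extensions at skipped levels, and reduce closure under composition to the two cases (level skipped by the outer function vs.\ a level skipped by the inner function, the latter handled by iterating~\ref{item:boring:insertion}). Your explicit pigeonhole check that $m_j \le k+j-1$ (so that the insertion hypothesis $m\le n$ holds at each step) is a small point the paper leaves implicit, and your $F_1$ in the decomposition step is built to skip no levels past $\tilde F(n)-1$ rather than inheriting all later skipped levels of $F$ as the paper does, but both choices agree with $F$ on $T(\le n)$, which is all~\ref{item:Ndecomposition} requires.
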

\begin{proof}
	We verify that $\mathcal M_\mathcal E$ satisfies Definition~\ref{def:sntree}.

	First consider the identity $\mathrm{Id}\colon \Sigma^{{<}\omega}\to \Sigma^{{<}\omega}$. It is clearly shape-preserving and
	since $\omega\setminus \tilde{\mathrm{Id}}[\omega]=\emptyset$,
	by Observation~\ref{obs:shapeboring} we have $\mathrm{Id}\in \mathcal M_\mathcal E$.

	Next we verify $\mathcal M_\mathcal E$ is closed for composition.
	Fix $F, F'\in \mathcal M_\mathcal E$. By induction on level $\ell$, we verify that function $H=F\circ F'$ satisfies Observation~\ref{obs:shapeboring}.
	Towards that, for every $\ell\notin \tilde{H}[\omega]$ we find corresponding extension $e^H_\ell\in \mathcal E_\ell$.
	Given $\ell\notin \tilde{H}[\omega]$ we consider two cases:
	\begin{enumerate}
		\item $\ell\notin \tilde{F}[\omega]$. In this case let $e^F_\ell\in \mathcal E_\ell$ be given by Observation~\ref{obs:shapeboring}.
		      It is easy to check that we can put $e^H_\ell=e^F_\ell$.
		\item $\ell\in \tilde{F}[\omega]$. Let $\ell'=\tilde{F}^{-1}(\ell)$. Notice that $\ell'\notin \tilde{F}'[\omega]$. Let $e^{F'}_{\ell'}\in \mathcal E_{\ell'}$ be given by Observation~\ref{obs:shapeboring}.
		      If $\ell'=\ell$ we can put $e^H_\ell=e^{F'}_{\ell'}$.   Otherwise we proceed by induction and produce extensions $e^{F'}_{\ell'}=e'_{\ell'},e'_{\ell'+1}$,\ldots, $e'_\ell$.
		      Assume that extension $e'_n\in \mathcal E_n$ was built for some $n$ satisfying $\ell'\leq n<\ell$. Let $m$ be the $(n+1-\ell')$-th element of $\omega\setminus \tilde{F}[\omega]$ and $e^F_{m}\in \mathcal E_m$ the extension
		      given by Observation~\ref{obs:shapeboring}. Construct $e'_{n+1}$ by an application of \ref{item:boring:insertion} for $m$, $n$,
		      $e_1=e^F_m$ and $e_2=e'_{\ell'}$.
		      Finally, put $e^H_\ell=e'_\ell$.
	\end{enumerate}
	We have thus proved that $\mathcal M_\mathcal E$ is a monoid.
	Notice that by Observation~\ref{obs:shapeboring} the monoid is closed which completes verification of condition \ref{item:Nmonoid} of Definition~\ref{def:sntree}.

	Condition \ref{item:Nduplication} of Definition~\ref{def:sntree} follows by \ref{item:boring:duplication}.

	Finally we verify \ref{item:Ndecomposition}. Given $n\in \omega$ and $F\in \M_\mathcal E$ skipping level $\tilde{F}(n)-1$ such that $\tilde{F}(n)>0$
	we can construct $F_2$ skipping only level $m=\tilde{F}(n)-1$ using Observation~\ref{obs:shapeboring} and extension $e^F_m$.
	Function $F_1$ can be also constructed by means of Observation~\ref{obs:shapeboring}: for every $\ell\in \omega\setminus \tilde{F}[\omega]$ satisfying $\ell\neq \tilde{F}(n)-1$ put
	$e^{F_1}_\ell=e^F_\ell$.
\end{proof}

\begin{proof}[Proof of Theorem~\ref{thm:boring1}]
	Fix a finite alphabet $\Sigma$, a family of boring extensions $\mathcal E$, a positive integer $r$ and sets $X,Y\subseteq \Sigma^{{<}\omega}$.
	Without loss of generality assume that $\tau_\mathcal E(X)=X$ and $\tau_\mathcal E(Y)=Y$.
	Let $k$ be the maximal length of a word in $X$ and $m$ be the maximal length of a word in $Y$.
	By Proposition~\ref{prop:boringmonoid} we know that  $(\Sigma^{{<}\omega},\allowbreak \sqsubseteq,\allowbreak \Sigma,\M_\mathcal E)$ is an $(\mathcal S,\mathcal M)$-tree.
	Let $N$ be given by Corollary~\ref{cor:shaperamseyN1} on this $(\mathcal S,\mathcal M)$-tree for $n=0$, $k$, $m$.
	Notice that for every $Z\subseteq \Sigma^{{<}\omega}$ Algorithm~\ref{envelope} will give precisely $\tau_\mathcal E(Z)$. Consequently, a colouring $\chi\colon\ebinom{\Sigma^N}Y\to r$
	yields a colouring $\chi'\colon (\AM_\mathcal E)^0_k({\leq}N)\to r$ by putting $\chi'(f)=\chi(f[\tau_\mathcal E(X)]$ and this correspondence is one to one.
	Let $f$ be given by Corollary~\ref{cor:shaperamseyN1}. It follows that $f[Y]$ has the desired property.
\end{proof}
Theorem~\ref{thm:boring2} follows in a complete analogy.

We now relate our result to classical Ramsey-type theorems on trees.

\subsubsection{Milliken tree theorem}
In order to formulate the Milliken tree theorem we need to state some standard definitions, see e.g.~\cite{todorcevic2010introduction}.
\begin{definition}[Strong subtree]
	Let $(T,\preceq)$ be a tree with a single root vertex.  Recall that $S\subseteq T$ is a \emph{strong subtree} of height $n\in \omega+1$ if and only if:
	\begin{enumerate}
		\item $S$ is closed for meets: for every $a,b\in S$ it holds that $a\meet b\in S$.
		\item $S$ is level preserving: there exists a function $\tilde{S}:n\to\omega$ such that $S(\ell)\subseteq T(\tilde S(\ell))$ for every $\ell<n$.
		\item For every $a\in S$ of level $\ell < n-1$ in tree $S$ and every immediate successor $b$ of $a$ in $T$ there is a unique $c\in S(\ell+1)$ such that $a\sqsubseteq b\sqsubseteq c$.
		\item If $n$ is finite then $S({\geq} n)=\emptyset$.
	\end{enumerate}
\end{definition}
Fix a finite alphabet $\Sigma$ and put $T=\Sigma^{{<}\omega}$ to be an infinite $|\Sigma|$-ary branching tree.
In this setup, every infinite strong subtree $S$ can be also seen as a \emph{strong embedding} $F_S\colon T\to S\subseteq T$
constructed via induction over levels.
\begin{enumerate}
	\item Given the root $r$ of $T$ put $F_S(r)$ to be the root of $S$.
	\item Let $a\in S$ be such that $F_S(a)$ is has been already constructed. Given $c\in \Sigma$, put $F_S(a\conc c)$ to be the unique successor of $a\conc c$ in $T(\tilde{S}(\ell(a)+1))$.
\end{enumerate}

It follows directly from the definition that every strong embedding is also a shape-preserving function (in the $\S$-tree  $(\Sigma^{{<}\omega},\sqsubseteq,\Sigma,\S)$), and vice versa, every shape-preserving function $F$ is a strong embedding and defines a strong subtree $F[\Sigma^{{<}\omega}]$.

Denote by $\Str^k_n(T)$ the set of all strong subtrees $S$ of $T$ of height $n$ such that $\tilde{S}(\ell)=\ell$ for every $\ell<k$.
Let $\Emil=(\Emil_n)_{n\in \omega}$ be the maximal sequence of sets satisfying Definition~\ref{def:boringext}, that is, for every $n\in \omega$, the set $\Emil_n$ simply consists of all functions $\Sigma^n\to \Sigma$. Let $\Mmil$ be the corresponding monoid of shape-preserving functions.
By Proposition~\ref{prop:boringmonoid} we obtain an $(\mathcal S,\mathcal M)$-tree $(\Sigma^{{<}\omega},\allowbreak \sqsubseteq,\allowbreak \Sigma,\Mmil)$.
An immediate consequence of Theorem~\ref{thm:shaperamseyN} is:
\begin{theorem}[Milliken tree theorem~\cite{Milliken1979}]
	Let $\Sigma$ be a finite alphabet and $(\Sigma^{{<}\omega},\subseteq)$ be the infinite $|\Sigma|$-ary tree. Then for every $k\leq n$ and every finite colouring of $\Str^k_n(T)$ there exists $S\in \Str^k_\omega(T)$
	such that the set $\{Q\in \Str^k_n(T):Q\subseteq S\}$ is monochromatic.
\end{theorem}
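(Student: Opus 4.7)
The plan is to deduce the theorem as a direct corollary of Theorem~\ref{thm:shaperamseyN} applied to the $(\mathcal S,\mathcal M)$-tree $(\Sigma^{{<}\omega}, \sqsubseteq, \Sigma, \Mmil)$, which is an $(\mathcal S,\mathcal M)$-tree by Proposition~\ref{prop:boringmonoid}. The essential ingredient is a dictionary identifying strong subtrees of $(\Sigma^{{<}\omega}, \sqsubseteq)$ with shape-preserving functions in $\Mmil$.

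First I would verify that, under the successor operation $\mathcal S$ of concatenation, the shape-preserving functions on $\Sigma^{{<}\omega}$ are precisely the strong embeddings $F_S\colon \Sigma^{{<}\omega} \to S$ associated with Milliken-style strong subtrees $S$: level-preservation together with weak $\mathcal S$-preservation and injectivity force $F_S(a\conc c)$ to be the unique node of $F_S[\Sigma^{{<}\omega}]$ at level $\tilde{F}_S(|a|+1)$ lying above $F_S(a)\conc c$, which is exactly the defining clause of a strong subtree. Since $\Emil_\ell$ consists of \emph{all} functions $\Sigma^\ell \to \Sigma$, Observation~\ref{obs:shapeboring} shows that every such shape-preserving function automatically lies in $\Mmil$. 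Restricting to $\Mmil^k$ then produces bijections $\Str^k_\omega(T) \leftrightarrow \Mmil^k$ via $S \mapsto F_S$, and $\Str^k_n(T) \leftrightarrow \AM^k_{n-k}$ via $S \mapsto F_S\restriction_{T({<}n)}$.

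Given a finite colouring $\chi$ of $\Str^k_n(T)$, I would push it forward along the second bijection to obtain a colouring $\chi'$ of $\AM^k_{n-k}$ by $\chi'(g) = \chi(g[\Sigma^{{<}n}])$, and apply Theorem~\ref{thm:shaperamseyN} (with its parameters instantiated as $n \mapsto k$ and $k \mapsto n-k$) to produce $F \in \Mmil^k$ such that $\chi'$ is constant on $\{F \circ g\colon g \in \AM^k_{n-k}\}$. Setting $S := F[\Sigma^{{<}\omega}]$ yields the required $S \in \Str^k_\omega(T)$: any $Q \in \Str^k_n(T)$ with $Q \subseteq S$ factors uniquely as $F_Q = F \circ g$ for $g = F^{-1} \circ F_Q\restriction_{T({<}n)}$, which is well-defined by injectivity of $F$, shape-preserving by Proposition~\ref{prop:shape-pres}~\ref{item:closed-monoid}, fixes $T({<}k)$ since both $F$ and $F_Q$ do, and hence belongs to $\AM^k_{n-k}$; consequently $\chi(Q) = \chi'(F \circ g)$ takes the single monochromatic value.

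The only substantive step is formalising the correspondence between strong subtrees and elements of $\Mmil$; once this dictionary is in place the Milliken tree theorem is an immediate translation of Theorem~\ref{thm:shaperamseyN}. I do not anticipate any real obstacle, since recovering Milliken's theorem from this particular choice of monoid is one of the motivating design goals of the $(\mathcal S,\mathcal M)$-tree framework.
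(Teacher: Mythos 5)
Your proposal follows the paper's route exactly: identify strong subtrees with shape-preserving functions via passing-number preservation, note that $\Emil$ being the set of all functions makes $\Mmil$ the full monoid of shape-preserving functions, and read the conclusion off Theorem~\ref{thm:shaperamseyN} applied to the $(\mathcal S,\mathcal M)$-tree given by Proposition~\ref{prop:boringmonoid}. The paper treats this translation as immediate and does not spell it out; your only small imprecision is citing Proposition~\ref{prop:shape-pres}~\ref{item:closed-monoid} for the shape-preservation of $g = F^{-1}\circ F_Q\restriction_{T({<}n)}$ (that item is about compositions and limits, not inverses), but the fact is true by the passing-number characterisation of shape-preserving maps on $\Sigma^{<\omega}$, so the argument stands.
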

This is a special from of the Milliken tree theorem for regularly branching trees.
As a consequence of Theorem~\ref{thm:shaperamseyspace} we can also build the associated Ramsey space~\cite[Theorem 6.3]{todorcevic2010introduction}.
\begin{remark}
	It is usual to formulate Milliken tree theorem for colouring
	product strong subtrees of finite product of trees. However, the formulation above
	is not weaker. Colouring $\Str^k_n(T)$ is equivalent to colouring product strong
	subtrees of $|\Sigma|^k$ trees. We chose this formalism because it is easier
	to generalize to irregularly branching trees as done in recent papers on coding trees~\cite{dobrinen2017universal,zucker2020}.

	On the other hand, the full strength of Milliken tree theorem for trees with finite but unbounded branching can not be
	derived directly from Theorem~\ref{thm:shaperamseyN}. There is a deeper reason for that: while Milliken tree theorem
	for regularly branching tree can use Hales--Jewett theorem as a pigeonhole, for unbounded branching trees, the Halpern--L\"auchli
	theorem is necessary.  While the Halpern--L\"auchli theorem has multiple proofs~\cite{Halpern1966,Milliken1979,todorcevic2010introduction,dodos2016}, they are all very different from proofs of the
	Hales--Jewett theorem.
\end{remark}

\subsubsection{Dual Ramsey theorem and Carlson--Simpson theorem}
The dual Ramsey theorem and its infinitary generalization, the Carlson--Simpson theorem, are usually not formulated in a language of trees and subtrees, they rather speak
about colouring equivalences (or partitions)~\cite{todorcevic2010introduction}. (A common equivalent formalism is also using the rigid surjections~\cite{Solecki2012,PromelBook}.)
Interpreting these theorems as theorems about trees and subtrees was instrumental in some recent progress
on big Ramsey degrees~\cite{Hubicka2020CS,Hubickabigramsey2,Balko2023}.
\begin{theorem}[Finite Dual Ramsey Theorem]
	\label{thm:dualRamsey}
	For every $k, r, m\in \omega\setminus\{0\}$, there is $N\in \omega$ such that for every $r$-colouring
	of the family $\Eqn_k (N)$
	with exactly $k$ classes there is an equivalence relation $E$ on $N$ elements with exactly
	$m$ classes such that $\{F \in \Eqn_k (N): F\hbox{ is coarser than }E\}$ is monochromatic.
\end{theorem}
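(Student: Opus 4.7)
The plan is to derive Theorem~\ref{thm:dualRamsey} as an application of Corollary~\ref{cor:shaperamseyN2} to a Carlson--Simpson style $(\mathcal S,\mathcal M)$-tree. The case $k > m$ is vacuous, since no equivalence with $k$ classes can be coarser than one with fewer classes, so I may assume $k \le m$.

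First I would fix $\Sigma = \{0,1\}$ and take the Carlson--Simpson family of boring extensions $\Ecs$, where $\Ecs_n = \{e^n_i : i < n\}$ consists solely of the projection functions forced by axiom~\ref{item:boring:duplication}; axiom~\ref{item:boring:insertion} follows by a direct computation, because the $e_3$ demanded by (B2) applied to two projections $e_1, e_2$ is itself a projection (splitting into the two cases of whether the displayed index of $e_2$ is smaller than $m$ or not). Proposition~\ref{prop:boringmonoid} then yields an $(\mathcal S,\mathcal M)$-tree $(\Sigma^{<\omega},\sqsubseteq,\Sigma,\Mcs)$.

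The key step is to establish, for every $N \ge k$, a surjection $\Phi \colon (\AM_{\Ecs})^0_{k+1}(N) \to \Eqn_k(N)$ defined as follows. Given $f$ in the source, its image levels $0 = \tilde f(0) < \cdots < \tilde f(k) = N$ single out $k+1$ positions in $\{0,\ldots,N\}$, and Observation~\ref{obs:shapeboring} attaches to each skipped level $\ell$ a projection $e^f_\ell = e^\ell_{p(\ell)}$ with $p(\ell) < \ell$. I associate to $f$ the equivalence on $\{0,\ldots,N\}$ whose classes are the connected components of the graph on $\{0,\ldots,N\}$ with edges $\{\ell, p(\ell)\}$ for every skipped $\ell$; equivalently, two positions are related when iterated projection sends them to the same image level. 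Because projections only point strictly backwards, the top image level $N$ always ends up as a singleton class; removing it gives $\Phi(f) \in \Eqn_k(N)$ on the $N$ elements $\{0,\ldots,N-1\}$. Surjectivity is easy: given $E$, take image levels to be $\{0\} \cup (\text{class minima of }E) \cup \{N\}$ and let each $p(\ell)$ point to any earlier element of $E$'s class of $\ell$. A parallel verification shows that the correspondence transports the composition of the corollary into coarsening: for $F \in (\AM_{\Ecs})^0_{m+1}(N)$ and $g \in (\AM_{\Ecs})^0_{k+1}(m+1)$, the image levels of $F\circ g$ are a subset of those of $F$ and the projection chain of $F\circ g$ concatenates the chains of $g$ and $F$, from which $\Phi(F\circ g)$ is a coarsening of $\Phi(F)$ with exactly $k$ classes; moreover, every such coarsening arises as $\Phi(F\circ g)$ for some choice of $g$ (built by reading off how the classes of $\Phi(F)$ are to be merged).

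To conclude, I apply Corollary~\ref{cor:shaperamseyN2} with $n = 0$ and shifted parameters $k+1$, $m+1$, $r$, obtaining the required $N$. Any $r$-colouring $\chi \colon \Eqn_k(N) \to r$ pulls back along $\Phi$ to a colouring $\chi' = \chi \circ \Phi$ on $(\AM_{\Ecs})^0_{k+1}(N)$; the corollary supplies $F \in (\AM_{\Ecs})^0_{m+1}(N)$ with $\chi'$ constant on $\{F \circ g : g \in (\AM_{\Ecs})^0_{k+1}(m+1)\}$, and then $E = \Phi(F) \in \Eqn_m(N)$ witnesses the conclusion of Theorem~\ref{thm:dualRamsey}. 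The main obstacle is the correspondence step: $\Phi$ is many-to-one (different projection pointers inside the same class encode the same equivalence), and the ``highest image level is forced to be a singleton'' phenomenon obliges one to shift the class counts by one when matching up the parameters of the corollary, so the bookkeeping must be done carefully; once it is in place, the Ramsey conclusion is essentially immediate.
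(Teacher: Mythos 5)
Your proof is correct and follows essentially the same route as the paper: identify finite approximations in $\AM_{\Ecs}$ with finite equivalence relations via the projection structure of Observation~\ref{obs:shapeboring}, check that composition corresponds to coarsening, and read the statement off Corollary~\ref{cor:shaperamseyN2}. One small correction: the map $\Phi$ you construct is in fact a bijection rather than merely a many-to-one surjection --- although the pointer $p(\ell)$ chosen for a skipped level $\ell$ is not uniquely determined, the resulting equivalence class is, and conversely the class structure (together with the passing-number condition) determines $f$ on the nose --- which is precisely what the paper's Observation~\ref{obs:shapeboringCS} records; since your argument only invokes surjectivity, this makes no difference to its validity.
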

Let $\Eqn_\infty$ be the set of all equivalence relations on $\omega$ with infinitely many equivalence classes.
Given $E_1,E_2\in \Eqn_\infty$ put $E_1<E_2$ if $E_1$ is coarser than $E_2$.
Given $n\in \omega$ and a finite equivalence relation $E$ of $n$, we say that $E'\in \Eqn_\infty$
\emph{extends} $E$ if and only if
\begin{enumerate}
	\item $E'\restriction n\times n=E$, and,
	\item $n$ is a minimal element of some equivalence class of $E'$.
\end{enumerate}
The \emph{Ellentuck topology} on $\Eqn_\infty$ is given by the following basic open sets:
$$[E,E']=\{E''\in \Eqn_\infty:E'' \hbox { extends $E$ and } E''<E'\}$$
for every finite partition $E$ of some $n\in \omega$ and $E'\in \Eqn_\infty$.
Finally, to define Ramsey and Ramsey null sets in analogy to Definition~\ref{def:ramsey}, we define $\depth_{E'}(E)$ to be finite if and only if
\begin{enumerate}
	\item $E$ extends to a partition $E''\in \Eqn_\infty$ such that $E''<E$, and
	\item $n$ is a minimal element of the $m$-th equivalence class of $E$ for some $m\in \omega$.
\end{enumerate}
In this case we put $\depth_{E'}(E)=m$
\begin{theorem}[Carlson--Simpson Theorem~\cite{carlson1984}]
	\label{thm:CS}

	$\Eqn_\infty$ with the Ellentuck topology is a topological Ramsey space: Every property of Baire subset of $\Eqn_\infty$ is Ramsey and every meager subset is Ramsey null.
\end{theorem}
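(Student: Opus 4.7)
The plan is to apply Theorem~\ref{thm:fatramseyspace} to an $(\S,\M)$-tree whose space of fat subtrees corresponds to $\Eqn_\infty$ as a topological Ramsey space.

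First I would build the $(\S,\M)$-tree. Take $\Sigma=\omega$ and let $T\subseteq\omega^{<\omega}$ consist of the \emph{rigid sequences}, i.e.\ $w=(w_0,\dots,w_{n-1})$ with $w_0=0$ and $w_i\leq 1+\max_{j<i}w_j$ for every $i$. Order $T$ by the prefix relation and define $\S(w,\emptyset,c)=w\conc c$ whenever the concatenation stays rigid. Then $T$ is finitely branching because a node with $k$ distinct entries has $k+1$ immediate successors, and axioms \ref{S1}--\ref{S3} are immediate. Each node at level $n$ codes an equivalence relation on $\{0,\dots,n-1\}$ via $i\sim j\iff w_i=w_j$; infinite paths through $T$ that introduce infinitely many fresh values are in bijection with $\Eqn_\infty$.

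Next I would define the monoid. Let $\Mcs$ consist of those shape-preserving $F\colon T\to T$ such that for each skipped level $\ell\notin\tilde F[\omega]$ there is a single character $c_\ell\in\omega$ with $F(a)_\ell=c_\ell$ for every $a$ with $\ell(F(a))>\ell$; in words, every skipped level is uniformly labelled. This is the Carlson--Simpson analogue of the ``constant boring extensions'' discussed in Section~\ref{sec:applications}. I would verify Definition~\ref{def:sntree} step by step: closure and limits \ref{item:Nmonoid} are routine once one tracks the characters $c_\ell$ through composition; decomposition \ref{item:Ndecomposition} follows by peeling off the top skipped level below $\tilde F(n)$ using its uniform character to produce $F_2$ skipping only that level; and duplication \ref{item:Nduplication} is delivered by the function $F_m^n$ that repeats the $n$-th coordinate into level $m$, which is consistent with rigidity since repeating an already-occurring coordinate never violates $w_i\leq 1+\max_{j<i}w_j$.

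I would then construct a map $\Psi\colon \mathcal R\to \Eqn_\infty$ by sending a fat subtree $\fat U$ to the equivalence obtained from reading the infinite rigid sequence $\fatf{U}(0),\fatf{U}(0,1),\fatf{U}(0,1,2),\dots$ along the leftmost infinite path of $T$. After restricting to canonical fat subtrees via Proposition~\ref{prop:canonical}, $\Psi$ sends the fat subtree order $\leq$ to the coarsening order $<$ on $\Eqn_\infty$, sends basic open sets $[\fat x,\fat U]$ to basic open sets $[E_{\fat x},E_{\fat U}]$ in the Ellentuck topology on $\Eqn_\infty$, and matches the depth functions. Consequently Baire, meager, Ramsey, and Ramsey-null classes are preserved along $\Psi$, and Theorem~\ref{thm:fatramseyspace} completes the proof.

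The main obstacle is twofold. Combinatorially, verifying \ref{item:Nduplication} for $\Mcs$ requires a careful global construction: the duplicator $F^n_m$ must copy the label of level $n$ uniformly into the skipped level $m$ across all branches, and one needs to check this action preserves the ``uniformly labelled skipped level'' condition globally. Conceptually, the more delicate step is the construction and normalization of $\Psi$: distinct fat subtrees can code the same equivalence because irrelevant choices on skipped levels get washed out, so one must pin down canonical representatives carefully in order for basic neighbourhoods, the order, and the depth functions to agree on both sides, yielding a bona fide Ramsey-space isomorphism rather than merely a continuous surjection.
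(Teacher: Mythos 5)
Your construction diverges from the paper's, and it contains a genuine flaw in the choice of monoid. You require that every skipped level of $F\in\Mcs$ carry a fixed constant character $c_\ell$. But then the duplication axiom~\ref{item:Nduplication} fails: the duplicator $F^n_m$ is forced (since $\bar p=()$ throughout your tree) to satisfy $F^n_m(b)=b\conc b_n$ for every $b\in T(m)$, so the character it writes at the skipped level $m$ is $b_n$, which varies with $b$ and is certainly not a constant. Hence $F^n_m\notin\Mcs$ under your definition, and the verification you sketch cannot go through. Note that the paper's family $\Ecs$ is not the ``constant'' family at all: $\Ecs$ is the \emph{minimal} family of boring extensions, i.e.\ it consists only of the duplications $e^m_n$; the constant extensions belong to the Graham--Rothschild family $\Egr$, not to $\Ecs$. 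So the condition defining the correct monoid is ``every skipped level copies some earlier unskipped level'' (Observation~\ref{obs:shapeboringCS}), not ``every skipped level is a constant.''

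Beyond this, the overall route differs from the paper's and creates avoidable problems. The paper uses a \emph{finite} alphabet $\Sigma$ with $|\Sigma|\geq 2$ and the full word tree $(\Sigma^{<\omega},\sqsubseteq)$, not rigid sequences over $\omega$; it shows that the shape-preserving functions in $\Mcs$ are in one-to-one correspondence with the equivalence relations on $\omega$ having infinitely many classes (the skipped levels determine which class each level joins), and that coarsening $E\mapsto E'$ corresponds exactly to post-composition $F_{E'}=F_E\circ F$; and then it invokes Theorem~\ref{thm:shaperamseyspace} (the Ellentuck theorem for $\M$) directly. There is no need to pass through fat subtrees at all. By contrast, your map $\Psi\colon\mathcal R\to\Eqn_\infty$ reads only the leftmost branch of $\fatf{U}$, which discards most of the information in a fat subtree; $\Psi$ is therefore badly non-injective, and the ``canonicalization'' you would need in order to match the order, the basic neighbourhoods, and the depth functions on both sides is not simply a matter of picking representatives — the natural object carrying the Carlson--Simpson structure here is a shape-preserving function (a single equivalence relation), not a fat subtree (a sequence of one-level extensions). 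I would suggest abandoning the rigid-sequence tree and the fat-subtree map, and instead working with the paper's finite-alphabet word tree, the duplication family $\Ecs$, the bijection $\Mcs\leftrightarrow\Eqn_\infty$, and Theorem~\ref{thm:shaperamseyspace}.
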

Consider a finite alphabet $\Sigma$ with at least two elements and let $\Ecs=(\Ecs_n)_{n\in \omega}$ be the minimal sequence of sets satisfying Definition~\ref{def:boringext}.
In this case, for every $n\in \omega$, the set $\Ecs_n$ consists precisely of functions $e_n^m$, $m<n$, required by condition~\ref{item:boring:duplication} of Definition~\ref{def:boringext}.
It is easy to check that condition~\ref{item:boring:insertion} is also satisfied and one can make Observation~\ref{obs:shapeboring} more precise:
\begin{observation}
	\label{obs:shapeboringCS}
	Given a finite alphabet $\Sigma$ with at least 2 elements and a
	shape-preserving function $F\colon \Sigma^{{<}\omega}\to\Sigma^{{<}\omega}$ it holds that $F\in \Mcs$ if and
	only if for every level $\ell$ skipped by $F$ there exists unique level
	$n\in \tilde{F}[\omega]$, $n<\ell$, such that
	for every $a\in \Sigma^{{<}\omega}$ with $\ell(F(a))>\ell$ it holds that
	$$(F(a)|_\ell)\conc F(a)_n \sqsubseteq F(a).$$
\end{observation}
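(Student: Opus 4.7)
The plan is to specialize Observation~\ref{obs:shapeboring} to the specific family $\Ecs$, so the first step is to identify each $\Ecs_n$ explicitly. The duplication axiom \ref{item:boring:duplication} forces $e_n^m \in \Ecs_n$ for every $m < n$, so by minimality of $\Ecs$ it suffices to check that the family defined by $\Ecs_n := \{e_n^m : m < n\}$ already satisfies the insertion axiom \ref{item:boring:insertion}. Given $e_1 = e_m^{m_1} \in \Ecs_m$ and $e_2 = e_n^{m_2} \in \Ecs_n$, I would simply set $e_3 = e_{n+1}^{m_2}$ if $m_2 < m$ and $e_3 = e_{n+1}^{m_2+1}$ if $m \leq m_2 < n$; a direct substitution using the definition $e_k^j(w) = w_j$ verifies $e_3(a \conc e_1(a) \conc b) = e_2(a \conc b)$ in both cases. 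Thus $\Ecs_n = \{e_n^m : m < n\}$ exactly.

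Next, by Observation~\ref{obs:shapeboring} applied to $\mathcal E = \Ecs$, $F \in \Mcs$ iff for every skipped level $\ell$ there is some $e^F_\ell \in \Ecs_\ell$ with
\[
(F(a)|_\ell) \conc e^F_\ell(F(a)|_\ell) \sqsubseteq F(a)
\]
for every $a$ with $\ell(F(a)) > \ell$. Using the explicit form of $\Ecs_\ell$, this becomes: there exists $m < \ell$ such that $F(a)_\ell = (F(a)|_\ell)_m = F(a)_m$ for every such $a$. It remains to show that $m$ can be taken in $\tilde F[\omega]$ and is unique there. For the first point, if $m \notin \tilde F[\omega]$ then $m$ is itself a skipped level, so by decreasing induction on $\ell$ I obtain some $m' < m$ with $m' \in \tilde F[\omega]$ satisfying $F(a)_m = F(a)_{m'}$, and composing the two equalities gives $F(a)_\ell = F(a)_{m'}$.

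For uniqueness, suppose $n_1 \neq n_2$ both lie in $\tilde F[\omega]$, both are less than $\ell$, and both satisfy the stated property. Then $F(a)_{n_1} = F(a)_{n_2}$ for every $a$ with $\ell(F(a)) > \ell$. Using that $F$ is shape-preserving on a regularly branching tree, $F$ preserves passing numbers, so $F(a)_{n_i} = a_{\tilde F^{-1}(n_i)}$. Since $\tilde F^{-1}(n_1) \neq \tilde F^{-1}(n_2)$ and $|\Sigma| \geq 2$, I can freely choose $a$ with distinct values at these two coordinates, obtaining a contradiction. The main obstacle is this last step, as it is the only place where the hypothesis $|\Sigma| \geq 2$ is genuinely used and where one has to invoke the shape-preserving structure of $F$ rather than just the axiom scheme of $\mathcal E$; the rest of the argument is essentially unpacking Observation~\ref{obs:shapeboring}.
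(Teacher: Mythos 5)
Your proof is correct and takes the same route the paper tacitly uses: identify $\Ecs_n=\{e_n^m:m<n\}$ (duplication forces inclusion, and your case split shows insertion holds so minimality gives equality), then specialize Observation~\ref{obs:shapeboring}. The paper leaves the refinement implicit — that the witnessing index can be taken in $\tilde{F}[\omega]$ (your induction across skipped levels) and is unique there (your passing-number argument, the one place $|\Sigma|\geq 2$ is needed) — and you supply exactly the missing details.
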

Every function $F\in \Mcs$ thus corresponds to an equivalence relation on $\omega$ as follows: Every level
in $F[\omega]$ is the minimal element of its equivalence class, and levels
$\ell\in \omega\setminus F[\omega]$ are split into equivalence classes according to
levels $n$ given by Observation~\ref{obs:shapeboringCS}.

Notice that this correspondence is one-to-one: for every equivalence relation $E$ on $\omega$ there exists a
unique $F_E\in \Mcs$. Moreover, if relation $E'$ is coarser than $E$ then there is $F\in \Mcs$ such that
$F_{E}\circ F=F_{E'}$.  It follows that Theorem~\ref{thm:dualRamsey} is a direct consequence of Corollary~\ref{cor:shaperamseyN2}
and Theorem~\ref{thm:CS} a direct consequence of Theorem~\ref{thm:shaperamseyspace}.

\subsubsection{Graham--Rothschild theorem}
Given a finite alphabet $\Sigma$ not containing characters $\{\lambda_i:i\in \omega\}$ and $k\in \omega+1$, a \emph{$k$-parameter word in alphabet $\Sigma$} is a (possibly infinite) sequence (word) $W$ in
alphabet $\Sigma\cup \{\lambda_i\colon 0\leq i<k\}$ such that for every $0\leq i  k$, $\lambda_i$ occurs in $W$, and for every $0< i < k$, the first
occurrence of $\lambda_i$ appears after the first occurrence of $\lambda_{i-1}$. A \emph{parameter word} is a $k$-parameter word for some $k\in \omega+1$.

Let $W$ be an $n$-parameter word and let $U$ be a parameter word of length $k\leq n$ (where $k,n\in \omega+1$). Then we denote by
$W(U)$ the parameter word created by \emph{substituting} $U$ to $W$. More precisely, this is a parameter word created from $W$ by replacing each occurrence of $\lambda_i$, $0\leq i < k$, by $U_i$ and truncating it just
before the first occurrence of $\lambda_k$ (in $W$).
Given an $n$-parameter word $W$ and set $S$ of parameter words of length at most $n$, we denote by $W(S)$ the set $\{W(U)\colon U\in S\}$.

We will prove:
\begin{theorem}[Graham--Rothschild Theorem~\cite{Graham1971}]
	\label{thm:GR1}
	For every pair of finite alphabets $\Pi\subseteq \Sigma$,
	for every $k,r,m\in \omega$ such that $0<k<m$ and $r>0$, there is a positive integer $N$ such that for every
	$r$-colouring of $k$-parameter words of length $N$ in the alphabet $\Sigma$ there exists an $m$-parameter
	word $W$ of length $N$ in alphabet $\Pi$ such that the set $$\{W(U):U\hbox{ is a $k$-parameter word in alphabet $\Sigma$ of length $m$}\}$$
	is monochromatic.
\end{theorem}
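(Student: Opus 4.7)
The plan is to derive Theorem~\ref{thm:GR1} by setting up an appropriate family of boring extensions and applying Theorem~\ref{thm:boring2} iteratively to handle the finitely many embedding types of $k$-parameter words in the larger alphabet $\Sigma$. Let $\Egr_n$ consist of the constants $c\colon \Sigma^n\to\Sigma$ with $c\in\Pi$ together with the duplications $e^n_j$ for $0\le j<n$. The axioms of Definition~\ref{def:boringext} are easy to check: \ref{item:boring:duplication} holds by construction, and for \ref{item:boring:insertion}, given $e_1\in\Egr_m$ and $e_2\in\Egr_n$, one takes $e_3$ to be the same $\Pi$-constant as $e_2$ if $e_2$ is a constant, and otherwise sets $e_3=e^{n+1}_j$ if $e_2=e^n_j$ with $j<m$, or $e_3=e^{n+1}_{j+1}$ if $j\ge m$. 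Proposition~\ref{prop:boringmonoid} then yields the $(\S,\M)$-tree $(\Sigma^{<\omega},\sqsubseteq,\Sigma,\Mgr)$; under the natural identification, an $m$-parameter word $W$ of length $N$ in $\Pi$ corresponds to a shape-preserving function $F_W\in\Mgr$ whose interesting levels are the first-occurrence positions of the parameters of $W$, with substitution $W(U)$ translating into composition of the associated shape-preserving functions.

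To accommodate $k$-parameter words over the larger alphabet $\Sigma$, I encode each such $V$ of length $N$ by its substitution set $X_V\subseteq\Sigma^N$. A direct check shows that for $V=W(U)$ with $W$ an $m$-parameter word in $\Pi$ and $U$ a $k$-parameter word of length $m$ in $\Sigma$, the embedding type $\tau_\Egr(X_V)$ depends only on $U$: the non-parameter positions of $W$ contribute $\Pi$-constants and its repeated parameters contribute duplications, so $W$ contributes only boring levels. As $U$ ranges over the finitely many $k$-parameter words of length $m$ in $\Sigma$, only finitely many embedding types $\tau_1,\ldots,\tau_s$ appear. I then apply Theorem~\ref{thm:boring2} iteratively: in step $j$, using $\mathcal E=\Egr$, $X=X_{U_j}$ for a representative of $\tau_j$, and $Y=\Sigma^M$ with $M$ sufficiently large, I refine the current envelope. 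After $s$ iterations this produces an $M$-parameter envelope $V^\ast$ in $\Pi$ such that for each $j$, every $m$-parameter sub-envelope $W$ of $V^\ast$ has all its $\tau_j$-typed substitutions $\{W(U):\tau_\Egr(X_U)=\tau_j\}$ monochromatic in a common color $c_j$.

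The main remaining obstacle is to collapse the tuple $(c_1,\ldots,c_s)$ to a single color so that the entire family $\{W(U):U\text{ is a }k\text{-parameter word of length }m\text{ in }\Sigma\}$ is monochromatic. I plan to achieve this via an additional pigeonhole step relying on the following observation: if $M$ is chosen large enough, then for any pair of types $i,j$ there exist an $m$-parameter sub-envelope $W'$ of $V^\ast$ and substitutions $U_i',U_j'$ of types $\tau_i,\tau_j$ respectively for which $W(U_i')=W'(U_j')$, forcing $c_i=c_j$; iterating this over all pairs collapses the tuple to a diagonal one. This pair-linking step is the principal technical challenge and will likely require a careful combinatorial analysis of the substitution structure within $\Mgr$.
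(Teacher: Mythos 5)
Your construction of $\Egr$ and the identification of parameter words over $\Pi$ with elements of $\Mgr$ coincide with the paper's setup. The problem is the final "pair-linking" step, which cannot work: for any fixed $m$-parameter word $W'$ of length $N$, the substitution map $U\mapsto W'(U)$ is \emph{injective} on $k$-parameter words of length $m$, since $U_i$ can be read off from $W'(U)$ at the first occurrence of $\lambda_i$ in $W'$. So there is no pair $U_i'\neq U_j'$ with $W'(U_i')=W'(U_j')$, and the tuple of colours $(c_1,\ldots,c_s)$ will not in general collapse to a single colour. Indeed, it provably cannot collapse when $\Pi\subsetneq\Sigma$: take $\Sigma=\{0,1\}$, $\Pi=\{0\}$, $k=1$, $m=2$, and colour a $1$-parameter word $V$ of length $N$ over $\Sigma$ by whether the letter $1$ occurs in it. For any $2$-parameter word $W$ of length $N$ in alphabet $\{0\}$, the word $W(\lambda_0\lambda_0)$ contains no letter $1$, while $W(1\lambda_0)$ has a $1$ at each occurrence of $\lambda_0$ in $W$; so the family $\{W(U)\}$ always receives both colours. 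Thus the conclusion is simply not available if $U$ ranges over $k$-parameter words in alphabet $\Sigma$; it only holds, and is what the framework actually yields, when $U$ ranges over $k$-parameter words in alphabet $\Pi$.

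With that reading the proof is considerably shorter than your proposal and bypasses Theorem~\ref{thm:boring2} and embedding types entirely. The paper derives the theorem directly from Corollary~\ref{cor:shaperamseyN2} applied to the $(\S,\M)$-tree built on $(\Sigma^{<\omega},\sqsubseteq,\Sigma,\S)$ with monoid $\Mgr$ via Proposition~\ref{prop:boringmonoid}. The key bijection is one level up from what you use: a finite approximation $f\in\AM^0_{k+1}(N)$ (over the monoid $\Mgr$) \emph{is} a $k$-parameter word of length $N$ in alphabet $\Pi$, once one reads the parameter positions off $\tilde f$ and the remaining positions off the boring extensions $e^f_\ell$ of Observation~\ref{obs:shapeboring}; moreover, composition of shape-preserving functions corresponds exactly to substitution of parameter words. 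A colouring of $k$-parameter words therefore transfers bijectively to a colouring of $\AM^0_{k+1}(N)$, Corollary~\ref{cor:shaperamseyN2} applies, and one translates back. Your step 3 observation that $\tau_{\Egr}(X_{W(U)})$ depends only on $U$, and the iterated use of Theorem~\ref{thm:boring2}, are not wrong in themselves, but they are a detour that produces only a per-type conclusion and then runs into the fundamental obstruction above.
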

\begin{theorem}[$*$-version of the Graham--Rothschild Theorem~\cite{Graham1971}]
	\label{thm:GR2}
	For every pair of finite alphabets $\Pi\subseteq \Sigma$,
	for every $k,r,m\in \omega$ such that $0<k<m$ and $r>0$ there is a positive integer $N$ such that for every
	$r$-colouring of $k$-parameter words of length at most $n$ in the alphabet $\Sigma$ there exists an $m$-parameter
	word $W$ in the alphabet $\Pi$ of length at most $N$ such that the set $$\{W(U):U\hbox{ is a $k$-parameter word in alphabet $\Sigma$  of length at most $m$}\}$$
	is monochromatic.
\end{theorem}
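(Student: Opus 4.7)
The plan is to follow the same blueprint used earlier in this section for Milliken's tree theorem and the Carlson--Simpson theorem: introduce an appropriate family of boring extensions, invoke Proposition~\ref{prop:boringmonoid} to obtain an $(\S,\M)$-tree, and apply Corollary~\ref{cor:shaperamseyN1}. First I would define $\Egr=(\Egr_n)_{n\in\omega}$ by letting $\Egr_n$ consist of the projections $e^n_j$ for $j<n$, together with all constant functions $\Sigma^n\to\Sigma$ whose value lies in $\Pi$. Verification of the two axioms of Definition~\ref{def:boringext} is a short routine check: \ref{item:boring:duplication} is immediate since all projections are included by construction, and \ref{item:boring:insertion} reduces to a case analysis on whether each of $e_1\in\Egr_m$ and $e_2\in\Egr_n$ is a projection or a $\Pi$-constant (projections combine by shifting indices, constants propagate unchanged). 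Proposition~\ref{prop:boringmonoid} then supplies the $(\S,\M)$-tree $(\Sigma^{{<}\omega},\sqsubseteq,\Sigma,\Mgr)$ with $\Mgr:=\M_{\Egr}$.

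By Observation~\ref{obs:shapeboring}, every $F\in\Mgr$ corresponds bijectively to an infinite parameter word over $\Pi\cup\{\lambda_0,\lambda_1,\dots\}$: the levels in $\tilde F[\omega]$ record where fresh parameters are introduced, while each skipped level is filled either by a $\Pi$-constant or, in the projection case, by a copy of an earlier parameter occurrence. Under this correspondence composition of shape-preserving functions matches substitution $W(U)$ of parameter words, and a finite $m$-parameter word in $\Pi$ of length at most $N$ matches an element of $\AM^0_{m+1}({\leq}N)$ relative to $\Mgr$. A direct application of Corollary~\ref{cor:shaperamseyN1} to this $(\S,\M)$-tree, instantiating its parameters $n$, $k$, $m$ as $0$, $k+1$, $m+1$, therefore yields the promised $N$ whenever the colouring is restricted to $\Pi$-parameter words.

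The main obstacle is that Theorem~\ref{thm:GR2} colours $k$-parameter words in the larger alphabet $\Sigma$, not only those in $\Pi$. To bridge this gap, one enriches the colouring before applying the corollary: since $\Sigma$ is finite and $m$ is fixed, the set $\mathcal{U}$ of $k$-parameter words in $\Sigma$ of length at most $m$ is finite, so the given $r$-colouring $\chi$ lifts to an $r^{|\mathcal{U}|}$-colouring $\chi^{\star}$ of $\AM^0_{m+1}({\leq}N)$ via $\chi^{\star}(F)=(\chi(F(U)))_{U\in\mathcal{U}}$. Applying Corollary~\ref{cor:shaperamseyN1} to $\chi^{\star}$ produces a witness $W$ for which the whole colour profile is constant on the orbit $\{W\circ V:V\in\Mgr\}$. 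Turning this stability, which a priori only guarantees that $\chi(W(V(U)))=\chi(W(U))$ for every $V$ and $U$, into genuine monochromaticity across $U$ requires iterating the argument, once per $U\in\mathcal{U}$, and using the $(\S,\M)$-tree axiom~\ref{item:Ndecomposition} together with a diagonal compactness step inside the space of shape-preserving functions to compose the successive witnesses; this combining step is the delicate technical heart of the proof, and is where I expect to spend most of the effort.
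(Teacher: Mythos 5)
Your first two paragraphs reproduce the paper's own terse proof faithfully: the choice of $\Egr$ (projections together with $\Pi$-valued constants), the verification via Proposition~\ref{prop:boringmonoid}, and the identification of $\Mgr$ with infinite parameter words over $\Pi$ and of $\AM^0_{m+1}({\leq}N)$ with $m$-parameter words over $\Pi$ of length at most $N$. The paper says nothing beyond ``follows by Corollary~\ref{cor:shaperamseyN1},'' so the ``main obstacle'' you flag in the third paragraph is a legitimate concern that the paper itself glosses over.

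However, the workaround you sketch cannot be made to work, and it is worth seeing concretely why. The stability you would derive, $\chi(W(V(U)))=\chi(W(U))$ for all $V$, is a left-invariance statement, not constancy across $U$. To convert it you would need, for given $U_0,U_1$, to find $V_0,V_1$ over $\Pi$ with $V_0(U_0)=V_1(U_1)$; but take $U_0=\lambda_0\cdots\lambda_{k-1}c$ and $U_1=\lambda_0\cdots\lambda_{k-1}c'$ with distinct $c,c'\in\Sigma\setminus\Pi$. Any $V_0(U_0)$ contains $c$ (wherever $V_0$ uses $\lambda_k$), any $V_1(U_1)$ contains $c'$, and neither can contain the other's letter, so no such $V_0,V_1$ exist. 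In fact the difficulty lies in the theorem statement itself, not in your argument: for $\Pi\subsetneq\Sigma$, the $2$-colouring $\chi(U)=0$ if $U$ contains a letter of $\Sigma\setminus\Pi$ and $\chi(U)=1$ otherwise has no monochromatic set of the required form, since for any $m$-parameter word $W$ in alphabet $\Pi$ and any $c\in\Sigma\setminus\Pi$ the words $W(\lambda_0\cdots\lambda_{k-1}c)$ and $W(\lambda_0\cdots\lambda_{k-1}\lambda_0)$ receive different colours (the first contains $c$, the second cannot). What Corollary~\ref{cor:shaperamseyN1} applied to $(\Sigma^{<\omega},\sqsubseteq,\Sigma,\Mgr)$ actually yields is the version where \emph{both} the colouring domain and the inner words $U$ range over $k$-parameter words in the alphabet $\Pi$; with that reading the correspondence in your second paragraph gives the theorem immediately and no colour-profile argument, iteration, or compactness step is needed. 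The right move here was not to force the argument through a ``delicate combining step,'' but to notice that the alphabet in the statement cannot be $\Sigma$ and to match the statement to what the Ramsey machinery actually delivers.
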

Given finite alphabets $\Pi\subseteq \Sigma$ where $\Sigma$ has at least two elements ($\Pi$ can be empty) we define a family of boring extensions $\Egr=(\Egr_n)_{n\in \omega}$
such that $\Egr_n$ consists precisely of
\begin{enumerate}
	\item all functions $e_n^m$, $m<n$, required by condition~\ref{item:boring:duplication} of Definition~\ref{def:boringext}, and
	\item all constant functions $e:\Sigma^n\to \Pi$.
\end{enumerate}
It is easy to verify that $\Egr_n$ satisfies condition~\ref{item:boring:insertion} and that, similarly to Observation~\ref{obs:shapeboringCS}, every
function $F\in \Mgr$ uniquely corresponds to an infinite-parameter word in alphabet $\Pi$.

Theorem~\ref{thm:GR1} now follows by Corollary~\ref{cor:shaperamseyN2} and Theorem~\ref{thm:GR2} follows by Corollary~\ref{cor:shaperamseyN1}.
We remark that the original formulation of the Graham--Rothschild Theorem also considers parameter words over finite groups.
These also naturally yield a corresponding family of boring extensions, we however leave the details out for brevity.

Notice also that $\Ecs\subseteq \Egr \subset \Emil$. While the three theorems are formulated and proven using different formalisms,
they all fit our generalized Ramsey theorem.  There are many other meaningful choices of the families of boring extensions which
yield Ramsey-type theorems. This is useful for applications, see~\cite{Balko2023}. In the following section we show an application that uses such a custom family.
\subsection{Unrestricted Ne\v set\v ril--R\"odl (or Abramson--Harrington) theorem}
\label{sec:AH}
Let $L$ be a relational language.  We consider standard model-theoretic
$L$-structures which we denote by bold letters $\str{A},\str{B},\ldots$ and
their vertex-sets (domains or underlying sets)  by corresponding normal letters $A,B,\ldots$
We consider finite and countably infinite structures only.
Given an $L$-structure $\str{A}$ and a set $S\subseteq A$, we write $\str{A}\restriction_S$ for the $L$-structure induced by $\str{A}$ on $S$.
Substructures are always induced: $\str{A}$ is a substructure of $\str{B}$ if and only if $A\subseteq B$ and $\str{A}=\str{B}\restriction_{A}$.

If $L$ contains a binary symbol $\leq$ we call an $L$-structure $\str{A}$ \emph{ordered} if $(A,\leq_\str{A})$ is
a linear order.
Given $L$-structures $\str{A}, \str{B}$ and $\str{C}$ we write $\Emb(\str{A},\str{B})$ for the set of all embeddings
$\str{A}\to\str{B}$.
\begin{definition}[Erd\H os--Rado partition arrow]
	We write $\str{C}\longrightarrow(\str{B})^\str{A}_2$ for the following statement:
	\begin{quote}
		For every $2$-colouring $\chi\colon\Emb(\str{A},\str{C})\to\{0,1\}$ there exists an embedding $f\in \Emb(\str{B},\str{C})$ such
		that $\chi$ restricted to $\{f\circ g:g\in \Emb(\str{A},\str{B})\}$ is constant.
	\end{quote}
\end{definition}

We give a short proof of the following theorem:
\begin{theorem}[Ne\v set\v ril--R\"odl~\cite{Nevsetvril1977}, Abramson--Harrington~\cite{Abramson1978}]
	\label{thm:ah}
	Let $L$ be a relational language and $\str{A},\str{B}$ finite ordered $L$-structures.
	Then there exists a finite ordered $L$-structure $\str{C}$ satisfying $\str{C}\longrightarrow(\str{B})^\str{A}_2$.
\end{theorem}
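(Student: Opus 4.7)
The plan is to reduce Theorem~\ref{thm:ah} to Theorem~\ref{thm:boring1} via an encoding of finite ordered $L$-structures as subsets of $\Sigma^{<\omega}$ for a suitable finite alphabet $\Sigma$ and family of boring extensions $\mathcal{E}$, chosen so that two encoded subsets have the same embedding type $\tau_\mathcal{E}$ precisely when they encode isomorphic structures.

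Let $k$ be the maximum arity of symbols in $L$. I would let $\Sigma$ be a finite alphabet whose characters record, at a single position, (a) a quantifier-free unary $L$-type and (b) the complete $L$-relational behaviour of a ``new'' vertex with respect to all tuples of at most $k-1$ previously-placed vertices. For a finite ordered $L$-structure $\str{D}$ with elements $d_1 < \cdots < d_n$, encode each $d_i$ by a word $w_i \in \Sigma^i$: the $j$-th letter of $w_i$ (for $j < i$) encodes the $L$-relational behaviour of $d_i$ with respect to tuples drawn from $\{d_1,\ldots,d_j\}$, while the last letter encodes the unary type of $d_i$. The resulting set $Y_\str{D} = \{w_1, \ldots, w_n\} \subseteq \Sigma^{<\omega}$ is meet-closed and determines $\str{D}$ up to isomorphism.

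Next I would define $\mathcal{E}$ to be the minimal family of boring extensions generated by the duplication functions required by~\ref{item:boring:duplication} and closed under the insertion axiom~\ref{item:boring:insertion}. The key lemma to verify is that, for any encoded $Y_\str{D}$, the interesting levels $I_\mathcal{E}(Y_\str{D})$ coincide with the levels that carry an element of $\str{D}$, and $\tau_\mathcal{E}(Y_\str{D})$ canonically represents the isomorphism type of $\str{D}$; equivalently, the ``boring'' letters of an encoded word are precisely those whose values are semantically forced by duplication and thus carry no new $L$-information. It follows that $\ebinom{Y_\str{E}}{Y_\str{A}}$ is in natural bijection with $\Emb(\str{A}, \str{E})$ for every encoded $\str{E}$.

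Applying Theorem~\ref{thm:boring1} to $X = Y_\str{A}$ and $Y = Y_\str{B}$ then produces a bound $N$, and I would take $\str{C}$ to be the ordered $L$-structure on (a copy of) $\Sigma^{<N}$, with linear order refining the levels (lexicographic within each level) and the $L$-relations read off the characters via the encoding, so that $Y_\str{C} = \Sigma^{<N}$. The main obstacle will be designing the alphabet and verifying the faithfulness lemma above for general finite arity languages — the binary case is straightforward, but higher arity requires encoding enough information at each position so that the duplication and insertion machinery preserves all $L$-structural data. Once this correspondence is established, the monochromatic $Y' \in \ebinom{\Sigma^{<N}}{Y_\str{B}}$ provided by Theorem~\ref{thm:boring1} decodes to the required embedded copy of $\str{B}$ in $\str{C}$ witnessing $\str{C} \longrightarrow (\str{B})^\str{A}_2$.
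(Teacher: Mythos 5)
Your high-level strategy — represent finite ordered $L$-structures as subsets of $\Sigma^{<\omega}$, cook up a family $\mathcal E$ of boring extensions so that $\tau_{\mathcal E}$ captures isomorphism type, and then invoke the boring-extension Ramsey theorem — is exactly the paper's strategy. However, the specific encoding and the specific family you propose diverge from the paper's in ways that create a genuine gap, chiefly around your choice of $\mathcal E$.

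The paper does not use the minimal family $\Ecs$; it constructs a custom family tailored to $\str{B}$: an extension $e$ is boring precisely when, for every strictly lexicographically increasing tuple $\bar{w}$, appending $e$ preserves ``decides a structure on level $i$'' and ``compatible''. This calibration is what makes the crucial faithfulness property (Claim~\ref{clm:envelope}) go through: for any two embeddings $f_1,f_2\colon\str{A}\to\str{B}$, the encoded images $\varphi\circ f_1[A]$ and $\varphi\circ f_2[A]$ have the \emph{same} embedding type, of a size ($2^{|A|}-1$) depending only on $|A|$. With the minimal family $\Ecs$, a level is boring only if its letter is an exact duplicate of an earlier level's letter. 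In your encoding, the letters of $w_i$ record the behaviour of $d_i$ over larger and larger initial segments, so consecutive letters are generically distinct and essentially no intermediate level will be a literal duplicate. Consequently, for a $k$-element substructure sitting at indices $i_1<\cdots<i_k$, the set of interesting levels will include most of $\{1,\dots,i_k\}$ and will depend on the particular indices $i_1,\dots,i_k$, not merely on the isomorphism type. The claimed bijection $\ebinom{Y'}{Y_\str{A}}\leftrightarrow\Emb(\str{A},\cdot)$ therefore fails — isomorphic substructures sitting in different positions would have distinct embedding types, and the Ramsey conclusion would only control one of them. (Relatedly, your claim that $Y_\str{D}=\{w_1,\dots,w_n\}$ is meet-closed is generally false under this encoding.)

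The paper also sidesteps the variable-length issue entirely by placing \emph{all} $n$ vertices at the single level $N=2^{n}-1$: $\varphi(v)_i$ records the index of $v$ in the lexicographically first copy $\str{D}^i$ of the $i$-th substructure $\str{B}^i$ (or a dummy value $n$ when $v\notin\str{B}^i$). This gives a small alphabet $\Sigma=n+1$ that automatically handles arbitrary arity, and the proof then applies Theorem~\ref{thm:boring2} (the combinatorial-cube version) rather than Theorem~\ref{thm:boring1}. Your variable-length, tree-based encoding is not irreparable in principle, but the two missing ideas are (i) you must design $\mathcal E$ to match the semantics of your encoding rather than take the minimal family, and (ii) you must prove a position-invariance lemma (the analogue of the paper's Claim~\ref{clm:envelope}) showing the embedding type of an encoded copy of $\str{A}$ is independent of where that copy sits — precisely the point your proposal asserts rather than establishes.
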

The proof applies Theorem~\ref{thm:boring2} but is otherwise fully self-contained.
\begin{proof}
	Fix a relational language $L$ containing a binary symbol $\leq$ and an ordered $L$-struc\-ture $\str{B}$. Without loss of generality assume that $B=n$  and
	$\leq_\str{B}$ is the natural ordering of $n$.

	Given two substructures $\str{B}'$, $\str{B}''$ of $\str{B}$ we put $\str{B}'\prec \str{B}''$ if either $|\str{B}'|<|\str{B}''|$ or $|\str{B}'|=|\str{B}''|$ and
	$B'$ is lexicographically before $B''$ (in the order of vertices of $\str{B}$).
	Put $N=2^n-1$ and
	enumerate all non-empty substructures of $\str{B}$ as $\str{B}^0, \str{B}^1,\ldots,\str{B}^{N-1}$ in the increasing order (given by $\preceq$).
	For each $i<N$, let $\str{D}^i$ be the lexicographically first substructure of $\str B$ isomorphic to $\str{B}^i$ and denote by $f^i$ the unique isomorphism $\str{B}^i\to\str{D}^i$. Put $\Sigma=n+1$ and define a function $\varphi\colon B\to \Sigma^{N}$ by:
	$$\varphi(v)_i=\begin{cases}
			n      & \hbox{if $v\notin B^i$} \\
			f^i(v) & \hbox{if $v\in B^i$}
		\end{cases}
		\hbox{ for every $v\in B$ and $i<N$.}$$

	See Figure~\ref{fig:abramson1} for an example. The function $\varphi$ will serve as a representation of $\str{B}$ in the regularly branching tree $(\Sigma^{{<}\omega},\sqsubseteq)$ which will allow us to later apply Theorem~\ref{thm:boring2}.
	\begin{figure}
		\centering
		\includegraphics[scale=0.833]{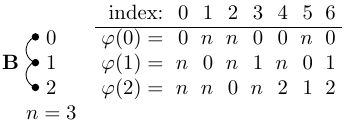}
		\caption{A representation, produced in the proof of Theorem~\ref{thm:ah}, of the ordered path.}
		\label{fig:abramson1}
	\end{figure}

	\medskip

	Given $k,\ell\in \omega$,  and a tuple  $\bar{w}=(w^0, w^1, \ldots, w^{k-1})$ of elements of $\Sigma^\ell$:
	\begin{enumerate}
		\item We say that $\bar w$ \emph{decides a structure on level $i<\ell$} if $w^0_i< w^1_i<\cdots<w^{k-1}_i<n$ and $i$ is the minimal index with this property.
		\item We say that  $\bar w$  \emph{become incompatible on level $i'<\ell$} if either
		      \begin{enumerate}
			      \item $k=2$ and $n > w^0_{i'}\geq w^1_{i'}$,
			      \item $w^0_{i'}< w^1_{i'}<\cdots<w^{k-1}_{i'} < n$ but there exists $i<i'$ such that $\bar w$ decides structure on level $i$
			            and $B\restriction_{\{w^0_i,w^1_i,\ldots,w^{k-1}_i\}}$ is not isomorphic to $B\restriction_{\{w^0_{i'},w^1_{i'},\ldots,w^{k-1}_{i'}\}}.$
		      \end{enumerate}
		\item We call $\bar w$ \emph{compatible}  for every subsequence $\bar w'$ of $\bar w$ there is no $i'$ such that $\bar w'$ becomes incompatible on level $i'$.
		      (In particular, every compatible sequence $\bar w$ is lexicographically increasing: $w^0\lexleq w^1\lexleq\cdots\lexleq w^{k-1}$.)
	\end{enumerate}
	The following summarises the main properties of the construction:
	\begin{claim}
		\label{claim:c1}
		For every $k\leq |B|$ and $v_0<v_1<\cdots<v_{k-1}\in B$ it holds that tuple $\bar{v}=(\varphi(v_0),\allowbreak \varphi(v_1),\allowbreak \ldots,\allowbreak \varphi(v_{k-1}))$ is strictly lexicographically increasing, compatible, decides structure on some level $i$, and moreover $\str{B}\restriction_{\{\varphi(v_0)_i,\allowbreak \varphi(v_1)_i,\allowbreak\ldots,\allowbreak\varphi (v_{k-1})_i\}}$ is isomorphic to $\str{B}\restriction_{\{v_0,v_1,\ldots v_{k-1}\}}$.
	\end{claim}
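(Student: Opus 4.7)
The plan is to exhibit the required level $i$ directly: take $i=j$ where $j$ is the enumeration index such that $\str{B}^j=\str{B}\restriction_{\{v_0,\dots,v_{k-1}\}}$, and verify the four properties in the conclusion against this choice. Everything will reduce to two basic facts from the construction: the unique isomorphism $f^j\colon \str{B}^j\to \str{D}^j$ is order-preserving (both sides are finite linearly ordered structures and $\leq$ is part of the signature), and the enumeration $\str{B}^0,\str{B}^1,\dots$ goes by size first and then by lexicographic order of vertex sets.

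At the chosen level $j$ the definition of $\varphi$ gives $\varphi(v_l)_j=f^j(v_l)$ for every $l$, and order-preservation of $f^j$ yields $\varphi(v_0)_j<\dots<\varphi(v_{k-1})_j$ with all values strictly below $n$; so $\bar v$ decides structure at $j$, and the induced substructure on $\{\varphi(v_l)_j\}_l$ equals $\str{D}^j$, which is isomorphic to $\str{B}^j=\str{B}\restriction_{\{v_0,\dots,v_{k-1}\}}$ by the definition of $\str{D}^j$. Minimality of $j$ is a short case split: any $i<j$ satisfies $\str{B}^i\prec \str{B}^j$, so either $|B^i|<k$, or $|B^i|=k$ and $B^i\neq \{v_0,\dots,v_{k-1}\}$; in both situations some $v_l$ lies outside $B^i$, making $\varphi(v_l)_i=n$ and preventing the ``deciding'' condition at level $i$.

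Compatibility of $\bar{v}$ amounts to ruling out both failure modes from the definition for every subsequence indexed by $l_0<\dots<l_{m-1}$. Mode (a) follows again from order-preservation: on any level $i'$ with $\varphi(v_{l_0})_{i'}<n$ we have $v_{l_0}\in B^{i'}$, and then either $v_{l_1}\in B^{i'}$, so $\varphi(v_{l_1})_{i'}=f^{i'}(v_{l_1})>f^{i'}(v_{l_0})$, or $v_{l_1}\notin B^{i'}$, so $\varphi(v_{l_1})_{i'}=n$; either way $\varphi(v_{l_0})_{i'}<\varphi(v_{l_1})_{i'}$. Mode (b) follows from the observation that on any level $i$ at which the subsequence has all coordinates below $n$, all the $v_{l_s}$ lie in $B^i$, so the induced structure $\str{B}\restriction_{\{f^i(v_{l_s})\}_s}$ is the $f^i$-image of $\str{B}\restriction_{\{v_{l_0},\dots,v_{l_{m-1}}\}}$; two such levels therefore produce substructures isomorphic to a common thing, hence to each other.

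Strict lex-increase of $\bar v$ is then read off the beginning of the enumeration: since the smallest non-empty substructures come first and singletons are smallest, $\str{B}^0,\dots,\str{B}^{n-1}$ must be exactly the singletons $\{0\},\dots,\{n-1\}$ in lex order. For $a<b$, $\varphi(v_a)$ and $\varphi(v_b)$ therefore agree on all coordinates $i<v_a$ (both equal $n$, since $i$ is neither $v_a$ nor $v_b$), and at coordinate $v_a$ one has $\varphi(v_a)_{v_a}=f^{v_a}(v_a)<n=\varphi(v_b)_{v_a}$, so $\varphi(v_a)\lexlt\varphi(v_b)$. The proof is really just bookkeeping; the only place to be careful is the case split (size smaller versus equal) in the minimality argument for $j$, and keeping the two compatibility failure modes separated.
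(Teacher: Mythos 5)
Your proof is correct and follows essentially the same route as the paper's own (much terser) argument: identify $i=j$ via $\str{B}^j=\str{B}\restriction_{\{v_0,\dots,v_{k-1}\}}$, use that the unique isomorphism $f^j$ between finite ordered structures is order-preserving, and use that the first $n$ substructures in the $\preceq$-enumeration are the singletons $\{0\},\dots,\{n-1\}$ in lex order. Your write-up is more detailed than the paper's (which compresses compatibility and deciding into ``by the construction''); the case split for minimality of $j$ and the two compatibility failure modes are handled correctly, with the key point in mode (b) being that any level where all coordinates of the subsequence fall below $n$ yields a substructure isomorphic via $f^i$ to $\str{B}\restriction_{\{v_{l_0},\dots,v_{l_{m-1}}\}}$.
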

	Fix a tuple $\bar{v}$.
	To see the first part of Claim~\ref{claim:c1} (about lexicographic ordering) notice that for every $v\in B$ it holds that $\str{D}^v=\str{B}\restriction_{\{v\}}$ and this is the first
	substructure (in the order $\preceq$) containing $v$. Therefore $\varphi(v)_v$ is the fist letter of $\varphi(v)$ that is not $n$.
	Now let $i$ be such that $\str{B}^i=\str{B}\restriction_{\{v_0,v_1,\dots, v_{k-1}\}}$. By the construction, $\bar{v}$ is compatible and agrees on a substructure on level $i$. This finishes the proof of the claim.

	\medskip

	Now we equip every level $\ell$ of the tree $(\Sigma^{{<}\omega},\sqsubseteq)$ with an ordered $L$-structure $\str{C}_\ell$:
	\begin{enumerate}
		\item The vertex set of  $\str{C}_\ell$ is $C_\ell=\Sigma^\ell$,
		\item $\leq_{\str{C}_\ell}$ is the lexicographic ordering of $\Sigma^\ell$,
		\item Whenever $(w^0,w^1,\ldots,w^{k-1})$ is a tuple of vertices of $\str{C}_\ell$ which is compatible and decides structure on some level $i$ then $B\restriction_{\{w^0,w^1,\ldots,w^{k-1}\}}$ is isomorphic to $B\restriction_{\{w^0_i,w^1_i,\ldots,w^{k-1}_i\}}$.
		\item There are no other tuples in relations of $\str{C}_\ell$ then ones required by the above two conditions.
	\end{enumerate}
	Notice that the definition of compatibility and Claim~\ref{claim:c1} ensures that such a structure exists.
	Moreover, the following claim follows directly from the construction:
	\begin{claim}
		\label{claim:embedding}
		Function $\varphi$ is an embedding $\str{B}\to \str{C}_{2^{|B|}-1}$.
	\end{claim}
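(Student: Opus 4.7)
The plan is to reduce everything to Claim~\ref{claim:c1}, which already packages the crucial combinatorial content of $\varphi$. I must verify three things: $\varphi$ is injective, it preserves $\leq$, and it preserves every other relation symbol of $L$. The first two drop out immediately from the $k=2$ case: for any $v_0 <_{\str{B}} v_1$, Claim~\ref{claim:c1} gives that $(\varphi(v_0), \varphi(v_1))$ is strictly lexicographically increasing, which, since $\leq_{\str{C}_N}$ is the lexicographic order on $\Sigma^N$, yields both $\varphi(v_0) \neq \varphi(v_1)$ and $\varphi(v_0) <_{\str{C}_N} \varphi(v_1)$ at once.

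For a relation symbol $R \in L \setminus \{\leq\}$ of arity $k$ and a tuple $(v_0, \ldots, v_{k-1})$ of pairwise distinct elements of $B$, I would permute into lexicographically increasing order and apply Claim~\ref{claim:c1}. Let $\pi$ be the unique permutation with $v_{\pi(0)} < v_{\pi(1)} < \cdots < v_{\pi(k-1)}$. Then Claim~\ref{claim:c1} supplies a level $i$ on which the tuple $(\varphi(v_{\pi(0)}), \ldots, \varphi(v_{\pi(k-1)}))$ is compatible and decides the structure. Moreover, unwinding the definition of $\varphi$, one has $\str{B}^i = \str{B}\restriction_{\{v_{\pi(j)} : j < k\}}$, and the map $v_{\pi(j)} \mapsto \varphi(v_{\pi(j)})_i$ is precisely $f^i$, the unique isomorphism $\str{B}^i \to \str{D}^i$.

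Putting this together, by items (3) and (4) in the definition of $\str{C}_N$, the substructure $\str{C}_N \restriction_{\{\varphi(v_j) : j < k\}}$ is isomorphic, via projection to coordinate $i$, to $\str{B} \restriction_{f^i[\{v_j : j < k\}]}$. Since $f^i$ is itself an isomorphism from $\str{B}^i$ onto $\str{D}^i$, the composition $\varphi(v_j) \mapsto v_j$ is an isomorphism $\str{C}_N \restriction_{\{\varphi(v_j) : j<k\}} \to \str{B} \restriction_{\{v_j : j<k\}}$ that respects the labelling of the tuple, so $R^{\str{B}}(v_0, \ldots, v_{k-1}) \Leftrightarrow R^{\str{C}_N}(\varphi(v_0), \ldots, \varphi(v_{k-1}))$. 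The one delicate point I anticipate is matching the construction of $\str{C}_N$, which is phrased for compatible lexicographically increasing tuples, to arbitrarily permuted tuples; this is exactly why the permutation $\pi$ is introduced, and why I rely on item (4) (no extra relational facts) to pin down the structure on the image of $\varphi$.
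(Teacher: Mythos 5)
Your proposal is correct and reconstructs what the paper leaves implicit: the paper states that the claim ``follows directly from the construction,'' offering no written proof, and you supply the details by combining Claim~\ref{claim:c1} with the definition of $\str{C}_N$, which is exactly the intended route. Two small remarks. First, a slightly slicker packaging is available: apply Claim~\ref{claim:c1} once with $k=|B|$ to the full increasing enumeration of $B$, which gives directly that $\str{C}_N\restriction_{\varphi[B]}$ and $\str{B}$ are isomorphic via the order-preserving bijection, and since both are linearly ordered finite structures that bijection is unique and equals $\varphi$; this avoids the relation-by-relation and tuple-by-tuple bookkeeping and the explicit permutation $\pi$, and also automatically takes care of tuples with repeated entries (which your ``pairwise distinct'' restriction silently skips, though this is harmless since an induced-substructure isomorphism determines such tuples anyway). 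Second, your appeal to the internals of the proof of Claim~\ref{claim:c1} (that the deciding level $i$ satisfies $\str{B}^i=\str{B}\restriction_{\{v_{\pi(j)}\}}$ and the projection is $f^i$) is not strictly needed: the statement of Claim~\ref{claim:c1} already gives an abstract isomorphism of ordered structures, and uniqueness of the order-preserving bijection then identifies it, so you could cite the claim as a black box.
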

	Next, we define a sequence $\mathcal E=(\mathcal E_n)_{n\in\omega}$ of sets where for every $n$ the set $\mathcal E_n$ consists precisely of those functions $e\colon \Sigma^n\to \Sigma$ such that
	for every strictly lexicographically increasing sequence $\bar{w}=(w^0,w^1,\ldots,w^{k-1})$ of elements of $\Sigma^{n}$ the sequence $\bar{w}'=((w^0)\conc e(w^0),(w^1)\conc e(w^1),\ldots,(w^k)\conc e(w^k))$ satisfies:
	\begin{enumerate}[label=(\Roman*)]
		\item\label{item:ah1} $\bar{w}'$ decides a structure on some level $i$ if and only if $\bar{w}$ decides a substructure on same level,
		\item\label{item:ah2} $\bar{w}'$ is compatible if and only if $\bar{w}$ is compatible.
	\end{enumerate}
	\begin{claim}
		\label{claim:boring}
		$\mathcal E$ is a family of boring extensions.
	\end{claim}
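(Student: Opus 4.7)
The plan is to verify the two defining properties \ref{item:boring:duplication} (duplication) and \ref{item:boring:insertion} (insertion) directly from the definition of $\mathcal{E}_n$.

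For \ref{item:boring:duplication}, I verify that for every $m<n$ the projection $e_m^n(a)=a_m$ lies in $\mathcal{E}_n$. Appending the column $(w^0_m,\dots,w^{k-1}_m)$ to a strictly lexicographically increasing tuple $\bar w$ cannot create a new deciding level, because ``$w^0_n<\cdots<w^{k-1}_n<n$'' at the appended column literally is a decision at column $m$ of $\bar w$. Similarly, any incompatibility arising at the new column (case~(a) for a pair, or case~(b) against some earlier deciding level) transfers to an identical incompatibility at column $m$ of $\bar w$, since the induced substructure at the appended column coincides with that at column $m$.

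For \ref{item:boring:insertion}, given $e_1\in\mathcal{E}_m$ and $e_2\in\mathcal{E}_n$ with $m\le n$, I propose
\[
  e_3(w_0\cdots w_n)=
  \begin{cases}
    e_2(w_0\cdots w_{m-1}\,w_{m+1}\cdots w_n) & \text{if } w_m=e_1(w_0\cdots w_{m-1}),\\
    n & \text{otherwise.}
  \end{cases}
\]
The required identity $e_3(a\conc e_1(a)\conc b)=e_2(a\conc b)$ holds by construction. To check $e_3\in\mathcal{E}_{n+1}$, fix a strictly lex-increasing tuple $\bar w$ in $\Sigma^{n+1}$, split indices into $I_A=\{i:w^i_m=e_1(w^i_0\cdots w^i_{m-1})\}$ and $I_B$, and let $u^i$ denote $w^i$ with its $m$-th entry deleted. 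The crucial combinatorial lemma I would establish first is that any two distinct indices $i<j$ both in $I_A$ satisfy $u^i\lexlt u^j$: if $w^i$ and $w^j$ disagree before position $m$ the order is inherited; otherwise they agree on the first $m$ entries and $I_A$-membership forces $w^i_m=w^j_m$, pushing the first difference past position $m$, where it is preserved upon deletion. Hence every subsequence of $\bar u$ indexed by $I_A$ is strictly lex-increasing in $\Sigma^n$, precisely the input shape required to invoke $e_2\in\mathcal{E}_n$.

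Property~(I) for $e_3$ is then immediate: a new decision at level $n+1$ would require all extended values to be $<n$ and strictly increasing, excluding any index of $I_B$ (whose $e_3$-value is $n$); with all indices in $I_A$, property~(I) for $e_2$ pushes the decision back to a level of $\bar u$, corresponding to a level of $\bar w$ other than $m$, so $\bar w$ itself decides there. For property~(II), case~(a) and case~(b) at the new level both force all involved indices into $I_A$ (any value $n$ from $I_B$ fails the ``both $<n$'' requirement), so invoking~(II) for $e_2$ on the corresponding $\bar u$-subsequence reduces the matter to compatibility of that $\bar u$-subsequence, which descends from compatibility of $\bar w$.

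The main obstacle I expect is the case~(b) analysis when the deciding level of $\bar w$ happens to be exactly position $m$: then $\bar u$'s deciding level migrates to some later position, and one must check that the migrated deciding substructure still matches $\bar w$'s deciding substructure. This uses that $\bar w$-compatibility forces the induced substructures at column $m$ and at every later strictly-increasing column of $\bar w$ to coincide; thus $\bar u$'s deciding structure equals $\bar w$'s, and any case~(b) for $e_2$ on $\bar u$ translates back to a case~(b) violation in $\bar w$, contradicting the assumption.
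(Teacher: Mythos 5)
Your proposal takes essentially the same approach as the paper. Both arguments rely on the same two ideas: for \ref{item:boring:duplication}, that appending a duplicate of column $m$ cannot create a first occurrence of a decision or an incompatibility that was not already present at column $m$ or earlier; and for \ref{item:boring:insertion}, the identical formula for $e_3$ (outputting $n$ on inputs that do not have the ``$a\conc e_1(a)\conc b$'' shape) together with the observation that both ``deciding a structure'' and ``becoming incompatible'' at the appended column force every entry there to be $<n$, so the $I_B$-indices are harmless and only the $I_A$-projection matters.

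The difference is one of detail rather than route. The paper dispatches the verification that $e_3\in\mathcal E_{n+1}$ in two sentences (``It is easy to verify\dots''), whereas you supply the missing steps: the lemma that deleting position $m$ from an $I_A$-indexed subsequence preserves strict lexicographic order (needed so that $e_2\in\mathcal E_n$ can actually be invoked on $\bar u$), and the transfer argument for property~(II) when $\bar w$'s deciding level is exactly $m$. Your resolution of the latter obstacle is correct: using the standing assumption that $\bar w$ is compatible, the substructure at column $m$ agrees with the substructure at $\bar u$'s (migrated) deciding level, so the two deciding structures coincide and any case-(b) mismatch produced by $e_2$ on $\bar u$ pulls back to a case-(b) mismatch for $\bar w$, contradicting compatibility. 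The only thing I would tighten is to make explicit, in the (I)-argument as well, that $\bar u^{(J)}$ (not merely $\bar u$) is the tuple on which $e_2$'s axioms are applied; your phrasing occasionally conflates the two, but the logic is sound throughout.
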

	Property \ref{item:boring:duplication} of Definition~\ref{def:boringext} follows from the fact that duplicating an earlier level will never affect properties \ref{item:ah1} and \ref{item:ah2} since they are concerned about first occurrences of certain combinations.

	Let $m\leq n$, $e_1$ and $e_2$ be as in property \ref{item:boring:duplication} of Definition~\ref{def:boringext}. We define $e_3\colon \Sigma^{n+1}\to \Sigma$ by putting
	$$
		e_3(w)=\begin{cases}
			e_2((w|_m)\conc b) & \hbox{if $w=(w|_m)\conc e_1(w|_m)\conc b$ where $b$ is a suffix of $w$,} \\
			n                  & \hbox{otherwise.}
		\end{cases}
	$$
	It is easy to verify that $e_3\in \mathcal E_{n+1}$:
	observe that if a tuple $\bar{w}$ agrees on a substructure or becomes incompatible on level $\ell+1$
	then for every $w\in \bar{w}$ it holds that $w_\ell\neq n$.  For this reason we can use $n$ as a safe choice when a value of $e_3$ is not implied by $e_2$.

	\medskip

	Next we show:
	\begin{claim}
		\label{claim:types}
		For every $m,n\in \omega$ and sets $X\subseteq \Sigma^m$ and $Y\subseteq \Sigma^n$ satisfying $\tau_\mathcal E(X)=\tau_\mathcal E(Y)$ it holds that $\str{C}_m\restriction_X$ is isomorphic to $\str{C}_n\restriction_Y$
	\end{claim}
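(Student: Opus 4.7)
The plan is to show that removing boring letters is harmless: it preserves all the combinatorial data encoding the substructures $\str{C}_\ell\restriction_X$. First I would note that if a level $\ell$ is boring for $X$, then by definition there exists $e_\ell\in \mathcal E_\ell$ with $x_\ell=e_\ell(x|_\ell)$ for every $x\in X$, so the $\ell$-th letter of each word in $X$ is determined by the earlier letters. Consequently, two distinct elements of $X$ must first differ at an interesting level, which means the map sending $x\in X$ to its subsequence of letters at positions in $I_\mathcal E(X)$ is a bijection $X\to\tau_\mathcal E(X)$. Combined with the hypothesis $\tau_\mathcal E(X)=\tau_\mathcal E(Y)$, this yields a well-defined bijection $\psi\colon X\to Y$ sending $x$ to the unique $y\in Y$ whose trace on $I_\mathcal E(Y)$ agrees, as a word, with the trace of $x$ on $I_\mathcal E(X)$.

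Next I would check that $\psi$ is an order isomorphism. If $x^0\neq x^1\in X$ first differ at an interesting level $i_p\in I_\mathcal E(X)$, then the sign of the lexicographic comparison of $x^0$ and $x^1$ in $\Sigma^m$ equals the sign of the comparison of their projections at coordinate $p$; since $\tau_\mathcal E(X)=\tau_\mathcal E(Y)$, the identical comparison holds for $\psi(x^0)$ and $\psi(x^1)$ in $\Sigma^n$.

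For the relations of $\str{C}_\ell$, the structure on any tuple is determined entirely by whether the tuple is compatible and, if so, by the letters appearing at its structure-deciding level (which specify the induced substructure of $\str{B}$). Starting from a lex-increasing tuple $\bar\tau$ in $\tau_\mathcal E(X)^k$ and reinserting boring letters one level at a time to reconstruct $\bar x\in X^k$, the axioms~\ref{item:ah1} and~\ref{item:ah2} guarantee that compatibility and the existence and position of the structure-deciding level are preserved; in particular, any deciding level of $\bar x$ is necessarily interesting, and the letters of $\bar x$ at this level coincide with those of $\bar\tau$ at the corresponding coordinate. Applying the same analysis to $\psi(\bar x)$ and using $\tau_\mathcal E(X)=\tau_\mathcal E(Y)$ shows that $\bar x$ and $\psi(\bar x)$ determine the same induced substructure of $\str{B}$, so $\psi$ preserves every relation. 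The main subtlety I expect is the bookkeeping around the index shift when letters are reinserted, and especially the verification that no new earlier deciding level can be created by a boring insertion---this relies on reading the biconditional ``same level $i$'' in~\ref{item:ah1} strictly, so that boring letters neither destroy an old deciding level nor create a fresh one.
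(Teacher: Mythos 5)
Your proposal is correct and matches the paper's (much terser) argument: the paper simply asserts that $\mathcal E$ was designed so that removing a boring level never changes compatibility, the position of the structure-deciding level, or the decided structure, hence $\str{C}_m\restriction_X$ is recoverable from $\tau_{\mathcal E}(X)$. You flesh this out (bijection via first differences landing on interesting levels, order preservation by lexicographic comparison at the first interesting disagreement, and relation preservation via a strict reading of axioms~\ref{item:ah1} and~\ref{item:ah2}), and the index-shift subtlety you flag is handled cleanly by removing boring levels in decreasing order, so that each removal is witnessed by the original $e_\ell\in\mathcal E_\ell$ applied to the unchanged $\ell$-prefix.
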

	The construction of family $\mathcal E$ is chosen in a way so that $\tau_\mathcal E(X)$ removes precisely those levels that does not affect the construction of structure $\str{C}_m\restriction_X$. The same is true about $\str{C}_n\restriction_Y$. Consequently, sets of the same embedding type induce isomorphic substructures.

	\begin{claim}
		\label{clm:envelope}
		Let $\str{A}$ be a substructure of $\str{B}$.
		Then for every pair of embeddings $f_1,f_2\colon\str{A}\to \str{B}$ it holds that $\tau_\mathcal E(\varphi\circ f_1[A])=\tau_\mathcal E(\varphi\circ f_2[A])$. Moreover $\tau_\mathcal E(\varphi\circ f_1[A])$ consists of words of length $2^{|A|-1}$.
	\end{claim}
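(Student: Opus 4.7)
The plan is to identify, for every embedding $f\colon\str{A}\to\str{B}$, the set $I_{\mathcal E}(\varphi\circ f[A])$ of interesting levels explicitly as $\{i<N:B^i\subseteq f[A]\}$. Because the non-empty substructures of $\str{B}\restriction_{f[A]}\cong\str{A}$ correspond bijectively with these indices, we will get $|I_{\mathcal E}(\varphi\circ f[A])|=2^{|A|}-1$, a number depending only on $|A|$. This identification in turn forces $\tau_{\mathcal E}(\varphi\circ f_1[A])=\tau_{\mathcal E}(\varphi\circ f_2[A])$, since the letter of $\varphi(v)$ at an interesting coordinate $i$ is either the safe symbol $n$ (when $v\notin B^i$) or $f^i(v)$ (when $v\in B^i$), and both options are determined by the ordered-isomorphism type of $f[\str{A}]\cong\str{A}$ once the interesting coordinates are listed in their inherited $\prec$-order.

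To show that every level $i$ with $B^i\not\subseteq f[A]$ is not interesting, the plan is to construct an explicit $e_i\in\mathcal E_i$ reproducing $\varphi(v)_i$ for each $v\in f[A]$ from its prefix $\varphi(v)|_i$. The earlier interesting coordinates already record, for each $v\in f[A]$, which substructure $\str{B}^j$ is the $\prec$-minimum substructure of $f[\str{A}]$ containing $v$, together with the value $f^j(v)$; from this data one can read off whether $v\in B^i$ and, if so, recover $f^i(v)$. On inputs in $\Sigma^i$ that do not arise as prefixes from $\varphi\circ f[A]$, I would define $e_i$ to output the safe letter $n$. Verifying that this $e_i$ satisfies axioms~\ref{item:ah1}--\ref{item:ah2} reduces to: (a)~on $\varphi\circ f[A]$-prefixes the extension mimics a true coordinate of $\varphi$, whose structure-deciding and compatibility behavior is controlled by Claim~\ref{claim:c1}; (b)~on other tuples the appended letter $n$ can neither create a new first structure-decision nor trigger a new incompatibility.

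Conversely, for $i$ with $B^i\subseteq f[A]$, interestingness is verified by contradiction: any boring $e_i$ would have to predict the $i$-th letter uniformly, yet by Claim~\ref{claim:c1} this coordinate is precisely the first place where the full lex-increasing tuple $(\varphi(v):v\in B^i)$ decides the substructure $\str{B}^i$, which forces condition~\ref{item:ah1} on $e_i$ to fail on that very tuple. Once both directions are established, one reads off the embedding type: at each interesting index $i$, the column $(\varphi(v)_i:v\in f[A])$ is the concatenation of $n$'s at positions outside $B^i$ and the values $f^i(v)$ at positions inside $B^i$, which depends only on how $\str{B}^i$ sits inside $\str{B}\restriction_{f[A]}\cong\str{A}$. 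I expect the main obstacle to be the bookkeeping in step~(b) of the previous paragraph: membership in $\mathcal E_i$ is quantified over \emph{all} strictly lex-increasing tuples in $\Sigma^i$, so the argument that defaulting to $n$ on unfamiliar prefixes never spoils axioms~\ref{item:ah1}--\ref{item:ah2} requires a careful case analysis on whether a tuple mixes genuine $\varphi\circ f[A]$-prefixes with $n$-padded prefixes.
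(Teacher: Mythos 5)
Your proposal is correct and follows essentially the same route as the paper's proof: identify the interesting levels as exactly those $i$ where some lexicographically increasing tuple from $S=\varphi\circ f[A]$ first decides a structure (equivalently, those $i$ with $B^i\subseteq f[A]$, giving $2^{|A|}-1$ levels), construct a boring extension for all other levels by reading off the next letter from $S$-prefixes and defaulting to $n$ elsewhere, and then observe that the $\prec$-order on the interesting indices and the use of lexicographically first representatives $\str{D}^i$ make the resulting compressed words depend only on $\str{A}$, not on the choice of embedding. The "main obstacle" you flag — verifying conditions~\ref{item:ah1}--\ref{item:ah2} on mixed tuples — is precisely the case analysis the paper carries out tersely (prefixes of $S$-tuples inherit compatibility and non-decision from Claim~\ref{claim:c1}; any coordinate getting the default letter $n$ blocks both a new decision and a new incompatibility), so your plan closes correctly.
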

	To see this, claim fix $\str{A}$, $i$, and $f_1,f_2$ as in the statement. Put $S=\varphi\circ f_1[A]$.  We show that $\ell< 2^{|B|}-1$ interesting if and only if
	there exists a lexicographically increasing tuple $\bar{w}\subseteq S$ that agrees on a substructure on level $\ell$.  Clearly if such a tuple exists, the level is interesting. To see the contrary,
	assume there is no such tuple.
	In this case we can define $e\colon \Sigma^\ell\to \Sigma$ by putting $e(u)=n$ if $u$ is not an initial segment of some word in $S$ and $e(u)=w_{\ell+1}$
	if $u$ is an initial segment of some $w\in S$. Notice that this is well defined, since if we have $w,w'\in S$ such that $w\meet w'=u$ and $w$ lexicographically before $w'$ then, by the construction of $\varphi$,
	$w'_{\ell+1}=n$ and $w_{\ell+1}$ is the first element of $w$ different from $n$. In this case level $\ell$ is interesting because the 1-tuple $(w)$ agrees on a substructure.

	Similarly as in the verification of Claim~\ref{claim:boring} we can verify that $e\in \mathcal E_\ell$:
	We know that no lexicographically strictly increasing tuple of vertices of $S$ decides substructure on level $\ell$.  By Claim~\ref{claim:c1} all such tuples of vertices in $S$ are compatible.
	Any tuple containing a word $w\in \Sigma^{\ell+1}\setminus S$ has the property that $w_\ell=n$ and thus it neither decides a substructure nor does it become incompatible.

	The interesting levels of $S$ are thus those levels where some strictly lexicographically increasing tuple of vertices in $S$ decides a structure.  It follows from the construction of $\varphi$, the choice of the ordering $\preceq$, and the encoding using the lexicographically first isomorphic substructures, that for both $S$ and $\varphi\circ f_2[A]$, substructures to agree on appear in the same order and are always represented by their lexicographically first isomorphic copies.

	\medskip

	Let $N$ be given by Theorem~\ref{thm:boring2} for $n=0, k=2^{|A|}-1, m=2^{|B|}-1, r=2$, $X=\tau_\mathcal E(\varphi\circ f[A])$ (for some embedding $f\colon\str{A}\to\str{B}$) and $Y=\varphi[B]$.
	By Claims~\ref{claim:embedding} and \ref{claim:types} we know that every subset of $\Sigma^N$ of the same embedding type as $Y$ corresponds to a copy of $\str{B}$ in $\str{C}_N$.
	By Claim~\ref{clm:envelope}, every subset of $\Sigma^N$ of embedding type $X$ corresponds to a copy of $\str{A}$ in $\str{C}_N$.
	Let $\chi$ be a 2-colouring of embeddings $\str{A}\to\str{C}_{N}$. By this correspondence, we obtain a colouring $\chi'$ of $\ebinom{\Sigma^N}{X}$.
	Let $Y'\subseteq \Sigma^N$ be a monochromatic subset given by Theorem~\ref{thm:boring2}, and let $g\colon \str{B}\to\str{C}_N$ be the corresponding embedding.  By Claim~\ref{claim:types},
	for every $g'\in \Emb(\str{A},\str{B})$ we know that $\tau_\mathcal E(g\circ g'[A])=X$. It follows that $g$ is the desired embedding, and hence $\str{C}_N\longrightarrow(\str{B})^\str{A}_2$
\end{proof}

\subsection{Big Ramsey degrees of free amalgamation classes}
We consider same model-theoretic $L$-structures as in Section~\ref{sec:AH}. However instead of ordered structures we will work with enumerated structures to signify the fact that the infinite orders of vertices considered will always be of the order-type $\omega$.
We say that an $L$-structure $\str{A}$ is \emph{enumerated} if its vertex-set is the cardinal $|\str{A}|$.
An embedding $f\colon \str{A}\to\str{B}$ of two enumerated structures is \emph{ordered} if for every $i<j\in A$ we also have $f(i)<f(j)$.
Recall that we denote by $\Emb(\str{A},\str{B})$ the set of all embeddings from $\str{A}$ to $\str{B}$. Denote by $\OEmb(\str{A},\str{B})$ the set of all ordered embeddings from $\str{A}$ to $\str{B}$.

We call an $L$-structure $\str{A}$ \emph{irreducible} if for every $u,v\in A$ there exists a relational symbol $\rel{}{}\in L$ and a tuple $\bar{x}\in \rel{A}{}$ containing both $u$ and $v$. Given a family $\mathcal F$ of finite enumerated irreducible structures, we denote by $\KF$ the class of all finite enumerated structures
$\str{A}$ such that there is no $\str{F}\in \mathcal F$ with an ordered embedding $\str{F}\to\str{A}$.
Given an enumerated $L$-structure $\str{K}$, we denote by $\OAge(\str{K})$ the set of all finite enumerated $L$-structures $\str{A}$ such that there exists an ordered embedding $\str{A}\to\str{K}$.
A countable enumerated $L$-structure $\str{K}$ is \emph{$\KF$-universal} if $\OAge(\str{K})=\KF$ and every countable enumerated $L$-structure $\str{K}'$ with $\OAge(\str{K}')\subseteq \KF$ has an ordered embedding to $\str{K}$.

\begin{definition}
	We say that an $L$-structure $\str{K}$ has \emph{finite big Ramsey degrees} if for every finite substructure $\str{A}$ of $\str{K}$ there exists $n\in \omega$ such that
	for every finite colouring $\chi$ of $\Emb(\str{A},\str{K})$ there exists $F\in \Emb(\str{K},\str{K})$ such that $\chi$ restricted to ${\{F\circ f:f\in \Emb(\str{A},\str{K})\}}$ takes
	at most $n$ colours.
\end{definition}

We prove the following mild strengthening of a theorem by Zucker~\cite{zucker2020}, where we are allowed to forbid specific enumerations of irreducible $L$-structures.
\begin{theorem}
	\label{thm:zucker}
	Let $L$ be a finite relational language with binary and unary symbols only, and let
	$\mathcal F$ be a finite family of finite enumerated irreducible $L$-structures. Then every $\KF$-universal $L$-structure has finite big Ramsey degrees.
\end{theorem}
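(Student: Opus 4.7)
The plan is to represent a $\KF$-universal structure $\str K$ via an $(\S,\M)$-tree so that embeddings $\str A\to \str K$ correspond to certain configurations of nodes, and then invoke Corollary~\ref{cor:shaperamseyN1} and the envelope machinery of Section~\ref{sec:envelope} to bound big Ramsey degrees. This is the same overall strategy used in the coding-tree approach of~\cite{dobrinen2017universal,zucker2020}, recast in our setting.

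First I would construct a coding tree. Fix an enumeration of $\str K$ as $v_0,v_1,\dots$ and let $T$ consist of finite words over an alphabet $\Sigma$ that encodes, at each level $n$, the quantifier-free $1$-type of a potential new vertex over the vertices $v_0,\dots,v_{n-1}$; since $L$ is unary/binary, each letter at level $n$ records only the binary type with $v_n$ together with the (fixed-along-each-branch) unary type. Designated \emph{coding nodes} at selected levels encode the actual vertices of $\str K$, and $\KF$-universality guarantees every branch not realising a forbidden pattern from $\mathcal F$ occurs in $T$. The successor operation is defined by letting $\S(a,\bar p,c)=a\conc c$ precisely when appending $c$ does not create a copy of any $\str F\in\mathcal F$ using the coding nodes in $\bar p$; the role of $\bar p$ is to record which earlier coding nodes are involved in the binary edges being added, and this is where the binarity of $L$ is used essentially. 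The monoid $\M$ is the set of shape-preserving $F\colon T\to T$ whose images preserve coding nodes (up to shift) and whose composed ranges still avoid all structures in $\mathcal F$.

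The main technical step is to verify that $\SNtree$ is indeed an $(\S,\M)$-tree. Closure of $\M$ (axiom~\ref{item:Nmonoid}) is routine from the finiteness of $\mathcal F$. For duplication~\ref{item:Nduplication}, duplicating a coding level corresponds to inserting a new vertex whose $1$-type copies that of an earlier vertex; since each $\str F\in\mathcal F$ is irreducible, any copy in the duplicated structure must already lie entirely on the original or entirely on the duplicate, so no new forbidden configurations arise. Decomposition~\ref{item:Ndecomposition} follows by peeling off the last skipped level using the explicit coding. I would also verify property~\ref{item:E1} of Proposition~\ref{prop:envelopes}, which is immediate here because $\S$ is essentially concatenation and shape-preserving functions do not create new relational witnesses. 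The hardest part is the duplication step: one must show that repeating a coding level is compatible with the \emph{particular} parameters $\bar p$ used at each later successor, and this relies on the fact that any forbidden pattern is already witnessed on an irreducible (hence single-vertex-replaceable) subset.

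With the $(\S,\M)$-tree in hand, the conclusion is as follows. Given $\str A\subseteq \str K$ with $|A|=k$, every embedding $f\colon\str A\to\str K$ selects a finite set $X_f\subseteq T$ of coding nodes; by Proposition~\ref{prop:envelopes}, Algorithm~\ref{envelope} produces an envelope of $X_f$ of height bounded by a number $h(\str A)$ depending only on $|A|$, and the number of possible embedding types is finite, say $n(\str A)$. A finite colouring $\chi$ of $\Emb(\str A,\str K)$ induces a colouring of $\AM^0_k$ refined by envelope type; applying Corollary~\ref{cor:shaperamseyN1} (iterated once per embedding type, or combined into a product colouring) yields $F\in\M$ such that $\chi$ on $\Emb(\str A,F[T])$ depends only on the envelope type. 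Since $F[T]$ is again $\KF$-universal (by construction of $\M$), this exhibits $n(\str A)$ as a bound on the big Ramsey degree of $\str A$ in $\str K$, completing the proof.
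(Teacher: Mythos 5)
Your overall strategy — build an $(\S,\M)$-tree, use the envelope machinery of Section~\ref{sec:envelope}, and apply the abstract Ramsey theorem — matches the paper's, but there are two significant divergences, one of which is a genuine gap.

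First, a route difference worth flagging: you propose building a Dobrinen--Zucker style coding tree, in which each level records a $1$-type over a fixed enumeration of the chosen $\KF$-universal $\str{K}$ and special ``coding nodes'' mark actual vertices. The paper deliberately does \emph{not} do this; it observes that coding trees ``are highly irregular and do not directly fit the framework of $(\S,\M)$-trees,'' and instead works with the tree $(\KFpt,\subseteq)$ of \emph{partial types of all enumerations} of structures in $\KF$. Two things this buys: (a) the tree is independent of the particular $\str{K}$, and (b) the successor operation can be made injective. On this last point, notice that in your setup $\S(a,\bar p, c)=a\conc c$ does not depend on $\bar p$, so~\ref{S2} fails outright. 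The paper fixes this by enriching the language with the binary relation $E$ recording ``free levels,'' so that the element of $\KFpt$ itself remembers which parameter was used (see the remark after Definition~\ref{defn:levelledS}). Without something analogous, your $\S$-tree axioms do not hold. Likewise, coordinating the position of coding nodes with~\ref{item:Ndecomposition} and~\ref{item:Nduplication} is exactly the irregularity the paper is designing around; you assert these are ``routine'' without giving the key reduction.

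Second, and this is the real gap: you write that ``by Proposition~\ref{prop:envelopes}, Algorithm~\ref{envelope} produces an envelope of $X_f$ of height bounded by a number $h(\str A)$ depending only on $|A|$.'' Proposition~\ref{prop:envelopes} only says the algorithm outputs a \emph{minimal} envelope; it says nothing about that minimum being bounded uniformly over copies of $\str A$. Indeed, for a naively chosen copy of $\str{K}$ inside the tree, the iterative parameter-closure in Algorithm~\ref{envelope} can cascade: each interesting level injects new parameters, which create new meets, which create new interesting levels, etc., and there is no a priori bound. Establishing the bound is the hardest part of the paper's proof of Theorem~\ref{thm:zucker}: it constructs the padded partial structure $\pstr{H}$ from $\str K$ by interleaving ``fake'' vertices and stratifying vertices into $k+1$ ``generations,'' so that each round of parameter-closure strictly drops a generation (Lemma~\ref{lem:signatures}) and every new parameter is a fake vertex that stops further closure. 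This is what makes Proposition~\ref{prob:upperbound} true. In the coding-tree setting this corresponds to Zucker's careful analysis of ``age changes'' and strict similarity types; it cannot be skipped. Finally, your concluding sentence asserts ``$F[T]$ is again $\KF$-universal (by construction of $\M$)''; the paper instead closes the loop with an explicit chain of embeddings $\str{K}\to\str{H}\to\str{G}$ and $\str{G}\to\str{K}$ together with Observation~\ref{obs:embedding}, which is the form of the argument you would also need to supply.
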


The main purpose of this section is to demonstrate an application of Theorem~\ref{thm:shaperamseyN} for trees with unbounded branching and with a complicated structure, and also to give the first direct proof of the result
above, i.e.,\ with no use of set-theoretic forcing.

We now fix a finite language $L$ with binary and unary symbols only, a finite family $\mathcal F$ of finite enumerated irreducible $L$-structures
and some symbol $t\notin \omega$ to serve as a special vertex called \emph{type vertex}.
We assume without loss of generality that $\KF$ contains an $L$-structure with a single vertex and all relations empty,
and also at least one irreducible binary structure.

Let $\str{A}$ be an $L$-structure.
Given $v\in A$ and some $u\notin A$, we denote by $\str{A}(v\mapsto u)$ the structure created from $\str{A}$ by renaming vertex $v$ to $u$.
Given $L$-structure $\str{B}$ we write $\str{A}\subseteq \str{B}$ if $A\subseteq B$ and $\str{A} = \str{B}\restriction_A$.

Bounds on big Ramsey degrees are based on studying Ramsey properties of trees of types which can be constructed as follows.
Given an $L$-structure $\str{A}$, a vertex $a\in A$ and a set $S\subseteq (A\setminus{a})$, the \emph{type of $a$ in $\str{A}$ over socle (or parameter) $S$} is the structure $(\str{A}\restriction_{(S\cup\{a\})})(a\mapsto \{t\})$.  If $\str{A}$ is enumerated, it is natural to consider socles that are initial segments in the enumeration: Given $u\leq v\in A$, we put $$\typev^u_\str{A}(v)=(\str{A}\restriction_{(u\cup\{v\})})(v\mapsto t).$$

\begin{figure}
	\centering
	\includegraphics[scale=0.833]{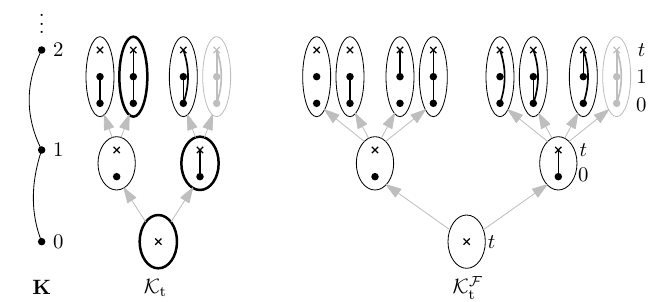}
	\caption{First three levels of trees  $(\mathcal{K}_{\mathrm t},\subseteq)$ (left, with coding nodes in bold) and $(\KFt,\subseteq)$ (left) with $\mathcal F$ consisting of non-bidirectional edges and a triangle.
		Enumerated vertices are depicted by dots while type vertices by crosses}
	\label{fig:typetree}
\end{figure}
Given an enumerated structure $\str{K}$, put $\mathcal{K}_{\mathrm t}=\{\typev^u_\str{K}(v):u\leq v\in \str{K}\}$. It is easy to see that $(\mathcal{K}_{\mathrm t},\subseteq)$ is a finitely branching tree which we call the \emph{tree of types of~$\str{K}$}, and it is isomorphic to the tree $\mathrm{CT}^\str{K}$ considered in \cite{zucker2020} (see Figure~\ref{fig:typetree} left).
Each level $\ell$ of this tree  contains a special node $\typev^\ell_\str{K}(\ell)$ called a \emph{coding node}. Dobrinen~\cite{dobrinen2017universal,dobrinen2019ramsey} and Zucker~\cite{zucker2020} developed a Ramsey theorem for trees with coding nodes which is the main tool used in~\cite{zucker2020} (see also recent ICM survey~\cite{dobrinen2021ramsey}).  These trees are highly irregular and do not directly fit the framework of $(\S,\M)$-trees (even though a suitable $(\S,\M)$-tree can be constructed).  We present an alternative approach based on a new, more regular tree, the \emph{tree of types of all enumerations} of $L$-structures with ordered age $\KF$  (see Figure~\ref{fig:typetree} right).

We put $$\KFt=\{\str{A}(\max A\mapsto t):\str{A}\in \KF\hbox{ and }|A|>0\}$$ to be the class of all \emph{types} and consider the tree $(\KFt,\subseteq)$.
This step avoids explicit coding nodes and the does not depend on a specific choice of a $\KF$-universal structure $\str{K}$.

Many proofs of statements about big Ramsey degrees implicitly use regular trees of types of all enumerations (see~\cite{devlin1979,Laflamme2006,dobrinen2016rainbow,Hubicka2020CS,BalkoForbiddenCycles}), usually
in a combination with Milliken's tree theorem, a technique introduced by Laver~\cite{todorcevic2010introduction}. More recently, the Carlson--Simpson theorem has also been used as an underlying Ramsey theorem for trees~\cite{Hubicka2020CS}.
For the first time, we make the enumeration an explicit part of the type which makes it possible to work with
bigger forbidden substructures than those consisting of at most 3 vertices, but it yields trees with unbounded branching even for structures in finite binary languages.

\begin{remark}
	Trees with unbounded branching are also used to give an upper bound on big Ramsey degrees of $L$-structures in languages containing relations of arity larger than two~\cite{Hubicka2020uniform,braunfeld2023big}.
	The notion of shape-preserving functions originates from a precise characterisation of big Ramsey degrees of the universal partial order~\cite{Balko2023}.
	Techniques presented here generalize to both higher arity and to non-free amalgamation classes (see~\cite{typeamalg}), this is however a more complicated result and it is out of the scope of this paper.
\end{remark}

\subsubsection{Tree of partial types}

To define a successor operation satisfying axioms \ref{S1} and \ref{S2} we need to further refine the tree $(\KFt,\subseteq)$ as follows.
Let $L^+$ be a relational language extending $L$ by a single binary relation $E\notin L$. We will denote $L^+$-structures by bold letters with superscript $+$ (such as $\pstr{A},\pstr{B},\ldots$), and the corresponding bold letters with no superscript will denote their \emph{$L$-reducts}, that is, $L$-structures created by forgetting the relation $E$.
\begin{definition}[Partial structure]
	A \emph{partial structure} is an enumerated $L^+$-structure $\pstr{A}$ such that
	\begin{enumerate}
		\item for every $(u,v)\in E_\pstr{A}$ it holds that $u<v-1$,
		\item if $(u,v)\in  E_\pstr{A}$ then then also $(u',v)\in E_\pstr{A}$ for every $u'<u$,
		\item for every $u<v$ such that the $L$-reduct $\str{A}\restriction_{\{u,v\}}$ is irreducible we have $(u,v)\in E_\pstr{A}$.
	\end{enumerate}
	For a partial structure $\pstr{A}$ we define the \emph{free level} function $\freef{A}$ by putting $$\freef{A}(v)=\min\{u\in A:(u,v)\notin E_\pstr{A}\}.$$
\end{definition}
\begin{remark}
	A similar, less restrictive notion of partial structure is used in constructions of Ramsey and EPPA classes~\cite{Hubicka2016,Hubicka2018EPPA}
	to amalgamate structures in a complicated way which can not be done by iterating normal amalgamation.

	Partial structures are more important when working with non-free amalgamation classes. In our proof, the main role of the relation $E$ is to obtain
	\ref{S2}.
\end{remark}
Given a partial structure $\pstr{A}$ and a vertex $v\in A$ we put $$\typev_\pstr{A}(v)=\typev_\pstr{A}^{\freef{A}(v)}(v)$$
and call it a \emph{partial type}.
We put
$$\KFpt=\{\typev_\pstr{A}(v):\pstr{A}\hbox{ is a partial structure, }\str{A}\in \KF\hbox { and } v\in A\}.$$
Notice that for every partial type $\type{T}\in \KFpt$ and every $v\in (T\setminus\{t\})$ it holds that $(v,t)\in E_\type{T}$.
Moreover, $\type{T}\setminus \{t\}$ is a partial structure and thus we can define the free level function $\freef{T}$ on it.

\begin{figure}
	\centering
	\includegraphics[scale=0.833]{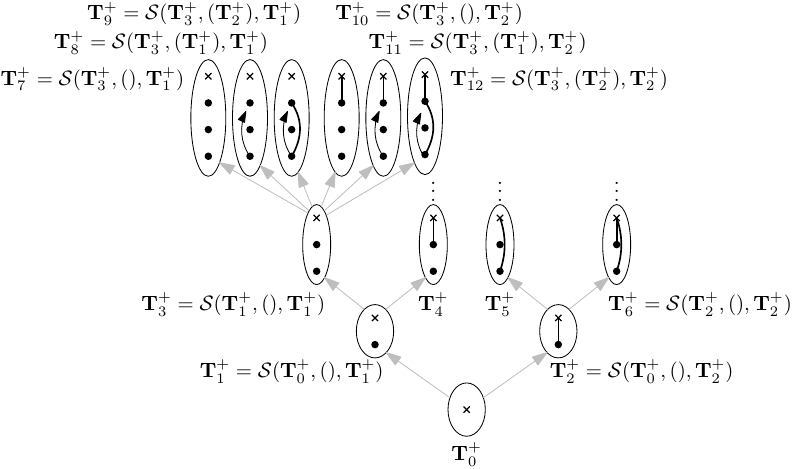}
	\caption{Initial part of tree  $(\KFpt,\subseteq)$. Pairs of vertices in relation $E$ are depicted by black arrows. Grey arrows denote immediate successors in $(\KFpt,\subseteq)$.}
	\label{fig:partialtypetree}
\end{figure}
The tree $(\KFpt,\subseteq)$ (see Figure~\ref{fig:partialtypetree}) is called the \emph{tree of partial types} of members of $\KF$.
Notice that the level of $\type{T}$ is $T\setminus \{t\} < \omega$.
Put $\Sigma=\left\{\type{A}\in \KFpt:A=\left\{0,t\right\}\right\}$.
\begin{figure}
	\centering
	\includegraphics[scale=0.833]{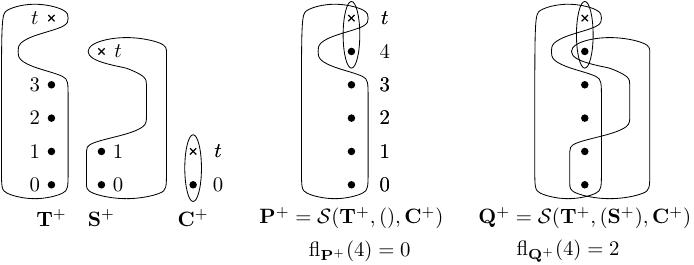}
	\caption{Successor operation on partial types. Enumerated vertices are depicted by dots and type vertices by crosses.}
	\label{fig:typesuccessor}
\end{figure}
\begin{definition}[Successor operation on partial types]
	\label{defn:levelledS}
	Given a partial type $\type{T}$ of level $\ell$, a tuple $\bar{p}$ such that:
	\begin{enumerate}
		\item$\bar{p}=()$ is empty, or
		\item $\bar{p}=(\type{S})$ for some partial type $\type{S}$ of level $\ell'<\ell$ such that $\type{S}\setminus \{t\}\subseteq \type{T}$,
	\end{enumerate}
	and a partial type $\type{C}\in \Sigma$, we define (if possible) a partial type $\type{Q}$ as follows:
	\begin{enumerate}
		\item $Q=(\ell+1)\cup\{t\}=T\cup \{\ell\}$,
		\item both $\type{T}$ and $\type{C}(0\mapsto \ell)$ are substructures of $\type{Q}$,
		\item if $\bar{p}=(\type{S})$ then $\type{S}(t\mapsto \ell)$ is a substructure of $\type{Q}$, and,
		\item there are no tuples in any relations of $\type{Q}$ other than the ones required by the conditions above.
	\end{enumerate}

	If there is $\type{Q} \in \KFpt$ as above, we put $\S(\type{T},\bar{p},\type{C})=\type{Q}$. We leave $\S(\type{T},\bar{p},\type{C})$ undefined in all other cases.
	See Figure~\ref{fig:typesuccessor}.
\end{definition}
\begin{remark}
	$\S(\type{T},(\type{S}),\type{C})$ is an amalgamation of $\type{T}$ and $\type{S}$ over $\type{S}\restriction_\omega$.
\end{remark}
\begin{prop}
	$(\KFpt,{\subseteq}, \Sigma, \S)$ is an $\S$-tree.
\end{prop}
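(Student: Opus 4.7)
The proof reduces to four things: that $(\KFpt,\subseteq)$ is a countable finitely branching tree with finitely many nodes of level $0$, and that the operation $\S$ satisfies axioms~\ref{S1}, \ref{S2}, and \ref{S3}. I would handle these in turn.

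First, the tree structure. The level of $\type{T}\in\KFpt$ is $|T\setminus\{t\}|$, and an immediate successor of $\type{T}$ at level $\ell$ is exactly a partial type $\type{Q}$ at level $\ell+1$ with $Q=T\cup\{\ell\}$ and $\type{T}=\type{Q}\restriction_T$. Because $L$ contains only finitely many unary and binary symbols, $\mathcal F$ is finite, and the vertex set of an immediate successor is of size $|T|+1$, there are only finitely many $L^+$-structures on $Q$ whose $L$-reducts lie in $\KF$ and which satisfy the partial-structure axioms; hence the tree is finitely branching. Level-$0$ nodes are partial types with $T=\{t\}$, and are determined by the (finitely many) unary $L$-predicates holding at $t$. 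Countability of $\KFpt$ follows.

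Axiom~\ref{S1} is immediate from Definition~\ref{defn:levelledS}: if $\S(\type{T},\bar p,\type{C})=\type{Q}$, then by construction $Q=T\cup\{\ell\}$ with $\type{T}\subseteq\type{Q}$, so $\type{Q}$ is an immediate successor of $\type{T}$; and $\bar p$ is either empty or a singleton $(\type{S})$ with $\ell(\type{S})=\ell'<\ell=\ell(\type{T})$, which is at most $\ell(\type{T})-1$.

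For~\ref{S2}, given $\type{Q}=\S(\type{T},\bar p,\type{C})$ I would recover each of $\type{T},\type{C},\bar p$ from $\type{Q}$ alone. The level $\ell=\ell(\type{Q})-1$ is forced, so $\type{T}=\type{Q}\restriction_{Q\setminus\{\ell\}}$; and $\type{C}$, being a partial type on $\{0,t\}$, is $(\type{Q}\restriction_{\{\ell,t\}})(\ell\mapsto 0)$. The key point is recovering $\bar p$: the ``no other tuples'' clause in Definition~\ref{defn:levelledS} together with the partial-structure axioms on $E$ (downward-closedness in the first coordinate, $u<v-1$, and the forced inclusion of irreducible pairs) guarantee that $\freef{Q}(\ell)$ reads off the socle level $\ell'$ exactly. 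If this is the ``default'' value (so that no non-trivial socle was used), then $\bar p=()$; otherwise $\bar p=(\type{S})$ with $\type{S}=(\type{Q}\restriction_{\{0,\dots,\ell'-1,\ell\}})(\ell\mapsto t)$, which by the same clause agrees with the original parameter.

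Axiom~\ref{S3} is then essentially the inverse of this procedure. Given an immediate successor $\type{Q}$ of $\type{T}$ with new vertex $\ell$, define $\type{C}=(\type{Q}\restriction_{\{\ell,t\}})(\ell\mapsto 0)\in\Sigma$, read off $\ell'=\freef{Q}(\ell)$ from $E_\type{Q}$, and set $\bar p=((\type{Q}\restriction_{\{0,\dots,\ell'-1,\ell\}})(\ell\mapsto t))$ if a non-trivial socle is recorded and $\bar p=()$ otherwise. Then $\S(\type{T},\bar p,\type{C})$ is a partial type whose relations agree with $\type{Q}$ by construction, and the ``no other tuples'' clause together with the fact that $\type{Q}$ itself is a partial type (so $E_\type{Q}$ satisfies the downward-closedness and irreducibility conditions) force $\S(\type{T},\bar p,\type{C})=\type{Q}$.

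I expect the main obstacle to be the injectivity axiom~\ref{S2}, specifically verifying that the $E$-relation on $\type{Q}$ unambiguously separates the two possible shapes $\bar p=()$ and $\bar p=(\type{S})$. This is precisely what motivates using partial types (rather than ordinary types) and the book-keeping relation $E$: it is the reason the tree $(\KFpt,\subseteq)$ admits a well-defined constructive successor operation in the first place.
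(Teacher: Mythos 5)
Your proof follows the same route as the paper's: you verify finite branching and finitely many roots from the finiteness of $L$, observe that \ref{S1} is immediate from the definition of $\S$, and establish \ref{S2} and \ref{S3} by exhibiting the explicit decomposition of a node $\type{Q}$ into $\type{T}=\type{Q}$ minus its top vertex, $\type{C}$ as the one-point type over that vertex, and $\bar p$ read off from the free-level function $\freef{Q}$ at the top vertex. Your closing remark that the book-keeping relation $E$ is precisely what makes the decomposition unambiguous matches the paper's own emphasis, so the two arguments are essentially identical (modulo an index shift: you index the new vertex as $\ell=\ell(\type{T})$ where the paper uses $\ell-1=\ell(\type{Q})-1$).
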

\begin{proof}
	Because the language $L$ is finite, and thus for every $\ell$ the set $\{\type{T}\in \KFpt:|T|=\ell+1\}$ is finite, we know that $(\KFpt,{\subseteq})$ is finitely branching and has finitely many nodes of level 0.
	Condition \ref{S1} of Definition~\ref{def:sucessor} follows directly from Definition~\ref{defn:levelledS}.
	It is easy to see that whenever $\type{Q}=\S(\type{T},\bar{p},\type{C})$ is defined, then
	$\type{Q}\in \KFpt$, and since $\type{Q}$ extends $\type{T}$ by a single vertex, it is an immediate successor of $\type{T}$.
	If $\bar{p}$ is non-empty we also know that $\bar{p}=(\type{S})$ for some $\type{S}\in \KFpt$ satisfying $\ell(\type{S})< \ell(\type{T})$.

	To see \ref{S3} one can verify that every $\type{Q}\in \KFpt$ of level $\ell>0$ can be decomposed into $\type{T}$, $\bar{p}$ and $\type{C}$
	such that $\type{Q}=\S(\type{T},\bar{p},\type{C})$:
	\begin{enumerate}
		\item $\type{T}$ is the immediate predecessor of $\type{Q}$:
		      $$\type{T}=\type{Q}|_{\ell-1}=\type{Q}\restriction_{(\ell-1\cup \{t\})}.$$
		\item If $\freef{Q}(\ell-1)=0$ then $\bar{p}$ is the empty sequence.  If $\freef{Q}(\ell-1)=\ell'>0$ then $$\bar{p}=\left(\typev_\type{Q}(\ell-1)\right)=\left(\type{Q}\restriction_{\left(\ell'\cup \{\ell-1\}\right)}\left(\ell-1\mapsto t\right)\right),$$
		\item $\type{C}=(\type{Q}\restriction_{\{\ell-1,t\}})(\ell-1\mapsto 0)$.
	\end{enumerate}
	Finally, \ref{S2} follows from the fact that the decomposition above is unique. Here, the relation $E$ plays an important role, since the free-level function determines the level of the parameter used to build the given element of $\KFpt$.
\end{proof}
\subsubsection{Properties of shape-preserving functions on partial types}
We define $\M$ to be the set of all shape-preserving functions $F\colon \KFpt\to\KFpt$ of $(\KFpt,{\subseteq},\allowbreak \Sigma, \S)$
satisfying $\tilde{F}(0)=0$.
To establish that $(\KFpt,\allowbreak {\subseteq},\allowbreak  \Sigma,\allowbreak  \S,\allowbreak \M)$ is an $(\S,\M)$-tree, we first prove three lemmas about properties of shape-preserving functions.

The condition $\tilde{F}(0)=0$ is not necessary to establish an $(\S,\M)$-tree. However it is needed to obtain:
\begin{observation}
	\label{obs:has_e1}
	$(\KFpt,\subseteq, \Sigma, \S,\M)$ satisfies assumption~\ref{item:E1} of Proposition~\ref{prop:envelopes}.
\end{observation}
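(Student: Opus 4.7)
The plan is to show that a function $F\in\M$ skipping only level $m$ effectively inserts one extra enumerated vertex at position $m$ without disturbing the $L$-structure on the remaining vertices. Under this assertion, the would-be amalgam $\S(\type{T},\bar{p},\type{C})$ embeds as an ordered $L$-structure into the given amalgam $\S(F(\type{T}),F(\bar{p}),\type{C})$, and the non-existence of forbidden substructures from $\mathcal{F}$ in the latter descends to the former.

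First I would check, by induction on level using condition~(iii) of shape-preservation together with weak $\S$-preservation, that $F$ is literally the identity on every node at level $<m$, and then establish the following structural claim: for every $\type{A}\in\KFpt$ at level $\ell\geq m$, the map $\phi_\type{A}\colon A\to F(A)$ defined by $v\mapsto v$ for $v<m$, $v\mapsto v+1$ for $m\leq v<\ell$, and $t\mapsto t$ satisfies $F(\type{A})|_{\phi_\type{A}(A)}=\phi_\type{A}(\type{A})$; in particular $\phi_\type{A}$ is an ordered $L$-embedding $\str{A}\to F(\str{A})$. For $\ell=m$, using \ref{S3} to write $\type{A}=\S(\type{A}^-,\bar{q},\type{D})$ with $\type{A}^-$ and the entries of $\bar{q}$ at levels $<m$, weak $\S$-preservation gives $\type{A}=\S(F(\type{A}^-),F(\bar{q}),\type{D})\preceq F(\type{A})$, so $\phi_m$ is literal inclusion. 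For $\ell>m$, the same substitution together with a level count (both $\S(F(\type{A}^-),F(\bar{q}),\type{D})$ and $F(\type{A})$ lie at level $\ell+1$) forces $F(\type{A})=\S(F(\type{A}^-),F(\bar{q}),\type{D})$, and the inductive hypothesis applied to $\type{A}^-$ and to the entries of $\bar{q}$ of level $\geq m$ controls $F(\type{A})|_{\phi_\type{A}(A)}$.

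Given the claim, I conclude as follows. If $\ell(\type{T})<m$ the two successor operations coincide. Otherwise, write $\ell(\type{T})=\ell$ and extend $\phi_\type{T}$ to $\iota\colon Q\to Q'$ (where $\str{Q}$ and $\str{Q}'$ denote the $L$-reducts of the would-be and given amalgams) by sending the new vertex $\ell$ to $\ell+1$. Checking that $\iota$ is an ordered $L$-embedding decomposes into three pieces: relations among old vertices (by the claim for $\type{T}$); the relation between the new vertex and $t$ (the same $\type{C}$ on both sides); and relations between the new vertex and parameter vertices (by the claim for the parameter $\type{S}$, using the compatibility of the two shift formulas on $S$). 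One further needs that non-parameter old vertices $v\geq \ell(\type{S})$ are sent by $\iota$ strictly past the parameter range of $F(\type{S})$, and that the extra vertex $m\in F(T)$ is never in the image of $\iota$, both ensuring that no spurious $L$-relation is introduced. Once $\iota\colon\str{Q}\to\str{Q}'$ is an ordered $L$-embedding, the non-existence of any $\str{F}\in\mathcal{F}$ embedded in $\str{Q}'$ transfers to $\str{Q}$, and the partial-structure conditions on the relation $E$ for membership in $\KFpt$ are automatic from the construction of $\S$. The main technical obstacle is the simultaneous bookkeeping in this last step: verifying compatibility of the two shift formulas on $S$, containment of non-parameter images outside the parameter range of $F(\type{S})$, and exclusion of the vertex $m$ from $\iota[Q]$, all of which reduce to the structural claim combined with direct arithmetic on the shift $v\mapsto v+1$.
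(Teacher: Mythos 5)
Your construction of the ordered $L$-embedding $\iota\colon\str{Q}\to\str{Q}'$ of the candidate amalgam into the given one, and the transfer of the absence of forbidden substructures from $\mathcal F$, handles the $\KF$-membership half of the verification and is in the spirit of the Level Removal Lemma~\ref{lem:vertexremoval}. But it misses the actual content of the observation, which the paper records in a single remark: Definition~\ref{defn:levelledS} treats level-$0$ partial types specially in that they cannot occur as parameters of the successor operation, and this is exactly why $\M$ is restricted to functions with $\tilde F(0)=0$. Your proposal never invokes $\tilde F(0)=0$, so as written it would establish condition~\ref{item:E1} for \emph{any} shape-preserving function skipping exactly one level. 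That is false: if $F$ skipped level~$0$, then for a level-$0$ type $\type{S}$ the image $F(\type{S})$ has level~$1$ and is a legitimate parameter, so $\S(F(\type{T}),(F(\type{S})),\type{C})$ may be defined while $\S(\type{T},(\type{S}),\type{C})$ is not.

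The gap is hidden in your last sentence, where you assert that ``the partial-structure conditions on the relation $E$ for membership in $\KFpt$ are automatic from the construction of $\S$''. This is precisely the place where $\tilde F(0)=0$ is needed: since the skipped level $m$ satisfies $m\geq 1$, every $F\in\M$ fixes $T(0)$ pointwise, so a parameter $\bar p$ and its image $F(\bar p)$ have level $\geq 1$ simultaneously and the level-$0$ exclusion never desynchronises the two sides. Once that is made explicit (and it is the heart of the observation), the remaining embedding argument correctly reduces the $\KF$-membership of the candidate amalgam to that of $\S(F(\type{T}),F(\bar p),\type{C})$, which is given by hypothesis.
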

This is because Definition~\ref{defn:levelledS} treats specially partial types of level 0 which can not appear as parameters.
We will eventually apply Proposition~\ref{prop:envelopes} to determine an upper bound on the big Ramsey degrees. Towards that we however need
to first understand what levels can be skipped by functions in $\M$.

\iftrue
	Given $\type{T}\in \KFpt$ and vertex $v\in T\setminus \{t\}$  we denote by $\type{T}\ominus v$  the $L^+$-structure created from $\type{T}$ by removing vertex $v$ and renaming all vertices $i\in T\setminus\{t\}$ with $i>v$ as $i\mapsto i-1$.
	\begin{lemma}[Level removal lemma]
		\label{lem:vertexremoval}
		For every $n>0$ and $F\in \M$ satisfying $\tilde{F}(n)>\tilde{F}(n-1)+1$ it holds that
		the function $F'\colon \KFpt\to \KFpt$ defined by $$F'(\type{T})=\begin{cases}
				F(\type{T})                         & \hbox{if }\ell(\type{T})<n,    \\
				F(\type{T})\ominus (\tilde{F}(n)-1) & \hbox{if }\ell(\type{T})\geq n
			\end{cases}$$
		is shape-preserving.
	\end{lemma}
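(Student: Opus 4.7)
Write $v_0 = \tilde F(n)-1$ and note that the hypothesis $\tilde F(n) > \tilde F(n-1)+1$ yields $\tilde F(n-1) < v_0$. The plan is to verify in turn that $F'$ maps into $\KFpt$, is injective, is level-preserving, and satisfies the weak $\S$-preservation together with the level-$0$ condition of Definition~\ref{def:shape-pres}.

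The well-definedness $F'(\type T) \in \KFpt$ is the main technical obstacle. For $\ell(\type T) = n$, the vertex $v_0$ is the largest ordinal in $F(\type T)$, so $F(\type T)\ominus v_0$ is simply the substructure $F(\type T)|_{v_0}$, which is a predecessor in $(\KFpt, \subseteq)$ and thus lies in $\KFpt$. For $\ell(\type T) > n$ the removal really shifts indices, and the only partial-structure axiom in danger is ``$(u,v)\in E$ implies $u<v-1$'': a violation can occur only if $(v_0-1, v_0+1) \in E_{F(\type T)}$, which after shifting would become $(v_0-1, v_0) \in E$. To rule this out, I would write $\type T|_{n+1} = \S(\type T|_n, \bar p, \type C)$ and use shape-preservation of $F$ to get
\[
\S(F(\type T|_n), F(\bar p), \type C) \;\preceq\; F(\type T|_{n+1}) \;\preceq\; F(\type T).
\]
By Definition~\ref{defn:levelledS}, the vertex freshly added on the left sits at position $\tilde F(n) = v_0+1$, and its free level is either $0$ or $\tilde F(\ell(\type S))$ for some $\ell(\type S) < n$, hence at most $\tilde F(n-1) < v_0$. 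Therefore $(v_0-1, v_0+1) \notin E$ in this substructure, and the same holds in $F(\type T)$. The remaining requirements on an element of $\KFpt$ (the $L$-reduct avoiding $\mathcal F$, monotonicity of $E$, ``irreducible implies $E$'', and $(v,t) \in E$ for all non-$t$ vertices) are all preserved under the substructure-and-relabel operation.

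Level-preservation is immediate from the formula, and condition (iii) of Definition~\ref{def:shape-pres} is trivial because $v_0$ is an ordinal vertex, not the type vertex $t$. For weak $\S$-preservation I would split on $\ell(\type T)$: the cases $\ell(\type T)<n-1$ and $\ell(\type T) = n-1$ follow from weak $\S$-preservation of $F$, noting that $\S(F(\type T), F(\bar p), \type C)$ lives at level $\tilde F(n-1)+1 \leq v_0$ and therefore sits unaffected inside $F(\S(\type T, \bar p, \type C)) \ominus v_0$. For $\ell(\type T) \geq n$, the key is the commutation identity
\[
\S\!\bigl(F(\type T)\ominus v_0,\; F(\bar p)\ominus v_0,\; \type C\bigr) \;=\; \S\!\bigl(F(\type T), F(\bar p), \type C\bigr) \ominus v_0,
\]
where $F(\bar p)\ominus v_0$ leaves components of level $<n$ unchanged. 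This holds by inspection of Definition~\ref{defn:levelledS} because the vertex freshly added by the right-hand $\S$ sits at position $\tilde F(\ell(\type T)) > v_0$, so the shift induced by $\ominus v_0$ is compatible with the successor construction. Combined with $\S(F(\type T), F(\bar p), \type C) \preceq F(\S(\type T, \bar p, \type C))$ and monotonicity of $\ominus v_0$ with respect to $\preceq$, this gives $\S(F'(\type T), F'(\bar p), \type C) \preceq F'(\S(\type T, \bar p, \type C))$.

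For injectivity, given $\type T_1 \neq \type T_2$ of common level $k$, let $m \leq k$ be the smallest level of disagreement and write $\type T_i|_m = \S(\type T|_{m-1}, \bar p_i, \type C_i)$ with $(\bar p_1, \type C_1) \neq (\bar p_2, \type C_2)$. By shape-preservation, the distinguishing data is encoded in the relations of vertex $\tilde F(m-1)$ inside $\S(F(\type T|_{m-1}), F(\bar p_i), \type C_i) \preceq F(\type T_i)$. The crucial point is that $\tilde F(m-1) \neq v_0$, because $v_0$ is skipped by $F$. Splitting on whether the discrepancy lies in the character $\type C_i$, in a parameter of level $<n$, or in a parameter of level $\geq n$, one checks (inducting on $\ell(\type S_i)$ via the same argument) that the distinguishing data always propagates to some vertex pair not involving $v_0$, and therefore survives the removal; in particular, distinct inputs at the same level $\geq n$ produce distinct outputs. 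Since inputs at different levels yield outputs at different levels, $F'$ is injective. The main obstacle throughout is the well-definedness step, namely the edge-absence calculation for $(v_0-1, v_0+1)$; once it is in place the rest is routine unpacking of Definition~\ref{defn:levelledS}.
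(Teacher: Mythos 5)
Your proof is correct and takes essentially the same approach as the paper. The core of the paper's argument is exactly the two steps you isolate: (i) the only threat to $F'(\type T)$ being a well-defined partial type is whether $\freef{F(\type T)}(v_0+1) < v_0$ (equivalently $(v_0-1,v_0+1)\notin E$), which is settled by observing that the vertex at position $\tilde F(n)$ in $F(\type T)$ is built with parameters in the image of $F$ of levels at most $\tilde F(n-1) < v_0$; and (ii) the commutation identity $\S(F'(\type T), F'(\bar p), \type C) = \S(F(\type T), F(\bar p), \type C)\ominus v_0$ for $\ell(\type T)\geq n$. You carry out a few checks that the paper leaves implicit (the $\ell(\type T)=n$ case being a predecessor, injectivity, condition (iii) of Definition~\ref{def:shape-pres}, and the case split at level $n-1$ for weak $\S$-preservation), but these are routine unpackings, and the substance matches the paper's proof.
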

	\begin{proof}
		Let $F$ and $n$ be as in the statement of the lemma.
		Put $m=\tilde{F}(n)-1$.
		Fix $\type{T}$ with $\ell(\type{T})>n$ and put
		$\type{Q}=F(\type{T})\ominus (\tilde{F}(n)-1)$. It is clear that removing a vertex from the $L$-reduct $\str{T}$ did not create a forbidden substructure. Renumbering vertices then ensures that $\str{Q}\in \KFt$.

		To see that $\type{Q}\in \KFpt$ we also need to verify that whenever $(u,v)\in E_\str{Q}$ and $v\neq t$ then $u<v-1$.
		Put $\type{S}=F(\type{T})$.  We removed vertex $m$ from $\type{S}$, hence we need to verify that $\freef{S}(m+1)<m$. This would then imply $\freef{Q}(m)<m$.
		Since $F$ is shape-preserving, we know that $\type{S}|_{m+2}$ is built using a parameter that is in the image of $F$, and all such parameters
		are strictly bellow $m$, hence indeed $\freef{S}(m+1)<m$.

		Function $F'$ is clearly level-preserving. It is easy to see that whenever $\S(\type{T},\bar{p},\type{C})$ is defined (and thus also $\S(F(\type{T}),(F(\bar{p})),\type{C})$ is defined) then $\S(F'(\type{T}),\allowbreak F'(\bar{p}),\type{C})$
		is defined and equal to $\S(F(\type{T}),(F(\bar{p})),\type{C})\ominus (\tilde{F}(n)-1)$, which proves that $F'\in \M$.
	\end{proof}
\fi

\begin{definition}[Type compatibility]
	We call types $\type{T},\type{S}\in \KFpt$ of level $\ell$ \emph{compatible}, and write $\type{T}\comp \type{S}$, if and only if $\type{T}\restriction_{\ell}=\type{S}\restriction_{\ell}$.
\end{definition}

\begin{lemma}[Boring extension lemma]
	\label{lem:boring}
	Let $\ell> 0$ be a level, $S\subseteq \KFpt(\ell)$ a set of types of level $\ell$, and $f\colon S\to \KFpt(\ell+1)$ a function satisfying:
	\begin{enumerate}[label=(B\arabic*)]
		\item\label{item:boringB1} $\type{T}\subseteq f(\type{T})$ for every $\type{T}\in S$.
		\item\label{item:boringB2} \emph{No compatibility change:} If $\type{T}\comp\type{S}$ then also $f(\type{S})\comp f(\type{T})$.
		\item\label{item:boringage} \emph{No age change:} For every partial structure $\pstr{A}$
		such that for every $i\geq \ell+1$ we have $\typev^\ell_\pstr{A}(i)\in f[S]$ it holds
		that whenever there is $\str{F}\in \mathcal F$ and an ordered embedding $e\colon \str{F}\to \str{A}$ with $\ell\in e[F]$ there is also $\str{F}'\in \mathcal F$ and an
		ordered embedding $e'\colon \str{F}'\to \str{A}$ with $\ell\notin e[F']$.
	\end{enumerate}
	Then there exists $F\in \M$ skipping only level $\ell$ extending f.
\end{lemma}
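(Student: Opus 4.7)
The plan is to define $F\in\M$ by induction on levels, starting from its prescribed action on level $\ell$. First, I extend $f$ to $\bar f\colon\KFpt(\ell)\to\KFpt(\ell+1)$ on all of $\KFpt(\ell)$, maintaining (B1)--(B3): for $\type{T}\notin S$ in a compatibility class meeting $S$, the restriction of $\bar f(\type{T})$ to $\{0,\ldots,\ell\}$ is forced by (B2) applied to any witness $\type{T}'\in S$ with $\type{T}\comp\type{T}'$, and the relations to $t$ can be read off from $\type{T}$; for $\type{T}\notin S$ in a compatibility class disjoint from $S$, I take the free extension $\bar f(\type{T})=\S(\type{T},(),\type{C}_0)$, where $\type{C}_0\in\Sigma$ is the relation-free character (available since $\KF$ contains the one-vertex empty structure by our standing assumption). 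Condition (B3) survives for $\bar f$ because the free vertices so introduced carry no relations and hence cannot occur in any ordered copy of an irreducible member of $\mathcal F$ of size at least two.

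With $\bar f$ in hand, define $F\colon\KFpt\to\KFpt$ as the identity on $\KFpt({<}\ell)$, as $\bar f$ on $\KFpt(\ell)$, and for $\type{Q}$ at level $m>\ell$ decomposed by (S3) as $\type{Q}=\S(\type{Q}|_{m-1},\Dp(\type{Q}),\Dc(\type{Q}))$, by the recursion
\[F(\type{Q}) = \S\bigl(F(\type{Q}|_{m-1}),\, F(\Dp(\type{Q})),\, \Dc(\type{Q})\bigr).\]
The plan is then to show by induction on $m$ that this $\S$-application is defined, i.e., yields a member of $\KFpt(m+1)$. The partial-structure axioms on the auxiliary relation $E$ and the consistency of the free-level function follow mechanically from the construction, since $F$ uniformly inserts one new enumerated vertex at position $\ell$. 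The delicate point is that the $L$-reduct of $F(\type{Q})$ avoids $\mathcal F$.

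For that, suppose some $\str{F}\in\mathcal F$ admits an ordered embedding $e$ into $F(\type{Q})$. I may assume $e$ uses the newly added vertex at position $m$, for otherwise $e$ embeds $\str{F}$ into $F(\type{Q}|_{m-1})$, contradicting the inductive hypothesis. If $e$ avoids the inserted vertex at position $\ell$, un-shifting indices yields an ordered embedding of $\str{F}$ into $\type{Q}$, contradicting $\type{Q}\in\KFpt$. If $e$ uses the inserted vertex at $\ell$, the partial structure $F(\type{Q})\setminus\{t\}$ satisfies the hypothesis of (B3)---its vertices $i\geq\ell+1$ all have types over $\{0,\ldots,\ell\}$ lying in $\bar f[\KFpt(\ell)]$ by construction---so (B3) furnishes another forbidden ordered embedding avoiding position $\ell$, reducing us to the previous case.

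Finally, $F$ is shape-preserving: injectivity follows from (B1) together with the injectivity of $\S$ guaranteed by (S2); level-preservation and the skipping of precisely level $\ell$ are built into the recursion; weak $\S$-preservation holds with equality on levels above $\ell$ and trivially below. Property (iii) of Definition~\ref{def:shape-pres} is immediate since $F$ is the identity on level $0$, so $\tilde F(0)=0$ and hence $F\in\M$. The principal obstacle is the inductive step described above---guaranteeing that no forbidden substructure appears when the vertex at level $\ell$ is inserted---which is precisely what condition (B3) was engineered to prevent.
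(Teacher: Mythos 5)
Your overall architecture — insert a new vertex at position $\ell$, define $F$ recursively from the forced recursion $F(\S(a,\bar{p},c))=\S(F(a),F(\bar{p}),c)$, and invoke the no-age-change condition to argue the inserted vertex cannot complete a forbidden copy — matches the paper's proof. The paper, however, never extends $f$ to all of $\KFpt(\ell)$; it builds $F$ directly and, when $\type{T}|_\ell\notin S$, deliberately adds \emph{no} $L$-relation between the new vertex $\ell$ and $t$ (step 3 of its construction). This design choice is exactly what powers the later use of~\ref{item:boringage}: since every $\str{F}\in\mathcal F$ is irreducible, any vertex $x$ above $\ell$ in a copy of $\str{F}$ through $\ell$ must be $L$-connected to $\ell$, which by construction forces $\typev^{\ell+1}_{\pstr{A}}(x)\in f[S]$, so~\ref{item:boringage} for $f$ applies after restricting to the relevant vertices.

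Your proposal has a genuine gap at precisely this point. You extend $f$ to $\bar f$ on all of $\KFpt(\ell)$ and then apply~\ref{item:boringage} for $\bar f$, but~\ref{item:boringage} is only hypothesized for $f$, and for $\bar f$ it is a strictly stronger condition (the universal quantifier ranges over more partial structures $\pstr{A}$). Your justification — ``the free vertices so introduced carry no relations and hence cannot occur in any ordered copy'' — only addresses $\type{T}$ in a compatibility class disjoint from $S$. For $\type{T}\notin S$ in a compatibility class that \emph{does} meet $S$, \ref{item:boringB2} forces the new vertex $\ell$ to carry $L$-relations to $\{0,\dots,\ell-1\}$, so it is not ``free,'' and nothing you have said rules out its participation in a forbidden copy. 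Moreover, your description of $\bar f(\type{T})$ in this case is under-specified: you say ``the relations to $t$ can be read off from $\type{T}$,'' but $\type{T}$ contains no vertex $\ell$, so this does not determine the $L$-relations between the new vertex $\ell$ and $t$. If you take these to be empty (matching the paper), then~\ref{item:boringage} for $\bar f$ can indeed be derived from~\ref{item:boringage} for $f$ — by observing, exactly as above, that any vertex $L$-connected to $\ell$ must have its $(\ell+1)$-type in $f[S]$, then restricting $\pstr{A}$ accordingly (and checking the restriction is still a partial structure, which needs a brief argument along the lines of Lemma~\ref{lem:vertexremoval}) — but that derivation is the missing content. If instead your $\bar f$ were to add any $L$-relation between $\ell$ and $t$ for some $\type{T}\notin S$, the argument would fail outright, since~\ref{item:boringage} for $f$ says nothing about such vertices.
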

\begin{remark}
	Condition~\ref{item:boringage} corresponds to the absence of age changes in Zucker's paper~\cite[Definition 5.1]{zucker2020}.
	Age changes are a key property of trees of types of structures with forbidden substructures.
\end{remark}
\begin{proof}
	Fix $\ell$, $S$ and $f$ as in the statement. We construct $F$ as follows.

	First for every $\type{T}\in \KFpt({<}\ell)$ we we put $F(\type{T})=\type{T}$.

	Now for given $\type{T}\in \KFpt$ of level $m\geq \ell$ we put $F(\type{T})=\type{Q}$ defined in several steps:

	\begin{enumerate}[label=\arabic*]
		\item\label{item:boringstep1} {\bf Inserting new isolated vertex $\ell$:}
		Start constructing $\type{Q}$ from $\type{T}$ by renaming every vertex $i\geq \ell$ to $i\mapsto i+1$ and inserting a new vertex $\ell$.
		Structure $\type{Q}\setminus \{\ell\}$ is already fully specified, it remains to specify tuples in relations of $\type{Q}$ containing the new vertex $\ell$.

		\item\label{item:boringstep2} {\bf Defining the type of vertex $\ell$:}
		If there is $\type{S}\in S$ such that $\type{S}\comp \type{T}|_\ell$,
		notice that $f(\type{S})\restriction_{\ell}=\type{S}\restriction_{\ell}=\type{Q}\restriction_{\ell}$, and
		add precisely those tuples (containing vertex $\ell$) to relations of $\type{Q}$ so that $$\type{Q}\restriction_{\ell+1}=f(\type{S})\restriction_{\ell+1}.$$
		Observe that by \ref{item:boringB2} the outcome of this step does not depend on the choice of $\type{S}$ as all possible choices are mutually compatible.

		\item\label{item:boringstep4} {\bf Completing the type of $t$:}
		Add tuple $(\ell,t)$ to $E_\type{Q}$.
		If $\type{T}|_\ell\in S$ then add tuples to relations of $\str{Q}$ (containing both $\ell$ and $\{t\}$) so that $$\type{Q}\restriction_{((\ell+1)\cup\{t\})}=f(\type{T}|_\ell).$$ If $\type{T}|_\ell\notin S$, we do not add any such relations.
		This completes the construction of $\type{Q}\restriction_{((\ell+1)\cup\{t\})}$, and so $F(\type{S})$ is fully defined for every $\type{S}\in \KFpt({\leq} \ell)$. This will be useful in the next step.

		\item\label{item:boringstep3} {\bf Completing types of vertices above $\ell$:}
		Now consider every $i\geq \ell\in T$ satisfying $\freef{T}(i)\geq \ell$ and add $(\ell, i+1)$ to $E_\type{Q}$.
		Put $\type{S}=\typev_\type{T}(i)|_\ell$ and
		add precisely those tuples (containing $\ell$ and $i+1$) to relations of $\type{Q}$ so that it holds that

		$$\typev_\type{Q}^\ell(i+1)=F(\type{S}).$$
		Note that if $\type{S}\in S$, after step~\ref{item:boringstep2} we have by~\ref{item:boringB2} that $f(\type{S})\restriction_{(\ell+1)}=F(\type{S})\restriction_{(\ell+1)}$.
		Notice that we now have $\typev_\type{Q}(i+1)=F(\type{S}).$

	\end{enumerate}

	This finishes the construction of $\type{Q}$.
	Notice that in steps \ref{item:boringstep4} and \ref{item:boringstep3} we extended the relation $E_\type{Q}$ so that $\type{Q}\setminus \{t\}$ is a partial structure.
	In step \ref{item:boringstep2} we extended $E_\type{Q}$ so that $\type{Q}$ is a partial type.

	Observe that for every $\type{T}\in S$ we have $F(\type{T})=f(\type{T})$. $F(\type{T})$ extends $\type{T}$ by a single new vertex $\ell$ which is added in step~\ref{item:boringstep1}. Steps~\ref{item:boringstep2} and \ref{item:boringstep4} reconstruct all tuples from $f(\type{T})$ containing $\ell$.

	By \ref{item:boringage} we have $\type{Q}\in \KFpt$: Indeed, it suffices to prove that there is no $\str F\in \mathcal F$ with an embedding $g\colon \str F\to \str{Q}$. Suppose for a contradiction that there is such a $g$. As $\type{Q} \ominus m = \type{T} \in \KFpt$, we know that $m$ is in the range of $g$. Let $\str A^+$ be the substructure induced by $\type{Q}$ on the range of $g$ together with $m\cup \{t\}$ after the natural renumbering. Note that as $\str F$ is irreducible, all vertices of $\str A^+$ above $m$ are connected to $m$ by a relation from $L$. From the construction it follows that for each such vertex $x$ we have that $\typev_\pstr{A}(x)\in f[S]$. \ref{item:boringage} then gives the desired contradiction.

	Similarly, for every $\type{T}\in \KFpt(\ell)$ we have $F(\type{T})\supseteq
		\type{T}$ as desired. To see that $F\in \M$ it remains to verify that
	for every partial type $\type{T}$ of level
	$\ell\geq m$ and for every $\bar{p}$ and $\type{C}$ such that
	$\S(\type{T},\bar{p},\type{C})$ is defined, we have
	$\S(F(\type{T}),F(\bar{p}),\type{C})=F(\S(\type{T},\bar{p},\type{C}))$.

	If $\bar{p} = ()$, observe that $F(\S(\type{T},\type{C}))$ is obtained by inserting vertex $\ell$ to $\S(\type{T},\type{C})$ using the procedure above while $\S(F(\type{T}),\type{C})$ is obtained by first inserting vertex $\ell$ to $\type{T}$ and then building a successor which gives the same outcome.
	Similarly if $\bar{p}=(\type{S})$.
\end{proof}
\iftrue
	The following lemma shows that the assumptions of Lemma~\ref{lem:boring} can not be relaxed.
	\begin{lemma}
		\label{lem:comp}
		For every $F\in \M$ it holds that:
		\begin{enumerate}
			\item $\type{T}$ can be created from $\type{Q} = F(\type{T})$ by removing all vertices in $Q\cap (\omega\setminus \tilde{F}[\omega])$ and renumbering the remaining vertices accordingly,
			\item whenever $\type{T}\comp \type{S}$ then also $F(\type{T})\comp F(\type{S})$,
			\item for every $n\in \omega$ and every partial structure $\pstr{A}$ such that for every $i\geq \tilde{F}(n)$ we have $\typev_\pstr{A}(i)\in F[\KFpt(n)]$, it holds
			      that whenever there are $\str{F}\in \mathcal F$ and an ordered embedding $e\colon \str{F}\to \str{A}$ with $n\in e[F]$ then there are also
			      $\str{F}'\in \mathcal F$ and an ordered embedding $e'$ from $\str{F}$ to a partial structure created from $\str{A}$ by removing all vertices $\{v<\tilde{F}(n):v\notin \tilde{F}[\omega]\}$.
		\end{enumerate}
	\end{lemma}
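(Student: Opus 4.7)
The strategy is to reduce all three statements to the case where $F$ skips only a single level, verify them there using Lemma~\ref{lem:boring}, and propagate through compositions. By iterating~\ref{item:Ndecomposition}, for every $n\in\omega$ the restriction $F\restriction_{T({\leq}n)}$ factors as $F_k\circ\cdots\circ F_1\restriction_{T({\leq}n)}$ with each $F_i\in\M$ skipping exactly one level. Since each of the three claims is local (it concerns a single finite type or a finite fragment of a structure), it suffices to prove them for such single-skipping $F$ and to verify stability under composition.

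Fix $F\in\M$ skipping only level $\ell$. A straightforward induction on level, using Definition~\ref{def:shape-pres}~\ref{def:shape-pres:item3} and Proposition~\ref{prop:shape-pres}~\ref{item:passingnumber}, shows that $F$ is the identity on $T({<}\ell)$. For claim~(1), the case $\ell(\type{T})<\ell$ is immediate. For $\ell(\type{T})\geq\ell$, iteratively unfolding $\type{T}|_{i+1}=\S(\type{T}|_i,\Dp(\type{T}|_{i+1}),\Dc(\type{T}|_{i+1}))$ and applying Proposition~\ref{prop:shape-pres}~\ref{item:passingnumber} shows that $F(\type{T})$ is obtained from $\type{T}$ by inserting exactly one new vertex at position $\ell$, with relations prescribed by $f=F\restriction_{T(\ell)}$; deleting that vertex and shifting higher indices back down by one recovers $\type{T}$.

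For claim~(2), I first argue that the restriction $f$ necessarily satisfies condition~\ref{item:boringB2} of Lemma~\ref{lem:boring}. Using meet-preservation (Proposition~\ref{prop:shape-pres}~\ref{item:meets}) together with the successor decompositions of $\type{T}|_\ell$ and $\type{S}|_\ell$ from their common meet, plus the fact that $F$ acts as identity on parameters at levels below $\ell$, the relations of the inserted vertex at position $\ell$ with vertices $0,\ldots,\ell-1$ in $f(\type{T}|_\ell)$ and $f(\type{S}|_\ell)$ are seen to depend only on the non-type part of $\type{T}|_\ell$, which equals that of $\type{S}|_\ell$ by compatibility. Given~\ref{item:boringB2}, if $\type{T}\comp\type{S}$ at level $m\geq\ell$ then $\type{T}|_\ell\comp\type{S}|_\ell$, so $f(\type{T}|_\ell)\comp f(\type{S}|_\ell)$, and combined with claim~(1) this yields $F(\type{T})\comp F(\type{S})$. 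Compatibility is transparently preserved under composition.

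For claim~(3), the single-skipping case reduces to property~\ref{item:boringage} of Lemma~\ref{lem:boring}: because $F$ maps $\KFpt$ into itself, $f$ must satisfy~\ref{item:boringage}, as otherwise one could use a partial structure $\pstr{A}$ whose high types lie in $f[\KFpt(\ell)]$ to produce an $F$-image that contains an unavoidable forbidden substructure from $\mathcal{F}$. Hence any $\str{F}\in\mathcal{F}$ embedding into $\pstr{A}$ that uses the skipped vertex gives rise to some $\str{F}'\in\mathcal{F}$ embedding into $\pstr{A}$ with that vertex removed. The main obstacle is the bookkeeping required to chain this through the composition $F=F_k\circ\cdots\circ F_1$: at each step one must verify that the hypothesis $\typev_\pstr{A}(i)\in F[\KFpt(n)]$ is preserved when the earlier skipped vertices have already been removed, so that the argument can be iterated to simultaneously remove all skipped vertices below $\tilde{F}(n)$ while keeping some forbidden substructure alive.
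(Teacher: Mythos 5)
Your proposal has a genuine gap in claim~(3), and for claims~(2) and~(3) the paper avoids your entire reduction strategy with arguments that require no decomposition at all.

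For claim~(2), the paper observes that $\type{T}\comp\type{S}$ (for same-level types) holds if and only if there exist $\type{C},\type{C}'\in\Sigma$ such that $\S(\S(\type{T},\type{C}),(\type{S}),\type{C}')$ is defined --- compatibility is witnessed by the ability to amalgamate $\type{T}$ and $\type{S}$ via the successor operation. Since $F$ is weakly $\S$-preserving, the analogous $\S$-expression for $F(\type{T})$, $F(\type{S})$ is also defined, and the equivalence applied in reverse gives $F(\type{T})\comp F(\type{S})$. This is a one-step argument for arbitrary $F\in\M$. Your route through meet-preservation and verifying condition~\ref{item:boringB2} of Lemma~\ref{lem:boring} for a single-skipping restriction is more delicate: you would still need to argue that the relations between the inserted vertex and the vertices above it are forced to agree in $F(\type{T})$ and $F(\type{S})$, which your sketch gestures at but does not nail down, and you then need the composition step on top of that.

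For claim~(3), the concern you yourself flag --- ``the main obstacle is the bookkeeping required to chain this through the composition'' --- is in fact an unresolved hole. The hypothesis that $\typev_\pstr{A}(i)\in F[\KFpt(n)]$ for all $i\geq\tilde{F}(n)$ is phrased in terms of the final $F$, and it is genuinely unclear that removing one skipped vertex at a time leaves a valid intermediate structure whose types land in the image of the corresponding partial composition. The paper sidesteps this entirely: form $\pstr{B}$ by deleting \emph{all} skipped vertices below $\tilde{F}(n)$ in one step (this is a partial structure by the same argument as Lemma~\ref{lem:vertexremoval}), extend it to a type $\type{T}$ by an isolated $t$, and note that if no member of $\mathcal{F}$ embeds into $\pstr{B}$ then $\type{T}\in\KFpt$; but then $F(\type{T})\in\KFpt$ is defined and contains an ordered copy of $\pstr{A}$, contradicting the assumption that $\pstr{A}$ has a forbidden substructure. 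No factoring into single-skippers, and therefore no bookkeeping, is needed. You should redo claim~(3) along these lines.
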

	\begin{proof}
		The first statement follows directly from the construction of the successor operation and definition of shape-preserving functions.

		To see the second statement,
		notice that for every pair $\type{T},\type{S}\in \KFpt$ of the same level we have that
		$\type{T}\comp \type{S}$ if and only if there exists some $\type{C},\type{C}'\in \Sigma$ such that $\S(\S(\type{T},\type{C}),(\type{S}),\type{C}')$ is defined; for example, both $\str C$ and $\str C'$ may be chosen to have no binary relations and with unary relations determined by $\type{S}$ and $\type{T}$. Because $F$ is shape-preserving we also have $\S(\S(F(\type{T}),\allowbreak \type{C}),\allowbreak (F(\type{S})),\allowbreak \type{C}')$ defined and this is possible if and only if $F(\type{T})\comp F(\type{S})$.

		To see the last statement, chose $n\in\omega$ and a partial structure $\pstr{A}$ such that for every $i\geq \tilde{F}(n)$ we have $\typev_\pstr{A}(i)\in F[\KFpt(n)]$.
		Now create $\pstr{B}$ by removing all vertices $\{v<\tilde{F}(n):v\notin \tilde{F}[\omega]\}$. Observe that this is a partial structure by the same argument as in
		Lemma~\ref{lem:vertexremoval}.
		Let $\type{T}$ be a type extending $\pstr{B}$ by an isolated vertex $t$. Assume, towards the contrary, that no $\str F'\in \mathcal F$ embeds into $\pstr{B}$, which implies that $\type{T}\in \KFpt$. Then $F(\type{T})$ is defined, but by the construction it contains an ordered copy of $\pstr{A}$, a contradiction.
	\end{proof}
\fi
Now we are ready to apply Theorem~\ref{thm:shaperamseyN}.
\begin{prop}
	\label{prop:typeramsey}
	$(\KFpt,{\subseteq},\Sigma,\S,\M)$ is an $(\S,\M)$-tree.
	Consequently, for every pair $n,k\in \omega$ and every finite colouring
	$\chi$ of $\AM^n_k$, there exists $F\in \M^n$ such that $\chi$ is constant when restricted to
	$\{F\circ g: g\in \AM^n_k\}$.
\end{prop}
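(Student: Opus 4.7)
The plan is to verify the three axioms of Definition~\ref{def:sntree} for $(\KFpt,\subseteq,\Sigma,\S,\M)$; once this is done, the Ramsey conclusion is immediate from Theorem~\ref{thm:shaperamseyN}. Axiom~\ref{item:Nmonoid} is routine: the identity lies in $\M$, and closure under composition and pointwise limits follows from Proposition~\ref{prop:shape-pres}~\ref{item:closed-monoid}, since the constraint $\tilde F(0)=0$ is preserved by both operations.

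For~\ref{item:Nduplication}, fix $n<m$ and define a function $f\colon\KFpt(m)\to\KFpt(m+1)$ by $f(\type T)=\S(\type T,\bar p,c)$, where $\bar p,c$ are read off from $\type T|_{n+1}=\S(\type T|_n,\bar p,c)$. Intuitively, $f$ adjoins a duplicate of the construction data for vertex $n$ at the new position $m$. The key point is that $f(\type T)\in\KFpt$: the new vertex has no $L$-relations with any vertex in $[\ell(\bar p),m-1]$, so by irreducibility of the structures in $\mathcal F$, any ordered embedding of some $\str F\in\mathcal F$ using the new vertex would have its image contained in $[0,\ell(\bar p))\cup\{m,t\}$, and replacing the new vertex by $n$ would yield an ordered embedding into $\type T$, contradicting $\type T\in\KFpt$. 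The hypotheses of Lemma~\ref{lem:boring} are then easily checked: \ref{item:boringB1} is immediate; \ref{item:boringB2} follows because compatible level-$m$ types share the same parameter $\bar p$ (this data is determined by the $L$- and $E$-structure on $[0,m)$) and $f(\type T)\restriction_{m+1}$ does not depend on $c$; \ref{item:boringage} is handled by the same vertex swap. Lemma~\ref{lem:boring} then produces $F_m^n\in\M$ skipping only level $m$, and the identity $F_m^n(b)=\S(b,\bar p,c)$ required by the axiom holds because $b|_{n+1}=\S(a,\bar p,c)$ whenever $\S(a,\bar p,c)\preceq b$ with $a=b|_n$.

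For~\ref{item:Ndecomposition}, given $F\in\M$ skipping level $m=\tilde F(n)-1$ with $\tilde F(n)>0$, the hypothesis $\tilde F(n)>\tilde F(n-1)+1$ of Lemma~\ref{lem:vertexremoval} is satisfied, so that lemma produces $F_1\in\M$ with $F_1(\type T)=F(\type T)\ominus m$ for $\ell(\type T)\geq n$. To construct $F_2$, define a partial function $g_0$ on $F_1[\KFpt(n)]\subseteq\KFpt(m)$ by $g_0(F_1(\type T))=F(\type T)$; by Lemma~\ref{lem:comp} applied to $F$, this assignment respects compatibility and preserves age in the senses required by~\ref{item:boringB2} and~\ref{item:boringage}. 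Extending $g_0$ to all of $\KFpt(m)$ using the same duplication scheme (applied with respect to any admissible lower level) and invoking Lemma~\ref{lem:boring} yields $F_2\in\M$ skipping only level $m$, and $F_2\circ F_1\restriction_{T(\leq n)}=F\restriction_{T(\leq n)}$ holds by construction (for $\ell(\type T)<n$, both $F_1(\type T)$ and $F(\type T)$ lie below level $m$ so $F_2$ acts as the identity there).

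The main obstacle in both constructions is verifying the no-age-change condition~\ref{item:boringage} of Lemma~\ref{lem:boring}: (re)inserting a vertex at a missing level must not create any new copy of a forbidden structure. Both cases are resolved by the same principle, that the inserted vertex has the same parameter profile as an already-present (or formerly-present) vertex, and that the irreducibility of members of $\mathcal F$ forces any hypothetical new forbidden copy to be transferable back to the original structure by a vertex swap. Everything else in the proof is bookkeeping.
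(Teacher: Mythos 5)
Your proof follows the paper's argument almost line for line: you verify (M1) via the closed-monoid property of shape-preserving functions, construct the duplication functions for (M3) by reading off the parameter and character data from level $n$ and then invoking Lemma~\ref{lem:boring}, and for (M2) you obtain $F_1$ from Lemma~\ref{lem:vertexremoval}, define the partial function $F\circ F_1^{-1}$ on $F_1[\KFpt(n)]$, and appeal to Lemma~\ref{lem:comp} to check the hypotheses of Lemma~\ref{lem:boring} — exactly as the paper does. The only cosmetic difference is that you spell out the irreducibility/vertex-swap argument for why the duplicated type lies in $\KFpt$ (the paper leaves this as a one-line remark), and your phrase about ``extending $g_0$ to all of $\KFpt(m)$'' before applying Lemma~\ref{lem:boring} is superfluous — the lemma already takes a partial function $f\colon S\to\KFpt(\ell+1)$ on any $S\subseteq\KFpt(\ell)$ and produces a total $F\in\M$ extending it, so no manual extension is needed — but this does not affect correctness.
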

\begin{proof}
	By Proposition~\ref{prop:shape-pres} we know that $\M$ satisfies condition \ref{item:Nmonoid} of Definition~\ref{def:sntree}.

	We first verify \ref{item:Nduplication}.
	Given $n<m$, define a function $f\colon \KFpt(m)\to \KFpt(m+1)$ by putting  $f(\type{T})=\S(\type{T},\bar{p},\type{C})$ where $\bar{p}$ and $\type{C}$ satisfy $\type{T}|_{n+1}=\S(\type{T}|_n,\bar{p},\type{C})$.
	Observe that $\S(\type{T},\bar{p},\type{C})$ extends $\type{T}$ by vertex $m$ which is a duplicate of vertex $n$, and is thus always defined (this follows from the fact that $\mathcal F$ consists of irreducible structures).
	An application of Lemma~\ref{lem:boring} yields the desired function $F_m^n$.

	Finally, we verify \ref{item:Ndecomposition}.
	Let $F$ and $n$ be as in~\ref{item:Ndecomposition}.
	Obtain $F_1$ by an application of Lemma~\ref{lem:vertexremoval}.
	Let $f\colon F_1[\KFpt(n)]\to \KFpt(\tilde{F}(n))$ be defined by putting $f(\type{T})=F(F^{-1}_1(\type{T}))$.
	It is easy to see that $f$ satisfies condition~\ref{item:boringB1} of Lemma~\ref{lem:boring}. Since conditions~\ref{item:boringB2} and \ref{item:boringage} follow by Lemma~\ref{lem:comp}, we can apply Lemma~\ref{lem:boring} to obtain the desired function $F_2$.
\end{proof}
\subsubsection{Universal structure}
We have established that the tree of partial types is an $(\S,\M)$-tree. In order to finish the proof of Theorem~\ref{thm:zucker}, we will now construct three structures:
\begin{enumerate}
	\item The $L^+$-structure $\pstr{G}$ will be a ``tree of amalgamations'' of all countable partial structures whose $L$-reducts have ordered age $\KF$.
	      This will serve as the object on which we apply our Ramsey theorem, and we will show that functions in $\M$ naturally induce (special) embeddings $\pstr{G}\to\pstr{G}$.
	\item $\pstr{G}$ is not enumerated (and hence it is not a partial structure), whereas Theorem~\ref{thm:zucker} talks about enumerated structures. We will thus fix a suitable enumeration of $\pstr{G}$ and get an enumerated $L$-structure $\Gp$ with ordered age $\KF$, and an isomorphism $\embGG \colon \str G \to \Gp $.
	\item The final step of the proof is to show that envelopes are bounded (that is, there exists a function $b\colon \omega\to\omega$ such that the height of an envelope of set $X$ is at most $b(|X|)$).  This is not true for arbitrary subsets of $G$, but similarly as in~\cite{zucker2020} or \cite{Hubicka2020uniform} this can be established for special subsets. We thus define an enumerated structure $\str{H}$ created from any given $\KF$-universal structure $\str{K}$ by adding extra vertices and relations.  We will show that if $\str{H}$ is embedded to $\str{G}$ in the natural way then the sizes of envelopes of subsets of vertices of this copy are bounded by a function which depends only on the size of the subset. Having such embeddings $\str{K}\to\str{H}\to\str{G}$ and $\str{G}\to\str{K}$ will let us apply Proposition~\ref{prop:typeramsey} and obtain the desired upper bound on big Ramsey degrees of $\str{K}$.
\end{enumerate}
We start with an explicit definition of $\pstr{G}$:
\begin{definition}
	Construct $L^+$-structure $\pstr{G}$ as follows:
	\begin{enumerate}
		\item Its vertex set is $$G=\{\pstr{A}:\pstr{A}\hbox { is a partial structure, } \str{A}\in \KF\hbox{ and }|A|>0\}.$$
		\item Given $\pstr{A}\subseteq \pstr{B}\in G$ put $a=|A|-1$ and $b=|B|-1$ and add tuples to relations of $\pstr{G}$
		      so that $$\pstr{G}\restriction_{\{\pstr{A}, \pstr{B}\}}=(\pstr{B}\restriction_{\{a,b\}})(a\mapsto\pstr{A},b\mapsto\pstr{B}).$$
		\item There are no other tuples in relations of $\type{G}$ than what is required above.

	\end{enumerate}
\end{definition}
One can check that this construction gives precisely the $L^+$-structure created by the following steps:
\begin{enumerate}
	\item Create the disjoint union of all partial structures in $G$.
	\item Identify vertex $u$ of partial structure $\pstr{M}$ with vertex of the same index $u$ of partial structure $\pstr{N}$ if an only if $\pstr{M}\restriction_{u+1} = \pstr{N}\restriction_{u+1}$.
\end{enumerate}

Note that $(G,\subseteq)$ is a tree. The fact that we have a tree order on $\type{G}$ is important. However, Theorem~\ref{thm:zucker} talks about enumerated structures, that is, structures whose vertex set is an ordinal. Fix an injective function $\embGG\colon G\to\omega$ such that for every $\pstr{A},\pstr{B}$ with $|\pstr{A}|<|\pstr{B}|$ we also have $\embGG(\pstr{A})<\embGG(\pstr{B})$. Let $\Gp$ be the enumerated $L$-structure such that $\embGG\colon \str{G}\to \Gp$ is an isomorphism.
Since both trees $(G,\subseteq)$ and $(\KFpt,\subseteq)$ are used in following discussion, we will always use $\pstr{A}$, $\pstr{B}$, $\pstr{C}$ for nodes of  $(G,\subseteq)$ since these are partial structures, and $\type{T}$, $\type{S}$, $\type{Q}$, $\type{O}$ for nodes of $(\KFpt,\subseteq)$ since these are partial types.
\begin{lemma}
	\label{lem:KFage}
	$\OAge(\Gp)\subseteq \KF$.
\end{lemma}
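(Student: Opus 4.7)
The plan is to argue by contradiction: suppose some $\str{F}\in\mathcal F$ admits an ordered embedding $e\colon \str{F}\to\Gp$. Writing $F=\{0,1,\dots,k-1\}$ with the enumeration inherited from $\str F$, transfer $e$ through the isomorphism $\embGG$ to obtain distinct vertices $\pstr{A}_0,\pstr{A}_1,\dots,\pstr{A}_{k-1}\in G$ with $\embGG(\pstr{A}_0)<\embGG(\pstr{A}_1)<\dots<\embGG(\pstr{A}_{k-1})$. By the defining property of the enumeration $\embGG$ (which is monotone in $|\cdot|$), we get $|A_0|\leq|A_1|\leq\dots\leq|A_{k-1}|$.

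Next I would use irreducibility of $\str{F}$ to force a chain. Because $\str{F}$ is irreducible, for every $i<j$ the pair $(\pstr{A}_i,\pstr{A}_j)$ lies in some binary relation of $\str{G}$ (or else in a common unary relation, which is less restrictive). The construction of $\pstr{G}$ places no binary tuples between two vertices that are $\subseteq$-incomparable, so each pair $\pstr{A}_i,\pstr{A}_j$ must be comparable in $\subseteq$. Combined with the monotonicity $|A_i|\leq|A_j|$ and the injectivity of $\embGG$, this forces the strict chain
\[
\pstr{A}_0\subsetneq\pstr{A}_1\subsetneq\dots\subsetneq\pstr{A}_{k-1}.
\]

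Set $a_i=|A_i|-1$, so $a_0<a_1<\dots<a_{k-1}$ are vertices of $\str{A}_{k-1}$. I would then verify, directly from the defining clause $\str{G}\restriction_{\{\pstr{A},\pstr{B}\}}=(\pstr{B}\restriction_{\{a,b\}})(a\mapsto\pstr{A},b\mapsto\pstr{B})$ applied with $\pstr{B}=\pstr{A}_{k-1}$ (and using $\pstr{A}_i\subseteq\pstr{A}_{k-1}$ to transport binary tuples and unary predicates correctly), that the $L$-reduct of $\str{G}\restriction_{\{\pstr{A}_0,\dots,\pstr{A}_{k-1}\}}$ is isomorphic, via $\pstr{A}_i\mapsto a_i$, to $\str{A}_{k-1}\restriction_{\{a_0,\dots,a_{k-1}\}}$. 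Composing with $e$ then yields an ordered embedding $\str{F}\to\str{A}_{k-1}$, contradicting $\str{A}_{k-1}\in\KF$.

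The only nonroutine step is the chain deduction, which genuinely relies on two simultaneous features of the construction: the enumeration $\embGG$ is compatible with the tree level $|\cdot|$, and the relational structure of $\pstr{G}$ is nontrivial only along $\subseteq$. If $L$ contained relations of arity $\geq 3$, irreducibility would not immediately give pairwise $\subseteq$-comparability from binary incidences, but under our standing assumption that $L$ has only unary and binary symbols the argument goes through cleanly.
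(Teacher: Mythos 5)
Your proposal is correct and takes essentially the same approach as the paper's proof: contradiction, transfer through $\embGG$, use irreducibility plus the fact that $\pstr{G}$ has binary tuples only along $\subseteq$ to force a chain, and then read off an ordered embedding $\str{F}\to\str{A}_{k-1}$. One small inaccuracy: the parenthetical ``or else in a common unary relation'' is vacuous, since for distinct $u\neq v$ a unary tuple cannot contain both; irreducibility together with the binary-language assumption forces a binary tuple outright, which is exactly what you then use.
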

\begin{proof}
	Assume for a contradiction that there exists $\str{F}\in \mathcal F$ with an ordered embedding $f\colon \str{F}\to\Gp$.
	Put $\pstr{A}=\embGG^{-1}[\max f[F]]$. By the construction of $\pstr{G}$, and because $\str{F}$ is irreducible, for every $\pstr{B}\in \embGG^{-1}[f[F]]$ we have
	$\pstr{B}\subseteq \pstr{A}$.
	From the way we constructed tuples of relations of $\pstr{G}$ it follows that there is also an ordered embedding $\str{F}\to\str{A}$. A contradiction.
\end{proof}

We associate every pair $(\type{S},\type{T})$ satisfying $\type{S}\subsetneq \type{T}\in \KFpt$ with a vertex of $\pstr{G}$ by putting $v(\type{S},\type{T})=\type{T}\restriction_{\ell(\type{S})+1}$.
\begin{observation}
	\label{obs:embedding}
	For every $F\in \M$ it holds that $$v(\type{S},\type{T})=v(\type{Q},\type{O}) \iff v(F(\type{S}),F(\type{T}))=v(F(\type{Q}),F(\type{O})).$$
	Every $F\in \M$ thus yields an embedding $\embF\colon \pstr G\to \pstr G$ defined by putting $\embF(\pstr{A})=v(F(\type{T}),F(\type{Q}))$ for every $\pstr{A}\in G$ and some $\type{T}\subset \type{Q}\in \KFpt$ satisfying $v(\type{T},\type{Q})=\pstr{A}$.
\end{observation}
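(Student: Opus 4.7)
The proof breaks into establishing the biconditional, which yields well-definedness and injectivity of $\embF$, and then verifying that $\embF$ preserves the $L^+$-relations of $\pstr G$. By unpacking the definition of $v$, the equality $v(\type{S},\type{T})=v(\type{Q},\type{O})$ is equivalent to the conjunction of $m:=\ell(\type{S})=\ell(\type{Q})$ and $\type{T}\restriction_{m+1}=\type{O}\restriction_{m+1}$ (an identity of partial structures on $\{0,\ldots,m\}$, not involving the type vertex~$t$). Since $\tilde F$ is injective by level-preservation, the level condition transfers to the images, so it suffices to show that the map
\[
\type{T}\restriction_{m+1}\longmapsto F(\type{T})\restriction_{\tilde F(m)+1}
\]
is a well-defined injection between partial structures.

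For well-definedness (the forward direction), the plan is induction on $m$. The base case $m=0$ invokes axiom~\ref{def:shape-pres:item3} with $b=a$, which together with $\tilde F(0)=0$ and level-preservation forces $F$ to fix every element of $T(0)=\Sigma$ pointwise. For the induction step, decompose $\type{T}|_{m+1}=\S(\type{T}|_m,\bar{p}_m,\type{C}_m)$; by shape-preservation,
\[
\S(F(\type{T}|_m),F(\bar{p}_m),\type{C}_m)\preceq F(\type{T}|_{m+1})\preceq F(\type{T}),
\]
and hence $F(\type{T})\restriction_{\tilde F(m)+1}=\S(F(\type{T}|_m),F(\bar{p}_m),\type{C}_m)\restriction_{\{0,\ldots,\tilde F(m)\}}$. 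The right-hand side is built from $F(\type{T}|_m)\setminus\{t\}$ (determined by $\type{T}\restriction_m$ via the inductive hypothesis), from $\bar{p}_m$ (determined by $\type{T}\restriction_{m+1}$), and from $\type{C}_m$; the critical observation is that removing $t$ kills every entry of $\type{C}_m$ involving the type vertex and leaves only its unary part, which depends only on the unary relations of $m$ in $\type{T}$ and thus on $\type{T}\restriction_{m+1}$.

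For injectivity (the backward direction), I would invoke the first item of Lemma~\ref{lem:comp}: the partial type $\type{T}$ may be recovered from $F(\type{T})$ by deleting the vertices at indices in $\omega\setminus\tilde F[\omega]$ and renumbering the remaining ones. The same procedure, applied to $F(\type{T})\restriction_{\tilde F(m)+1}$, yields $\type{T}\restriction_{m+1}$ and depends only on $\tilde F[\omega]$; hence the forward map is injective, completing the biconditional.

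Once the biconditional holds, $\embF\colon\pstr G\to\pstr G$ is well-defined and injective. To see it is an embedding, pick $\pstr A\subseteq\pstr B\in G$ and choose some $\type{Q}\in\KFpt$ of high enough level with $\pstr B=\type{Q}\restriction_{|\pstr B|}$; then $\embF(\pstr A)=F(\type{Q})\restriction_{\tilde F(|\pstr A|-1)+1}$ sits as an initial segment of $\embF(\pstr B)=F(\type{Q})\restriction_{\tilde F(|\pstr B|-1)+1}$. By construction of $\pstr G$, the $L^+$-relations between $\pstr A$ and $\pstr B$ coincide with the $\{|\pstr A|-1,|\pstr B|-1\}$-substructure of $\pstr B$; these are transported faithfully to the $\{\tilde F(|\pstr A|-1),\tilde F(|\pstr B|-1)\}$-substructure of $\embF(\pstr B)\subseteq F(\type{Q})$, which once more encodes precisely the $L^+$-relations between $\embF(\pstr A)$ and $\embF(\pstr B)$ in $\pstr G$. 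The main obstacle is the inductive forward step, specifically tracking that the $t$-involving part of $\type{C}_m$ drops out under the restriction to $\{0,\ldots,\tilde F(m)\}$; the remaining bookkeeping is routine.
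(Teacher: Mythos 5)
Your split into a biconditional plus a routine embedding-verification is the right strategy, and your backward direction via Lemma~\ref{lem:comp}(1) is exactly what the paper's implicit argument does. But the inductive forward direction has a genuine gap. You claim that $F(\type{T}|_m)\setminus\{t\}$ ``is determined by $\type{T}\restriction_m$ via the inductive hypothesis''; however, the IH at stage $m-1$ only tells you that $F(\type{T}|_m)\restriction_{\tilde F(m-1)+1}$ is determined by $\type{T}\restriction_m$, whereas $F(\type{T}|_m)\setminus\{t\}$ is the partial structure on the full domain $\{0,\ldots,\tilde F(m)-1\}$. Whenever $F$ skips a level (i.e.\ $\tilde F(m) > \tilde F(m-1)+1$), the nodes that $F(\type{T}|_m)$ carries at the skipped levels $\tilde F(m-1)+1,\ldots,\tilde F(m)-1$ are not covered by the IH, and a priori they could depend on the relations of $\type{T}|_m$ involving $t$. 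So the step does not go through as written; the gap is already visible at $m=1$ if $\tilde F(1)>1$. (As a side remark, $T(0)\neq\Sigma$: $\Sigma$ is the set of level-$1$ types, while $T(0)$ consists of types on $\{t\}$ alone; the intended conclusion, that $F$ fixes $T(0)$ pointwise, is still correct.)

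The missing fact is precisely Lemma~\ref{lem:comp}(2) (compatibility preservation), which the paper proves \emph{without} induction: two same-level types $\type{T},\type{S}$ satisfy $\type{T}\comp\type{S}$ if and only if $\S(\S(\type{T},\type{C}),(\type{S}),\type{C}')$ is defined for suitable $\type{C},\type{C}'\in\Sigma$, and the weak $\S$-preservation axiom carries definedness over $F$. Citing this, the forward direction is immediate: $\type{T}\restriction_{m+1}=\type{O}\restriction_{m+1}$ means $\type{T}|_{m+1}\comp\type{O}|_{m+1}$, hence $F(\type{T}|_{m+1})\comp F(\type{O}|_{m+1})$, hence $F(\type{T})\restriction_{\tilde F(m+1)}=F(\type{O})\restriction_{\tilde F(m+1)}$, and the further restriction to $\{0,\ldots,\tilde F(m)\}$ gives what you need. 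In short, you correctly identified that the forward direction reduces to showing $F(\type{T})\setminus\{t\}$ depends only on $\type{T}\setminus\{t\}$, but your induction does not prove that; it is exactly Lemma~\ref{lem:comp}(2), and you should invoke it just as you invoked Lemma~\ref{lem:comp}(1). Finally, in the embedding check you verify preservation of $\subseteq$ but not its reflection; since $\pstr G$ has relations \emph{only} between $\subseteq$-comparable vertices, you also need $\embF(\pstr A)\subseteq\embF(\pstr B)\Rightarrow\pstr A\subseteq\pstr B$, which follows from the backward direction of the biconditional together with injectivity.
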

Now we associate substructures of $\str{G}$ with envelopes in the $(\S,\M)$-tree $(\KFpt,\allowbreak {\subseteq},\allowbreak \Sigma,\allowbreak \S,\allowbreak \M)$.
\begin{definition}
	\label{def:Genvelope}
	Let $X\subset G$ be a finite set which forms a chain in $\subseteq$, and let $\pstr{C}\in G$ be a partial structure   satisfying $\max X\subseteq \pstr{C}$ and $\lvert\max X\rvert<\freef{C}(|C|-1)$.
	Put $\type{O}=\typev_\pstr{C}(|C|-1)$.
	Let $$Y=\{\type{O}\}\cup\{\type{O}|_{|B|-1}:\pstr{B}\in X\}.$$
	Let $(F, I)$ be the pair produced by Algorithm~\ref{envelope} for $Y$. We call $F$ an \emph{envelope of $X$ in $\pstr{C}$} and denote it by $F(X,\pstr{C})$. We call $I$ the \emph{interesting levels of $X$ in $\pstr{C}$} and denote this set by $I(X,\pstr{C})$. $|I(X,\pstr{C})|$ is thus the height of the envelope.
\end{definition}
Intuitively, one can think of $\pstr{C}$ as an initial segment of a countable enumerated structure in which members of $X$ lie.
Notice that for every $\pstr{B}\in X$ we have $v(\type{O}|_{|B|-1},\type{O})=\type{B}$. It follows that the envelope $F$ of $Y$
constructed in Definition~\ref{def:Genvelope} has the property that $X\subseteq \embF(G)$
and thus can be also considered to be an envelope of $X$ in the tree $(G,\subseteq)$.

We will devote the rest of this section to a proof of the following proposition.
\begin{prop}
	\label{prob:upperbound}
	For every enumerated structure $\str{K}$ such that $\OAge(\str{K})\subseteq \KF$ there exists an embedding $\embKG\colon\str{K}\to\str{G}$ such that $\embGG\circ \embKG$ is an ordered embedding and for every finite $\str{A}\subseteq \str{K}$ there exists $b(\str{A})\in \omega$ such that $|I(\embKG[A'])|\leq b(\str{A})$  for every $\str{A}'\subseteq \str K$ isomorphic to $\str A$.
\end{prop}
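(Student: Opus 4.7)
The plan is to factor $\embKG$ through an auxiliary enumerated $L$-structure $\str H\supseteq\str K$ with $\OAge(\str H)\subseteq\KF$, obtained by interleaving a bounded number of ``witness'' vertices into the enumeration of $\str K$. More precisely, I would add, directly after each vertex $v\in K$, a bounded collection of witness vertices indexed by pairs $(\str F,e)$ with $\str F\in\mathcal F$ and $e$ a partial embedding of $\str F\setminus\{\max F\}$ into $\str K\restriction_{v+1}$ whose image contains $v$. The quantifier-free type of each witness over $\str K\restriction_{v+1}$ is chosen to realise over $e[F\setminus\{\max F\}]$ exactly the type the missing vertex of $\str F$ would have, but with additional unary decoration that prevents the full copy of $\str F$ from being completed. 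Because $L$ contains only unary and binary symbols and both $L$ and $\mathcal F$ are finite, a routine free-amalgamation check ensures that $\str H$ stays in $\KF$ and that each vertex of $\str K$ carries at most $C_0=C_0(L,\mathcal F)$ witnesses.

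Next I would define $\embHG\colon\str H\to\str G$ by sending each $v\in H$ to the partial structure $\pstr A_v\in G$ on domain $v+1$ whose $L$-reduct is $\str H\restriction_{v+1}$ and whose relation $E$ is the maximal choice compatible with the definition of a partial structure. Since $v<w$ forces $\pstr A_v\subsetneq\pstr A_w$, the map $v\mapsto\pstr A_v$ is an $L$-embedding of $\str H$ onto a chain in $(G,\subseteq)$. Letting $\embKH\colon\str K\hookrightarrow\str H$ be the inclusion and setting $\embKG=\embHG\circ\embKH$, the composition $\embGG\circ\embKG$ is an ordered embedding because $\embGG$ is size-monotone and $|\pstr A_v|$ is strictly increasing in $v$.

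For the envelope bound, fix finite $\str A\subseteq\str K$ and $\str A'\cong\str A$, and form the set $Y\subseteq\KFpt$ associated with $X=\embKG[\str A']$ through Definition~\ref{def:Genvelope}. By Observation~\ref{obs:has_e1} and Proposition~\ref{prop:envelopes}, Algorithm~\ref{envelope} applied to $Y$ produces a minimal envelope whose height equals the number of interesting levels, so it suffices to bound this number by some $b(\str A)$ depending only on $\str A$. Levels of type~\ref{item:I1} and type~\ref{item:I2} together contribute at most $2|A|-1$ levels. For type~\ref{item:I3} levels, Lemma~\ref{lem:boring} shows that a level $i$ is unskippable only when the function $f_i$ constructed in the algorithm violates condition~\ref{item:boringB2} or~\ref{item:boringage} of that lemma, i.e.\ when removing level $i$ would force either a compatibility change or an age change among the higher-level types already in $\Cl_{I_{i+1}}(Y)$. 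I would argue that the construction of $\str H$ guarantees any such forced pattern is witnessed by an irreducible configuration involving at most a bounded number of vertices of $\embKG[\str A']$ together with one of their $C_0$-many witnesses, so that the number of type~\ref{item:I3} levels is at most a constant $C(|A|,L,\mathcal F)$, yielding $b(\str A)=2|A|-1+C(|A|,L,\mathcal F)$.

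The delicate step is this last matching: one must trace precisely how the failure of~\ref{item:boringB2} or~\ref{item:boringage} at level $i$ corresponds to a concrete irreducible substructure of $\str H$ that is already present in $\embKH[\str A']$ together with a single associated witness, ruling out the possibility of an age change forced only by ``cumulative'' combinations of higher-level data not pinned to a specific vertex of $\str A'$. This is exactly what dictates the choice of the witness types in the first step and genuinely uses the assumptions that $L$ is binary and that $\mathcal F$ consists of irreducible structures; it is also where the boundedness of the age-change patterns in a finite binary language is converted into the quantitative bound $b(\str A)$.
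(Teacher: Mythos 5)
Your high-level plan (factor through an auxiliary structure $\str H$ obtained by interleaving extra vertices into the enumeration of $\str K$, then bound the interesting levels) matches the paper's, but the specific construction and the key lemmas that make the bound work are different and, as proposed, incomplete.

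The paper's $\pstr{H}$ is a $(k+1)$-generation hierarchy, where $k$ is the maximum size of a forbidden structure. Each $i\in K$ is blown up to $2(k+1)$ consecutive positions: even positions $\iota(i,0),\dots,\iota(i,k)$ carry ``generation $0,\dots,k$'' copies of $i$, and odd positions are fully isolated ``fake'' vertices. A generation-$m$ vertex at $i$ relates to a generation-$n$ vertex at $j>i$ only when $m<n$ (or $m=n=k$), and generation $k$ holds the genuine copy of $\str K$. This anti-chain-like structure between generations is what powers the bound: Lemma~\ref{lem:meets} shows any nontrivial meet of two types lands either at generation $0$ or strictly below the minimum of their generations; Lemma~\ref{lem:signatures} shows a level of generation $n$ that is interesting by~\ref{item:I3} has a signature whose types have originals of generation $\ge n+1$; and the free level of any non-generation-$0$ vertex is odd, i.e.\ a fake vertex, which adds nothing new to the parameter closure. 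Together these give a strictly descending induction across generations, bounding the number of interesting levels per generation and hence $|I(\embKG[A'])|$ by a quantity depending only on $|A|$ and $k$.

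Your proposal replaces this hierarchy with a single round of witness vertices indexed by forbidden-structure patterns, each ``unary-decorated'' to block completing a copy of $\str F$. There are two genuine gaps. First, adding a vertex whose quantifier-free type over $e[F\setminus\{\max F\}]$ is exactly the missing vertex's type \emph{would} create a copy of $\str F$; the only way your ``unary decoration'' can prevent that without enlarging the finite language $L$ is by changing that type, in which case the witness no longer encodes the age-change information you need for~\ref{item:boringage}. The paper avoids this dilemma entirely by never completing lower-generation configurations: related lower-generation pairs always live in different generations, so forbidden substructures cannot close up. Second, and more fundamentally, one round of witnesses does not provide a termination mechanism. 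When a level associated with one of your witnesses is itself interesting by~\ref{item:I2} or~\ref{item:I3}, the parameter closure grows and can trigger further interesting levels; without the generation-wise descent (and the isolated fake vertices to absorb new parameters harmlessly), there is nothing to stop this cascade. You flag exactly this difficulty in your last paragraph, but the claimed bound $C(|A|,L,\mathcal F)$ on~\ref{item:I3} levels is not justified and I do not see how to obtain it with your construction. The paper's analogues of Lemmas~\ref{lem:meets} and~\ref{lem:signatures} are precisely the missing ingredients, and they depend on the $(k+1)$-generation structure that your single witness round does not reproduce.
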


For the rest of section we fix $\str{K}$ a let $k$ be the maximal size of a structure in $\mathcal F$, or, if $\mathcal F$ is empty, we put $k=1$.
\begin{definition}
	Let $\iota\colon \omega\times (k+1)\to\omega$ be the function defined by $$\iota(i,n)=2(k+1)i+2n$$ for every $i\in \omega$ and $n\leq k$.
	By $\pstr{H}$ we denote the countable enumerated partial structure constructed from $\str{K}$ as follows:
	\begin{enumerate}
		\item For every $i<j\in \omega$ and every $m\leq n\leq k$ such that either $m=n=k$ or $m<n$, add precisely those tuples to relations of $\pstr{H}$ such that $$(\str{H}\restriction_{\{\ind{i}{m},\ind{j}{n}\}})(\ind{i}{m}\mapsto i, \ind{j}{n}\mapsto j)\subseteq \str{K}.$$
		      Also add $(\ell,\ind{j}{n})$ for every $\ell\leq \ind{i}{m}$ to $E_\pstr{H}$.
		\item There are no tuples in relations of $\pstr{H}$ other than the ones required by the conditions above.
		      In particular, every odd vertex is fully isolated.
	\end{enumerate}
\end{definition}
Every vertex with an odd index is called a \emph{fake} vertex.
Given $m\leq k$ we call every vertex of the form $\ind{i}{m}$, $i\in \omega$, a \emph{vertex of generation $m$};
we will also consider all fake vertices to be of generation 0.
Generation $k$ is special and holds a copy of $\str{K}$. Lower generation vertices will be later used
to limit the height of envelopes.

Let $\embKH \colon \omega\to\omega$ be the function defined by $$\embKH(i)=\ind{i}{k}$$ for every $i\in \omega$.
The following can be checked directly from the construction of $\pstr{H}$:
\begin{observation}\label{obs:H}
	$\pstr{H}$ is a well defined partial structure and $\embKH$ is an ordered embedding $\str{K}\to\str{H}$.
	Moreover, for every $v\in \pstr{H}$ satisfying at least one of:
	\begin{enumerate}
		\item $v<2(k+1)$, or
		\item $v$ is vertex of generation 0 (including fake vertices),
	\end{enumerate}
	it holds that $\freef{H}(v)=0$.
	For every other $v\in \pstr{H}$ it holds that $\freef{H}(v)$ is odd.
\end{observation}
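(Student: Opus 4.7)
The plan is to verify the three assertions by direct inspection of the construction of $\pstr{H}$.

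For the first assertion, that $\pstr{H}$ is a partial structure, the three axioms must be checked. Downward closure of $E_\pstr{H}$ in its first coordinate is immediate, since for every valid clause the construction inserts \emph{all} $\ell \le \ind{i}{m}$ simultaneously as first coordinates paired with $\ind{j}{n}$. The irreducibility axiom reduces to an observation: binary $L$-relations are placed on a pair $\{u,v\}$ only when $u = \ind{i}{m}$ and $v = \ind{j}{n}$ with $i < j$ and either $m = n = k$ or $m < n$, and in each such case the construction also inserts $(\ind{i}{m}, \ind{j}{n})$ into $E_\pstr{H}$; every other pair, including every pair containing a fake vertex, carries no binary relation at all, and hence cannot be irreducible. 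The gap axiom $u < v - 1$ follows from a short numerical estimate: the largest first coordinate inserted into $E_\pstr{H}$ with target $\ind{j}{n}$ equals $\ind{j-1}{k} = 2(k+1)j - 2$ when $n = k$ and $\ind{j-1}{n-1} = 2(k+1)j + 2n - 2k - 4$ when $0 < n < k$, both strictly below $\ind{j}{n} - 1$ using $k \ge 1$.

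Next, $\embKH$ is an ordered embedding. The function $\embKH(i) = 2(k+1)i + 2k$ is strictly increasing in $i$. Because the clause $m = n = k$ is allowed, for every $i < j$ the construction makes $\pstr{H}\restriction_{\{\ind{i}{k}, \ind{j}{k}\}}$ equal, under the renaming $\ind{i}{k} \mapsto i,\ \ind{j}{k} \mapsto j$, to $\str{K}\restriction_{\{i,j\}}$, capturing both binary and unary information; hence $\embKH$ preserves every relation of $L$ in both directions.

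Finally, the computation of $\freef{H}(v)$ splits by the kind of $v$. A fake vertex (odd index) never occurs as a target $\ind{j}{n}$ of any constructional clause, so no $E$-tuple has it as second coordinate and $\freef{H}(v) = 0$. If $v < 2(k+1)$ then $v$ is either fake or equals $\ind{0}{m}$ for some $m \le k$; since every clause requires $i < j$, no clause targets such $v$ and again $\freef{H}(v) = 0$. For a non-fake generation-$0$ vertex $v = \ind{j}{0}$ with $j \ge 1$ neither ``$m < n = 0$'' nor ``$m = n = 0 = k$'' (using $k \ge 1$) can hold, so still no clause targets $v$ and $\freef{H}(v) = 0$. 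In the remaining case $v = \ind{j}{n}$ with $j \ge 1$ and either $n = k$ or $0 < n < k$; maximizing $\ind{i}{m}$ over valid $(i,m)$ with $i < j$ yields $\ind{j-1}{k}$ or $\ind{j-1}{n-1}$ respectively, both of which are \emph{even}, so $\freef{H}(v)$ equals one more than this maximum and is therefore odd.

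No step is technically hard; the only care required is in keeping the two sub-cases $n = k$ versus $0 < n < k$ straight in the maximization, and in confirming that the ``excluded'' vertices really receive no $E$-tuple at all so that their free level is vacuously $0$.
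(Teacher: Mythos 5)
Your verification is correct and complete; the paper itself gives no proof of this observation, merely asserting that it ``can be checked directly from the construction of $\pstr{H}$,'' and your computation of the maximal $E$-neighbour $\ind{j-1}{k}$ (resp.\ $\ind{j-1}{n-1}$) of $\ind{j}{n}$ and the resulting oddness of $\freef{H}(\ind{j}{n})$ is exactly the routine check being invoked. The case split you give (fake vertices and $\ind{0}{m}$ never being a target, generation-$0$ vertices excluded by $k\geq 1$) matches what is needed.
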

Let $\embHG\colon \pstr{H} \to \pstr{G}$ be the embedding defined by $$\embHG(v)=\pstr{H}|_{v+1}.$$
Notice that $\embGG\circ \embHG\circ \embKH$ is an ordered embedding $\str K\to \Gp$.

It remains to analyze the behaviour of Algorithm~\ref{envelope} on finite subsets of $\embHG\circ \embKH[K]$.
Fix a finite set $X'\subseteq K$ and put $X=\embHG\circ \embKH[X']$ and $\pstr{C}=\embHG\circ \embKH(\max X'+1)$.
Let $\str{O}$ and $Y$ be constructed as in Definition~\ref{def:Genvelope}.
\begin{observation}\label{obs:env1}
	The following is true for closures of $Y$:
	\begin{enumerate}
		\item For every $I\subseteq \omega$ it holds that $\Cl_I(Y)\subseteq \Cl_\omega(Y)$.
		\item For every $\type{T}\in \Cl_\omega(Y)$ there exists $v\in C$ and $\ell\leq v$ such that $\type{T}=(\typev_\pstr{C}(v))|_\ell$.
		      Consequently, every $\type{T}\in \Cl_\omega(Y)$ satisfies $\type{T}\setminus \{t\}\subseteq \pstr{C}$.
		\item If set $Z\subseteq G$ satisfies $\type{T}\setminus \{t\}\subseteq \pstr{C}$ for every $\type{T}\in Z$
		      then for every $I\subseteq \omega$ it holds that $Z$ is parameter-closed over $I$ if and only if
		      for every $i\in I$ with $\freef{C}(i)>0$ it holds that $\typev_\pstr{C}(i)\in Z$.
	\end{enumerate}
\end{observation}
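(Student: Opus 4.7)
The plan is to establish all three claims from a single parameter computation in the $\S$-tree $(\KFpt,\subseteq,\Sigma,\S)$ restricted to partial types coming from $\pstr{C}$: for every $v\in C$ and every $i$ with $0\leq i<\freef{C}(v)$, reading off the unique decomposition described in the verification of \ref{S3} following Definition~\ref{defn:levelledS} gives that $\Dp((\typev_\pstr{C}(v))|_{i+1})$ is the empty tuple when $\freef{C}(i)=0$ and the singleton $(\typev_\pstr{C}(i))$ when $\freef{C}(i)>0$. The essential input is that the $E$-relations of $\pstr{C}$ are inherited by every restriction $(\typev_\pstr{C}(v))|_\ell$, so the free-level functions of these restrictions agree with $\freef{C}$ on their support.

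Point 1 is then purely formal: $\Cl_\omega(Y)$ is by definition meet-closed and parameter-closed over $\omega$, hence parameter-closed over any $I\subseteq\omega$, and since it contains $Y$ the minimality of $\Cl_I(Y)$ forces $\Cl_I(Y)\subseteq \Cl_\omega(Y)$.

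For Point 2 I would introduce the candidate set $M=\{(\typev_\pstr{C}(v))|_\ell:v\in C,\ \ell\leq \freef{C}(v)\}$ and prove $\Cl_\omega(Y)\subseteq M$ by checking that $M$ contains $Y$, is meet-closed, and is parameter-closed. Containment of $Y$ is immediate from $\type{O}=\typev_\pstr{C}(|C|-1)$ together with the fact that the remaining elements of $Y$ are restrictions of $\type{O}$. Meet-closedness follows because, whenever defined, the meet of $(\typev_\pstr{C}(v_1))|_{\ell_1}$ and $(\typev_\pstr{C}(v_2))|_{\ell_2}$ equals their longest common predecessor, of the form $(\typev_\pstr{C}(v_i))|_m$, still in $M$. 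Parameter-closedness is precisely the key computation, since $\typev_\pstr{C}(i)\in M$ with the choice $v=i$, $\ell=\freef{C}(i)$. The consequent inclusion $\type{T}\setminus\{t\}\subseteq \pstr{C}$ is then immediate from the construction, as $(\typev_\pstr{C}(v))|_\ell$ carries the $L^+$-structure of $\pstr{C}$ on its vertices $\{0,\ldots,\ell-1\}$.

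For Point 3 the hypothesis $\type{T}\setminus\{t\}\subseteq \pstr{C}$ for each $\type{T}\in Z$ lets the key computation apply verbatim to determine $\Dp(\type{T}|_{i+1})$ for every $i<\ell(\type{T})$. Unfolding the definition of parameter-closedness over $I$ and splitting on whether $\freef{C}(i)$ vanishes or not then shows that the only obligation parameter-closedness imposes on $Z$ is exactly the presence of $\typev_\pstr{C}(i)$ for every relevant $i\in I$ with $\freef{C}(i)>0$. The main technical point throughout is the initial parameter computation; once that is in place, every remaining verification is a straightforward unfolding of definitions.
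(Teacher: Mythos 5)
Your proof is correct and supplies exactly the details one would expect for this observation, which the paper states without proof. The key step — computing $\Dp(\type{T}|_{i+1})$ for a type $\type{T}$ with $\type{T}\setminus\{t\}\subseteq\pstr{C}$ by noting that the free-level function of the restriction $\type{T}|_{i+1}$ agrees with $\freef{C}$ at $i$, so that $\Dp(\type{T}|_{i+1})$ is empty when $\freef{C}(i)=0$ and equals $(\typev_\pstr{C}(i))$ when $\freef{C}(i)>0$ — is the right leverage, and items 1--3 then follow from minimality of closures and from verifying that your candidate set $M=\{(\typev_\pstr{C}(v))|_\ell : v\in C,\ \ell\leq\freef{C}(v)\}$ contains $Y$, is meet-closed, and is parameter-closed. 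One small caveat, inherited from the paper's phrasing of item 3 rather than introduced by you: the stated equivalence really consists of the ``if'' direction plus the ``only if'' direction under the tacit assumption that $Z$ meets $T({>}i)$ for each relevant $i\in I$; this always holds in the paper's application, since the $Z$'s that arise are closures containing $\type{O}$, whose level $\freef{C}(|C|-1)$ dominates all levels of interest.
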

The iterative nature of Algorithm~\ref{envelope} makes its analysis complicated, as every time some level is deemed interesting,
the parameter closure is extended by the corresponding parameters needed to construct the successors of that level, which in turn may cause
other levels to become interesting. In general, newly introduced parameters may introduce new meets and new parameters and this process
may lead to unbounded envelopes.  We will see that the construction of $\pstr{H}$ makes this analysis easier.

It is a consequence of Observation~\ref{obs:env1} that for every interesting
level $i$ the parameter-closure adds no new type if $\freef{C}(i)=0$, and only one type
$\type{T}=\typev_\pstr{C}(i)$ if $\freef{C}(i)\neq 0$. By Observation
\ref{obs:H} we know that $\ell(\type{T})=\freef{C}(i)$ is odd.  Because
$\type{T}$ becomes part of the parameter closure, level $\ell(\type{T})$
will be later considered interesting by \ref{item:I1}.  Since it corresponds to a
fake vertex, it will not further extend the parameter closure. Consequently, we only need to be concerned about levels considered interesting by \ref{item:I2} and \ref{item:I3}.
\begin{remark}
	The discussion above shows that, with the exception of level $0$, the envelopes after introduction of fake vertices behave identically to ones characterised by Zucker~\cite[Section 4]{zucker2020}.
	These envelopes allow for the characterization of exact big Ramsey degrees in~\cite{Balko2021exact} (refining upper bounds from \cite[Section 5]{zucker2020} which gives finite but not necessarily optimal big Ramsey degrees).  It is thus possible to also refer to arguments in \cite[Section 5]{zucker2020}, however
	for completeness we finish the analysis based on our construction of $\pstr{H}$. This approach is more similar to the argument used in~\cite{Hubicka2020uniform}
	to give a finite upper bound on big Ramsey degrees of 3-uniform hypergraphs.
\end{remark}

As a consequence of Observation~\ref{obs:env1}, we can assign every $\type{T}\in \Cl_\omega(Y)$ an ``original'' vertex $o(\type{T})\in \pstr{C}$ satisfying $\type{T}=\typev_\pstr{C}(o(\type{T}))|_{\ell(\type{T})}$.
If the choice of such a vertex is not unique, we choose one minimizing its generation.
\begin{lemma}
	\label{lem:meets}
	For every $\type{T},\type{S}\in \Cl_\omega(Y)$, at least one of the following is satisfied:
	\begin{enumerate}
		\item $\type{T}\meet\type{S}\in \{\type{T},\type{S}\}$
		\item $\ell(\type{T}\meet\type{S})$ is of generation 0
		\item $\ell(\type{T}\meet\type{S})$ is a vertex of generation strictly lower than the minimum of the generation of $o(\type{T})$ and the generation of $o(\type{S})$.
	\end{enumerate}
\end{lemma}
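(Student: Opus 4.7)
The proof splits into three cases according to the nature of the meet $\type{T}\meet\type{S}$. If $\type{T}\meet\type{S}\in\{\type{T},\type{S}\}$, condition (1) holds and we are done, so the plan is to assume otherwise. Writing $m=\ell(\type{T}\meet\type{S})$, we then have $m<\min(\ell(\type{T}),\ell(\type{S}))$ and $\type{T}|_{m+1}\neq\type{S}|_{m+1}$. Setting $u=o(\type{T})$ and $v=o(\type{S})$, and using that $\freef{C}(u),\freef{C}(v)>m$ (so the $E$-edges to $t$ in the two types at level $m{+}1$ coincide, as do the unary predicates on levels ${<}m$), the discrepancy at level $m+1$ must be witnessed by $u$ and $v$ having different $L$-relations to the vertex $m$ of $\pstr{C}$.

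If $m$ is a fake (odd) vertex or $m=\iota(i_m,0)$, then $m$ has generation $0$ and case (2) is immediate. Otherwise $m=\iota(i_m,p)$ with $1\leq p\leq k$, and I will establish (3), namely $p<\min(r_u,r_v)$. The key ingredient is the explicit edge rule underlying the construction of $\pstr{H}$: for positions $w_1=\iota(j_1,n_1)<w_2=\iota(j_2,n_2)$, the pair $(w_1,w_2)$ carries an $L$-relation exactly when $j_1<j_2$ and either $n_1<n_2$ or $n_1=n_2=k$. Applied to the pair $(m,u)$ with $m<u$: whenever $r_u\leq p$ and $r_u<k$, no $L$-relation can exist between $u$ and $m$, so $u$'s only relation to $m$ is the automatic $E$-edge; the same holds for $v$. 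Hence if both $r_u,r_v\leq p$ with $r_u,r_v<k$, then $u$ and $v$ share all relations to $m$, contradicting $\type{T}|_{m+1}\neq\type{S}|_{m+1}$.

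The final step upgrades this to $r_u>p$ and $r_v>p$ simultaneously. Here I exploit the identity $\type{T}|_m=\type{S}|_m$ at the vertices $\iota(i_m,n)$ with $n<p$ (which sit strictly below $m$ in the same block of $\pstr{H}$) and at vertices $\iota(j,p)$ with $j<i_m$. The $L$-relations from $u$ or $v$ to any such vertex are governed by $\str{K}(i_m,i_u)$ or $\str{K}(j,i_u)$ and by the analogous quantities for $v$, and agreement at these neighbours combined with disagreement at $m$ forces both generations to strictly exceed $p$. The same comparison rules out the degenerate case $p=k$: if $r_u=r_v=k$ and $p=k$, then $u$ and $v$ have an $L$-relation to $m$ iff $\str{K}(i_m,i_u)\neq\emptyset$ and resp.\ for $v$, but the same $\str{K}(i_m,\cdot)$ also governs the $L$-relations to each $\iota(i_m,n)$ with $n<k$, so agreement below $m$ forces agreement at $m$, a contradiction. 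The main obstacle is the last bookkeeping: showing that an asymmetric configuration ($u$ lacking the $L$-edge while $v$ has one) cannot survive the minimality of $o(\type{T})$, which is handled by exhibiting a lower-generation representative of $\type{T}$ sharing the same restricted type whenever $r_u\leq p$, contradicting the choice of $u=o(\type{T})$.
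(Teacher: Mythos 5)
Your proof takes a genuinely different route from the paper's. The paper's argument splits on whether the generations of $o(\type{T})$ and $o(\type{S})$ coincide and \emph{computes} the level of the meet explicitly as some $\iota(v,q)$, where $v$ is the least row at which a discrepancy in $\str{K}$ forces the two types apart and $q$ is $0$ in the equal-generation case and otherwise one less than the smaller of the two generations; the lemma's conclusion is then read directly off this formula. You instead argue by contradiction, trying to show that whenever the generation $p$ of the vertex $\ell(\type{T}\meet\type{S})$ is at least $1$, it must be strictly below both $r_u$ and $r_v$. The idea is reasonable, and the paragraph handling the case where both $r_u,r_v\leq p$ via the edge rule of $\pstr{H}$ captures the right mechanism.

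However, two steps do not go through as written. First, your "both $\leq p$" argument only excludes the case $r_u,r_v<k$, and your follow-up for $p=k$ assumes $r_u=r_v=k$; the configuration $r_u<k=p=r_v$ falls between the two sub-cases, and there $u$ genuinely lacks an $L$-relation to $m$ while $v$ can have one, so no contradiction appears from this comparison alone. Second, and more seriously, the asymmetric case (say $r_u\leq p<r_v$) is dispatched by asserting that one can "exhibit a lower-generation representative of $\type{T}$" and thereby contradict the minimality of $o(\type{T})$. This is the load-bearing claim and it is not true in general: when $r_u=1$, the only lower non-fake generation is $0$, and by Observation~\ref{obs:H} a generation-$0$ vertex $w$ has $\freef{C}(w)=0$, so $\typev_\pstr{C}(w)$ has level $0<\ell(\type{T})$ and cannot represent $\type{T}$. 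The alleged lower-generation representative need not exist, so the contradiction does not follow. Closing this gap really does seem to require the paper's joint treatment of both generations --- locating the minimal row witnessing the disagreement and reading its generation off the block structure of $\pstr{H}$ --- rather than bounding $r_u$ and $r_v$ separately.
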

\begin{proof}
	Fix $\type{T},\type{S}\in \Cl_\omega(Y)$ such that $\type{T}\meet\type{S}\notin \{\type{T},\type{S}\}$.  Without loss of generality assume that $\ell(\type{T})\leq \ell(\type{S})$.
	Let $u$ be maximal such that $\iota(u,0)\leq\ell(\type{T})$.  By Observation~\ref{obs:env1} we have that $\type{T}\setminus \{t\}\subseteq \type{S}\setminus \{t\}$.

	First consider the case that $o(\type{T})$ and $o(\type{S})$ are not fake and
	let $i,j\in \omega$, $m,n<k$ satisfy
	$$\begin{aligned}o(\type{T})&=\iota (j,n)\\ o(\type{S})&=\iota (i,m).\end{aligned}$$

	If $n=m$ (i.e., the generations of original vertices are the same) then, as $\type{T}\meet\type{S}\notin \{\type{T},\type{S}\}$, we know that $n>0$ and there exists $v<u$
	such that  $\str{K}\restriction_{\{v,j\}}(j\mapsto i)\neq \str{K}\restriction_{\{v,i\}}$.
	If $v$ is the minimal vertex with this property then by the construction of $\pstr{H}$ it holds that $\ell(\type{T}\meet\type{S})=\iota(v,0)$ and thus is of generation 0.

	If $n\neq m$  then, since $\type{T}\meet\type{S}\notin \{\type{T},\type{S}\}$, there exists vertex
	$v\leq u$ such that at least one of the following holds:
	\begin{enumerate}
		\item $n>0$, $u\neq v$ and  $\str{H}\restriction_{\{v,j\}}$ is irreducible, or
		\item $m>0$ and $\str{H}\restriction_{\{v,i\}}$ is irreducible.
	\end{enumerate}
	If $v$ is the minimal vertex of this property then by construction of $\pstr{H}$ it holds that  $$\ell(\type{T}\meet\type{S})=\iota(v,\max(\min(m,n)-1,0))).$$

	Finally, observe that since $\type{T}\meet\type{S}\notin \{\type{T},\type{S}\}$, at most one of $\type{T}$ and $\type{S}$ can be fake.
	In the case one of them is fake, we can proceed similarly as in the previous case considering it to be of generation 0.
\end{proof}
Levels which will be interesting by condition \ref{item:I3} are only those failing the assumptions of the boring extension lemma (Lemma~\ref{lem:boring})
for function $f_i$ constructed in condition \ref{item:I3}.
For this function, assumptions \ref{item:boringB1} and \ref{item:boringB2} will always be satisfied
(\ref{item:boringB1} is clear from the choice of $f_i$ while \ref{item:boringB2} follows by Observation~\ref{obs:env1}).
Consequently, if $\mathcal{F}=\emptyset$ then assumption~\ref{item:boringage} becomes vacuously true and the condition \ref{item:I3} will trigger only for level 0 (because of
our restriction that functions in $\M$ fix level 0).

For empty $\mathcal F$ we can thus finish our analysis:  By \ref{item:I1}, we will get interesting levels for every $\ell(\type{T})$, $\type{T}\in Y$, namely $|\pstr{C}|-1$ and $|\pstr{B}|-1$ for every $B\in X$.
These levels will introduce new parameters, which correspond to types of vertices of $X$ in $\pstr{H}$ (at most $|X|$ new types in total).  Levels corresponding to these new types and meets will introduce
no new types in the type-closure and thus the set of interesting levels is bounded from above by $2|X|+2$:
\begin{enumerate}
	\item $|X|+1$ levels will be interesting because they contain $Y$.
	      Each such level is generation $k$ and will introduce at most one new parameter to the parameter-closure.
	      (These are types of vertices of $X$, similarly as in various other bounds on big Ramsey degrees.)
	\item Vertices of $Y$ form a chain and thus contribute no new interesting levels for meets.
	      However, types introduced to the parameter-closure will introduce at most $|X|$ levels containing meets.
	\item Level 0 can never be skipped and will thus be considered interesting.
\end{enumerate}

Finally, we analyze which levels may be considered interesting by \ref{item:I3}.
Except for level $0$ which is always interesting by our choice of $\M$, every other level $\ell$ considered interesting by \ref{item:I3} has an associated structure $\pstr{A}$ (given by the failure assumption of Lemma~\ref{lem:boring}) with an ordered embedding $e\colon\str{F}\to\str{A}$ for some $\str{F}\in \mathcal F$.
Let $f$ be a function with domain $F$ defined by
$$f(i)=\begin{cases}
		\emptyset                      & \hbox{if $e(i)\leq \ell$} \\
		o(\typev_\pstr{A}(e(i))|_\ell) & \hbox{if $e(i)> \ell$.}
	\end{cases}
$$
We call the pair $(\str{F},f)$ a \emph{signature of interesting level $\ell$}.
Signatures are not necessarily unique, however:
\begin{observation}
	If two levels $\ell,\ell'>0$ are considered interesting by \ref{item:I3},
	$(\str{F},f)$ is a signature of level $\ell$, and $(\str{F}',f')$ is a signature of level $\ell'$ then $(\str{F},f)\neq (\str{F}',f')$.
\end{observation}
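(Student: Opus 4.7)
The plan is a proof by contradiction. Suppose $\ell<\ell'$ are both interesting by~\ref{item:I3} with equal signatures. Since $\str{F}$ is part of the signature, I may at once reduce to the case where both signatures use the same forbidden $\str{F}\in \mathcal{F}$ and the same origin function $f\colon F\to \pstr{C}\cup\{\emptyset\}$. Let $\pstr{A}$ and $\pstr{A}'$ be the partial structures witnessing the failure of~\ref{item:boringage} in Lemma~\ref{lem:boring} at levels $\ell$ and $\ell'$ respectively, with ordered embeddings $e\colon\str{F}\to\str{A}$ and $e'\colon\str{F}\to\str{A}'$ hitting $\ell$ and $\ell'$. Because the signatures agree, for every $i\in F$ with $f(i)\neq\emptyset$ we have $e(i)>\ell$ and $e'(i)>\ell'$, and the level-$\ell$ (resp.\ level-$\ell'$) type of $e(i)$ in $\pstr{A}$ (resp.\ of $e'(i)$ in $\pstr{A}'$) is pinned down by the common origin as a restriction of $\typev_\pstr{C}(f(i))$. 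Moreover, picking $i^*\in F$ with $e'(i^*)=\ell'$, the signature forces $f(i^*)=\emptyset$ and hence $e(i^*)\leq \ell$.

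I would then construct a new partial structure $\pstr{B}$ exhibiting an ordered embedding of $\str{F}$ into a partial structure whose types above $\ell'$ lie in $f_{\ell'}[S_{\ell'}]$, but which avoids $\ell'$ itself. Concretely I splice the lower portion of $\pstr{A}$ up to and including level $\ell$ (which already houses the entire copy $e[F]$) with the upper portion of $\pstr{A}'$ from level $\ell'+1$ onward, filling the interval $(\ell,\ell']$ by duplicating safe earlier vertices via the duplication functions provided by~\ref{item:Nduplication}. Because the common $f$ dictates how the upper block of $\pstr{A}'$ interacts with $\pstr{C}$, it automatically interacts consistently with the lower block coming from $\pstr{A}$, whose upper-facing types above $\ell$ are instances of the same restrictions of $\typev_\pstr{C}(f(i))$. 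The transplanted copy of $\str{F}$ then lives entirely in vertices $\leq\ell$ of $\pstr{B}$, so in particular avoids $\ell'$.

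The main obstacle is verifying that $\pstr{B}$ is an admissible partial structure with $\str{B}\in \KF$ and that its types above $\ell'$ still lie in $f_{\ell'}[S_{\ell'}]$. Every new relation in $\pstr{B}$ is either inherited from $\pstr{A}$ within the lower block, inherited from $\pstr{A}'$ within the upper block, or dictated by the common origin data in $\pstr{C}$ across the splice; Lemma~\ref{lem:comp} supplies the preservation principle that makes these assignments coherent, so no forbidden configuration can arise in $\pstr{B}$ that was not already present in $\pstr{A}$ or $\pstr{A}'$. Once $\pstr{B}$ is in hand, the ordered embedding of $\str{F}$ into $\pstr{B}$ avoiding $\ell'$ contradicts the very failure of~\ref{item:boringage} that declared $\ell'$ interesting by~\ref{item:I3}, proving $(\str{F},f)\neq(\str{F}',f')$.
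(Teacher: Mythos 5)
The paper gives no proof of this Observation---it is stated as a remark whose justification is left implicit---so there is no ``paper proof'' to compare against; I can only assess your argument on its own terms. Unfortunately, your concluding step does not produce a contradiction.

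Recall how a level $\ell'$ is declared interesting by \ref{item:I3}: the function $f_{\ell'}$ fails to extend to a function in $\M$ skipping only $\ell'$, and by Lemma~\ref{lem:boring} this is tracked back to a \emph{failure} of~\ref{item:boringage}. That failure is an \emph{existential} statement: there exists a particular partial structure $\pstr{A}'$, whose types above $\ell'$ all lie in $f_{\ell'}[S_{\ell'}]$, such that some $\str F'\in\mathcal F$ embeds hitting $\ell'$ but \emph{no} forbidden structure embeds into $\pstr{A}'$ avoiding $\ell'$. Your construction produces a brand-new structure $\pstr{B}$, obtained by splicing the lower part of $\pstr A$ with the upper part of $\pstr{A}'$, and you observe that $\str F$ embeds into $\pstr{B}$ avoiding level $\ell'$. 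But this only shows that $\pstr{B}$ is a ``good'' structure for~\ref{item:boringage}; it says nothing about $\pstr{A}'$. Exhibiting one structure in which the conclusion of~\ref{item:boringage} holds does not refute the existence of a different bad witness, so the level $\ell'$ remains interesting and you have no contradiction. To make this strategy work you would need to locate a forbidden copy avoiding $\ell'$ \emph{inside $\pstr{A}'$ itself}, not inside a spliced auxiliary structure.

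Beyond that main logical gap, the splicing step is also not on solid ground: filling the interval $(\ell,\ell']$ ``by duplicating safe earlier vertices via~\ref{item:Nduplication}'' operates on functions in $\M$, not directly on partial structures, and you would still have to check that the cross-relations between the lower block inherited from $\pstr A$ and the upper block inherited from $\pstr{A}'$ are consistent, yield a genuine partial $L^+$-structure whose $L$-reduct is in $\KF$, and keep the types above $\ell'$ inside $f_{\ell'}[S_{\ell'}]$. Appealing to Lemma~\ref{lem:comp} does not substitute for this verification: Lemma~\ref{lem:comp} constrains the action of a fixed $F\in\M$, and you have no such $F$ available here. I would recommend abandoning the splicing idea and instead arguing directly that the data $(\str F,f)$, together with the fact that the original vertices $f(i)$ and the level $\ell$ determine the types $\typev_{\pstr A}(e(i))|_\ell$ occurring in the witness, pin down $\ell$ uniquely.
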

\begin{lemma}
	\label{lem:signatures}
	Let $\ell>0$ be interesting by \ref{item:I3} and let $(\str{F},f)$ be its signature.
	Let level $\ell$ correspond to a generation $n$ vertex.  Then for every type $\type{T} \in f[F]$ we have that $o(\type{T})$ is of generation at least $n+1$.
\end{lemma}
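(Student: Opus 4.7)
The plan is to combine the failure analysis of Lemma~\ref{lem:boring} with the explicit construction of $\pstr H$. Fix $\ell > 0$ interesting by \ref{item:I3}. I would first check that the function $f_\ell$ from step~\ref{item:I3} of Algorithm~\ref{envelope} satisfies conditions~\ref{item:boringB1} and~\ref{item:boringB2} of Lemma~\ref{lem:boring} automatically: it sends each type in $S_\ell$ to its canonical one-level extension inside $\Cl_{I_{\ell+1}}(Y)$, which is visibly an extension and visibly compatibility-preserving. Hence the failure of $f_\ell$ to extend to an element of $\M$ skipping only level~$\ell$ must come from the age-change condition~\ref{item:boringage}, yielding exactly the signature data: a partial structure $\pstr A$ whose types over level $\ell$ all lie in $f_\ell[S_\ell]$, a forbidden structure $\str F \in \mathcal F$, and an ordered embedding $e \colon \str F \to \pstr A$ with $\ell \in e[F]$.

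Next I would transfer this obstruction back into $\pstr H$. Fix $j \in F$ with $e(j) > \ell$ and set $\type T_j = \typev_\pstr{A}(e(j))|_\ell$, so that $f(j) = o(\type T_j) \in f[F]$. By the definition of $f_\ell$ and Observation~\ref{obs:env1}, the level-$(\ell{+}1)$ type $\typev_\pstr{A}^{\ell+1}(e(j)) \in f_\ell[S_\ell]$ coincides with $\typev_\pstr{C}(v)|_{\ell+1}$ for some vertex $v > \ell$ of $\pstr C$, and by the definition of $o$ we may take $v = o(\type T_j)$. Consequently, the $L$-relations between $\ell$ and $e(j)$ in $\pstr A$ coincide with the $L$-relations between $\ell$ and $o(\type T_j)$ in $\pstr C \subseteq \pstr H$. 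Since $\str F$ is irreducible, the pair $(\ell, e(j))$ lies in some tuple of an $L$-relation of $\pstr A$, so the pair $(\ell, o(\type T_j))$ is $L$-irreducible in $\pstr H$.

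Finally I would read off the generation bound directly from the construction of $\pstr H$. Fake (odd-indexed) vertices of $\pstr H$ are fully $L$-isolated, so neither $\ell$ nor $o(\type T_j)$ can be fake; write $\ell = \iota(u,n)$ and $o(\type T_j) = \iota(u',n')$. Within a single $\iota$-block the construction of $\pstr H$ adds no $L$-relations, so $u < u'$, and the rule producing $L$-relations between $\iota(u,n)$ and $\iota(u',n')$ with $u<u'$ fires only when $n < n'$ or $n = n' = k$. The boundary case $n = n' = k$ cannot occur here, because Lemma~\ref{lem:meets} forces meets of elements of $\Cl_\omega(Y)$ to drop to strictly lower generation or to land on a fake vertex, while Observation~\ref{obs:H} ensures parameter-closure only introduces types above fake vertices; hence every generation-$k$ type in $\Cl_\omega(Y)$ already sits in $Y$ and makes its level interesting by~\ref{item:I1} rather than by~\ref{item:I3}. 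We are thus left with $n < n'$, giving $n' \geq n+1$, as claimed. The main subtlety is precisely this generation-shifting design of $\pstr H$ via the shift $\iota$ and the intercalated fake vertices: it forces every $L$-relation crossing a chosen level to strictly raise generation, which is what makes the signatures of (I3)-interesting levels climb the generation ladder and so bounds their number.
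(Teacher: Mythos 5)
Your proof follows the same underlying idea as the paper's (very terse) argument: the generation-shifted construction of $\pstr{H}$ via $\iota$ and the intercalated fake vertices forces any $L$-relation between a vertex of generation $n$ and a higher-indexed vertex to land on a vertex of strictly higher generation (except in the $m=n=k$ block). Your first two steps — identifying the failure as coming from condition~\ref{item:boringage} of Lemma~\ref{lem:boring}, and transferring the $L$-irreducibility of the pair $(\ell, e(j))$ in $\pstr{A}$ (which holds because $\str{F}$ is irreducible) to the pair $(\ell, o(\type{T}_j))$ in $\pstr{C}\subseteq\pstr{H}$ via the matching of types over socle $\ell+1$ — are a faithful expansion of what the paper compresses into a single sentence about the construction of $\pstr{H}$.

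The weak point is your treatment of the boundary case $n = n' = k$. Your argument — that by Lemma~\ref{lem:meets} and Observation~\ref{obs:H} every generation-$k$ type in $\Cl_\omega(Y)$ sits in $Y$, and therefore its level is interesting by~\ref{item:I1} rather than~\ref{item:I3} — does not actually rule out the case. The problem is that a level $\ell$ of generation $k$ can perfectly well be \emph{not} in $\{\ell(\type{T}) : \type{T}\in \Cl_\omega(Y)\}$ (so~\ref{item:I1} does not fire there) while still being a candidate for~\ref{item:I3}: interestingness by~\ref{item:I3} is a property of the truncation function $f_\ell$ on $S_\ell = \{a|_\ell : a\in \Cl_{I_{\ell+1}}(Y)\cap T({>}\ell)\}$, and $S_\ell$ is nonempty whenever $\Cl_{I_{\ell+1}}(Y)$ has \emph{any} element above level $\ell$, which it does at every $\ell < |C|-1$. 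So the implication ``gen-$k$ types in $\Cl_\omega(Y)$ live in $Y$ $\Rightarrow$ gen-$k$ levels are never (I3)-interesting'' does not follow as stated. You need a separate argument that an obstruction witnessing (I3) at a generation-$k$ level cannot exist; observing that all the $o(\type{T}_j)$ would then also have to be generation $k$ (which you correctly derived) is the right starting point, but it must then be combined with the fact that $\pstr{C}$ is triangle-free-for-$\mathcal{F}$ (more precisely, $\OAge(\str{H})\subseteq\KF$) and the structure of the obstruction $\pstr{A}$ to reach a contradiction — a nontrivial extra step that your sketch elides.

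For comparison, the paper's own proof is a one-liner appealing to the fact that ``the lexicographically first copy of every irreducible structure $\str{F}$ of size at most $k$ uses vertices of generations $0,1,\ldots,|F|-1$.'' It does not explicitly address the $n=k$ boundary either, and is considerably less detailed than your account. Your proposal is the more honest expansion of that sentence, and the first two steps are correct; the remaining gap is the rigorous exclusion of $n=k$.
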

\begin{proof}
	This follows from the construction of $\str{H}$ which ensures that the lexicographically first copy of every irreducible structure $\str{F}$
	of size at most $k$ uses vertices of generations $0,1,\ldots |F|-1$.
\end{proof}

\begin{proof}[Proof of Proposition~\ref{prob:upperbound}]
	Fix $\str{A}$.  We prove an upper bound for the number of vertices of every generation in $I(\embKG[A'])$ for every $\str{A}'\in \binom{\str{K}}{\str{A}}$.
	Every level in $I(\embKG[A'])$ was considered interesting by \ref{item:I1}, \ref{item:I2} or \ref{item:I3}.
	By Lemmas~\ref{lem:meets} and \ref{lem:signatures} we get that only vertices of generation $k$ are considered interesting by \ref{item:I1} which is at most $|A|+1$ vertices.
	Vertices of generation $k-1$ can be considered interesting by \ref{item:I2} or \ref{item:I3}. There are only boundedly many meets of vertices of generation $k$,
	and by Lemma~\ref{lem:signatures} also boundedly many signatures using vertices of generation $k$ yielding a bound on the number of vertices of generation $k-1$.
	We can proceed similarly for lower generations.
\end{proof}
\begin{proof}[Proof of Theorem~\ref{thm:zucker}]
	Let $\mathcal F$ be a finite family of finite enumerated irreducible $L$-structures and let $\str{K}$ be a $\KF$-universal structure.
	Fix a finite enumerated structure $\str{A}$,
	an ordered embedding $\embGpK\colon \Gp\to\str{K}$ (which exists by Lemma~\ref{lem:KFage} and by the $\KF$-universality of $\str{K}$), and a
	finite colouring $\chi$ of $\OEmb(\str{A},\str{K})$. Define a colouring $\chi'$ of $\OEmb(\str{A},\str{G})$ by putting $$\chi'(\str{A})=\chi(\embGpK\circ \embGG(\str{A})).$$
	Let $b=b(\str{A})$ be the bound given by Proposition~\ref{prob:upperbound}.  For every $$e\in \OEmb(\str{A},\str{G}\restriction_{\KFpt({\leq}b)}),$$ we obtain a colouring $\chi''_e$ of $\AM_0^b$
	defined by $$\chi''_e(f)=\chi'(f\circ e(\str{A})).$$
	Notice that by Observation~\ref{obs:embedding} this is well defined.
	Because $\OEmb(\str{A},\str{G}\restriction_{\KFpt({\leq}b)})$ is finite, by a repeated application of Theorem~\ref{thm:shaperamseyN} we obtain $F\in \M$ such that all the $\chi_e''$ colourings are monochromatic. Let $\embKG:\str{K}\to\str{G}$ be the embeddng given by Proposition~\ref{prob:upperbound}.
	By Observation~\ref{obs:embedding} we know that $F\circ \embKG$ is a copy of $\str{K}$ in $\str{G}$ where the number of colours of copies of $\str{A}$ is bounded by $|\OEmb(\str{A},\str{G}\restriction \KFpt({\leq}b)|$.
	Now, the embedding $\embGpK\circ \embGG \circ F\circ \embKG$ is the desired ordered embedding $\str{K}\to\str{K}$.
	The upper bound on the big Ramsey degree of $\str{A}$ in $\str{K}$ is thus  $|\OEmb(\str{A},\str{G}\restriction \KFpt({\leq}b)|$.
\end{proof}

\section*{Acknowledgement}
The authors would like to thank to Sam Braunfeld, Jamal Kawach, S{\l}awomir Solecki, Stevo Todorcevic, and Spencer Unger for fruitful discussions.
Part of the work on this paper was done during the Thematic Program on Set Theoretic Methods in Algebra, Dynamics and Geometry
held at Fields Institute, Toronto in 2023.

D.~Ch., J.~H., M.~K. and J.~N. were supported by the project 21--10775S of  the  Czech  Science Foundation (GA\v CR).
M.~B was supported by the Center for Foundations of Modern Computer Science (Charles University project UNCE/SCI/004).
This article is part of a project that has received funding from the European Research Council (ERC) under the European Union's Horizon 2020 research and innovation programme (grant agreement No 810115).
N.~D.\ is supported by National Science Foundation grants DMS--1901753 and DMS--2300896. A.~Z.\ is supported by NSERC grants RGPIN--2023--03269 and DGECR--2023--00412.
\bibliographystyle{alpha}
\bibliography{ramsey.bib}
\end{document}